\documentclass[11pt]{article}
\usepackage{amssymb, latexsym, bbm, times}  
\usepackage{amsthm}  
\usepackage{mathptmx}
\usepackage{amscd}
\usepackage{xcolor}
\usepackage{url}

\usepackage{quiver}
\usetikzlibrary{nfold} 
\usepackage{graphicx}

\usepackage[all]{xy}
\usepackage{placeins}

\usepackage{mathtools} 
\usepackage{rotating}  

\newtheorem{Thm}{Theorem}[section]
\newtheorem{Lem}[Thm]{Lemma}
\newtheorem{Cor}[Thm]{Corollary}
\newtheorem{Prop}[Thm]{Proposition}
\newtheorem{Conj}[Thm]{Conjecture}

\theoremstyle{definition}
\newtheorem{Def}[Thm]{Definition}
\newtheorem{Rem}[Thm]{Remark}
\newtheorem{example}[Thm]{Example}

\newcommand{\Z}{\mathbb{Z}}

\newcommand{\C}{\mathbb{C}}
\newcommand{\bll}{\mathbbm{1}}

\newcommand{\M}{{\mathcal M}}

\newcommand{\End}{\operatorname{End}}

\usepackage{xcolor}
\usepackage{version}

\excludeversion{NB}

\newcommand{\bsm}{\begin{smallmatrix}}
\newcommand{\esm}{\end{smallmatrix}}

\newcommand{\bg}{\mathbf{g}}

\def\cqfd{\hfill $\Box$ \bigskip}

\def\resp{{\em resp.\ }}

\def\<{\langle\,}
\def\>{\,\rangle}

\def\uP{\underline{P}}

\def\g{\mathfrak g}

\def\<{\langle}
\def\>{\rangle}

\def\ra{\rightarrow}

\def\1{\mathbf 1}

\def\ii{{\mathbf i}}

\def\a{\alpha} 
\newcommand{\alp}{\alpha}  
\newcommand{\bet}{\beta}
\newcommand{\Psib}{{\Psi}}
\newcommand{\gam}{\gamma}
\newcommand{\del}{\delta}
\newcommand{\sig}{\sigma}
\newcommand{\hsig}{\hat{\sigma}}
\newcommand{\htau}{\hat{\tau}}
\newcommand{\Gam}{\Gamma}
\newcommand{\tGam}{\Gamma}
\newcommand{\sgn}{\mathrm{sgn}}

\newcommand{\abs}[1]{\lvert #1 \rvert}
\newcommand{\df}{\colon}
\newcommand{\xlrarrow}[1]{\xleftrightarrow{#1}} 
\newcommand{\diag}{\operatorname{diag}} 

\newcommand{\sfA}{A}  
\newcommand{\sfB}{B}
\newcommand{\sfC}{C}
\newcommand{\sfD}{D}
\newcommand{\sfE}{E}
\newcommand{\sfF}{F}
\newcommand{\sfG}{G}
\newcommand{\tsfF}{\widetilde{\sfF}} 
\newcommand{\tsfG}{\widetilde{\sfG}}

\newcommand{\tb}{\mathbf{\mathfrak{t}}}

\def\End{{\rm End}}

\def\de{\delta}

\def\la{\lambda}
\def\AA{\mathcal{A}}

\def\SS{\mathcal{S}}
\def\YY{\mathcal{Y}}

\def\M{{\mathfrak M}}
\def\O{{\mathcal O}}
\def\1{{\mathbf 1}}

\def\hg{\widehat{\mathfrak{g}}}

\def\G{\Gamma}
\def\tG{\widetilde{\Gamma}}

\def\be{\mathbf e}
\def\bz{\mathbf z}
\def\bx{\mathbf x}
\def\bb{\mathbf b}

\def\bi{\mathbf{i}}

\def\bk{\mathbf{k}}
\def\be{\mathbf{e}}

\def\bv{\mathbf{v}}
\def\bw{\mathbf{w}}

\def\btrois{3}
\def\bdeux{2}
\def\bun{1}

\def\Si{\Sigma}
\def\SS{\mathcal{S}}

\def\bd{\mathbf{d}}
\def\br{\mathbf{r}}

\def\QQ{\mathcal{Q}}

\def\bQ{\underline{Q}}

\newcommand{\red}{\color{red}}
\newcommand{\blue}{\color{blue}}
\newcommand{\green}{\color{green}}

\begin{document}

\title{\bf  Representations of shifted quantum affine \\ algebras and cluster algebras II: \\Non simply-laced types and cluster expansions}
\author{C. Geiss, D. Hernandez and B. Leclerc}

\date{}

\maketitle

\begin{abstract}
We extend to non simply-laced types the results of \cite{GHL} on cluster structures in the Gro\-then\-dieck rings of  categories $\mathcal{O}^{\mathrm{sh}}$ for shifted quantum affine algebras. We also show that, for all types, the formal power series expressions of the cluster monomials in terms of classes of positive prefundamental representations can be regarded as limits of cluster expansions, thus establishing their positivity.
\end{abstract}

\setcounter{tocdepth}{2}
{\small \tableofcontents}

\section{Introduction}
In this paper we continue our investigation, started in \cite{GHL}, of cluster algebra structures on the Gro\-then\-dieck rings of categories of modules over shifted quantum affine algebras.

Recall that the shifted quantum affine algebra $U_q^\mu(\widehat{\mathfrak{g}})$ 
was introduced by Finkelberg
and Tsymbaliuk \cite{FT} in their study of quantized $K$-theoretic Coulomb branches of $3d$ $N = 4$ SUSY
quiver gauge theories. Here $\mathfrak{g}$ denotes a finite-dimensional simple complex Lie algebra, and $\mu$ is 
an integral coweight of $\mathfrak{g}$.
When $\mu = 0$, the algebra $U_q^\mu(\widehat{\mathfrak{g}})$ is a central extension of the quantum loop algebra $U_q(L{\mathfrak{g}})$ and it has essentially the same representation theory. However, for $\mu \not = 0$, things become very different. If $\mu$ is dominant $U_q^\mu(\widehat{\mathfrak{g}})$ has a rich supply of finite-dimensional representations, wheras for non dominant $\mu$ there are no non-trivial finite-dimensional representations. In \cite{H}, the second author has started a systematic study of the representation theory of $U_q^\mu(\widehat{\mathfrak{g}})$. He has introduced a category
$\mathcal{O}^\mu$ containing infinite-dimensional representations, and shown that the Grothendieck group of
$\mathcal{O}^{\mathrm{sh}} := \oplus_{\mu\in P^\vee} \mathcal{O}^\mu$ has a natural ring structure coming from fusion product of representations.

In \cite{GHL}, we focused on the case when $\g$ is of simply-laced type $A, D, E$. 
We showed that the Grothendieck ring of $\mathcal{O}^{\mathrm{sh}}$ contains an infinite rank cluster algebra whose explicit initial seeds consists of certain distinguished elements $\underline{Q}_{w(\varpi^\vee_i),q^r}$ of the Grothendieck ring previously constructed in \cite{FH3}. Here $w$ denotes an element of the Weyl group of $\mathfrak{g}$, and $\varpi^\vee_i$ a fundamental coweight. We proved that this cluster algebra is dense for the topological ring structure of the Grothendieck ring. We also conjectured that the genuine cluster algebra (without taking closure) coincides with the Grothendieck group of the full subcategory of $\mathcal{O}^{\mathrm{sh}}$ whose objects are the finite length modules. This conjecture is now proved, by combining results of Hernandez and Zhang \cite{HZ} and Kamnitzer, Labelle, Leroux-Lapierre, Pinet and Weekes \cite{KLLLPW}. However, the monoidal categorification conjecture is not yet settled. It states that all cluster monomials are classes of simple objects of $\mathcal{O}^{\mathrm{sh}}$. This was checked for finite-dimensional representations \cite[\S 9.2]{GHL}, using previous results of \cite{H} and of Kashiwara, Kim, Oh and Park \cite{KKOP}. In \cite{KLLLPW} it was also established that the $Q$-variables $\underline{Q}_{w(\varpi^\vee_i),q^r}$ are classes of simple objects. But the general case remains open. 

The aim of this paper is to extend the results of \cite{GHL} to the non simply-laced types $B, C, F, G$. 
In \S\ref{sec:QClA} we associate with each Cartan type and each Weyl group element $w$ a cluster algebra $\mathcal{A}_w$, defined via the infinite quiver of an initial seed. Strictly speaking, this quiver depends on the choice of a reduced word for $w$, but we prove in \S\ref{mutequiv} and Appendix~\ref{App:G2-braid} that its mutation class depends only on $w$. Let $w_0$ denote the longest element of the Weyl group $W$. The sequel of the paper is devoted to the construction of an injective homomorphism $I$ from $\mathcal{A}_{w_0}$ into the Grothendieck ring of $\mathcal{O}^{\mathrm{sh}}$, and to the proof that its image $I(\mathcal{A}_{w_0})$ is dense. 

One of the key points is the existence of a stabilized $g$-vector for each cluster variable of our initial seed. Heuristically, the stabilized $g$-vector can be regarded as the $g$-vector with respect to the canonical initial seed of $\mathcal{A}_e$, where $e$ denotes the unit element of $W$, but since the two seeds are only connected via an infinite sequence of mutations, one needs to prove the stabilization of the corresponding sequences of $g$-vectors. In \cite[\S 4]{GHL} our proof of this stabilization process relied on a close connection in simply-laced type between initial seeds of $\mathcal{A}_{w_0}$   
and the Auslander-Reiten combinatorics of a Dynkin quiver $Q$ of the same type. This enabled us to obtain the stabilized $g$-vectors via an analogue of the well-known knitting algorithm for calculating the Auslander-Reiten quiver of $Q$. However, it does not seem possible to extend this method to the non simply-laced types, for which the description of the quivers of our initial seeds is significantly more involved.   

In \S\ref{sgvect}, we develop a new approach using a twisting of a braid group action introduced in \cite{FH3}. The main difference with the proof of \cite[\S 4]{GHL} lies in the choice of our infinite sequence of mutations. The sequence of the present paper allows a different type of induction, which can be applied to both simply-laced and non simply-laced types. In the end we obtain a generalization to all types of the knitting algorithm of \cite[\S 4]{GHL}. In \S\ref{stabg} we establish that the g-vector of any cluster variable stabilizes, and that this stabilized value does not depend on the choice of a reduced expression. 

With this in hand, the construction of the homomorphism $I$ and the proof that its image is dense follow the same lines as in \cite{GHL}. In \S\ref{sec:rings}, we introduce the rings of formal power series used for constructing the $q$-characters of objects of $\mathcal{O}^{\mathrm{sh}}$. In \S\ref{sec:QQ} we define following \cite{FH3} the $Q$-variables, and we recall the functional relations that they satisfy, called $QQ$-systems. Then in \S\ref{sec:main-thm} we define the ring homomorphism $I\colon \mathcal{A}_{w_0} \to K_\Z$ and show our main result (Theorem~\ref{main-Thm}). To prove that $I$ is independent of the choice of the initial seed of $\mathcal{A}_{w_0}$ which was used to define it (Theorem~\ref{uniF}), we extend the notion of stabilized $g$-vector to every cluster variable, and then show that this independence follows from a unicity result for solutions of the $QQ$-system.

The topological ring $K_\Z$ is topologically generated by the elements $\underline{Q}^{\pm 1}_{\varpi_i^\vee, q^r}$ 
(where the parity of $r\in \Z$ is determined by $i$).  
In \S\ref{sect-limitF} we show that the expansion of the image by $I$ of a cluster monomial of $\mathcal{A}_{w_0}$ in 
terms of the variables $\underline{Q}^{\pm 1}_{\varpi_i^\vee, q^r}$ can be interpreted as a cluster expansion with 
respect to the canonical initial seed of $\mathcal{A}_e$. That is, we prove that this power series expansion is the limit of the Laurent polynomial cluster expansions along an infinite sequence of mutations ending in this limit seed. 
Using the positivity theorem for cluster expansions, we deduce that this power series has positive coefficients
(Theorem~\ref{devtp}). Note that this question was not considered in \cite{GHL}, so this result is also new in simply-laced types $A, D, E$. 

\medskip

Finally, in \S\ref{sec:shifted-quantum} we connect our results with the representation theory of shifted quantum affine algebras. After reviewing the algebras $U_q^\mu(\widehat{\mathfrak{g}})$ and the category $\mathcal{O}^{\mathrm{sh}}$, we deduce from Theorem~\ref{main-Thm} the desired cluster structure on the Grothendieck ring of   
$\mathcal{O}^{\mathrm{sh}}$ (Theorem~\ref{isomgv}). We end this section by providing some evidence for the monoidal categorification conjecture (Conjecture~\ref{main-conj}). Note that if a cluster monomial is proved to be the class of a simple object $M$ of $\mathcal{O}^{\mathrm{sh}}$, then its limit cluster expansion given by Theorem~\ref{devtp} coincides with the $q$-character of $M$ (Theorem~\ref{qc}).

\medskip

The results of this work were presented in several conferences by the authors, in particular at the Workshop {\it Enveloping Algebras and Geometric Representation Theory} in Oberwolfach in May 2025 (see \cite{HMF}), at the \emph{Mathematical Congress of the Americas 2025} in Miami, at the \emph{ARTA~X} Conference in Cologne in September 2025,
and at the Conference {\it Quantum groups, monoidal categorification and related topics} in Jeju Island in December 2025.

\section{Definitions}

\subsection{Cartan matrix of a Lie algebra}\label{ssecCartan}
Let $\g$ be a simple Lie algebra over $\C$, with Cartan generators $e_i, f_i, h_i\ (i\in I)$.
We denote by $n := |I|$ the rank of $\g$.
Let $C=(c_{ij})_{i,j\in I}$ be the Cartan matrix of $\g$, which encodes the Serre presentation of~$\g$.
There is a diagonal matrix $D = \diag(d_i\mid i\in I)$ with entries in $\Z_{>0}$ 
such that the product
\[
B=D\,C = (b_{ij})_{i,j\in I}
\]
is symmetric. We normalize $D$ so that $\min\{d_i \mid i\in I\} = 1$
and call it the minimal symmetrizer of~$C$.

Put $t:=\max\{d_i \mid i\in I\}$.
Thus 
\[
t =
\left\{
\begin{array}{cl}
1 & \mbox{if $C$ is of type $A_n$, $D_n$, $E_6$, $E_7$ or $E_8$}, \cr
         2 & \mbox{if $C$ is of type $B_n$, $C_n$ or $F_4$}, \cr
         3 & \mbox{if $C$ is of type $G_2$}.
\end{array}
\right.
\]

\begin{example}\label{example_Cartan}
The Lie algebra $\g = \mathfrak{so}_7$, of type $B_3$, has
Cartan matrix
\[
C = \begin{pmatrix}
2 & -1 & 0 \cr
-1 & 2 & -1\cr
0 & -2 & 2\end{pmatrix}
\]
We have $D=\diag(2,2,1)$ and the symmetric matrix $B$ is given by
\[
B = \begin{pmatrix}
4 & -2 & 0 \cr
-2 & 4 & -2\cr
0 & -2 & 2 \end{pmatrix}
\]
\end{example}

\subsection{Weights, roots, and Weyl group}

Let $\mathfrak{h} := \oplus_{i\in I}\C h_i$ denote the Cartan subalgebra of $\g$, 
and let $\mathfrak{h}^*$ be its dual vector space. 
We denote by $(\varpi_i\mid i\in I)$ the basis of $\mathfrak{h}^*$ dual to 
$(h_i\mid i\in I)$, and call its elements the fundamental weights.
We denote by $\a_i\ (i\in I)$ the simple roots of $\g$, defined by
\begin{equation}\label{fundw_roots}
\a_i = \sum_{j\in I} c_{ji} \varpi_j,\qquad (i,j\in I). 
\end{equation}
They span respectively the root lattice and the weight lattice:
\[
 Q := \bigoplus_{i\in I} \Z\a_i\ \subset \  P := \bigoplus_{i\in I} \Z\varpi_i
 \ \subset \ \mathfrak{h}^*.
\]
Let $Q_+ := \bigoplus_{i\in I} \Z_{\ge 0}\a_i$.
The weight lattice $P$ is endowed with a partial ordering given by:
\[
 \la \ge \mu \quad \Longleftrightarrow \quad \la-\mu\in Q_+,\qquad (\la,\mu \in P).
\]

The Weyl group $W$ of $\g$ is generated by the simple reflexions $s_i$, which act on $\mathfrak{h}^*$ by
\[
 s_i(\la) = \la - \la(h_i) \a_i,\qquad (i\in I,\ \la\in \mathfrak{h}^*).
\]
In particular, we have
\[
s_i(\varpi_j) = \varpi_j - \de_{ij}\a_i,\quad s_i(\a_j) = \a_j - c_{ij}\a_i,\qquad (i,j\in I).
\]
Let $w_0$ be the longest element of $W$. It induces an involution $\nu : I \to I$ defined via the equality
\[
 w_0(\alpha_i) = -\alpha_{\nu(i)},\quad (i\in I).
\]

\subsection{Langlands dual and coweights}

Recall that the Langlands dual Lie algebra $\g^L$ has Cartan matrix $C^T$, the transpose of $C$.
Note that if we denote by $D'$ the minimal symmetrizer of $C^T$, we have that 
$DD'= \diag(t,\ldots,t)$.

\begin{example}\label{example_Cartan2}
We continue Example~\ref{example_Cartan}.  
The Langlands dual of $\g = \mathfrak{so}_7$ is is $\g^L = \mathfrak{sp}_6$, of type $C_3$,
with Cartan matrix
\[
C^T = \begin{pmatrix}
2 & -1 & 0 \cr
-1 & 2 & -2\cr
0 & -1 & 2 \end{pmatrix}
\]
and symmetrizer $D'=\diag(1,1,2)$.
\end{example}

We denote by $(\varpi^\vee_i\mid i\in I)$ and $(\alpha^\vee_i\mid i\in I)$ the fundamental weights and simple roots of
$\g^L$, and call them the fundamental coweights and simple coroots of $\g$. They are related by
\begin{equation}\label{cofundw_roots}
\a^\vee_i = \sum_{j\in I} c_{ij} \varpi^\vee_j,\qquad (i,j\in I). 
\end{equation} 
The two Lie algebras $\g$ and $\g^L$ have the same Weyl group $W$, which acts on the coweight lattice $P^\vee$ of $\g^L$ by
\[
s_i(\varpi^\vee_j) = \varpi^\vee_j - \de_{ij}\a^\vee_i,\quad s_i(\a^\vee_j) = \a^\vee_j - c_{ji}\a^\vee_i,\qquad (i,j\in I).
\]
Finally, we have a natural pairing between $P$ and $P^\vee$ given by
\[
 \a_i(\varpi^\vee_j) = \de_{ij},\qquad \a_i(\a^\vee_j) = c_{ji}, \qquad (i,j\in I).
\]

\subsection{Basic infinite quiver}
\label{subsec-basic-quiver}
\begin{figure}[t]
\[
\def\objectstyle{\scriptstyle}
\def\lablestyle{\scriptstyle}
\xymatrix@-1.0pc{
&{}\save[]+<0cm,1.5ex>*{\vdots}\restore&{}\save[]+<0cm,1.5ex>*{\vdots}\restore  
&{}\save[]+<0cm,1.5ex>*{\vdots}\restore
\\
&{(1,2)}\ar[rd]\ar[u]&
&\ar[ld] (3,2) \ar[u]
\\
&&\ar[ld] (2,1) \ar[rd]\ar[uu]&&
\\
&\ar[uu]{(1,0)}\ar[rd]&
&\ar[ld] (3,0) \ar[uu]
\\
&&\ar[uu]\ar[ld] (2,-1) \ar[rd]&&
\\
&\ar[uu](1,-2) \ar[rd] &&\ar[ld] (3,-2)\ar[uu]
\\
&&\ar[ld] \ar[uu](2,-3) \ar[rd]&&
\\
&\ar[uu](1,-4) \ar[rd] &&\ar[ld] (3,-4)\ar[uu]
\\
&&\ar[ld] \ar[uu](2,-5) \ar[rd]&&
\\
&\ar[uu](1,-6) &&\ar[uu] (3,-6) 
\\
&{}\save[]+<0cm,0ex>*{\vdots}\ar[u]\restore&{}\save[]+<0cm,0ex>*{\vdots}\ar[uu]\restore  
&{}\save[]+<0cm,0ex>*{\vdots}\ar[u]\restore
\\
}
\qquad
\def\objectstyle{\scriptstyle}
\xymatrix@-1.0pc{
&{}\save[]+<0cm,1.5ex>*{\vdots}\restore&{}\save[]+<0cm,1.5ex>*{\vdots}\restore  
&{}\save[]+<0cm,1.5ex>*{\vdots}\restore&{}\save[]+<0cm,1.5ex>*{\vdots}\restore 
&{}\save[]+<0cm,1.5ex>*{\vdots}\restore 
\\
&(\bun,3)\ar[rd]\ar[u]&&\ar[ld] (\btrois,3)\ar[u]& \ar[ld](\bdeux,3)\ar[u]\ar[rd] &&
\\
&&{(\bdeux,1)}\ar[rd]\ar[ld]\ar[uu] &\ar[u] (\btrois,1) \ar[rd]&&(\bun,1)\ar[ld]\ar[uu]&
\\
&(\bun,-1)\ar[rd]\ar[uu]&&\ar[ld] (\btrois,-1)\ar[u]& \ar[ld](\bdeux,-1)\ar[rd]\ar[uu] &&
\\
&&{(\bdeux,-3)}\ar[rd]\ar[ld]\ar[uu] &\ar[u] (\btrois,-3) \ar[rd]&&(\bun,-3)\ar[ld]\ar[uu]&
\\
&\ar[uu](\bun,-5)\ar[rd]&&\ar[u]\ar[ld] (\btrois,-5) &\ar[ld] (\bdeux,-5)\ar[uu] \ar[rd]  &&
\\
&&\ar[uu](\bdeux,-7)\ar[rd]\ar[ld]   & \ar[u](\btrois,-7) \ar[rd]&&(\bun,-7)\ar[ld]\ar[uu]&
\\
&(\bun,-9)\ar[rd]\ar[uu]&&\ar[u]\ar[ld] (\btrois,-9) &\ar[ld] (\bdeux,-9) \ar[uu]\ar[rd] &&
\\
&&\ar[uu](\bdeux,-11)\ar[ld]\ar[rd] & \ar[u](\btrois,-11)\ar[rd]&&\ar[ld](\bun,-11)\ar[uu]&
\\
&(\bun,-13)\ar[uu] && (\btrois,-13)\ar[u] & (\bdeux,-13) \ar[uu]
\\
&{}\save[]+<0cm,0ex>*{\vdots}\ar[u]\restore&{}\save[]+<0cm,0ex>*{\vdots}\ar[uu]\restore  
&{}\save[]+<0cm,0ex>*{\vdots}\ar[u]\restore&{}\save[]+<0cm,0ex>*{\vdots}\ar[u]\restore 
&{}\save[]+<0cm,0ex>*{\vdots}\ar[uu]\restore 
\\
}
\]
\caption{\label{Fig0} {\it The basic quivers $\G_C$ in type $A_3$ and $B_3$.}}
\end{figure}

Following \cite{HL1}, we attach an infinite quiver to the Cartan matrix $C$.
Put $\widetilde{V} = I \times \Z$.
We introduce a quiver $\tG$ with vertex set $\widetilde{V}$.
Recall the symmetric matrix $B= (b_{ij})_{i,j\in I}$ of \S\ref{ssecCartan}. The arrows of $\tG$ are given by
\[
(i,r) \to (j,s) 
\quad \Longleftrightarrow \quad
(c_{ij}\not = 0 
\quad \mbox{and} \quad
s=r+b_{ij}).
\]
It is easy to check that the oriented graph $\tG$ has two isomorphic connected components. 
We pick one of them and call it $\Gamma$. The vertex set of $\Gamma$ is denoted by $V$ and the arrow set by $A$. We call $\Gamma = \Gamma_C$ the \emph{basic infinite quiver of $C$}.
Examples in type $A_3$ and $B_3$ are shown in Figure~\ref{Fig0}.

\section{Quivers and cluster algebras} \label{sec:QClA}

Let $U_q(\widehat{\g})$ denote the quantum affinization of the Lie algebra $\g$.
In \cite{HL2}, we have introduced the cluster algebra $\AA_\G$ defined by the basic infinite quiver $\G$, and we have shown that a completion of $\AA_\G$ is isomorphic to the Grothendieck rings of two tensor subcategories $\O^+_{\Z}$ and $\O^-_{\Z}$ of the category $\O$ of representations of the Borel subalgebra of $U_q(\widehat{\g})$ introduced in \cite{HJ}. More recently, it was shown in \cite{H} that $\AA_\G$ is isomorphic to the Grothendieck ring of the category $\mathcal{C}^{\mathrm sh}_\Z$ of finite-dimensional representations of shifted quantum affine algebras (with integral spectral parameter). The aim of this section is to introduce a more general class of cluster algebras $\AA_w$ labelled by $w\in W$, where $\AA_e := \AA_\G$ corresponds to the unit element $e$ of $W$. This was already explained in \cite{GHL} for the simply-laced types $A,D,E$, so 
we will mainly focus here on the new features of the non simply-laced types.

\subsection{The case $w=s_i$}
\label{subsec2-2}

In this section we introduce the cluster algebras $\AA_{s_i}$ associated with the simple reflections $s_i$. We start by defining the quivers $\G_{s_i,\,r}$.

\begin{Def}\label{Def2-7}
Let $i\in I$ and let $(i,r)$ be a vertex of $\G = \G_e$. The quiver $\G_{s_i,\,r}$ is obtained from 
$\G_e$ by performing the following operations:
\begin{enumerate}
 \item [(a)] If $d_i = 1$, then
\begin{enumerate}
 \item[(i)] insert a new vertex $*$ between vertices $(i,r)$ and $(i,r-2)$;
 \item[(ii)] replace the arrow $(i,r) \leftarrow (i,r-2)$ by a pair of arrows 
$(i,r) \to * \leftarrow (i,r-2)$;
 \item[(iii)] for $j$ with $c_{ij}<0$, replace the arrow $(i,r) \to (j,r+b_{ij})$ by an arrow $(* \to (j,r+b_{ij})$;
 \item[(iv)] if $c_{ij}= -2$, add two new arrows $(i,r) \leftarrow (j,r)$ and $*\to (j,r)$;
 \item[(v)] if $c_{ij}= -3$, add four new arrows $(i,r) \leftarrow (j,r\pm 1)$ and $*\to (j,r\pm 1)$;
 \item[(vi)] rename vertex $*$ as $(i,r-2)$, and rename all vertices $(i,p)$'s with $p<r$ as $(i,p-2)$.
 \end{enumerate}
\item [(b)] If $d_i = 2$, then
\begin{enumerate}
 \item[(i)] insert a new vertex $*$ between vertices $(i,r)$ and $(i,r-4)$,
 and a new vertex $\dagger$ between vertices $(i,r-2)$ and $(i,r-6)$;
 \item[(ii)] replace the arrow $(i,r) \leftarrow (i,r-4)$ (\resp $(i,r-2) \leftarrow (i,r-6)$) by a pair of arrows $(i,r) \to * \leftarrow (i,r-4)$ (\resp $(i,r-2) \to \dagger \leftarrow (i,r-6)$);
 \item[(iii)] for $j$ with $c_{ij}<0$, replace the arrow $(i,r) \to (j,r+b_{ij})$ 
 (\resp $(i,r-2) \to (j,r+b_{ij}-2)$) 
 by an arrow $* \to (j,r+b_{ij})$
 (\resp $\dagger \to (j,r+b_{ij}-2)$);
 \item[(iv)] rename vertex $*$ as $(i,r-4)$, vertex $\dagger$ as $(i, r-6)$, 
 and rename all vertices $(i,p)$'s with $p<r-2$ as $(i,p-4)$.
\end{enumerate}
\item [(b)] If $d_i = 3$, then
\begin{enumerate}
 \item[(i)] insert a new vertex $*$ between vertices $(i,r)$ and $(i,r-6)$,
a new vertex $\dagger$ between vertices $(i,r-2)$ and $(i,r-8)$, and 
a new vertex $\circ$ between vertices $(i,r-4)$ and $(i,r-10)$.
 \item[(ii)] replace the arrow $(i,r) \leftarrow (i,r-6)$ (\resp $(i,r-2) \leftarrow (i,r-8)$, $(i,r-4) \leftarrow (i,r-10)$) 
 by a pair of arrows 
$(i,r) \to * \leftarrow (i,r-6)$ (\resp $(i,r-2) \to \dagger \leftarrow (i,r-8)$, $(i,r-4) \to \circ \leftarrow (i,r-10)$;
 \item[(iii)] for $j$ with $c_{ij}<0$, replace the arrow $(i,r) \to (j,r+b_{ij})$ 
 (\resp $(i,r-2) \to (j,r+b_{ij}-2)$, $(i,r-4) \to (j,r+b_{ij}-4)$)
 by an arrow $* \to (j,r+b_{ij})$ (\resp $\dagger \to (j,r+b_{ij}-2)$, $\circ \to (j,r+b_{ij}-4)$);
 \item[(iv)] rename vertex $*$ as $(i,r-6)$, vertex $\dagger$ as $(i, r-8)$, vertex $\circ$ as $(i, r-10)$, 
 and rename all vertices $(i,p)$'s with $p<r-4$ as $(i,p-6)$.
\end{enumerate}
\end{enumerate}
\end{Def}

\begin{figure}[t]
\[
\def\objectstyle{\scriptstyle}
\def\lablestyle{\scriptstyle}
\xymatrix@-1.0pc{
&&&&\\
&&{}\save[]+<0cm,0ex>*{\vdots}\restore
& {}\save[]+<0cm,0ex>*{\vdots}\restore
\\
&&\ar[ld] (\bdeux,5)& \ar[ld](\bun,5)&
\\
&{(\bun,3)}\ar[rd]\ar[uu] &\ar[u] (\bdeux,3) \ar[rd]&&
\\
&&\ar[u]\ar[ld] (\bdeux,1) &\ar[ld] (\bun,1) \ar[uu]  &
\\
&\ar[uu](\bun,-1)\ar[rd]   & \ar[u](\bdeux,-1) \ar[rd]&&
\\
&&\ar[u]\ar[ld] (\bdeux,-3) &\ar[ld] (\bun,-3) \ar[uu] &
\\
&\ar[uu]\ar[rd](\bun,-5) & \ar[u]\ar[rd](\bdeux,-5)&&
\\
&&\ar[u]\ar[ld] (\bdeux,-7) &\ar[ld] (\bun,-7) \ar[uu]  &
\\
&\ar[uu](\bun,-9)   & \ar[u](\bdeux,-9) &&
\\
&{}\save[]+<0cm,0ex>*{\vdots}\restore  &{}\save[]+<0cm,0ex>*{\vdots}\restore&{}\save[]+<0cm,0ex>*{\vdots}\ar[uu]\restore 
\\
}
\quad
\xymatrix@-1.0pc{
&&&&\\
&&{}\save[]+<0cm,0ex>*{\vdots}\restore
& {}\save[]+<0cm,0ex>*{\vdots}\restore
\\
&&\ar[ld] (\bdeux,5)& \ar[ld](\bun,5)&
\\
&{(\bun,3)}\ar[rd]\ar[uu] &\ar[u] (\bdeux,3) \ar[rd]&&
\\
&&\ar[u]\ar[ld] (\bdeux,1) &\ar[ld] (\bun,1) \ar[uu]  &
\\
&\ar[d]\ar[uu]\mathbf{\red(\bun,-1)}   & \ar[u](\bdeux,-1) \ar[rd]&&
\\
&\mathbf{\green(\bun,-5)}\ar[rd] && \mathbf{\red(1,-3)}\ar[uu]\ar[d] 
\\
&&\ar[uu]\ar[ld] (\bdeux,-3) &\ar[ld] \mathbf{\green(\bun,-7)}  &
\\
&\ar[uu]\ar[rd](\bun,-9) & \ar[u]\ar[rd](\bdeux,-5)&&
\\
&&\ar[u]\ar[ld] (\bdeux,-7) &\ar[ld] (\bun,-11) \ar[uu]  &
\\
&\ar[uu](\bun,-13)   & \ar[u](\bdeux,-9) &&
\\
&{}\save[]+<0cm,0ex>*{\vdots}\restore  &{}\save[]+<0cm,0ex>*{\vdots}\restore&{}\save[]+<0cm,0ex>*{\vdots}\ar[uu]\restore 
\\
}
\quad
\def\objectstyle{\scriptstyle}
\def\lablestyle{\scriptstyle}
\xymatrix@-1.0pc{
&&&&\\
&&{}\save[]+<0cm,0ex>*{\vdots}\restore
& {}\save[]+<0cm,0ex>*{\vdots}\restore
\\
&&\ar[ld] (\bdeux,5)& \ar[ld](\bun,5)&
\\
&{(\bun,3)}\ar[rd]\ar[uu] &\ar[u] (\bdeux,3) \ar[rd]&&
\\
&&\ar[u]\ar[ld] (\bdeux,1) &\ar[ld] (\bun,1) \ar[uu]  &
\\
&\ar[uu](\bun,-1)\ar[rdd]\ar[r]   & \ar[d]\ar[u]\mathbf{\red(\bdeux,-1)} &&
\\
&   & \ar[lu]\mathbf{\green(\bdeux,-3)} \ar[rd]&&
\\
&&\ar[u]\ar[ld] (\bdeux,-5) &\ar[ld] (\bun,-3) \ar[uuu] &
\\
&\ar[uuu]\ar[rd](\bun,-5) & \ar[u]\ar[rd](\bdeux,-7)&&
\\
&&\ar[u]\ar[ld] (\bdeux,-9) &\ar[ld] (\bun,-7) \ar[uu]  &
\\
&\ar[uu](\bun,-9)   & \ar[u](\bdeux,-11) &&
\\
&{}\save[]+<0cm,0ex>*{\vdots}\restore  &{}\save[]+<0cm,0ex>*{\vdots}\restore&{}\save[]+<0cm,0ex>*{\vdots}\ar[uu]\restore 
\\
}
\]
\caption{\label{Fig7} {\it The quivers $\G_e$, $\G_{s_1,-1}$ and $\G_{s_2,-1}$ in type $B_2$.}}
\end{figure}

This is illustrated in Figure~\ref{Fig7}, where we display the quivers $\G_{s_1,-1}$ and $\G_{s_2,-1}$ in type $B_2$. 
The new inserted vertices are painted in green. The vertices immediately above the green vertices are painted in red. Hence the only vertical down arrows join the red vertices to the green vertices.

\begin{Lem} \label{Lem28} 
\begin{enumerate}
 \item[\rm (i)] If we mutate the quiver $\G_{s_i,\,r}$ at vertex $(i,r)$ if $d_i = 1$ (\resp at vertices $(i,r)$ and $(i,r-2)$ if $d_i = 2$; at vertices $(i,r)$, $(i,r-2)$ and $(i,r-4)$ if $d_i = 3$) 
 then we obtain the quiver $\G_{s_i,\,r+2d_i}$.
 \item[\rm (ii)] If we mutate the quiver $\G_{s_i,\,r}$ at vertex $(i,r-2)$ if $d_i = 1$ (\resp at vertices $(i,r-4)$ and $(i,r-6)$ if $d_i = 2$; at vertices $(i,r-6)$, $(i,r-8)$ and $(i,r-10)$ if $d_i = 3$) 
 then we obtain the quiver $\G_{s_i,\,r-2d_i}$.
\end{enumerate}
\end{Lem}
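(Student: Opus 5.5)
The plan is a direct local verification of quiver mutation, organized so that the computation is done once and the other cases follow by symmetry. First, note that mutation only changes arrows incident to the mutated vertex, or arrows passing through it along length-two paths. Moreover, the modifications in Definition~\ref{Def2-7} affect only a bounded region of $\G_e$ near column $i$, around heights $r$ to $r-2d_i$ (and $r-6,\dots,r-10$ for the auxiliary vertices). Outside this window, $\G_{s_i,r}$ and $\G_{s_i,r\pm 2d_i}$ agree with $\G_e$, up to the renaming of the vertices $(i,p)$. So it suffices to compare finite full subquivers: the vertices of column $i$ near height $r$, together with their neighbours $(j,\cdot)$ for $c_{ij}<0$.

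The first step treats case (i) with $d_i=1$. Mutating at $(i,r)$ in $\G_{s_i,r}$ is done by listing the arrows at $(i,r)$:
\begin{itemize}
\item incoming: $(i,r-2)$ is no longer directly connected to it;
\item incoming: the new vertex $*$, via $(i,r)\to *$ reversed? (to be read off carefully);
\item arrows from the $(j,r+b_{ij}\pm\cdot)$ and the vertical arrow to $(i,r+2)$.
\end{itemize}
We then perform the three steps of Fomin--Zelevinsky mutation. These are: add composite arrows for each path $u\to(i,r)\to v$, reverse the arrows at $(i,r)$, and cancel $2$-cycles. After this, we check that the result is exactly the quiver obtained by applying Definition~\ref{Def2-7} at height $r+2$, after relabelling. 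The key observation is that the vertex $(i,r)$ of $\G_{s_i,r}$ plays the role of the new vertex $*$ of $\G_{s_i,r+2}$. The extra arrows (iv), (v) for $c_{ij}=-2,-3$ are exactly what is needed for the $2$-cycle cancellations to produce the correct horizontal arrows. This is where the asymmetry $b_{ij}=d_ic_{ij}$ enters, so we compute this separately for $c_{ij}=-1,-2,-3$.

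For $d_i=2,3$ the inserted vertices $*,\dagger,(\circ)$ lie on disjoint vertical strands of column $i$: the strands of column $i$ through $r, r-2$ (and $r-4$) are separated. The prescribed vertices $(i,r),(i,r-2),(i,r-4)$ are pairwise non-adjacent, so their mutations commute. Each is then a copy of the $d_i=1$ computation on its own strand, with all neighbours $j$ satisfying $d_j=1$ and $c_{ij}=-1$, so no analogue of (iv), (v) is needed. Part (ii) follows from (i) by involutivity of mutation. Applying (i) to $\G_{s_i,r-2d_i}$ gives $\G_{s_i,r}$, and the vertices mutated there, $(i,r-2d_i),\dots$, correspond after renaming in $\G_{s_i,r}$ to $(i,r-2)$ (respectively $(i,r-4),(i,r-6)$; $(i,r-6),(i,r-8),(i,r-10)$). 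Mutating back therefore returns $\G_{s_i,r-2d_i}$.

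The main obstacle is the bookkeeping in the $d_i=1$, $c_{ij}\in\{-2,-3\}$ case. Here vertex $(i,r)$ has several neighbours in column $j$, namely $(j,r)$, or $(j,r\pm1)$ and $(j,r+b_{ij})$, and we must check that all created and cancelled arrows match Definition~\ref{Def2-7} at the shifted height. I would first do the $B_2$ example of Figure~\ref{Fig7} explicitly, and the $G_2$ analogue, to fix the pattern, and then argue generally using the fact that column $j$ has $d_j\in\{2,3\}$.
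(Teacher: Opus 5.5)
Your plan is correct and is essentially the paper's own argument, which simply asserts that the lemma is checked by inspection using the mutation rules. Deriving (ii) from (i) by involutivity of mutation is a sensible economy, and it works because the renaming in Definition~\ref{Def2-7}(vi) makes the labelled quivers agree exactly. Two slips should be fixed when you write it out. First, in $\G_{s_i,r}$ the arrow is $(i,r)\to *$, which is outgoing from $(i,r)$, and the column-$j$ neighbours of $(i,r)$ are $(j,r-b_{ij})$ together with the vertices added in (iv)/(v); they do not include $(j,r+b_{ij})$, whose arrow was moved to $*$. Second, for $d_i=2$ a neighbour may have $d_j=2$ (types $B_n$ with $n\ge 3$, and $F_4$), but your strand-by-strand computation only uses $c_{ij}=-1$ (so $b_{ij}=-d_i$ and there are no extra arrows), since mutating at the red vertex never touches the vertical arrows of column $j$.
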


\begin{proof}
This is easily checked by inspection, using the quiver mutation rules. \cqfd  
\end{proof}

It follows from Lemma~\ref{Lem28} that, for a fixed $i\in I$, all the quivers $\G_{s_i,\,r}$ are mutation-equivalent, and they define the same cluster algebra. The next definition then makes sense.

\begin{Def}
\emph{We denote by $\AA_{s_i}$ the cluster algebra with quiver $\G_{s_i,\,r}$.}
\end{Def}

\subsection{General case with $w$ arbitrary}\label{SecAw}

\begin{figure}[t]
\[
\def\objectstyle{\scriptstyle}
\def\lablestyle{\scriptstyle}
\def\objectstyle{\scriptstyle}
\def\lablestyle{\scriptstyle}
\xymatrix@-1.0pc{
&{}\save[]+<0cm,0ex>*{\vdots}\restore&{}\save[]+<0cm,0ex>*{\vdots}\restore
& 
\\
&(\bun,7)\ar[rd] & (\bdeux,7) \ar[rd]&&
\\
&&\ar[ld] (\bdeux,5)\ar[u]& \ar[ld](\bun,5)\ar[uu]&
\\
&\mathbf{\red(\bun,3)}\ar[uu]\ar[d] &\ar[u] (\bdeux,3) \ar[rd]&&
\\
&\mathbf{\green(\bun,-1)}\ar[rd] && \mathbf{\red(\bun,1)} \ar[uu]\ar[d] 
\\
&&\ar[uu]\ar[ld] (\bdeux,1) &\ar[ld] \mathbf{\green(\bun,-3)}   &
\\
&\ar[uu](\bun,-5)\ar[rdd]\ar[r]   & \ar[d]\mathbf{\red(\bdeux,-1)}\ar[u] &&
\\
&                          & \ar[lu]\mathbf{\green(\bdeux,-3)}\ar[rd]
\\
&&\ar[ld] (\bdeux,-5)\ar[u] & \mathbf{\red(\bun,-7)} \ar[uuu]\ar[d] &
\\
&\mathbf{\red(1,-9)}\ar[uuu]\ar[d] & &\ar[ld] \mathbf{\green(\bun,-11)}  &
\\
&\mathbf{\green\ar[rd](\bun,-13)} & \ar[uu]\ar[rd](\bdeux,-7)&&
\\
&&\ar[u] \mathbf{\red(\bdeux,-9)}\ar[d] &\ar[l]\ar[ldd] (\bun,-15) \ar[uu]  &
\\
&&\ar[ld] \mathbf{\green(\bdeux,-11)}\ar[ru] &&
\\
&\ar[uuu](\bun,-17)\ar[rd]   & \ar[u](\bdeux,-13)\ar[rd] &&
\\
&&(2,-15)\ar[u] & (1,-19)\ar[uuu]
\\
&\ar[uu]{}\save[]+<0cm,0ex>*{\vdots}\restore  &{}\save[]+<0cm,0ex>*{\vdots}\restore&{}\save[]+<0cm,0ex>*{\vdots}\restore 
\\
}
\qquad
\def\objectstyle{\scriptstyle}
\def\lablestyle{\scriptstyle}
\xymatrix@-1.0pc{
&{}\save[]+<0cm,0ex>*{\vdots}\restore&{}\save[]+<0cm,0ex>*{\vdots}\restore
& 
\\
&(\bun,7)\ar[rd] & (\bdeux,7) \ar[rd]&&
\\
&&\ar[ld] (\bdeux,5)\ar[u]& \ar[ld](\bun,5)\ar[uu]&
\\
&{(\bun,3)}\ar[rdd]\ar[uu]\ar[r] &\ar[u] \mathbf{\red(\bdeux,3)}\ar[d] &&
\\
& & \ar[lu]\mathbf{\green(2,1)}\ar[rd]
\\
&&\ar[u]\ar[ld] (\bdeux,-1) & \mathbf{\red(\bun,1)}\ar[d] \ar[uuu]  &
\\
&\ar[uuu]\ar[d]\mathbf{\red(1,-1)} && \ar[ld]\mathbf{\green(\bun,-3)}
\\
&\mathbf{\green(\bun,-5)}\ar[rd]   & \ar[uu](\bdeux,-3) \ar[rd]&&
\\
&&\ar[u] \mathbf{\red(\bdeux,-5)}\ar[d] &\ar[l]\ar[ldd] (\bun,-7) \ar[uu] &
\\
&& \ar[ld]\mathbf{\green(\bdeux,-7)}\ar[ru]
\\
&\ar[uuu]\mathbf{\red(\bun,-9)} & \ar[u]\ar[rd](\bdeux,-9)&&
\\
& \ar[rd]\ar[u]\mathbf{\green(1,-13)} && \mathbf{\red(1,-11)} \ar[uuu]\ar[d]
\\
&&\ar[uu]\ar[ld] (\bdeux,-11) &\ar[ld] \mathbf{\green(\bun,-15)}  &
\\
&\ar[uu](\bun,-17)\ar[rd]   & \ar[u](\bdeux,-13)\ar[rd] &&
\\
&&(2,-15)\ar[u] & (1,-19)\ar[uu]
\\
&\ar[uu]{}\save[]+<0cm,0ex>*{\vdots}\restore  &{}\save[]+<0cm,0ex>*{\vdots}\restore&{}\save[]+<0cm,0ex>*{\vdots}\restore 
\\
}
\]
\caption{\label{Fig8} {\it Two initial seeds of $\AA_{w_0}$ in type $B_2$.}}
\end{figure}

To define the cluster algebra $\AA_w$ for arbitrary $w$, we proceed exactly as in \cite[\S3.3]{GHL}.
Namely, we fix a reduced expression $w = s_{i_1}\cdots s_{i_l}$ of $w$, and a sequence of vertices 
$(i_1,r_1), \ldots , (i_l, r_l)$ of $\G_e$ such that $r_{s} \gg r_{s+1}$ for every $1\le s <l$
(the minimal descent steps $r_s - r_{s+1}$ will be specified in Definition~\ref{def:adapt} below). 
To this datum, we can attach a quiver $\G_{(i_1,\ldots, i_l),\,(r_1,\ldots, r_l)}$ by performing the same operations as in Definition~\ref{Def2-7} successively at each vertex $(i_1,r_1), \ldots , (i_l, r_l)$. 

Put $\bi:=(i_1,\ldots, i_l)$ and $\br := (r_1,\ldots, r_l)$.
It follows from Lemma~\ref{Lem28} that the cluster algebra defined by the quiver $\G_{\bi, \br}$ depends only on the sequence $\bi$, and not on the choices of $r_1, \ldots , r_l$ (subject to the above condition). Moreover, as in \cite[\S3.3]{GHL}, one can show the following crucial property of our family of quivers.

\begin{Thm}\label{Prop2-10}
The cluster algebra defined by the quiver $\G_{\bi,\br}$ depends only on the Weyl group element $w$, not on the choice of $\br$ and not on the choice of a reduced word $\bi$ for $w$. 
\end{Thm}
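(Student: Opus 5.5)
The independence from $\br$ is already a consequence of Lemma~\ref{Lem28}, applied one letter at a time. Fix $\bi$ and two admissible sequences $\br,\br'$. Because $r_s\gg r_{s+1}$, the local modifications made at the vertices $(i_s,r_s)$ involve disjoint regions of the quiver. Hence the mutations of Lemma~\ref{Lem28} that move the insertion point of the $s$-th letter up or down by $2d_{i_s}$ take place in a region of $\G_{\bi,\br}$ that looks exactly like $\G_{s_{i_s},r_s}$. We move the insertion points one at a time, always keeping the descent condition, and connect $\G_{\bi,\br}$ to $\G_{\bi,\br'}$ by a finite mutation sequence.

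For independence from the reduced word, we use Matsumoto's theorem: any two reduced words for $w$ are related by a finite sequence of braid moves
\[
s_is_js_i\cdots = s_js_is_j\cdots \quad (m_{ij}\ \text{factors}),
\]
with $m_{ij}=2,3,4,6$ according as $c_{ij}c_{ji}=0,1,2,3$. A braid move only affects consecutive letters. We may therefore choose the spectral parameters of the letters in the block to be consecutive, with huge gaps separating the block from all other letters. It then suffices to show, for each rank-two pair $\{i,j\}$, that the quivers attached to the two sides of the braid relation, inserted at the same location, are mutation equivalent by a finite mutation sequence supported near that location. This sequence must leave the rest of the quiver untouched, including the arrows to the neighbouring columns $k\neq i,j$.

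\emph{Commuting case $m_{ij}=2$.} If $c_{ij}=0$, the operations of Definition~\ref{Def2-7} at $(i,r)$ and at $(j,r')$ modify disjoint sets of arrows, so they commute up to relabelling. Hence no mutation is needed beyond moving the parameters, which Lemma~\ref{Lem28} already handles.

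\emph{Case $m_{ij}=3$.} This is the simply-laced situation of \cite[\S3.3]{GHL}, and we reuse that argument. It applies verbatim when $d_i=d_j$, which covers all such pairs here, including the $A_2$ subdiagrams of $B_n,C_n,F_4$ with $d_i=d_j=2$.

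\emph{Cases $m_{ij}=4,6$ (types $B_2$ and $G_2$).} These are the main obstacle. I would compute explicitly the quivers $\G_{(1,2,1,2),\br}$ and $\G_{(2,1,2,1),\br}$ in type $B_2$, in the spirit of the two seeds of $\AA_{w_0}$ displayed in Figure~\ref{Fig8}, and look for an explicit finite mutation sequence relating them. The natural candidate is a sequence that realizes one step of a braid-type (Donaldson--Thomas-like) transformation on the finitely many vertices whose arrows were altered. The same computation is then done for the alternating words of length $6$ in $G_2$; by the announcement in the introduction this is deferred to Appendix~\ref{App:G2-braid}, where a computer-assisted search for the mutation sequence is feasible.

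Finally, one must check that these local sequences are compatible with arrows to other columns $k$. The modifications of Definition~\ref{Def2-7} to arrows into column $k$ depend only on which letters act, and vertex $k$ is frozen with respect to the local mutations. I would verify that each local mutation sequence, performed inside the full quiver, creates no extra arrows to column $k$ beyond those prescribed by the other side of the braid relation; this is a direct check using the mutation rule at the finitely many mutated vertices.
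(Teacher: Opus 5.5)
There is a genuine gap. Your overall architecture is the paper's: independence of $\br$ via Lemma~\ref{Lem28}, reduction by Matsumoto to commutation moves and single rank-two braid moves with specially chosen $\br$, and a local mutation sequence for each braid type whose effect on neighbouring columns is controlled. But the step that carries all the new content in non simply-laced type is not proved. For $m_{ij}=4,6$ you only propose to ``look for'' a mutation sequence (a DT-like candidate, or a computer search). Nothing in the proposal shows that such a finite sequence exists, let alone identifies one.

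The missing idea is \emph{unfolding}. The intermediate quivers are again of the same shape, but attached to reduced words in an unfolded Weyl group: for $F_4$, the type-$A_3$ parabolic subgroup of $W(E_6)$; for $G_2$, $W(D_4)$. In these groups, shift moves lift to single mutations, and braid moves lift to short blocks of mutations (four for the $A_2$-moves used for $B_2$, five for the $D_4$-moves used for $G_2$, Lemma~\ref{lem:G2-braid}). Connecting two reduced words of the unfolded longest element by braid and commutation moves then yields the explicit $23$-step sequence of Lemma~\ref{Lem:B2-F4-braid} and the $163$-step sequence of Lemma~\ref{lem:G2-key}.

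Your locality heuristic is also too optimistic. During the $B_2$ sequence, the oblique arrows joining columns $2,3$ to columns $1,4$ of $F_4$ change nontrivially at intermediate steps. This is why the paper does the computation inside $F_4$ and checks that the vertical arrows of columns $1$ and $4$ are never altered. It then obtains $B_2$, $B_n$ and $C_n$ by deleting columns.

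A secondary problem: the case $m_{ij}=3$ does not follow verbatim from \cite{GHL} whenever $d_i=d_j$. When $d_i=d_j=2$, Definition~\ref{Def2-7}(b) inserts two vertices, so the GHL sequence must be doubled. This gives six mutations, performed pairwise on the two interleaved halves of each column. More seriously, suppose the braid pair is adjacent to the double bond: $B_n$ or $C_n$ with the pair $\{n-2,n-1\}$, or $F_4$ with $\{1,2\}$ or $\{3,4\}$. Then the local quiver contains configurations with no simply-laced counterpart. One is the $3$-cycles of $\beta$-arrows created by Definition~\ref{Def2-7}(a)(iv) at a letter $i$ of the block having a neighbour $k$ with $c_{ik}=-2$. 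Another is a $d=1$ column attached to a $d=2$ column, which links the two interleaved copies and destroys the reduction to the simply-laced picture. The paper treats these separately as cases (B) and (C) of Proposition~\ref{prp:A2-braid}, each with its own verification. Your closing ``direct check'' would cover this in principle, but it has to be carried out; as stated, the claim that the GHL argument applies unchanged is not correct.
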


\begin{proof}
For $A, D, E$ types, this was proved in \cite[Proposition 3.5]{GHL}, via a reduction to type~$A_2$.
Similarly, for types $B, C, F$, this can be reduced to a verification in rank $2$. For instance, for type $B_2$, we exhibit in \S\ref{subsec-5.2} an explicit sequence of mutations from $\G_{(1,2,1,2),\,(r_1,r_2,r_3,r_4)}$ to $\G_{(2,1,2,1),\,(p_1,p_2,p_3,p_4)}$ for some $(r_1,r_2,r_3,r_4)$ 
and $(p_1,p_2,p_3,p_4)$. 

The detailed proof of Theorem~\ref{Prop2-10} will be given in Section \ref{mutequiv} (and in Appendix \ref{App:G2-braid} for type $G_2$).

\cqfd
\end{proof}

\begin{Rem} 
Alternatively, we have also found a general argument based on some results of \cite{FHOO} which reduces 
this theorem for non simply-laced types to the same statement for simply-laced types. 
For instance, this general argument allows to deduce the statement in type $G_2$ from 
\cite[Proposition 3.5]{GHL} in type $D_4$. We plan to present this general argument in a forthcoming paper.
\end{Rem}

\begin{Def}
\emph{The cluster algebra with defining quiver $\G_{\bi,\br}$ is denoted by $\AA_w$.}  
\end{Def}

In Figure~\ref{Fig8} we display two initial seeds of the cluster algebra $\AA_{w_0}$ in type $B_2$, corresponding, respectively to the two reduced expressions $s_1s_2s_1s_2$ and
$s_2s_1s_2s_1$ of $w_0$.

\section{Stabilized $g$-vectors}\label{sgvect}

In this section we extend to  all Cartan types the results of \cite[\S 4]{GHL} for types $A$, $D$, $E$.
The method of \cite[\S 4]{GHL}, based on Auslander-Reiten quivers combinatorics, does not generalize to the non simply-laced cases, so we use a different approach, which also provides new proofs for the results of \cite[\S 4]{GHL} in types $A$, $D$, $E$. 

\subsection{Fine description of the quivers $\G_{\bi,\br}$} \label{subsec-fine}

We first describe the quivers $\G_{\bi, \br}$ in more detail, introducing in particular
a notation for their arrows as well as some useful combinatorial functions of $(\bi, \br)$. 

The next lemma follows immediately by inspection of the Cartan matrices.


\begin{Lem}  \label{lem:CartanM}
For $i, j \in I$ there holds $|{c_{ij}}| \leq 3$. If $c_{ij} \leq -2$ then $c_{ji} = -1$, $d_i = 1$ and $d_j = -c_{ij}$. 
Conversely, if $b_{ij} < 0$ and $d_i = 1$,  then $c_{ij} = -2d_j$. \cqfd
\end{Lem}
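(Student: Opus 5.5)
The plan is a direct finite verification, organized through the symmetry relation $b_{ij}=b_{ji}$, that is, $d_i c_{ij}=d_j c_{ji}$, so that as little as possible is left to table lookup. First, I will recall the list of connected Dynkin types (the classification of finite-type Cartan matrices). For every pair $i\neq j$ in a given type, the Dynkin diagram fixes the unordered pair $\{c_{ij},c_{ji}\}$: it is $\{0,0\}$, $\{-1,-1\}$, $\{-1,-2\}$ or $\{-1,-3\}$, the last two occurring only in types $B_n,C_n,F_4$ and $G_2$ respectively. This immediately gives $|c_{ij}|\le 3$, since the diagonal entries are $2$.

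For the second assertion, suppose $c_{ij}\le -2$. From the list of unordered pairs, $c_{ji}=-1$. Substituting into $d_i c_{ij}=d_j c_{ji}$ gives $d_j=-d_i c_{ij}=d_i|c_{ij}|$. Thus $d_j\ge 2d_i\ge 2$. By the table for $t$ in \S\ref{ssecCartan}, the ratio $d_j/d_i$ between two symmetrizer entries is at most $t\le 3$, and the minimal symmetrizer takes only the two values $1$ and $t$ in each connected type. Since $d_j>d_i$, this forces $d_i=1$ and $d_j=t$. Reading off $t$ then gives $d_j=-c_{ij}$.

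For the converse, assume $b_{ij}<0$ and $d_i=1$. Then $b_{ij}=c_{ij}<0$, so $i$ and $j$ are adjacent. I will split according to the value of $d_j$.

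\emph{Case $d_j=t$.} When $t>1$, this makes $i$ short and $j$ long, and the edge is the multiple edge. In that situation $c_{ji}=-1$, and the symmetry relation expresses $c_{ij}$ as $d_j c_{ji}/d_i$, that is, as a fixed multiple of $d_j$.

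\emph{Case $d_j=1$.} Here the edge is simple and $c_{ij}=-1$.

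The main obstacle is exactly this last step: matching the multiple of $d_j$ produced by $d_i c_{ij}=d_j c_{ji}$ with the constant $-2d_j$ appearing in the statement. I would check this against the explicit matrices: Example~\ref{example_Cartan} in type $B_3$, where $c_{32}=-2$ and $d_2=2$, and then $C_n$, $F_4$ and $G_2$. In doing so I would keep track of the normalization of $D$ and of whether the relevant entry is $b_{ij}$ or $c_{ij}$. The point is to be sure that the factor $2$ is correctly accounted for in each type before concluding.
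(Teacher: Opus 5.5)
Your treatment of $|c_{ij}|\le 3$ and of the case $c_{ij}\le -2$ is correct. It is essentially the paper's argument, which is just inspection of the Cartan matrices; routing it through $d_ic_{ij}=d_jc_{ji}$ is a clean way to organize that inspection.

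The gap is in the converse, and it is not a bookkeeping issue. The step you postpone (``be sure that the factor $2$ is correctly accounted for'') cannot be completed, because the converse is false as printed, and your own two cases already show this. Suppose $d_i=1$ and $b_{ij}<0$. Then $c_{ji}\le -2$ is excluded by the second assertion with $i,j$ swapped, since it would give $d_i=-c_{ji}\ge 2$. Hence $c_{ji}=-1$, and
$c_{ij}=b_{ij}=b_{ji}=d_jc_{ji}=-d_j$, never $-2d_j$. So every adjacent pair with $d_i=1$ is a counterexample to $c_{ij}=-2d_j$:
\begin{itemize}
\item in type $A_2$, $c_{12}=-1$ while $-2d_2=-2$;
\item in the $B_3$ case of Example~\ref{example_Cartan} that you propose to check, $c_{32}=-2$ while $-2d_2=-4$;
\item in $G_2$ (with $d_1=3$, $d_2=1$), $c_{21}=-3$ while $-2d_1=-6$.
\end{itemize}
Checking the explicit matrices will therefore refute the statement, not confirm it.

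You should instead state and prove the corrected converse: if $b_{ij}<0$ and $d_i=1$, then $c_{ji}=-1$ and $c_{ij}=b_{ij}=-d_j$. Your case split yields exactly this: $c_{ij}=-1=-d_j$ when $d_j=1$, and $c_{ij}=d_jc_{ji}/d_i=-d_j$ when $d_j=t>1$. It is also consistent with the values of $b_{ij}$ used later in the paper, e.g.\ $b_{ji_p}=-2$ when $c_{i_pj}=-2$ in \eqref{eq:2.2b}. The factor $2$ in the printed statement should be flagged as an error rather than reconciled.
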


Recall the basic quiver $\Gamma$ of \S\ref{subsec-basic-quiver}, 
with vertex set $V \subset I \times \Z$ and arrow set $A$. The arrows in $A$ starting at vertex $(i,k)\in V$ will be denoted by:
\[
\alpha_{ji}^{(k)} : (i,k) \to (j, k + b_{ij}),\qquad (b_{ij}\not = 0). 
\]

\begin{Def} \label{def:adapt}
Let $\bi = (i_1, \ldots, i_l)$ be a reduced word for 
$w \in W$. We say that
$\br = (r_1, \ldots, r_l) \in \Z^l$ is \emph{adapted} to  $\bi$
if the following holds:
\begin{eqnarray}
(i_{j}, r_j) \in V, && (1 \le j \le l), \label{eq:1.1a} \\
r_p \leq r_m + b_{i_m,i_p}, && (1 \le m < p \le l),  \label{eq:1.1b}\\
r_p \leq r_m +2b_{i_m,i_p}, && \mbox{ if moreover } c_{i_p, i_m}\leq -2. \label{eq:1.1c} 
\end{eqnarray}
\end{Def}
Note that 
Equation (\ref{eq:1.1c}) becomes redundant for pairs $(m,p)$ such that there exists $m'$
with $i_{m'}=i_m$ and $m<m'<p$. 
In the sequel we will always assume that $\br$ is adapted to $\bi$ without further mention.

\begin{example}\label{exa-3-3}
Choose the reduced word $\bi = (2,1,2,1)$ for $w_0$ in type $B_2$.
Then $\br = (4,2,-2,-4)$ is adapted to $\bi$, with minimal descending intervals.
This is illustrated in Figure~\ref{fig:1-3}.
The vertices $(i_k,r_k)\ (1\le k\le 4)$ appear in boldface in the left quiver $\Gamma$.

\end{example}

The following notation will be useful later. For $1\le p \le l$ and $j\in I$ such that $j\not = i_p$,
we put:
\[
 m^-(j,p) := \max\{1\le h<p \mid i_h = j\},
\]
if this set is nonempty, and $m^-(j,p) = 0$ otherwise.

\subsubsection{Vertices of $\G_{\bi,\br}$} \label{sub-sec-311}

It follows from Definition~\ref{Def2-7} that the vertices of the quivers $\Gamma_{\bi,\br}$
and $\Gamma$ are labelled by the same set $V\subset I\times \Z$. This is because 
after inserting a new vertex $*$ in position $(i,k)$, we rename $*$ as $(i,k-2d_i)$ and shift accordingly all 
the second components of the vertices below it.
This can be observed in Figure~\ref{fig:1-3}. To keep track of all these shifts, it will be convenient
to introduce the following combinatorial functions.
For $(i,r)\in V$, we set: 
\begin{eqnarray*}
\sigma(i, r) &:=& 2 d_i \cdot |\{1 \leq j \leq l \mid i_j = i \mbox{ and } r_j \geq r \}|,
\\
\tau(i, r) &:=& \sigma(i, r + 2 d_i).
\end{eqnarray*}
Note that if $(i,r) = (i_m,r_m)$ for some $1\le m \le l$, then 
$\sigma(i, r) = \tau(i,r) + 2d_i$.
For  $1\leq m\leq l$ we also set:
\begin{eqnarray*}
\rho_m &:=& r_m - \sigma(i_m, r_m),
\\
m^+ &:=& 
\left\{
\begin{array}{cl}
\min \{k > m \mid i_k = i_m \} & \mbox{if } \{k > m \mid i_k = i_m \}  \neq \emptyset, \\
\infty & \mbox{otherwise},
\end{array}
\right.
\\
\rho_m^+ &:=& 
\left\{
\begin{array}{cl}
\rho_{m^+} & \mbox{if } m^+ \leq l, \\
-\infty & \mbox{if } m^+ = \infty.
\end{array}
\right.
\end{eqnarray*}
We will see below that the pairs $(i_m,\rho_m)$ label the target vertices of (some of)
the vertical down arrows of $\G_{\bi,\br}$ corresponding to the components $i_m$ of the reduced word $\bi$.

For $1\le m \le l$, we introduce certain subsets $V_m$ and $V^-_m$ of the vertex set $V$ as follows.
Note that $\rho_m^+ \leq \rho_m$. We define:
\begin{eqnarray*}
V_m &:=& \{ (i_m, k) \in V \mid \rho_m^+ < k \leq \rho_m \},\\[2mm]
V_m^- &:=& V_m\setminus\{(i_m,\rho_m-2j)\mid 1\leq j\leq d_{i_m}\}. 
\end{eqnarray*}
Note that $|V_m| = \infty$ if and only if $m^+ = \infty$. 
We also set $V_0 := V\setminus (V_1\cup V_2\cup\cdots\cup V_l)$.

\begin{example}
We continue Example~\ref{exa-3-3}. In this case we have (see Figure~\ref{fig:1-3}):
\[
\begin{array}{ll}
 V_1 = \{(2,2), (2,0), (2,-2), (2,-4)\}, & V_2 = \{(1,-2), (1,-4), (1,-6), (1,-8), (1,-10)\},
\\[2mm]
V_3 = \{(2,k)\mid k \le -6\}, & V_4 = \{(1,k)\mid k \le -12\}.  
\end{array}
\]
\end{example}

\begin{figure}[ht]
\[\scalebox{0.75}{\begin{tikzcd}[cramped]
	&& {(2,6)} && {(1,6)} &&&&& {(2,6)} && \\
	&&&&&&&&&&& {(1,6)} \\
	{(1,4)} && {\mathbf{(2,4)}} &&&&&&& \textcolor{rgb,255:red,255;green,5;blue,9}{{(2,4)}} \\
	&&&&&&& {(1,4)} \\
	&& {(2,2)} && {\mathbf{(1,2)}} &&&&& \textcolor{rgb,255:red,2;green,202;blue,15}{{(2,2)}} \\
	&&&&&&&&&&& \textcolor{rgb,255:red,255;green,5;blue,9}{{(1,2)}} \\
	{(1,0)} && {(2,0)} &&&&&&& {(2,0)} \\
	&&&&&&& \textcolor{rgb,255:red,255;green,5;blue,9}{{(1,0)}} &&&& \textcolor{rgb,255:red,2;green,202;blue,15}{{(1,-2)}} \\
	&& {\mathbf{(2,-2)}} && {(1,-2)} &&&&& {(2,-2)} \\
	&&&&&&& \textcolor{rgb,255:red,2;green,202;blue,15}{{(1,-4)}} \\
	{\mathbf{(1,-4)}} && {(2,-4)} &&&&&&& \textcolor{rgb,255:red,255;green,5;blue,9}{{(2,-4)}} \\
	&&&&&&&&&&& {(1,-6)} \\
	&& {(2,-6)} && {(1,-6)} &&&&& \textcolor{rgb,255:red,2;green,202;blue,15}{{(2,-6)}} \\
	&&&&&&& \textcolor{rgb,255:red,255;green,5;blue,9}{{(1,-8)}} \\
	{(1,-8)} && {(2,-8)} &&&&&&& {(2,-8)} \\
	&&&&&&& \textcolor{rgb,255:red,2;green,202;blue,15}{{(1,-12)}} &&&& \textcolor{rgb,255:red,255;green,5;blue,9}{{(1,-10)}} \\
	&&&&&&&&& {(2,-10)} \\
	&&&&&&&&&&& \textcolor{rgb,255:red,2;green,202;blue,15}{{(1,-14)}} \\
	&&&&&&& {(1,-16)} && {(2,-12)}
	\arrow["{\alpha_{1 2}^{(6)}}"{description}, from=1-3, to=3-1]
	\arrow["{\alpha_{2 1}^{(6)}}"{description}, from=1-5, to=3-3]
	\arrow["{\alpha_{1 2}^{(6)}}"{description}, from=1-10, to=4-8]
	\arrow["{\alpha_{2 1}^{(6)}}"{description}, from=2-12, to=3-10]
	\arrow["{\alpha_{2 1}^{(4)}}"{description}, from=3-1, to=5-3]
	\arrow["{\alpha_{2 2}^{(4)}}"{description}, from=3-3, to=1-3]
	\arrow["{\alpha_{1 2}^{(4)}}"{description}, from=3-3, to=5-5]
	\arrow["{\alpha_{2 2}^{(4)}}"{description}, from=3-10, to=1-10]
	\arrow["{\beta_{2 2}^{(3)}}"{description}, color={rgb,255:red,51;green,54;blue,255}, from=3-10, to=5-10]
	\arrow["{\beta_{2 1}^{(5)}}"{description}, color={rgb,255:red,51;green,54;blue,255}, from=4-8, to=3-10]
	\arrow["{\alpha_{2 1}^{(4)}}"{description}, from=4-8, to=7-10]
	\arrow["{\alpha_{2 2}^{(2)}}"{description}, from=5-3, to=3-3]
	\arrow["{\alpha_{1 2}^{(2)}}"{description}, from=5-3, to=7-1]
	\arrow["{\alpha_{1 1}^{(2)}}"{description}, from=5-5, to=1-5]
	\arrow["{\alpha_{2 1}^{(2)}}"{description}, from=5-5, to=7-3]
	\arrow["{\beta_{1 2}^{(5)}}"{description}, color={rgb,255:red,51;green,54;blue,255}, from=5-10, to=4-8]
	\arrow["{\alpha_{1 2}^{(4)}}"{description}, from=5-10, to=6-12]
	\arrow["{\alpha_{1 1}^{(2)}}"{description}, from=6-12, to=2-12]
	\arrow["{\beta_{1 1}^{(0)}}"{description}, color={rgb,255:red,51;green,54;blue,255}, from=6-12, to=8-12]
	\arrow["{\alpha_{1 1}^{(0)}}"{description}, from=7-1, to=3-1]
	\arrow["{\alpha_{2 1}^{(0)}}"{description}, from=7-1, to=9-3]
	\arrow["{\alpha_{2 2}^{(0)}}"{description}, from=7-3, to=5-3]
	\arrow["{\alpha_{1 2}^{(0)}}"{description}, from=7-3, to=9-5]
	\arrow["{\alpha_{2 2}^{(2)}}"{description}, from=7-10, to=5-10]
	\arrow["{\alpha_{1 2}^{(2)}}"{description}, from=7-10, to=8-8]
	\arrow["{\alpha_{1 1}^{(0)}}"{description}, from=8-8, to=4-8]
	\arrow["{\beta_{1 1}^{(-2)}}"{description}, color={rgb,255:red,51;green,54;blue,255}, from=8-8, to=10-8]
	\arrow["{\alpha_{2 1}^{(2)}}"{description}, from=8-12, to=9-10]
	\arrow["{\alpha_{2 2}^{(-2)}}"{description}, from=9-3, to=7-3]
	\arrow["{\alpha_{1 2}^{(-2)}}"{description}, from=9-3, to=11-1]
	\arrow["{\alpha_{1 1}^{(-2)}}"{description}, from=9-5, to=5-5]
	\arrow["{\alpha_{2 1}^{(-2)}}"{description}, from=9-5, to=11-3]
	\arrow["{\alpha_{2 2}^{(0)}}"{description}, from=9-10, to=7-10]
	\arrow["{\alpha_{1 2}^{(0)}}"{description}, from=9-10, to=12-12]
	\arrow["{\alpha_{2 1}^{(0)}}"{description}, from=10-8, to=11-10]
	\arrow["{\alpha_{1 1}^{(-4)}}"{description}, from=11-1, to=7-1]
	\arrow["{\alpha_{2 1}^{(-4)}}"{description}, from=11-1, to=13-3]
	\arrow["{\alpha_{2 2}^{(-4)}}"{description}, from=11-3, to=9-3]
	\arrow["{\alpha_{1 2}^{(-4)}}"{description}, from=11-3, to=13-5]
	\arrow["{\alpha_{2 2}^{(-2)}}"{description}, from=11-10, to=9-10]
	\arrow["{\beta_{2 2}^{(-3)}}"{description}, color={rgb,255:red,51;green,54;blue,255}, from=11-10, to=13-10]
	\arrow["{\alpha_{1 1}^{(-2)}}"{description}, from=12-12, to=8-12]
	\arrow["{\beta_{2 1}^{(-1)}}"{description}, color={rgb,255:red,51;green,54;blue,255}, from=12-12, to=11-10]
	\arrow["{\alpha_{2 1}^{(-2)}}"{description}, from=12-12, to=15-10]
	\arrow["{\alpha_{2 2}^{(-6)}}"{description}, from=13-3, to=11-3]
	\arrow["{\alpha_{1 2}^{(-6)}}"{description}, from=13-3, to=15-1]
	\arrow["{\alpha_{1 1}^{(-6)}}"{description}, from=13-5, to=9-5]
	\arrow["{\alpha_{2 1}^{(-6)}}"{description}, from=13-5, to=15-3]
	\arrow["{\beta_{1 2}^{(-1)}}"{description}, color={rgb,255:red,51;green,54;blue,255}, from=13-10, to=12-12]
	\arrow["{\alpha_{1 2}^{(-2)}}"{description}, from=13-10, to=14-8]
	\arrow["{\alpha_{1 1}^{(-4)}}"{description}, from=14-8, to=10-8]
	\arrow["{\beta_{1 1}^{(-6)}}"{description}, color={rgb,255:red,51;green,54;blue,255}, from=14-8, to=16-8]
	\arrow["{\alpha_{1 1}^{(-8)}}"{description}, from=15-1, to=11-1]
	\arrow["{\alpha_{2 2}^{(-8)}}"{description}, from=15-3, to=13-3]
	\arrow["{\alpha_{2 2}^{(-4)}}"{description}, from=15-10, to=13-10]
	\arrow["{\alpha_{1 2}^{(-4)}}"{description}, from=15-10, to=16-12]
	\arrow["{\alpha_{2 1}^{(-4)}}"{description}, from=16-8, to=17-10]
	\arrow["{\alpha_{1 1}^{(-6)}}"{description}, from=16-12, to=12-12]
	\arrow["{\beta_{1 1}^{(-8)}}"{description}, color={rgb,255:red,51;green,54;blue,255}, from=16-12, to=18-12]
	\arrow["{\alpha_{2 2}^{(-6)}}"{description}, from=17-10, to=15-10]
	\arrow["{\alpha_{1 2}^{(-6)}}"{description}, from=17-10, to=19-8]
	\arrow["{\alpha_{21}^{(-6)}}"{description}, from=18-12, to=19-10]
	\arrow[from=19-8, to=16-8]
	\arrow["{\alpha_{2 2}^{(-8)}}"{description}, from=19-10, to=17-10]
\end{tikzcd}
}\]    
    \caption{Basic quiver $\Gamma$ and $\Gamma_{(2,1,2,1), (4,2,-2,-4)}$ in type $B_2$}
    \label{fig:1-3}
\end{figure}

\FloatBarrier

\subsubsection{Arrows of $\G_{\bi,\br}$}

We will now give names to the arrows of $\G_{\bi,\br}$. There will be two types of arrow denoted by the letters
$\alpha_{ji}^{(k)}$ and $\beta_{ji}^{(k)}$. The $\alpha$ arrows are the ones coming from the basic quiver $\G$. 
They will be labelled by the same set of triples $(i,j,k)$ as in $\G$, but note that because of the shifts, their
starting vertex will not necessarily be $(i,k)$. The $\beta$ arrows are the new arrows inserted following Definition~\ref{Def2-7}. More precisely, the set of $\alpha$ arrows of $\G_{\bi,\br}$ is going to be
\[
 \left\{ \alpha_{ji}^{(k)}\mid ((i, k),j) \in V\times I \mbox{ and } b_{ij}\neq 0 \right\},
\]
the set of vertical $\beta$ arrows will be
\[
 \left\{ \beta_{i_p i_p}^{ (r_p- d_{i_p}-2k)} \mid 1 \leq p \leq l,\ {0\leq k\leq d_{i_p}-1} \right\}
\]
and the set of oblique $\beta$ arrows will be
\[
\begin{array}{ll}  
&\hspace{-3cm} \left\{ \beta_{ji_p}^{ (r_p+1)},\ \beta_{i_pj}^{(r_p+1)}\mid 
j\in I,\  1\leq p \leq l, c_{i_p,j}=-2  \right\}\ \\[.5ex]
 \bigcup\ &\left\{ \beta_{j i_p}^{(r_p+h)}, \beta_{i_p j}^{(r_p+h+1)} \mid j\in I,\ 1\leq p\leq l,\ h\in\{0,2\},\ c_{i_p,j}=-3  \right\}.
\end{array}
\]
We now define the source and target of these arrows. For an arrow $\gamma$, we denote by $s(\gamma)$ 
its source and $t(\gamma)$ its target. It will be convenient to abbreviate $\sigma'(i,r):=r-\sigma(i,r)$ and $\tau'(i,r)= r-\tau(i,r)$.
For $(i,r)\in V$, and for $j\in I$ with $b_{ij}<0$, we put
for the $\alpha$-arrows:
\[
\begin{array}{ll}
s\left(\alp_{ii}^{(r)}\right) :=(i,  \tau'(i,r)),\quad  
&t\left(\alp_{ii}^{(r)}\right) := (i,  b_{ii}\!+\!\tau'(i,r)), \\[3mm]   
s\left(\alp_{ji}^{(r)}\right) :=(i, \sig'(i,r)),\quad           
&t\left(\alp_{ji}^{(r)}\right) :=(j, \tau'(j,r+b_{ij})).
\end{array}
\]
Next, for $1\leq p\leq l$, we deal with the vertical  $\beta$-arrows:
\[
s\left(\bet_{i_p i_p}^{(r_p-d_{i_p}-2k)}\right):= (i_p, \rho_p + 2d_{i_p}-2k),\quad  
t\left( \bet_{i_p i_p}^{(r_p-d_{i_p}-2k)}\right):=(i_p, \rho_p-2k),\quad( 0\leq k\leq d_{i_p}-1).
\]
Finally, for $j\in I$ and $1\leq p\leq l$ we  deal with the
oblique $\beta$-arrows. Those arrows occur only when 
$c_{i_p j}\leq-2$. Recall, that in this case $d_{i_p}=1$ by Lemma~\ref{lem:CartanM}. In case $c_{i_p,j}=-3$ the additional parameter $h\in\{0,2\}$ is needed:
\[
\begin{array}{lll}

s\left(\bet_{j i_p}^{(r_p+1)}\right):= (i_p, \rho_p), \quad
&t\left(\bet_{j i_p}^{(r_p+1)}\right):=(j, \tau'(j,r_p)), &( c_{i_pj}=-2),\\[.5ex]
s\left(\bet_{i_p j}^{(r_p+1)}\right):= (j, \tau'(j,r_p)), \quad 
&t\left(\bet_{i_p j}^{(r_p+1)}\right):= (i_p, \rho_p + 2),  &(c_{i_pj}=-2),\\[.5ex]
s\left(\beta_{j i_p}^{(r_p+h)}\right) := (i_p, \rho_p), \quad
& t\left(\beta_{j i_p}^{(r_p+h)}\right) := (j, \tau'(j, r_p+h-1)), & (c_{i_p,j}=-3),\\[.5ex]
s\left(\beta_{i_p j}^{(r_p+h+1)}\right) :=  (j, \tau'(j, r_p+h-1))  , \quad
& t\left(\beta_{i_p j}^{(r_p+h+1)}\right) := (i_p, \rho_p+2), & (c_{i_p,j}=-3).\\
\end{array}
\]
These definitions are illustrated in Figures \ref{fig:1-1ab}, \ref{fig:1-1c} and~\ref{fig:1-1d}.
Note that, for $c_{i_pj}=-2$ we have by our definitions: 
\[
\begin{array}{ll}
s\left(\bet_{ji_p}^{(r_p+1)}\right) = t\left(\bet_{i_pi_p}^{(r_p-1)}\right),\qquad &
t\left(\bet_{ji_p}^{(r_p+1)}\right)= s\left(\alp_{jj}^{(r_p)}\right),\\[3mm]
s\left(\bet_{i_pj}^{(r_p+1)}\right) = t\left(\bet_{ji_p}^{(r_p+1)}\right), &
t\left(\bet_{i_pj}^{(r_p+1)}\right) = s\left(\bet_{i_pi_p}^{(r_p-1)}\right).
\end{array}
\]
Similarly, for $c_{i_p,j}=-3$ and $h\in\{0,2\}$ we have:
\[
\begin{array}{ll}
s\left(\bet_{ji_p}^{(r_p+h)}\right) = t\left(\bet_{i_pi_p}^{(r_p-1)}\right),\qquad &
t\left(\bet_{ji_p}^{(r_p+h)}\right)= s\left(\alp_{jj}^{(r_p+h-1)}\right),\\[3mm]
s\left(\bet_{i_pj}^{(r_p+h+1)}\right) = t\left(\bet_{ji_p}^{(r_p+h)}\right), &
t\left(\bet_{i_pj}^{(r_p+h+1)}\right) = s\left(\bet_{i_pi_p}^{(r_p-1)}\right).
\end{array}
\]
In particular, for $c_{i_p,j}=-2$ the oblique arrows
$\beta_{j i_p}^{r_p+1}$ and $\beta_{i_p j}^{(r_p+1)}$ 
(\resp for $c_{i_p,j}=-3$ and $h\in\{0,2\}$ the oblique arrows
$\beta_{j i_p}^{r_p+h}$ and $\beta_{i_p j}^{(r_p+h+1)}$) together
with the vertical arrow $\beta_{i_p i_p}^{(r_p-1)}$ form a 3-cycle.

\begin{Prop} \label{prop:Gam-basic}
Let $j\in I$ and $1\le p \le l$ be such that $b_{j {i_p}}<0$. Set $m=m^-(j,p)$. 
\begin{itemize}
\item[(a)]
We have $s\left(\alp_{j i_p}^{(r_p-2k)}\right)=(i_p,\rho_p-2k)
=t\left(\bet_{i_pi_p}^{(r_p-d_{i_p}-2k)}\right)=t\left(\alp_{i_pi_p}^{(r_p-2d_{i_p}-2k)}\right)$
and\\ 
$t\left(\alp_{ji_p}^{(r_p-2k)}\right)=(j,\tau'(r_p)-2k)\in V_m^-$ for $0\leq k\leq d_{i_p}-1$.
\item[(b)]
If $c_{i_pj}=-2$ we have 
$t\left(\bet_{ji_p}^{(r_p+1)}\right)= s\left( \alp_{j j}^{(r_p)}\right)  =s\left( \alp_{i_p j}^{(r_p)}\right)\in V_m^-$, and
$t\left(\alp_{jj}^{(r_p)}\right)\in V_m$.
\item[(c)] If $c_{i_pj}=-3$ we have 
$t\left(\bet_{ji_p}^{(r_p+h)}\right)= s\left(\alp_{j j}^{(r_p+h-1)}\right)=s\left(\alp_{i_p j}^{(r_p+h-1)}\right)\in V_m^-$, and\\
${t\left(\alp_{jj}^{(r_p+h-1)}\right)\in V_m}$ for $h\in\{0,2\}$.
\end{itemize}
\end{Prop}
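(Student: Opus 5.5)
The proposition is purely combinatorial, so I would prove it by unwinding the definitions of $s$ and $t$ for the $\alpha$- and $\beta$-arrows in terms of the counting functions $\sigma$ and $\tau$, and comparing with the definitions of $\rho_p$, $V_m$ and $V_m^-$. Write $i=i_p$, $d=d_{i}$.

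\textbf{Part (a), source of $\alpha_{ji}^{(r_p-2k)}$.} By definition this source is $(i,\sigma'(i,r_p-2k))$. For $0\le k\le d-1$, no index $h$ with $i_h=i$ has $r_p-2k\le r_h<r_p$. Indeed, for $h<p$ we have $r_h\ge r_p+2d$: by (\ref{eq:1.1b}), or by the fact that $(i,r_h),(i,r_p)$ both lie in $V$, so their second coordinates are congruent mod $2d$. For $h>p$ we have $r_h<r_p$, and the same congruence gives $r_h\le r_p-2d$. Hence $\sigma(i,r_p-2k)=\sigma(i,r_p)$, and the source equals $(i,\rho_p-2k)$.

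\textbf{Part (a), the two other descriptions of $(i,\rho_p-2k)$.} This is the target of $\beta_{ii}^{(r_p-d-2k)}$ directly from its definition. The target of $\alpha_{ii}^{(r_p-2d-2k)}$ is $(i,2d+\tau'(i,r_p-2d-2k))$. Since $\tau(i,r_p-2d-2k)=\sigma(i,r_p-2k)$, this also equals $(i,\rho_p-2k)$.

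\textbf{Part (a), the target of $\alpha_{ji}^{(r_p-2k)}$.} It is $(j,\tau'(j,r_p-2k+b_{ij}))$, which should read $(j,r_p+b_{ij}-2k-\tau(j,r_p+b_{ij}))$; the statement's ``$\tau'(r_p)$'' is shorthand for this. The key point is to count the entries $h$ with $i_h=j$ and $r_h\ge r_p+b_{ij}-2k+2d_j$, and show these are exactly the $h\le m=m^-(j,p)$. For $h<p$ this follows from (\ref{eq:1.1b}) and (\ref{eq:1.1c}). For $h>p$, I would need $r_h<r_p+b_{ij}-2k+2d_j$, which should follow from $r_h\le r_p+b_{ij}$ together with the parity constraints coming from membership in $V$.

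Having this count, $\tau(j,\cdot)=\sigma(j,r_m)$, so the second coordinate can be written as $\rho_m$ minus an explicit offset $r_m-(r_p+b_{ij}-2k)$. This offset is at least $2d_j$ plus something, which gives the upper bound needed for $V_m^-$. For the lower bound $>\rho_m^+$, note that $m^+>p$ by maximality of $m$, so the same counting applies with $m^+$ and (\ref{eq:1.1b}) for the pair $(p,m^+)$. If $m=0$, then $V_m$ is interpreted inside $V_0$, and the argument reduces to $\tau=0$.

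\textbf{Parts (b) and (c).} Here $d_i=1$ and $d_j=-c_{ij}$ by Lemma~\ref{lem:CartanM}. The equalities $t(\beta_{ji}^{(r_p+1)})=s(\alpha_{jj}^{(r_p)})$ and the $c=-3$ analogues were already recorded after the definitions. The further equality $s(\alpha_{jj}^{(r)})=s(\alpha_{ij}^{(r)})$ amounts to $\tau'(j,r)=\sigma'(j,r)$, which holds because no $r_h$ with $i_h=j$ equals $r$ when $r=r_p$ or $r_p\pm1$; this is again by congruences. Membership of the source in $V_m^-$ and of $t(\alpha_{jj})=(j,\tau'(j,r)+2d_j)$ in $V_m$ then uses the same counting as in (a), now invoking the sharper inequality (\ref{eq:1.1c}). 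The condition $c_{i_p j}\le-2$ is exactly what that inequality requires.

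\textbf{Main obstacle.} I expect the delicate step to be the inequalities placing the targets in $V_m^-$ rather than merely in $V_m$, that is, avoiding the top $d_j$ positions $\rho_m-2,\dots,\rho_m-2d_j$. This requires combining (\ref{eq:1.1b}) and (\ref{eq:1.1c}) with the residues mod $2d_j$ of the vertices in $V$, case by case according to $(d_i,d_j)\in\{(1,1),(2,2),(3,3),(1,2),(2,1),(1,3),(3,1)\}$. I would handle these by first writing the general offset formula and then checking each case.
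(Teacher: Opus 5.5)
Your plan follows the paper's route. The paper's proof reduces everything to the remark that when $r+2d_j$ lies between $r_{m^+}$ and $r_m$, one has $\tau(j,r)=\sigma(j,r_m)$, so $(j,\tau'(j,r))=(j,\rho_m-(r_m-r))$ lies in $V_m$ below its green vertices; your ``offset'' computation is exactly this. The gaps are in how you obtain the window inequalities.

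\textbf{(1) The congruence claim is false.} Second coordinates in a column of $V$ are not congruent mod $2d_i$: each column consists of all coordinates of one parity. In Example~\ref{exa-3-3}, $i_2=i_4=1$, $d_1=2$ and $r_2-r_4=6$. Also (\ref{eq:1.1b}) is vacuous when $i_m=i_p$, since $b_{ii}>0$. The separation $r_{h'}\le r_h-2d_i$ for occurrences $h<h'$ of the same letter $i$, which you need for $\sigma(i_p,r_p-2k)=\sigma(i_p,r_p)$, comes from reducedness. Some letter $j'$ adjacent to $i$ occurs in between, and (\ref{eq:1.1b}) applied twice gives $r_{h'}\le r_h-2|b_{ij'}|\le r_h-2d_i$.

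\textbf{(2) Part (a), $h>p$, fails as stated when $d_{i_p}>d_j$.} For $h>p$ with $i_h=j$ you need $r_h<r_p+b_{i_pj}-2k+2d_j$ for all $k\le d_{i_p}-1$. But (\ref{eq:1.1b}) only gives $r_h\le r_p+b_{i_pj}$, which suffices iff $d_{i_p}\le d_j$. If $d_{i_p}>d_j$, parity cannot rule out $r_h=r_p+b_{i_pj}$, since both are coordinates in column $j$. What is needed is (\ref{eq:1.1c}) for the pair $(p,h)$, which applies because $c_{ji_p}\le-2$. Without it, e.g.\ with $r_{m^+}=r_p-2$ in type $B_2$ and $i_p$ long, the target of $\alpha_{ji_p}^{(r_p-2)}$ is $(j,\rho_{m^+}-2)\in V_{m^+}$. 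Your lower bound $>\rho_m^+$ is the same statement and has the same gap. Conversely, for $h<p$ in (a), (\ref{eq:1.1b}) alone suffices because $|b_{i_pj}|\ge d_j$, so (\ref{eq:1.1c}) is not needed there.

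\textbf{(3) Parts (b),(c) need a whole window excluded.} We have $\sigma(j,r)-\tau(j,r)=2d_j\,|\{h:\ i_h=j,\ r\le r_h<r+2d_j\}|$, so you must exclude a whole window, not just $r_h=r$. For $h<p$ this is (\ref{eq:1.1c}) for the pair $(h,p)$, giving $r_h\ge r_p+2d_j$ and hence, by parity, $r_h\ge r+2d_j$. It is not a congruence.

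\textbf{(4) Your ``main obstacle'' aims at a false target.} Read literally, $V_m^-$ removes $(i_m,\rho_m-2),\dots,(i_m,\rho_m-2d_{i_m})$, but these positions do occur as targets.
\begin{itemize}
\item For $\bi=(1,2,1)$, $\br=(4,3,2)$ in type $A_2$, one gets $t(\alpha_{12}^{(3)})=(1,0)=(1,\rho_1-2)$.
\item In Figure~\ref{fig:1-3}, $t(\beta_{12}^{(-1)})=(1,-6)=(1,\rho_2-4)$ with $d_1=2$.
\end{itemize}
The intended $V_m^-$ is $V_m$ minus its green vertices $(i_m,\rho_m-2k')$, $0\le k'\le d_{i_m}-1$. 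What one actually proves is that the second coordinate is at most $\rho_m-2d_j$, i.e.\ $r+2d_j\le r_m$, and equality can occur. So the offset is ``at least $2d_j$'', not ``$2d_j$ plus something''. With the corrected inequalities above this is immediate, and no case-by-case analysis over $(d_i,d_j)$ is needed.
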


\begin{figure}[t]
\[\scalebox{.9}{
\begin{tikzcd}[cramped]
	{\scriptstyle{(i_p,\rho_p+4d_{i_p}-2k)}} &&& \\
	&&& {\scriptstyle{(j,\sigma'(j,r_p-b_{i_p,j}-2k))}} \\
	\\
	\textcolor{rgb,255:red,255;green,0;blue,4}{{\scriptstyle{(i_p,\rho_p+2d_{i_p}-2k)}}} \\
	&&& \bullet \\
	&&& \bullet \\
	\textcolor{rgb,255:red,0;green,199;blue,7}{{\scriptstyle{(i_p,\rho_p-2k)}}} \\
	\\
	&&& {\scriptstyle{(j,\tau'(j,r_p+b_{i_p,j}-2k))}} \\
	{\scriptstyle{(i_p,\rho_p-2d_{i_p}-2k)}}
	\arrow["{\alpha_{i_p,j}^{(r_p-b_{i_pj}-2k)}}"{description}, from=2-4, to=4-1]
	\arrow["{\alpha_{i_p i_p}^{(r_p-2k)}}"{description}, from=4-1, to=1-1]
	\arrow["{\beta_{i_p i_p}^{(r_p-d_{i_p}-2k)}}"{description}, color={rgb,255:red,0;green,17;blue,255}, from=4-1, to=7-1]
	\arrow["{\alpha_{j,j}^{(r_p-b_{i_pj}-2k)}}"{description}, from=5-4, to=2-4]
	\arrow["{\alpha_{j i_p}^{(r_p-2k)}}"{description}, from=7-1, to=9-4]
	\arrow["{\alpha_{j j}^{(r_p+b_{ji_p}-2k)}}"{description}, from=9-4, to=6-4]
	\arrow["{\alpha_{i_pi_p}^{(r_p-2d_{i_p}-2k)}}"{description}, from=10-1, to=7-1]
\end{tikzcd}
}\]
    \caption{ Local configuration around $\bet_{i_pi_p}^{(r_p-d_{i_p}-2k)}$ 
  for $c_{i_p,j}=-1$ and $0\leq k\leq d_{i_p}-1$}
    \label{fig:1-1ab}
\end{figure}

\begin{figure}[ht]
\[\scalebox{1.0}{
\begin{tikzcd}[cramped,sep=scriptsize]
	\bullet &&&& \\
	&& {\scriptstyle{(i_p,\rho_p+4)}} \\
	&&&& {\scriptstyle(j,\sigma'(j,r_p+2))} \\
	&& \textcolor{rgb,255:red,255;green,0;blue,4}{{\scriptstyle{(i_p,\rho_p+2)}}} \\
	{\scriptstyle{(j,\sigma'(j,r_p))}} \\
	&& \textcolor{rgb,255:red,0;green,204;blue,3}{{\scriptstyle{(i_p,\rho_p)}}} \\
	&&&& {\scriptstyle(j,\tau'(j,r_p-2))} \\
	&& {\scriptstyle{(i_p,\rho_p-2)}} \\
	\bullet
	\arrow[from=1-1, to=2-3]
	\arrow["{\alpha_{j i_p}^{(r_p+2)}}"{description}, from=2-3, to=5-1]
	\arrow["{\alpha_{i_p j}^{(r_p+2)}}"{description}, from=3-5, to=4-3]
	\arrow["{\alpha_{i_p i_p}^{(r_p)}}"{description}, from=4-3, to=2-3]
	\arrow["{\beta_{i_p i_p}^{(r_p-1)}}"{description}, color={rgb,255:red,4;green,0;blue,255}, from=4-3, to=6-3]
	\arrow["{\alpha_{j j}^{(r_p)}}"{description}, from=5-1, to=1-1]
	\arrow["{\beta_{i_p j}^{(r_p+1)}}"{description}, color={rgb,255:red,4;green,0;blue,255}, from=5-1, to=4-3]
	\arrow["{\alpha_{i_p j}^{(r_p)}}"{description}, from=5-1, to=8-3]
	\arrow["{\beta_{j i_p}^{(r_p+1)}}"{description}, color={rgb,255:red,4;green,0;blue,255}, from=6-3, to=5-1]
	\arrow["{\alpha_{j i_p}^{(r_p)}}"{description}, from=6-3, to=7-5]
	\arrow["{\alpha_{j j}^{(r_p-2)}}"{description}, from=7-5, to=3-5]
	\arrow["{\alpha_{i_p i_p}^{(r_p-2)}}"{description}, from=8-3, to=6-3]
	\arrow[from=8-3, to=9-1]
	\arrow["{\alpha_{jj}^{(r_p-4)}}"{description}, from=9-1, to=5-1]
\end{tikzcd}
}\]
\caption{Local configuration around $\bet_{i_pi_p}^{(r_p-1)}$ in case $c_{i_p j}=-2$}
    \label{fig:1-1c}
\end{figure}

\begin{figure}[ht]
\[\scalebox{0.9}{\begin{tikzcd}[cramped]
	&&&& \bullet & \\
	&& {\scriptstyle(i_p,\rho_p+2)} \\
	\bullet \\
	&& {\scriptstyle(i_p,\rho_p+4)} &&& {\scriptstyle(j,\sigma'(r_p+3))} \\
	\\
	&& \textcolor{rgb,255:red,255;green,0;blue,8}{{\scriptstyle(i_p,\rho_p+2)}} && {\scriptstyle(j,\sigma'(j,r_p+1))} \\
	\\
	{\scriptstyle(j,\sigma'(j,r_p-1))} && \textcolor{rgb,255:red,0;green,204;blue,14}{{\scriptstyle(i_p,\rho_p)}} \\
	\\
	&& {\scriptstyle(i_p,\rho_p-2)} &&& {\scriptstyle(j,\tau'(r_p-3))} \\
	&&&& \bullet \\
	&& {\scriptstyle(i_p,\rho_p-4)} \\
	\bullet
	\arrow[from=1-5, to=2-3]
	\arrow["{\alpha_{j i_p}^{(r_p+4)}}"{description}, from=2-3, to=6-5]
	\arrow[from=3-1, to=4-3]
	\arrow["{\alpha_{i_p i_p}^{(r_p+2)}}"{description}, from=4-3, to=2-3]
	\arrow["{\alpha_{j i_p}^{(r_p+2)}}"{description, pos=0.4}, from=4-3, to=8-1]
	\arrow["{\alpha_{i_pj}^{(r_p+3)}}"{description, pos=0.2}, from=4-6, to=6-3]
	\arrow["{\alpha_{i_p i_p}^{(r_p)}}"{description}, from=6-3, to=4-3]
	\arrow["{\beta_{i_pi_p}^{(r_p-1)}}"{description}, color={rgb,255:red,5;green,9;blue,255}, from=6-3, to=8-3]
	\arrow["{\alpha_{jj}^{(r_p+1)}}"{description}, from=6-5, to=1-5]
	\arrow["{\beta_{i_p j}^{(r_p+3)}}"{description}, color={rgb,255:red,5;green,9;blue,255}, from=6-5, to=6-3]
	\arrow["{\alpha_{i_p j}^{(r_p+1)}}"{description}, from=6-5, to=10-3]
	\arrow["{\alpha_{jj}^{(r_p-1)}}"{description}, from=8-1, to=3-1]
	\arrow["{\beta_{i_p j}^{(r_p+1)}}"{description, pos=0.6}, color={rgb,255:red,5;green,9;blue,255}, from=8-1, to=6-3]
	\arrow["{\alpha_{i_p j}^{(r_p-1)}}"{description, pos=0.6}, from=8-1, to=12-3]
	\arrow["{\beta_{j i_p}^{(r_p+2)}}"{description}, color={rgb,255:red,5;green,9;blue,255}, from=8-3, to=6-5]
	\arrow["{\beta_{j i_p}^{(r_p)}}"{description}, color={rgb,255:red,5;green,9;blue,255}, from=8-3, to=8-1]
	\arrow["{\alpha_{ji_p}^{(r_p)}}"{description, pos=0.8}, from=8-3, to=10-6]
	\arrow[from=10-3, to=8-3]
	\arrow[from=10-3, to=11-5]
	\arrow["{\alpha_{jj}^{(r_p-3)}}"{description}, from=10-6, to=4-6]
	\arrow["{\alpha_{jj}^{(r_p-5)}}"{description, pos=0.6}, from=11-5, to=6-5]
	\arrow[from=12-3, to=10-3]
	\arrow[from=12-3, to=13-1]
	\arrow["{\alpha_{jj}^{(r_p-7)}}"{description}, from=13-1, to=8-1]
\end{tikzcd}
}\]
\caption{ Local configuration around $\bet_{i_pi_p}^{(r_p-1)}$ in case $c_{i_p j}=-3$}
    \label{fig:1-1d}
\end{figure}

\begin{proof}
These are elementary consequences of the definitions and of the following remark:

\smallskip
Let $(j,r)\in V$ and suppose that $r_{m^+}< r+2d_j< r_m$ 
for some  $m\in\{1,2,\ldots, l\}$ with $i_m=j$. 
Then we have
$(j, r-\tau(j,m))\in V_m^-$.

\smallskip
We omit the details.
\cqfd
\end{proof}

\begin{Cor}\label{cor:sgam}
Let $1\le p \le l$.
\begin{itemize} 
 \item[(a)] The set of arrows of $\G_{\bi,\br}$ with source $(i_p,\rho_p)$ is equal to
\[
 \left\{\alp_{ji_p}^{(r_p)}\mid b_{i_pj}<0\right\}\ \bigcup\ 
\left\{\bet_{ji_p}^{(r_p+1)}\mid c_{i_pj}=-2\right\}\ \bigcup\ 
 \left\{\bet_{ji_p}^{(r_p)}, \bet_{ji_p}^{(r_p+2)} \mid c_{i_pj}=-3\right\}
\]
 \item[(b)] If $d_{i_p}\geq 2$, for $1\leq k\leq d_{i_p}-1$ the set of arrows with source $(i_p,\rho_p-2k)$ is equal to
\[
\left\{\alp_{ji_p}^{(r_p-2k)}\mid b_{i_pj}<0\right\}.
\]
 \item[(c)] Write $B(p) :=  \left\{ \beta_{i_p i_p}^{(r_p- d_{i_p}-2k)} \mid 0\leq k\leq d_{i_p}-1 \right\}$.
Then for $\bet\in B(p)$, every arrow $\gamma$ with source $s(\gamma) = t(\bet)$ has its target
$t(\gamma)$ in $V_m$ for some $m<p$.
\end{itemize}
\end{Cor}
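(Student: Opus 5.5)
Corollary~\ref{cor:sgam} follows from the explicit description of sources and targets of the $\alpha$- and $\beta$-arrows, together with Proposition~\ref{prop:Gam-basic}. The plan is to list, for each arrow family, the condition under which its source equals a given vertex $(i_p,\rho_p-2k)$, and then check that these conditions pick out exactly the sets claimed.

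\textbf{Parts (a) and (b).} First I determine which $\alpha$-arrows can start at $(i_p,\rho_p-2k)$ for $0\le k\le d_{i_p}-1$.
\begin{itemize}
\item A horizontal arrow $\alpha_{ji}^{(r)}$ has source $(i,\sigma'(i,r))$, so it starts there only if $i=i_p$ and $\sigma'(i_p,r)=\rho_p-2k$.
\item A vertical arrow $\alpha_{ii}^{(r)}$ has source $(i,\tau'(i,r))$, so it needs $\tau'(i_p,r)=\rho_p-2k$.
\end{itemize}
The functions $r\mapsto\sigma'(i_p,r)$ and $r\mapsto\tau'(i_p,r)$ are nonincreasing in the appropriate sense. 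The function $\sigma(i_p,\cdot)$ jumps by $2d_{i_p}$ exactly at the $r_m$ with $i_m=i_p$. Hence:
\begin{itemize}
\item $\sigma'(i_p,r)=\rho_p-2k$ has the unique solution $r=r_p-2k$. For $k=0$ this uses $\sigma(i_p,r_p)=r_p-\rho_p$; for $1\le k\le d_{i_p}-1$ it uses that no $r_m$ with $i_m=i_p$ lies in $(r_p-2k,r_p)$, which follows from adaptedness (\ref{eq:1.1b}) with $b_{i_pi_p}=2d_{i_p}$.
\item $\tau'(i_p,r)$ takes the values $\rho_p-2k$ only for $r$ with $\tau(i_p,r)=\sigma(i_p,r_p)$. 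Since $\tau'(i_p,r_p-2k)=\rho_p-2k+2d_{i_p}$, there is no solution. Indeed, by Proposition~\ref{prop:Gam-basic}(a) the vertex $(i_p,\rho_p-2k)$ is the \emph{target} of $\alpha_{i_pi_p}^{(r_p-2d_{i_p}-2k)}$, not a source of a vertical $\alpha$.
\end{itemize}
So the $\alpha$-arrows with this source are exactly $\alpha_{ji_p}^{(r_p-2k)}$ with $b_{i_pj}<0$.

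Next I treat the $\beta$-arrows.
\begin{itemize}
\item The vertical $\beta$'s have sources $(i_{p'},\rho_{p'}+2d-2k')$ with $d=d_{i_{p'}}$. Such a vertex equals $(i_p,\rho_p-2k)$ only if $i_{p'}=i_p$ and $p'=p^+$. I expect the check that $\rho_{p^+}+2d_{i_p}-2k'<\rho_p-2d_{i_p}+2$, or that it coincides with nothing in $V_p$, to be the fiddliest point. Here I would use $\rho_p-\rho_{p^+}=r_p-r_{p^+}-2d_{i_p}$ and adaptedness (\ref{eq:1.1b}), which gives $r_{p^+}\le r_p-2d_{i_p}$.
\item The oblique $\beta$-arrows starting at $i_p$-vertices have source $(i_p,\rho_p)$ by definition. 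Their targets are $j$-vertices, which exist only when $c_{i_pj}\le -2$, forcing $d_{i_p}=1$ and $k=0$. This yields exactly the extra families in (a).
\item The remaining oblique arrows $\beta_{i_pj}$ start at $j$-vertices, so they do not start at our vertex.
\end{itemize}
Since $d_{i_p}\ge2$ excludes oblique arrows, (b) follows.

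\textbf{Part (c).} For $\beta\in B(p)$ we have $t(\beta)=(i_p,\rho_p-2k)$, so by (a) and (b) the arrows $\gamma$ out of it are $\alpha_{ji_p}^{(r_p-2k)}$ and, when $k=0$, the oblique $\beta_{ji_p}$.
\begin{itemize}
\item Proposition~\ref{prop:Gam-basic}(a) gives $t(\alpha_{ji_p}^{(r_p-2k)})\in V_m^-\subset V_m$ with $m=m^-(j,p)<p$.
\item Proposition~\ref{prop:Gam-basic}(b),(c) give $t(\beta_{ji_p})=s(\alpha_{jj}^{(\cdot)})\in V_m^-$ for the same $m$.
\end{itemize}

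The main obstacle is the case $m^-(j,p)=0$, where ``$V_0$'' is not of the form $V_m$ with $1\le m<p$. Here I would need to read the statement with $V_0$ allowed, since $0<p$ and $V_0$ is defined, or else check that Proposition~\ref{prop:Gam-basic} already covers it through the same remark. I would adopt the reading $m\in\{0,\dots,p-1\}$, consistent with the convention $m^-(j,p)=0$.
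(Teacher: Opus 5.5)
Your route is the paper's: its proof is only an appeal to Proposition~\ref{prop:Gam-basic}, i.e.\ inspection of the source/target formulas. Your part (c) is fine, including allowing $m\in\{0,\dots,p-1\}$ (when $m^-(j,p)=0$ the target lies in $V_0$, exactly as in Proposition~\ref{prop:Gam-basic} itself). The inspection for (a) and (b), however, has a hole. The sentence ``since $d_{i_p}\ge 2$ excludes oblique arrows, (b) follows'' is false. It only excludes the arrows $\bet_{ji_p}$ attached to position $p$. For every position $p'$ with $c_{i_{p'}i_p}\in\{-2,-3\}$ (which forces $d_{i_p}=-c_{i_{p'}i_p}\ge 2$), the arrows $\bet_{i_{p'}i_p}^{(r_{p'}+1)}$, resp.\ $\bet_{i_{p'}i_p}^{(r_{p'}+h+1)}$, start at $(i_p,\tau'(i_p,r_{p'}))$, resp.\ $(i_p,\tau'(i_p,r_{p'}+h-1))$. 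These sources lie in column $i_p$; an example is $\bet_{21}^{(5)}\colon(1,4)\to(2,4)$ in Figure~\ref{fig:1-3}. Your bullet on the ``remaining oblique arrows'' only treats those attached to $p$ itself. These arrows must be ruled out. This works because their sources are values of $\tau'(i_p,\cdot)$, exactly like the sources of the vertical $\alp_{i_pi_p}$.

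Second, the vertical-$\bet$ step fails as written, for two reasons. Since $\sig(i_p,r_{p^+})=\sig(i_p,r_p)+2d_{i_p}$, one has $\rho_p-\rho_{p^+}=r_p-r_{p^+}+2d_{i_p}$, not $-2d_{i_p}$. Also, \eqref{eq:1.1b} for the pair $(p,p^+)$ only gives $r_{p^+}\le r_p+b_{i_pi_p}=r_p+2d_{i_p}$. The spacing $r_{p^+}\le r_p-2d_{i_p}$ instead comes from reducedness. Some $j$ with $c_{i_pj}<0$ occurs at a position $m'$ with $p<m'<p^+$, and \eqref{eq:1.1b} for $(p,m')$ and $(m',p^+)$ gives $r_{p^+}\le r_p+2b_{i_pj}\le r_p-2d_{i_p}$. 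Then the top red vertex of $p^+$, namely $\rho_{p^+}+2d_{i_p}=\rho_p-(r_p-r_{p^+})$, lies strictly below the lowest green vertex $\rho_p-2d_{i_p}+2$. The same spacing, for both neighbouring occurrences of $i_p$, is what your claims about $\sig'(i_p,r_p-2k)$ and $\tau'(i_p,r_p-2k)$ really rest on.

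A uniform way to finish (a) and (b) is as follows. Both $\sig'(i_p,\cdot)$ and $\tau'(i_p,\cdot)$ are strictly increasing. At each occurrence $r_m$ of $i_p$, the first jumps from $\rho_m$ to $\rho_m+2d_{i_p}+2$, skipping exactly the red values. The second jumps from $\rho_m-2d_{i_p}$ to $\rho_m+2$, skipping exactly the green values. Every arrow starting in column $i_p$ has as source one of:
\begin{itemize}
\item a value of $\sig'(i_p,\cdot)$: horizontal $\alp$'s, and $\bet_{ji_{p'}}$ with $i_{p'}=i_p$, whose source is $\rho_{p'}=\sig'(i_p,r_{p'})$;
\item a value of $\tau'(i_p,\cdot)$: vertical $\alp$'s and the $\bet_{i_{p'}i_p}$;
\item a red value: vertical $\bet$'s.
\end{itemize}
Since $(i_p,\rho_p-2k)=(i_p,\sig'(i_p,r_p-2k))$, this gives (a) and (b). Note that the fact you invoke, that $(i_p,\rho_p-2k)$ is the target of $\alp_{i_pi_p}^{(r_p-2d_{i_p}-2k)}$, does not by itself rule out its being the source of a vertical $\alp$; the jump of $\tau'$ is the actual reason.
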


\begin{proof}
This follows from Proposition~\ref{prop:Gam-basic} (a)--(c).
\cqfd
\end{proof}

\begin{example}
We continue Example~\ref{exa-3-3} and illustrate our main notation in this case.
Recall that $\bi = (2,1,2,1)$ is a reduced word for $w_0$ in type $B_2$, and we take 
$\br = (4,2,-2,4)$. We have $d_1 = 2$ and $d_2 = 1$. 
The functions $\sig$ and $\tau$ take the following values:
\[
\begin{array}{c|cccccccc}
r & \sig (1, r) & \tau (1, r) & \sig(2, r) & \tau (2, r)  \\
\hline
  6 & 0 & 0 & 0 & 0  \\
  4 & 0 & 0 & 2 & 0\\
  2 & 4 & 0 & 2 & 2  \\
  0 & 4 & 0 & 2 & 2  \\
 -2 & 4 & 4 & 4 & 2  \\
 -4 & 8 & 4 & 4 & 4 \\
 -6 & 8 & 4 & 4 & 4 \\
 -8 & 8 & 8 & 4 & 4 \\
-10 & 8 & 8 & 4 & 4 \\
\end{array}
\]
Thus we obtain:
\[
\begin{array}{c|rrrr}
p &   1 & 2 & 3 & 4\\\hline
i_p & 2 & 1 &  2 & 1\\
r_p & 4 & 2 & -2 & -4\\
\sig(i_p,r_p) & 2 & 4 & 4 & 8\\
\rho_p & 2 &-2 & -6 & -12
\end{array}
\]
We can now check on the right quiver of Figure~\ref{fig:1-3} that for example
\[
s\left(\alp_{ii}^{(r)}\right) :=(i, r - \tau(i,r)),\qquad
t\left(\beta^{(r_p-d_p)}_{i_pi_p}\right) = (p,\rho_p).
\]
\end{example}

\begin{figure}[ht]
\vspace*{-1.0cm}
\[\scalebox{0.7}{\input{Fig1-2}}\]    
    \caption{$\Gamma_{(3,1,2,3,1,2,3,1,2),(4,4,2,-4,-4,-6,-12,-12, -14)}$ in type $B_3$}
    \label{fig:1-2}
\end{figure}

\begin{example}\label{ex-3-7}
In type $B_3$ we take the reduced word $\bi = (3,1,2,3,1,2,3,1,2)$ for $w_0$, 
and we choose $\br = (4,4,2,-4,-4,-6,-12,-12, -14).$
The corresponding quiver $\G_{\bi,\br}$ is displayed in Figure~\ref{fig:1-2}.  
\end{example}

\FloatBarrier

\subsection{Braid group generators}

In this section we recall from \cite{FH3} the formulas for 
an action of the braid group associated with the Cartan matrix $C$. 
We introduce some twisted braid group generators depending on the datum $(\bi, \br)$ of a reduced word $\bi$
for $w_0$, and a tuple $\br$ adapted to $\bi$ as in Definition~\ref{def:adapt}. 
We then describe the matrices of these twisted generators in terms of $\Gamma_{\bi,\br}$
and show that they exhibit a certain sign-coherence property.

\subsubsection{Twisting datum}\label{sub-sec3-2-1}

We fix $(\bi, \br)$ as above, and we denote by $l$ the length of $w_0$.
The twisted generators will involve a family of integers $k_i^{(p)}\ (i\in I, 0\le p \le l)$
defined recursively as follows.

\begin{Def}
Let $\bk^{(p)}=\left(k_i^{(p)}\right)_{i\in I}\in\Z^I$ be the integer 
vectors defined by the initial condition $\bk^{(0)}:=0$ and the recurrence relation:
\[
\bk^{(p+1)} =\bk^{(p)}- 2d_{i_{p+1}} \be_{i_{p+1}},\qquad (0\leq p<l),
\]
where $\be_i\in\Z^I$ denotes the $i$-th unit vector.
\end{Def}
By construction, we obviously have $k^{(p)}_j=k^{(p-1)}_j$ if $j\neq i_p$, and
\begin{equation} \label{eq:2.1}
    k^{(p)}_j
\left\{
\begin{array}{ll}
-\sig(i_p,r_p)         &\mbox{if } j=i_p,\\[2mm]
-\sig\left(j, r_{m^-(j,p)}\right) &\mbox{if } j\neq i_p,
\end{array}                
\right.
\qquad (1\le p\le l).   
\end{equation}

The following lemma expresses the coordinates of the source and target of certain arrows of $\G_{\bi,\br}$
in terms of the integers $k_i^{(p)}$.
\begin{Lem}\label{lem-3-9}
Let $j\in I$ and $1\le p\le l$ such that $b_{ji_p}<0$. 
Then, for $0\leq k\leq d_{i_p}-1$ we have:
\begin{equation}
s\left(\alp^{(r_p-2k)}_{ji_p}\right)=\left(i_p,\, r_p+k^{(p)}_{i_p}-2k\right), 
\ \
t\left(\alp^{(r_p-2k)}_{ji_p}\right)=\left(j,\, b_{ji_p}+r_p+k^{(p-1)}_j-2k\right). \label{eq:2.2a}
\end{equation}
Moreover, if $c_{i_p j}=-2$, we have
\begin{equation}
s\left(\bet^{(r_p+1)}_{ji_p}\right) =\left(i_p,\,r_p+k^{(p)}_{i_p}\right),\
t\left(\bet^{(r_p+1)}_{ji_p}\right) =\left(j,\,  b_{ji_p}+r_p+2+k^{(p-1)}_j\right)
= \left(j,\, r_p+k^{(p-1)}_j\right), 
 \label{eq:2.2b}
\end{equation} 
and if $c_{i_p,j}=-3$,  for $h\in\{0,2\}$  we have
\begin{equation}
s\left(\bet^{(r_p+h)}_{ji_p}\right) =\left(i_p,\,r_p+k^{(p)}_{i_p}\right),
\ \ 
t\left(\bet^{(r_p+h)}_{ji_p}\right) 
= \left(j,\, r_p-1+h+k^{(p-1)}_j\right). 
 \label{eq:2.2g}
\end{equation} 
\end{Lem}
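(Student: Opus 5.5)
The plan is to compare the explicit source/target formulas for the $\alpha$- and $\beta$-arrows of $\G_{\bi,\br}$ with the closed form (\ref{eq:2.1}) for $k^{(p)}$. The whole lemma should reduce to two identities between the combinatorial functions $\sigma,\tau$ and the twisting integers:
\[
k^{(p)}_{i_p}=-\sigma(i_p,r_p),\qquad k^{(p-1)}_j=-\sigma\bigl(j,r_{m}\bigr)\quad (m=m^-(j,p)),
\]
the second holding because $k^{(p-1)}_j=k^{(p)}_j$ for $j\neq i_p$. Both are just (\ref{eq:2.1}). When $m=0$ the right-hand side is read as $0$, consistent with $k^{(0)}=0$.

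\textbf{Sources.} By Proposition~\ref{prop:Gam-basic}(a), $s(\alp^{(r_p-2k)}_{ji_p})=(i_p,\rho_p-2k)$, and $\rho_p=r_p-\sigma(i_p,r_p)=r_p+k^{(p)}_{i_p}$. This gives the first formula in (\ref{eq:2.2a}). The sources of the oblique arrows $\bet_{ji_p}^{(r_p+1)}$ and $\bet_{ji_p}^{(r_p+h)}$ are $(i_p,\rho_p)$ by definition, so the same computation settles the source halves of (\ref{eq:2.2b}) and (\ref{eq:2.2g}).

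\textbf{Targets.} By definition, $t(\alp^{(r_p-2k)}_{ji_p})=(j,\tau'(j,r_p-2k+b_{ji_p}))$. We must therefore show
\[
\tau(j,\,r_p-2k+b_{ji_p})=\sigma(j,\,r_{m}) \quad\text{for } 0\le k\le d_{i_p}-1,
\]
that is, $\sigma(j,\,r_p-2k+b_{ji_p}+2d_j)$ counts exactly the occurrences $i_h=j$ with $h<p$. The count uses two inequalities:
\begin{itemize}
\item[(i)] $r_h\ge r_p-2k+b_{ji_p}+2d_j$ for every $h<p$ with $i_h=j$;
\item[(ii)] $r_h< r_p-2k+b_{ji_p}+2d_j$ for every $h\ge p$ with $i_h=j$.
\end{itemize}
For (i), apply (\ref{eq:1.1b}) with $m=h$: $r_p\le r_h+b_{j i_p}$. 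Since $b_{ji_p}<0$ and $2k\le 2d_{i_p}-2$, we need $b_{ji_p}+2d_j-2k\le -b_{ji_p}$, i.e. $2d_j-2k\le -2b_{ji_p}$. This is checked against Lemma~\ref{lem:CartanM}: if $d_{i_p}>1$ then $-b_{ji_p}=d_{i_p}|c_{i_pj}|\ge d_{i_p}$, and in the rank-two non-simply-laced case the values fit exactly. For (ii), one uses that $r$ is strictly decreasing along occurrences with $r_h\ll r_p$, together with the parity constraint $(i_h,r_h)\in V$.

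The precise bookkeeping in (i) and (ii), including the vertex-parity constraint and the borderline cases where equality must be excluded, is where Proposition~\ref{prop:Gam-basic}(a) is used: it already places the target in $V_m^-$, which forces the $\tau$-value to equal $\sigma(j,r_m)$. So rather than recheck the inequalities from scratch, I would read (i) and (ii) off that membership, since the remark in its proof states exactly $(j,r-\tau(j,m))\in V_m^-$ when $r_{m^+}<r+2d_j<r_m$.

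\textbf{Oblique targets.} Here $d_{i_p}=1$ and $d_j=-c_{i_pj}$. For $c_{i_pj}=-2$ the target is $(j,\tau'(j,r_p))$, and $b_{ji_p}=-2$, so $b_{ji_p}+r_p+2=r_p$. The same membership argument, via Proposition~\ref{prop:Gam-basic}(b), gives $\tau(j,r_p)=\sigma(j,r_m)$. For $c_{i_pj}=-3$ the target is $(j,\tau'(j,r_p+h-1))$, and Proposition~\ref{prop:Gam-basic}(c) gives the same identification. This yields (\ref{eq:2.2b}) and (\ref{eq:2.2g}).

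The main obstacle is verifying the $\tau=\sigma(j,r_m)$ identity uniformly in $k$ and in the Cartan types, which I expect to handle by the case split $c_{i_pj}\in\{-1,-2,-3\}$ and $c_{ji_p}\in\{-1,-2,-3\}$ using Lemma~\ref{lem:CartanM}.
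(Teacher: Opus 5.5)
This is the paper's argument. Equation (\ref{eq:2.1}) rewrites $k^{(p)}_{i_p}$ and $k^{(p-1)}_j$ as $-\sigma(i_p,r_p)$ and $-\sigma(j,r_m)$ with $m=m^-(j,p)$, and Proposition~\ref{prop:Gam-basic} supplies the sources and places every target in the column-$j$ block $V_m$ (or in $V_0$ when $m=0$), which pins the shift down to $\tau=\sigma(j,r_m)$.

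There are two small corrections. First, that last implication is the converse of the remark you quote, and it is really justified by monotonicity of $s\mapsto s-\tau(j,s)$: values with $s+2d_j>r_m$ land strictly above $\rho_m$, and values with $s+2d_j\le r_{m^+}$ land at or below $\rho_m^+-2d_j$. Second, your discarded direct check of (ii) is incomplete. When $d_{i_p}>d_j$ and $k\ge 1$, the case $r_h=r_p+b_{ji_p}$ is a genuine vertex allowed by parity and by (\ref{eq:1.1b}), and it is excluded only by (\ref{eq:1.1c}).
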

\begin{proof}
This follows from Equation (\ref{eq:2.1}) and Proposition~\ref{prop:Gam-basic}. We omit the details.
\cqfd
\end{proof}

For an arrow $\gamma$ of $\G_{\bi,\br}$ with source $(i,a)$ and target $(j,b)$, we define the 
degree of $\gamma$ by
\[
\deg(\gamma):= b-a.
\]
Lemma~\ref{lem-3-9} implies the following
\begin{Cor}\label{Cor3-10}
For $j\in I$ and $1\le p\le l$ such that $b_{ji_p}<0$. 
Then, for $0\leq k\leq d_{i_p}-1$ we have
\[
 \deg\left(\alp_{ji_p}^{(r_p-2k)}\right)=b_{ji_p}+k_j^{(p-1)}-k_{i_p}^{(p)}.
\]
Moreover,  if $c_{i_pj} = -2$ we have
\[
 \deg\left(\bet_{ji_p}^{(r_p+1)}\right)=k_j^{(p-1)}-k_{i_p}^{(p)}=\deg\left(\alp_{ji_p}^{(r_p)}\right)+2,
\]
and if $c_{i_p j}=-3$, for $h\in\{0,2\}$  we have 
\[
 \deg\left(\bet_{ji_p}^{(r_p+h)}\right)=k_j^{(p-1)}-1+h-k_{i_p}^{(p)}=\deg\left(\alp_{ji_p}^{(r_p+2)}\right)+h+2.
\]
\cqfd
\end{Cor}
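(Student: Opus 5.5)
The plan is to subtract second coordinates, using the explicit sources and targets recorded in Lemma~\ref{lem-3-9}. By definition, for an arrow $\gamma$ from $(i,a)$ to $(j,b)$ we have $\deg(\gamma)=b-a$. So each formula in the statement should follow from the corresponding line of Lemma~\ref{lem-3-9} by taking (target coordinate) minus (source coordinate).

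\emph{The $\alpha$-arrows.} For $0\le k\le d_{i_p}-1$, equation (\ref{eq:2.2a}) gives source $\bigl(i_p, r_p+k^{(p)}_{i_p}-2k\bigr)$ and target $\bigl(j, b_{ji_p}+r_p+k^{(p-1)}_j-2k\bigr)$. Subtracting, the terms $r_p$ and $-2k$ cancel and we are left with $b_{ji_p}+k^{(p-1)}_j-k^{(p)}_{i_p}$. In particular this degree does not depend on $k$, which is used next.

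\emph{The case $c_{i_pj}=-2$.} Equation (\ref{eq:2.2b}) gives source $\bigl(i_p,r_p+k^{(p)}_{i_p}\bigr)$ and target $\bigl(j,r_p+k^{(p-1)}_j\bigr)$, so $\deg\bigl(\beta^{(r_p+1)}_{ji_p}\bigr)=k_j^{(p-1)}-k_{i_p}^{(p)}$. To compare with $\deg\bigl(\alp_{ji_p}^{(r_p)}\bigr)$, I need $b_{ji_p}=-2$. Lemma~\ref{lem:CartanM} gives $d_{i_p}=1$ when $c_{i_pj}=-2$, and $b_{ji_p}=b_{i_pj}=d_{i_p}c_{i_pj}=-2$. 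This is also consistent with the two expressions for the target in (\ref{eq:2.2b}). Hence the $\beta$-degree equals the $\alpha$-degree plus $2$.

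\emph{The case $c_{i_pj}=-3$.} Equation (\ref{eq:2.2g}) gives $\deg\bigl(\beta^{(r_p+h)}_{ji_p}\bigr)=k_j^{(p-1)}-1+h-k^{(p)}_{i_p}$. Here $d_{i_p}=1$, so $b_{ji_p}=-3$ and the $\alpha$-degree is $-3+k_j^{(p-1)}-k^{(p)}_{i_p}$. The difference is therefore $2+h$, which gives the last identity. The only point needing care is that the $\alpha$-degree on the right-hand side is independent of the superscript. So $\deg\bigl(\alp_{ji_p}^{(r_p+2)}\bigr)$ should be read as the common value $b_{ji_p}+k_j^{(p-1)}-k^{(p)}_{i_p}$ computed in the first step, whatever label is used. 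With that reading the verification is direct, and I expect no real obstacle.
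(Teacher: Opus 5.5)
Your approach is the paper's: the corollary is read off from Lemma~\ref{lem-3-9} by subtracting second coordinates. Your computations are correct for $\alp_{ji_p}^{(r_p-2k)}$, for the case $c_{i_pj}=-2$, and for the first equality in the case $c_{i_pj}=-3$.

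The final step, however, rests on a false claim. Part one only shows that $\deg(\alp_{ji_p}^{(r_p-2k)})$ is constant for $0\le k\le d_{i_p}-1$. When $c_{i_pj}=-3$ you have $d_{i_p}=1$, so this covers only $\alp_{ji_p}^{(r_p)}$ and says nothing about $\alp_{ji_p}^{(r_p+2)}$. In fact the degree of $\alp_{ji_p}^{(r)}$ does depend on $r$, through $\sigma$ and $\tau$:
\begin{itemize}
\item Since $\sigma(i_p,r_p+2)=\sigma(i_p,r_p)-2$, the source of $\alp_{ji_p}^{(r_p+2)}$ is $(i_p,\sigma'(i_p,r_p+2))=(i_p,\rho_p+4)$. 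This vertex lies above the vertical arrow $\bet_{i_pi_p}^{(r_p-1)}$, not at $(i_p,\rho_p)$.
\item Its target $(j,\tau'(j,r_p-1))$ is exactly the target of $\bet_{ji_p}^{(r_p)}$ (compare Figure~\ref{fig:1-1d}).
\end{itemize}
Using (\ref{eq:2.2g}) with $h=0$, this gives
\[
\deg\left(\alp_{ji_p}^{(r_p+2)}\right)=\left(r_p-1+k_j^{(p-1)}\right)-\left(r_p+k_{i_p}^{(p)}+4\right)=k_j^{(p-1)}-k_{i_p}^{(p)}-5=\deg\left(\alp_{ji_p}^{(r_p)}\right)-2 .
\]

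So for $h=0$ one has $\deg(\bet_{ji_p}^{(r_p)})=\deg(\alp_{ji_p}^{(r_p+2)})+4$, not $+2$. The last identity is therefore false as printed: the superscript $r_p+2$ is a misprint for $r_p$.

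What your computation actually proves is
\[
\deg\left(\bet_{ji_p}^{(r_p+h)}\right)=\deg\left(\alp_{ji_p}^{(r_p)}\right)+h+2 .
\]
This is also the form used in the proof of Proposition~\ref{prp:2.7}. There the entry $z^{\deg(\alp_{ji_p}^{(r_p)})}(1+z^2+z^4)$ must match the $(j,i_p)$-entry $z^{k_j^{(p-1)}-k_{i_p}^{(p)}}(z+z^{-1}+z^{-3})$ of $M_p=Z_{p-1}t_{i_p}Z_p^{-1}$. You should flag the misprint and prove this corrected identity, rather than assert that the $\alp$-degree is independent of the superscript.
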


\begin{example}\label{exa-3-11}
We continue Example~\ref{ex-3-7}. We calculate
\[
\begin{array}{c|cccc}
p & i_p & \bk^{(p)} & r_p & r_p+k_{i_p}^{(p)}=\rho_p\\\hline 
0 &     & (0,0,0) \\
1 &  3  & (0,0,-2)  & 4   & 2  \\
2 &  1  & (-4,0,-2) & 4   & 0  \\
3 &  2  & (-4,-4,-2)  & 2  &-2\\
4 &  3  & (-4,-4,-4)  &-4  &-8\\
5 &  1  & (-8,-4,-4)  &-4  &-12\\
6 &  2  & (-8,-8,-4)  &-6  &-14\\
7 &  3  & (-8,-8,-6)  &-12 &-18\\
8 &  1  & (-12,-8,-6) &-12 &-24\\
9 &  2  & (-12,-12,-6)&-14 &-26\\
\end{array}
\]
Using this table, we can calculate the degrees of the oblique arrows in Figure~\ref{fig:1-2}.
For instance:
\begin{eqnarray*}{}
\deg\left(\alp_{32}^{(2)}\right) &=& b_{32}+k_3^{(2)}-k_{2}^{(3)}=0,\\
\deg\left(\alp_{12}^{(2)}\right) &=& b_{12}+k_1^{(2)}-k_{2}^{(3)} =+2,\\
\deg\left(\alp_{23}^{(-12)}\right) &=&b_{23}+k_2^{(6)}-k_{3}^{(6)} =-4,\\
\deg\left(\bet_{23}^{(-11)}\right) &=& k_2^{(6)}-k_{3}^{(7)} =-2,\\
\deg\left(\alp_{21}^{(-12)}\right) &=&b_{21}+k_2^{(7)}-k_{1}^{(8)} =+2.\\
\end{eqnarray*}
\end{example}

\subsubsection{Twisted braid group action}\label{tbga}

Let $z$ be a formal variable. For $i\in I$, define the endomorphism 
$t_i\in \End_{\Z[z^{\pm 1}]}\left(\Z[z^{\pm 1}]^I\right)$
by the following formulas for its matrix entries:
\begin{equation}
    (t_i)_{jk}=
\left\{    
\begin{array}{ll}
 \del_{jk} &\mbox{if } k\neq i,\\
-z^{-2d_i}  &\mbox{if } j=k=i,\\
 z^{-d_i}   &\mbox{if } k=i \mbox{ and } c_{ij}=-1,\\
 1+ z^{-2}   &\mbox{if } k=i \mbox{ and } c_{ij}=-2,\\
z+z^{-1}+z^{-3} &\mbox{if } k=i \mbox{ and } c_{ij}=-3,\\
 0          &\mbox{if } k=i \mbox{ and } c_{ij}=0,
\end{array} 
\right.
\qquad (j,k\in I).
\label{eq:t'i}\\
\end{equation}
It is well known that the $t_i$'s give a representation of the braid group attached to $W$
(see for instance \cite{FH3}; in type $A$, this is a version of the classical Burau representation). 

\begin{example}\label{exa3-12}
In type $B_3$ we have $d_1 = d_2 = 2$, $d_3 = 1$, and
\[
t_1=
\left(
\begin{array}{ccc}
 -z^{-4} & 0 & 0\\
 z^{-2}& 1 & 0\\
 0       & 0 & 1\\
\end{array}
\right),
 \quad
t_2=
\left(
\begin{array}{ccc}
 1 & z^{-2} & 0\\
 0& -z^{-4} & 0\\
 0       & z^{-2} & 1\\
\end{array}
\right),
\quad
t_3=
\left(
\begin{array}{ccc}
 1 & 0 & 0\\
 0 & 1 & 1 + z^{-2}\\
 0 & 0 & -z^{-2}\\
\end{array}
\right).
\]
\end{example}

Using the twisting datum $(\bk^{(p)})_{1\le p\le l}$ of \S~\ref{sub-sec3-2-1}, we now form the following family of diagonal matrices
\[
Z_p:=\diag\left(z^{k_i^{(p)}} \mid i\in I\right), \qquad (1\le p \le l),
\]
and defined the \emph{twisted braid group generators}
\[
M_p:= Z_{p-1}t_{i_p} Z_p^{-1},\qquad  (1\le p \le l).
\]

\begin{example}
We continue Example~\ref{exa3-12}. Using the table of Example~\ref{exa-3-11}, we calculate for example
\[
M_3= 
\left(
\begin{array}{ccc}
 1 & z^{-2} & 0\\
 0& - 1 & 0\\
 0       & 1 & 1\\
\end{array}
\right),
 \quad
M_7=
\left(
\begin{array}{ccc}
 1 & 0 & 0\\
 0 & 1 & z^{-2}+z^{-4}\\
 0 & 0 & -1\\
\end{array}
\right),
\quad
M_8=
\left(
\begin{array}{ccc}
 -1 & 0 & 0\\
 z^2 & 1 & 0\\
 0 & 0 & 1\\
\end{array}
\right).
\]
\end{example}

\begin{Prop}\label{prp:Msgn-coh}
For $0\le a < b \le l$, define 
\[
M_{ab} := M_{a+1} M_{a+2} \cdots M_b.
\]
Then the rows of $M_{ab}$ 
are sign-coherent, in the sense that all coefficients of all Laurent polynomials in a given 
row of $M_{ab}$ have the same sign. Moreover, if $a \ge 1$ these signs are all positive on row~$i_a$.
\end{Prop}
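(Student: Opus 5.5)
The plan is to strip away the twisting and reduce to a statement about the untwisted braid matrices $t_i$. The diagonal factors telescope:
\[
M_{ab} = Z_a\, t_{i_{a+1}} t_{i_{a+2}}\cdots t_{i_b}\, Z_b^{-1}.
\]
Left and right multiplication by diagonal matrices of monomials $z^{k}$ only multiplies each entry by a monomial. So the sign pattern of every Laurent coefficient is exactly that of $T_{ab}:=t_{i_{a+1}}\cdots t_{i_b}$, and it suffices to prove both claims for $T_{ab}$.

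\emph{Step 1: rows of $T_{ab}$ as deformed roots.} I would read row vectors as coordinates in the basis of simple roots $\alpha_k$. Right multiplication by $t_i$ fixes coordinate $j$ for every $j\neq i$. It replaces coordinate $i$ by
\[
-z^{-2d_i}v_i+\sum_{j:\,c_{ij}<0}(t_i)_{ji}v_j .
\]
At $z=1$ we have $(t_i)_{ji}=-c_{ij}$, so this is exactly $s_i(\sum_k v_k\alpha_k)$. By induction on $b$, the $j$-th row of $T_{ab}$ is a $z$-deformation $\gamma_j(z)$ of the vector $w_{ab}(\alpha_j)$, where $w_{ab}:=s_{i_b}\cdots s_{i_{a+1}}$. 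The word $(i_{a+1},\dots,i_b)$ is a subword of a reduced word, hence reduced, so $w_{ab}(\alpha_j)$ is a root, positive or negative, and is therefore sign-coherent at $z=1$. For the second claim, take $j=i_a$ with $a\ge1$. Since $(i_a,\dots,i_b)$ is reduced, $\ell(w_{ab}s_{i_a})>\ell(w_{ab})$, so $w_{ab}(\alpha_{i_a})>0$, which gives the predicted positive sign on row $i_a$.

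\emph{Step 2: lifting sign-coherence from $z=1$ to the Laurent coefficients.} This is the main obstacle. The update of coordinate $i$ involves the negative term $-z^{-2d_i}v_i$, so positivity of all coefficients is not formal. I would prove by induction on $b-a$ the stronger statement that the row $\gamma_j(z)$ equals $\epsilon\,\gamma^+(z)$ with $\epsilon=\pm1$ and $\gamma^+\in\Z_{\ge0}[z^{\pm1}]^I$.

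At the induction step with $\gamma=u(\alpha_j)>0$ and $i=i_b$, the reduced-word hypothesis gives $\gamma\neq\alpha_i$, and in the critical case $s_i\gamma<\gamma$. The idea is then to restrict to the rank-two parabolic generated by $i$ and one neighbour $j$ at a time. There the exchange of coordinates is governed by the rank-two formulas (types $A_2$, $B_2$, $G_2$), which can be checked by hand. The aim is to show that the negative term $z^{-2d_i}v_i$ is cancelled monomial by monomial by part of $\sum_j (t_i)_{ji}v_j$.

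The bookkeeping for this uses Corollary~\ref{Cor3-10}: after twisting, the entries of $M_p$ are precisely $z^{\deg}$ of the arrows $\alpha$ and $\beta$ issuing from $(i_p,\rho_p)$ and $(i_p,\rho_p-2k)$, as given by Corollary~\ref{cor:sgam}. As a fallback if the cancellation argument gets out of hand, I would interpret each entry of $M_{ab}$ as a signed sum over paths in $\G_{\bi,\br}$ that go down along the vertical $\beta$-arrows of $B(p)$ and then across. Corollary~\ref{cor:sgam}(c) guarantees that such paths only move into earlier strips $V_m$ with $m<p$, so the resulting expansion is triangular. In that path model, all contributions to a fixed row carry the same sign, namely the parity of the number of diagonal factors $-z^{-2d_i}$ used, and that parity is determined by the row.
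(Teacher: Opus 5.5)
Your reduction to $T_{ab}$ and your Step~1 are correct; they are the classical half of the argument. Step~2, however, carries all the content and is not proved, so there is a genuine gap.

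The row induction cannot close with the hypothesis you carry. Right multiplication by $t_i$ replaces the $i$-th coordinate by $\sum_m (t_i)_{mi}v_m-z^{-2d_i}v_i$. Knowing only that each $v_m$ has coefficients of one sign, and that the value at $z=1$ is positive, does not give nonnegative coefficients (think of $2z^{-1}-z^{3}$). You would need to know the $z$-degrees at which the monomials sit, and the hypothesis ``$\gamma_j=\epsilon\gamma^+$'' does not record them. Moreover, in the row picture the negative term has to be absorbed by all neighbours of $i$ at once, so ``one neighbour at a time'' decouples nothing.

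The decoupling happens for columns instead. Left multiplication by $t_j$ changes coordinate $k\neq j$ of a column into $u_k+(t_j)_{kj}u_j$, which involves only $j$ and $k$. This is what the paper exploits. Column $i$ of $T_{ab}$ is the additive form of $\Psi_{w(\varpi_i^\vee),a}=T'_w(\Psi_{i,a})$ with $w=s_{i_{a+1}}\cdots s_{i_b}$. Theorem~\ref{scoherence} shows that each entry $(k,i)$ is sign-coherent, with the sign of $w(\varpi_i^\vee)(\alpha_k)$, i.e.\ of the $\alpha_i$-coefficient of your root $w_{ab}(\alpha_k)$. Combined with your Step~1, this gives the proposition.

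What your sketch lacks is the treatment of the critical case $c_{kj}<0$, $w(\varpi_i^\vee)(\alpha_k)<0$ in the induction on the height of $\varpi_i^\vee-w(\varpi_i^\vee)$. The paper handles it as follows.
\begin{itemize}
\item Pass to $w'=w''w$ with $w''\in\langle s_j,s_k\rangle$, chosen so that $w'(\varpi_i^\vee)$ lies strictly between $w(\varpi_i^\vee)$ and $\varpi_i^\vee$ and is dominant for $j,k$.
\item By induction, the projection of $\Psi_{w'(\varpi_i^\vee),a}$ to the $j,k$-variables is a product of $\Psi_{j,\cdot}$ and $\Psi_{k,\cdot}$ with positive exponents.
\item The element $s_jw$ is reached from $w'$ inside the rank-two Weyl group, which acts factor by factor. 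Each factor is a rank-two $\Psi_{\tilde w(\varpi^\vee),c}$, whose sign-coherence is read off from the explicit tables.
\item Lemma~\ref{sign} guarantees that all factors carry the same sign.
\end{itemize}
Without such a step, ``the negative term is cancelled monomial by monomial'' is just the conclusion restated.

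Your fallback is not merely incomplete but false. Paths using different numbers of diagonal factors do land in the same row, and they cancel. In type $A_2$ with $\bi=(1,2,1)$,
\[
t_1=\begin{pmatrix}-z^{-2}&0\\ z^{-1}&1\end{pmatrix},\qquad t_2=\begin{pmatrix}1&z^{-1}\\ 0&-z^{-2}\end{pmatrix},\qquad (t_1t_2)_{22}=z^{-1}\cdot z^{-1}+1\cdot(-z^{-2})=0 .
\]
So row $2$ of $T_{02}$ is sign-coherent only because a path with no diagonal factor cancels a path with one. Hence no row-determined parity exists. The same happens for $M_{02}$, which differs from $T_{02}$ only by monomials.

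A smaller slip: the reduced-word hypothesis does not exclude $\gamma=\alpha_{i_b}$ (take $b=a+1$ and $j=i_b$). This is exactly how a row becomes negative. It is harmless, but your case analysis should include it.
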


\begin{proof}
Clearly, we have
\[
M_{ab}=Z_a\, t_{i_{a+1}} t_{i_{a+2}}\cdots t_{i_b}\, Z_b^{-1}.
\]
Since $Z_a$ and $Z_b^{-1}$ are diagonal matrices with positive diagonal entries, it is sufficient to verify the claim for 
$T_{ab}:=t_{i_{a+1}}t_{i_{a+2}}\cdots t_{i_b}$. 
Now, $(i_{a+1},i_{a+2},\ldots, i_b)$ is a reduced expression for
$w_{a,b}:=s_{i_{a+1}}s_{i_{a+2}}\cdots s_{i_b}\in W$. So, this follows
from Theorem \ref{scoherence} below (after identification of the additive and multiplicative notations as explained in Section \ref{scs}). \cqfd
\end{proof}

The following proposition describes the twisted braid group generators $M_p$ in terms of the quiver $\G_{\bi,\br}$.

\begin{Prop}\label{prp:2.7}
The entries of $M_p \ (1\le p \le l)$ are given by
\begin{equation} \label{eq:MpQ}
(M_p)_{jk}=
\left\{
\begin{array}{ll}
  \del_{jk} &\mbox{if } k\neq i_p\\[2mm]
-\del_{jk}+ \displaystyle\sum z^{\deg(\gam)} & \mbox{if } k=i_p,
\end{array}
\right.
\qquad (j,k\in I),
\end{equation}
where the sum runs through all arrows $\gam$ with source $(i_p,\rho_p)$ and target of the form $(j,r)\in V$.
\end{Prop}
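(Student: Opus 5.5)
We compute $(M_p)_{jk}=z^{k^{(p-1)}_j}(t_{i_p})_{jk}\,z^{-k^{(p)}_k}$ directly from the definitions and compare it with the arrow list of Corollary~\ref{cor:sgam}(a), using the degrees in Corollary~\ref{Cor3-10}.

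\emph{Columns $k\neq i_p$.} Here $(t_{i_p})_{jk}=\delta_{jk}$. Since $k^{(p)}_k=k^{(p-1)}_k$ for $k\neq i_p$, the entry is $\delta_{jk}z^{k^{(p-1)}_k-k^{(p)}_k}=\delta_{jk}$, as claimed.

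\emph{Entry $j=k=i_p$.} We get $-z^{-2d_{i_p}}z^{k^{(p-1)}_{i_p}-k^{(p)}_{i_p}}=-1$, because $k^{(p)}_{i_p}=k^{(p-1)}_{i_p}-2d_{i_p}$. No arrow with source $(i_p,\rho_p)$ has target of the form $(i_p,r)$. Indeed, by Corollary~\ref{cor:sgam}(a) the arrows out of $(i_p,\rho_p)$ are the $\alpha_{ji_p}^{(r_p)}$ with $b_{i_pj}<0$, so $j\neq i_p$, together with oblique $\beta$'s whose targets lie in rows $j\neq i_p$. The vertical arrow at this vertex points into $(i_p,\rho_p)$, not out of it. So the $(i_p,i_p)$ entry is exactly $-1=-\delta_{i_pi_p}+\sum$ with an empty sum.

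\emph{Entries $j\neq i_p$ in column $i_p$.} The entry is $(t_{i_p})_{ji_p}\,z^{k^{(p-1)}_j-k^{(p)}_{i_p}}$. If $c_{i_pj}=0$, both sides vanish, since there is no arrow from $(i_p,\rho_p)$ to row $j$. Otherwise we split into cases.
\begin{itemize}
\item[(1)] If $c_{i_pj}=-1$, then $b_{i_pj}=-d_{i_p}$ and the entry is $z^{-d_{i_p}+k^{(p-1)}_j-k^{(p)}_{i_p}}$. This is $z^{\deg\alpha_{ji_p}^{(r_p)}}$ by Corollary~\ref{Cor3-10}, and $\alpha_{ji_p}^{(r_p)}$ is the unique arrow to row $j$.
\item[(2)] If $c_{i_pj}=-2$, then $d_{i_p}=1$, $d_j=2$ and $b_{i_pj}=-2$ by Lemma~\ref{lem:CartanM}. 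The entry is $(1+z^{-2})z^{k^{(p-1)}_j-k^{(p)}_{i_p}}$. By Corollary~\ref{Cor3-10} this equals $z^{\deg\beta_{ji_p}^{(r_p+1)}}+z^{\deg\alpha_{ji_p}^{(r_p)}}$, matching the two arrows $\alpha_{ji_p}^{(r_p)}$ and $\beta_{ji_p}^{(r_p+1)}$.
\item[(3)] If $c_{i_pj}=-3$, then $b_{i_pj}=-3$ and there are three arrows, $\alpha_{ji_p}^{(r_p)}$, $\beta_{ji_p}^{(r_p)}$ and $\beta_{ji_p}^{(r_p+2)}$. Using Lemma~\ref{lem-3-9}, their degrees are $-3+D$, $-1+D$ and $1+D$, where $D=k^{(p-1)}_j-k^{(p)}_{i_p}$. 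These sum to $(z+z^{-1}+z^{-3})z^{D}$, which is the entry of $t_{i_p}$ after twisting.
\end{itemize}

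The main point to check carefully is this bookkeeping of degrees in case (3). Corollary~\ref{Cor3-10} writes these degrees relative to $\alpha^{(r_p+2)}$, so I will recompute them directly from \eqref{eq:2.2a} and \eqref{eq:2.2g} with $k=0$, taking source $(i_p,r_p+k^{(p)}_{i_p})$. The targets are then $(j,-3+r_p+k^{(p-1)}_j)$ and $(j,r_p-1+h+k^{(p-1)}_j)$ for $h\in\{0,2\}$, which gives the three exponents above. The other point is that Corollary~\ref{cor:sgam}(a) lists all arrows out of $(i_p,\rho_p)$, so no extra terms appear; this is exactly what that corollary provides.
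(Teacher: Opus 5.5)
Your proof is correct and follows the paper's argument. Like the paper, you list the arrows out of $(i_p,\rho_p)$ via Corollary~\ref{cor:sgam}(a), read off their degrees from Lemma~\ref{lem-3-9} and Corollary~\ref{Cor3-10}, and compare them case by case in $c_{i_pj}$ with column $i_p$ of $Z_{p-1}t_{i_p}Z_p^{-1}$, the other columns being trivially $\delta_{jk}$.

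Recomputing the $c_{i_pj}=-3$ degrees directly from \eqref{eq:2.2a} and \eqref{eq:2.2g} was the right call. The reference to $\alpha_{ji_p}^{(r_p+2)}$ in Corollary~\ref{Cor3-10} should read $\alpha_{ji_p}^{(r_p)}$. With $k^{(p-1)}_j-k^{(p)}_{i_p}$ denoted as in your proposal, the exponents it shifts by $-3$, $-1$, $+1$ are the correct ones.
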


\begin{proof}
Let us denote by $\widetilde{M}_p$ the matrix that is described by the right-hand side of~(\ref{eq:MpQ}). 
By Corollary~\ref{cor:sgam}~(a) the set of arrows $\gamma$ involved in the sum is equal to 
\[
 \left\{\alp_{ji_p}^{(r_p)}\mid b_{i_pj}<0\right\}\ \bigcup\ 
\left\{\bet_{ji_p}^{(r_p+1)}\mid c_{i_pj}=-2\right\}  \bigcup\ 
\left\{\bet_{ji_p}^{(r_p)}, \bet_{ji_p}^{(r_p+2)}   \mid c_{i_pj}=-3\right\}.
\qquad (*)
\]
It follows that 
\[
(\widetilde{M}_p)_{ji_p}=
\left\{
\begin{array}{ll}
-1 &\mbox{if } c_{i_pj} = 2,\\[2mm]
z^{\deg\left(\alp_{ji_p}^{(r_p)}\right)} & \mbox{if } c_{i_pj} = -1,\\[2mm]
z^{\deg\left(\alp_{ji_p}^{(r_p)}\right)}(1+z^2) & \mbox{if } c_{i_pj} = -2,\\[2mm]
 z^{\deg\left(\alp_{ji_p}^{(r_p)}\right)}(1+z^2+z^4) & \mbox{if } c_{i_pj} = -3,\\[2mm]
0 &\mbox{if } c_{i_pj} = 0.
\end{array}
\right. 
\]
Here, the case $c_{i_pj}=2$ (\emph{i.e.} $j=i_p$) follows from the fact that the set of arrows described in (*)
contains no vertical arrows. 
The case $c_{i_pj}=-1$ follows clearly from (*).
The case $c_{i_pj}=-2$ follows again from (*), together with Equations~(\ref{eq:2.2a}) and (\ref{eq:2.2b}) of Lemma~\ref{lem-3-9}.

Now, again by (\ref{eq:2.2a}), we see that
\[
\deg\left(\alp_{ji_p}^{(r_p)}\right)= b_{ji_p}+ k_j^{(p-1)}- k_j^{(p)}.
\]
In view of the definition $M_p=Z_{p-1}t_{i_p}Z_p^{-1}$
we conclude that indeed $M_p=\widetilde{M}_p$.
\cqfd
\end{proof}

\subsection{$G$-matrices}

In what follows it will be convenient to identify the free abelian
group $\Z^{(V)}$ with an abelian subgroup of $\Z[z^{\pm1}]^I$ via
the injective homomorphism
\[
\Z^{(V)}\hookrightarrow \Z[z^{\pm 1}]^I,\quad \be_{(i,k)}\mapsto z^k\be_i.
\]
Note, that under this identification the submodule $\Z^{(V)}$ is stable under the action of the twisted braid group generators $M_p\ (1\le p \le l)$. 

\smallskip
We are going to define a sequence of \emph{G-matrices} $G^{(0)}, G^{(1)}, \ldots, G^{(l)}\in \Z^{(V)\times (V)}$.
The columns of these matrices, which will later be identified with certain $g$-vectors of our initial  cluster variables, are 
defined as follows.
Recall the partition $V = V_0\cup V_1\cup\cdots\cup V_l$ of \S\ref{sub-sec-311}.
For $(i,k)\in V$ and $0\le m \le l$, we put: 
\begin{equation} \label{eq:gvec-M}
\bg_{(i,k)}^{(m)}:=
\left\{
\begin{array}{ll}
    M_{mp}(z^k\be_{i_p}) &\mbox{if } (i,k)\in V_p\mbox{ for some } p > m, \\[2mm]
    z^k\be_i &\mbox{otherwise}.
\end{array}
\right.
\end{equation}
We then define $G^{(m)}$ as the matrix with column vectors $\bg_{(i,k)}^{(m)}\quad  ((i,k)\in V)$.
In particular, $G^{(l)}$ is the unit matrix with $\bg^{(i,k)}(l)=z^k\be_i$ for all $(i,k)\in V$.

\begin{Prop}\label{lem-3-16}
Let $1\le m \le l$. All rows of $G^{(m)}$ labelled by an element of $V$ of the form $(i_m,k)$
have nonnegative entries.
\end{Prop}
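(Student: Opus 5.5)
The plan is to read the statement column by column, using the definition~(\ref{eq:gvec-M}) of the vectors $\bg^{(m)}_{(i,k)}$ together with the sign-coherence statement of Proposition~\ref{prp:Msgn-coh}. Under the identification $\Z^{(V)}\hookrightarrow\Z[z^{\pm1}]^I$, $\be_{(i,k)}\mapsto z^k\be_i$, the entry of $G^{(m)}$ in row $(i_m,k)$ and column $(i,k')$ is the coefficient of $z^k$ in the $i_m$-th component of the vector $\bg^{(m)}_{(i,k')}\in\Z[z^{\pm1}]^I$. So it suffices to show that, for every $(i,k')\in V$, the $i_m$-th component of $\bg^{(m)}_{(i,k')}$ is a Laurent polynomial with nonnegative coefficients.

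I will split into the two cases of~(\ref{eq:gvec-M}).

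\emph{First case: $(i,k')\notin V_p$ for every $p>m$.} Then $\bg^{(m)}_{(i,k')}=z^{k'}\be_i$. Its $i_m$-component is either $z^{k'}$ or $0$, so it is nonnegative.

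\emph{Second case: $(i,k')\in V_p$ for some $p>m$.} Then $\bg^{(m)}_{(i,k')}=M_{mp}(z^{k'}\be_{i_p})$. Its $i_m$-component is therefore $z^{k'}\,(M_{mp})_{i_m,i_p}$, which is an entry in row $i_m$ of the matrix $M_{mp}=M_{m+1}\cdots M_p$. We have $1\le m<p\le l$, so Proposition~\ref{prp:Msgn-coh} applies with $a=m\ge 1$ and $b=p$. It says that all coefficients of all Laurent polynomials in row $i_a=i_m$ of $M_{ab}$ are positive, so in particular $(M_{mp})_{i_m,i_p}$ has nonnegative coefficients. Multiplying by the monomial $z^{k'}$ only shifts exponents and preserves this. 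This gives the claim for every column.

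There is essentially no obstacle once Proposition~\ref{prp:Msgn-coh} is available. All the real content sits in its positivity assertion on row $i_a$, which is in turn deferred to Theorem~\ref{scoherence}. The one point I will check carefully is the indexing convention $M_{ab}=M_{a+1}\cdots M_b$. This is what makes the distinguished positive row $i_a$ equal to the row $i_m$ appearing in the statement, and not the row $i_{m+1}$. It also explains why the hypothesis $m\ge1$ is needed.
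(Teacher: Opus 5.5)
Your proof is correct and is the paper's own argument. The paper's proof just says the statement follows immediately from Proposition~\ref{prp:Msgn-coh}, and you have unpacked that column by column. In particular you correctly identify the relevant entries as $z^{k'}(M_{mp})_{i_m,i_p}$ with $a=m\ge 1$, so the positivity of row $i_a=i_m$ applies.
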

\begin{proof}
This follows immediately from Proposition~\ref{prp:Msgn-coh}. \cqfd
\end{proof}

\begin{example}
We continue our running example in type $B_3$. We have $V=I\times 2\Z$ and $l=9$. 
Recall that the values of $i_m$ and $\rho_m$ are as follows:
\[
\begin{array}{l|rrrrrrrrr}
  m   & 1 & 2 & 3 & 4 & 5 & 6 & 7 & 8 & 9  \\\hline
  i_m & 3 & 1 & 2 & 3 & 1 & 2 & 3 & 1 & 2\\
\rho_m& 2 & 0 &-2 &-8 &-12&-14&-18&-24&-26
\end{array}
\]
Also, $m^+=m+3$ for $1\leq m\leq 6$ and $m^+=\infty$ for $7\leq m\leq 9$. We deduce that
\[
\begin{array}{lll}
V_1 =3\times\{2,0,\ldots,-6\}, & V_2=1\times\{0,-2,\ldots,-10\},  & V_3=2\times\{-2,-4,\ldots,-12\},\\
V_4 =3\times\{-8,-10,\ldots,\!-16\}, & V_5=1\times\{-12,-14,\ldots,\!-22\},&
V_6=2\times\{-14,-16,\ldots,\!-24\},\\
V_7=3\times (2\Z)_{\leq-18}, & V_8=1\times (2\Z)_{\leq -24}, &
V_9= 2\times(2\Z)_{\leq -26}.
\end{array}
\]
The matrices $G(m)$ are readily calculated from the 
vectors $M_{mp}\be_{i_p}\ (m<p\leq l)$, which are displayed in the following table,
 where  the abbreviation $p:=1+z^2$ is used. 
\[
\begin{array}{c|ccccccccc}
p\backslash m & 0 &1 & 2 & 3 & 4 & 5 & 6 & 7 & 8  \\\hline \\
1 &
\begin{array}{c}0\\p\\-1 \end{array}&
\\ \\
2 &
\begin{array}{c}-1\\z^2\\0 \end{array}&
\begin{array}{c}-1\\z^2\\\blue{0} \end{array}&
\\ \\
3 &
\begin{array}{c}-z^{-2}\\p\\-1 \end{array}&
\begin{array}{c}-z^{-2}\\0\\\blue{1} \end{array}&
\begin{array}{c} \blue{z^{-2}}\\-1\\1\end{array}&
\\ \\
4 &
\begin{array}{c}-z^{-4}p\\z^{-2}p\\ -z^{-2}\end{array}&
\begin{array}{c}-z^{-4}p\\0\\  \blue{z^{-2}}\end{array}&
\begin{array}{c}\blue{z^{-4}p} \\-z^{-2}p\\z^{-2}\end{array}&
\begin{array}{c}0\\\blue{z^{-2}p}\\-1 \end{array}&
\\ \\
5 &
\begin{array}{c}0\\z^4\\-z^2\end{array}&
\begin{array}{c}0\\-z^2\\\blue{z^2} \end{array}&
\begin{array}{c}\blue{0}\\-z^2\\z^2 \end{array}&
\begin{array}{c}-1\\ \blue{z^2}\\0 \end{array}&
\begin{array}{c}-1\\z^2\\\blue{0} \end{array}&
\\ \\
6 &
\begin{array}{c} -1\\z^2\\-1 \end{array}&
\begin{array}{c} -1\\-1\\\blue{1}\end{array}&
\begin{array}{c} \blue{1}\\-p\\1 \end{array}&
\begin{array}{c} -z^{-2}\\ \blue{p}\\-z^2\end{array}&
\begin{array}{c} -z^{-2}\\0\\\blue{z^2}\end{array}&
\begin{array}{c} \blue{z^{-2}}\\-1\\z^2\end{array}&
\\ \\
7 & 
\begin{array}{c}0\\ 0\\ -z^{-4} \end{array}& 
\begin{array}{c} 0\\ -z^{-4}p\\ \blue{z^{-4}}  \end{array}& 
\begin{array}{c}\blue{0}\\-z^{-4}p\\z^{-4} \end{array}& 
\begin{array}{c}-z^{-6}p\\\blue{z^{-4}p}\\-z^{-2} \end{array}& 
\begin{array}{c} -z^{-6}p\\ 0\\ \blue{z^{-2}}\end{array}& 
\begin{array}{c} \blue{z^{-6}p}\\-z^{-4}p\\z^{-2}\end{array}&
\begin{array}{c} 0\\\blue{z^{-4}p}\\ -1
\end{array}
\\ \\
8 & 
\begin{array}{c}-z^2\\0\\ 0\end{array}& 
\begin{array}{c}-z^2\\0\\\blue{0} \end{array}& 
\begin{array}{c}\blue{z^2}\\-z^4\\ 0\end{array}& 
\begin{array}{c}0\\\blue{z^4}\\-z^4 \end{array}& 
\begin{array}{c}0\\-z^2\\\blue{z^4} \end{array}& 
\begin{array}{c}\blue{0}\\-z^2\\z^4 \end{array}&
\begin{array}{c}-1\\\blue{z^2}\\-0\end{array}&
\begin{array}{c}-1\\z^2\\\blue{0}\end{array}
\\ \\
9 & 
\begin{array}{c}0\\-z^2\\ 0\end{array}& 
\begin{array}{c}0\\-z^2\\\blue{0} \end{array}& 
\begin{array}{c}\blue{0}\\-z^2\\0\end{array}& 
\begin{array}{c}-1\\\blue{z^2}\\-z^2\end{array}& 
\begin{array}{c}-1\\-1\\\blue{z^2}\end{array}&
\begin{array}{c} \blue{1}\\-z^{-2}p\\z^2\end{array}&
\begin{array}{c}-z^{-2}\\\blue{p}\\-z^4\end{array}&
\begin{array}{c}-z^{-2}\\0\\\blue{z^4}\end{array} &
\begin{array}{c}\blue{z^{-2}}\\-1\\z^4\end{array}\\ \\
\end{array}
\]
Note that all the blue entries have nonnegative coefficients, as asserted by Proposition~\ref{lem-3-16}. 
\end{example}

\subsection{$g$-vectors}
\label{ssec:g-vect} \label{rem:Def_g-vect}
In this section, we are going to show that the column vectors of the matrix $G^{(0)}$ are the 
stabilized $g$-vectors of the cluster variables of $\AA_{w_0}$ belonging to the initial seed 
with quiver $\G_{\bi,\br}$, with respect to the reference seed ``at infinity'' with quiver $\G$.

\begin{Rem} \label{rem:lowering}
Consider the last reflexion $s_{i_l}$ of our fixed reduced expression $\bi$ for $w_0$. 
Note, that by our construction, the  $d_{i_l}$ “lowest” green vertices of
$\G_{\bi,\br}$ are $(i_l,\rho_l), (i_l,\rho_l-2),\ldots, (i_l,\rho_l-2(d_{i_l}-1))$.
By Lemma~\ref{Lem28}~(ii), if we mutate at the 
green vertex $(i_l,\rho_l)$, only the last entry of $\br$ will
be decreased by two, say $\br(-2):=(r_1,r_2,\ldots, r_l-2)$. In formulas
\[
\mu_{(i_l,\rho_l)}(\G_{\bi,\br})=\G_{\bi,\br(-2)}.
\]
Moreover, relative to $\br(-2)$, the new $\rho_l$ will be 
$\rho_l-2$. Thus,
we can iterate and find
\begin{equation} \label{eq:lowering}
\mu_{(i_l,\rho_l-2k)}\circ\cdots\circ\mu_{(i_l,\rho_l-2)}\circ\mu_{(i_l,\rho_l)}(\G_{\bi,\br})=\G_{\bi,\br(-2k)}.
\end{equation}
We should also point out, that in case $d_{i_l}\in\{2,3\}$ the quiver
$\mu_{(i_l,\rho_l-2)}(\Gam_{\bi,\br})$ is \emph{not} of the form
$\Gam_{\bi,\br'}$ for any $\br'$. A similar problems occurs for $d_{i_l}=3$ and $\mu_{(i_l,\rho_l-4)}(\Gam_{\bi,\br})$. On the other hand, if $d_{i_l}\in\{2,3\}$ we have for example 
\[
\mu_{(i_l,\rho_l)}\circ\mu_{(i_l,\rho_l-2)}(\Gam_{\bi,\br})=
\mu_{(i_l,\rho_l-2)}\circ\mu_{(i_l,\rho_l)}(\Gam_{\bi,\br})=
\Gam_{\bi,\br(-4)}.
\]
This will not be a problem for our goals in the present paper. However, we plan to address this kind of issues in a forthcoming publication by
introducing a larger class of quivers.
\end{Rem}

The infinite sequence of mutations
that we will use to go from $\G_{\bi,\br}$ to $\G$ is different from the one we used in \cite{GHL}. 
 Based on~\eqref{eq:lowering}  we will get, for $k\ra\infty$, a  sequence of mutations transforming $\G_{\bi,\br}$ into a sequence of quivers whose lowest 
$d_{i_l}$ pairs of green and red vertices are translated down to  minus infinity, so that in the limit we
will obtain the quiver 
\[
\G^{(l-1)} := \G_{(i_1,\ldots,i_{l-1}),(r_1,\ldots,r_{l-1})}.
\]
We can now repeat the same procedure with the quiver $\G^{(l-1)}$ and obtain a second infinite sequence
of mutations transforming $\G^{(l-1)}$ into
\[
\G^{(l-2)} := \G_{(i_1,\ldots,i_{l-2}),(r_1,\ldots,r_{l-2})}.
\]
And so on. In this way we get an infinite sequence of mutations going from $\G^{(l)} := \G_{\bi,\br}$
to $\G^{(0)} := \G$, via the intermediate steps 
\[
\G^{(m)} := \G_{(i_1,\ldots,i_{m}),(r_1,\ldots,r_{m})} \ (1\le m \le l-1).
\]
What we are going to show is that the column vectors of the matrix $G(m)$ are in fact the 
stabilized $g$-vectors of the cluster variables of the initial seed with quiver $\G_{\bi,\br}$
of $\AA_{w_0}$, with respect to the reference seed with quiver $\Gamma^{(m)}$.
In particular, taking $m=0$, we will get the desired result.

Following~\cite[(2.2)]{CA4} we write the FZ-mutation rule for
a matrix ${B} = (b_{vt}) \in \Z^{(V)\times (V)}$ in direction $s\in V$ as
\[
\left(\mu_s(B)\right)_{vt} =
\left\{
\begin{array}{ll}
    -b_{vt} &\mbox{if } v=s \mbox{ or } t=s,\\[2mm]
     b_{vt}+\sgn(b_{st})[b_{vs}b_{st}]_+ &\mbox{otherwise,}
\end{array}
\right.
\qquad (v,t\in V).
\]
Since we consider only skew-symmetric matrices
$B\in \Z^{(V)\times (V)}$, we may codify them as in \cite[(1.4)]{DWZ2} by a quiver $\G(B)$ 
with vertex set $V$ and
$b_{ts}$ many arrows $\gam : s \to t$ for $b_{ts}>0$. And conversely, every quiver 
with vertex set $V$ can be converted using this rule into a skew-symmetric matrix in $\Z^{(V)\times (V)}$.

According to \cite[(1.3) \& Thm.~1.7]{DWZ2}, the transformation of $g$-vectors under mutation of the reference seed
with matrix $B$ is as follows.
For $s\in V$, and a cluster variable $x$ with $g$-vector $\bg=(g_v)_{v\in V}\in\Z^{(V)}$, the new $g$-vector 
$\bg'$ of $x$ after a mutation of the reference seed in direction $s$ is given by
\begin{equation} \label{eq:g-transf}
g'_v=
\left\{
\begin{array}{ll}
    -g_v &\mbox{if } v=s,\\[2mm]
     g_v+[b_{vs}]_+g_s -b_{vs}\min(g_s, 0) &\mbox{if } v\neq s. 
\end{array} 
\right.
\end{equation}
Note that $[b_{vs}]_+g_s -b_{vs}\min(g_s, 0)=\sgn(g_s)[b_{vs}g_s]_+$.
Thus, a $g$-vector transforms like an additional column attached to $B$.
In terms of quivers, we see that if $\sgn(g_s) > 0$, then $g'_v$ is obtained by adding to $g_v$
the number $g_s \times |\{\gam : s \to v\}|$.

For $1\le m\le l$, let us denote by $\vec{\mu}_{(i_m,\rho_m-2\Z_{\ge 0})}$ the infinite sequence of mutations
\[
\vec{\mu}_{(i_m,\rho_m-2\Z_{\ge 0})} := \cdots\circ\mu_{(i_m,\rho_m-4)}\circ\mu_{(i_m,\rho_m-2)}\circ\mu_{(i_m,\rho_m)}
\]
transforming $\G^{(m)}$ into $\G^{(m-1)}$, as explained above.
We also denote by $B^{(m)}$ the skew-symmetric matrix corresponding to $\G^{(m)}$.

We can now state and prove our key result.
\begin{Prop} \label{prp:key}
Let $1\le m \le l$. We have
\[
 \vec{\mu}_{(i_m,\rho_m-2\Z_{\ge 0})}(B^{(m)},G^{(m)}) = (B^{(m-1)},G^{(m-1)}),
\]
where $(B^{(m)},G(m))$ denotes the matrix obtained by adding to $B^{(m)}$ the column vectors of $G(m)$.
\end{Prop}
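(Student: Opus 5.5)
The argument splits into the quiver part (the $B$-matrices) and the $g$-vector part (the columns of $G$), both handled by induction along the infinite sequence of mutations. The quiver part comes from Remark~\ref{rem:lowering} applied to the truncated word $(i_1,\ldots,i_m)$. By \eqref{eq:lowering}, after mutating at $(i_m,\rho_m),(i_m,\rho_m-2),\ldots,(i_m,\rho_m-2k)$ we get $\G_{(i_1,\ldots,i_m),(r_1,\ldots,r_m-2k-2)}$. Here $r_m$ may be lowered freely, since no later letters constrain it.

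Taking $k\to\infty$, we must see that every arrow stabilizes. An arrow incident to a vertex $v$ is changed only by mutations at neighbours of $v$. Once the moving green/red pair has passed below $v$, the arrows at $v$ coincide with those of $\G^{(m-1)}$. In the limit the vertical $\beta$-arrows and oblique $\beta$-arrows attached to $p=m$ disappear, and the relabelling of $V_m$ matches $\G^{(m-1)}$. So $B$ converges entrywise to $B^{(m-1)}$.

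For the $g$-vectors, I use \eqref{eq:g-transf} and track each column of $G^{(m)}$. A column $\bg^{(m)}_{(i,k)}$ with $(i,k)\notin V_p$ for every $p>m$ is a unit vector $z^k\be_i$. If $(i,k)$ is not the mutated vertex, its $i_m$-row entries vanish except possibly through the unit coordinate. One checks it is unaffected, apart from the relabelling of vertices in $V_m$ that turns $z^k\be_i$ into the corresponding unit vector of $\G^{(m-1)}$. The substantial case is a column $M_{mp}(z^k\be_{i_p})$ with $p>m$. Write $M_{m-1,p}=M_mM_{mp}$. We must show that the sequence of mutations acts on such a column as left multiplication by $M_m$. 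By Proposition~\ref{lem-3-16}, the entries of this column on the rows $(i_m,\cdot)$ are nonnegative. So at each mutation at $(i_m,\rho_m-2j)$, the rule \eqref{eq:g-transf} with $g_s\ge 0$ is linear: it negates $g_s$ and adds $g_s$ times the number of arrows $s\to v$.

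The core computation follows. At step $j$ the mutated vertex is the current green vertex, which has the configuration of Corollary~\ref{cor:sgam}(a),(b). Its outgoing arrows are the $\alpha_{ji_m}$ and the oblique $\beta_{ji_m}$, and their degrees are computed in Corollary~\ref{Cor3-10}. By Proposition~\ref{prp:2.7}, these outgoing arrows encode exactly column $i_m$ of $M_m$ shifted by $z^{\rho_m-2j}$. Hence the coordinate $z^{\rho_m-2j}\be_{i_m}$ of $\bg$ is replaced by $M_m(z^{\rho_m-2j}\be_{i_m})$ in the relabelled vertex set. Three points need care. First, the contributions written into the $i_m$-row during later steps have the correct sign: they land below, where they are later processed with nonnegative coefficient. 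Second, each entry stabilizes after finitely many steps, because a $g$-vector has finite support. Third, the green/red relabelling by $2d_{i_m}$ matches the twist $Z_{m-1}Z_m^{-1}$. Summing over all $i_m$-coordinates gives the limit $M_m\bg^{(m)}=\bg^{(m-1)}$.

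I expect the main obstacle to be this last bookkeeping. The arrows from the mutated vertex must at every intermediate step agree with those of the initial green vertex after translation. This holds by \eqref{eq:lowering} only after each block of $d_{i_m}$ mutations, not after each single mutation when $d_{i_m}\ge 2$. I would therefore group the mutations into blocks of $d_{i_m}$ mutations. By Corollary~\ref{cor:sgam}(b) the vertices within a block are not adjacent to each other, so the mutations in a block commute, and I would verify the transformation blockwise.
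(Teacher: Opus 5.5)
Your overall mechanism is the paper's: linear $g$-vector mutation at the current green vertex, outgoing arrows read off via Proposition~\ref{prp:2.7}, signs from Proposition~\ref{lem-3-16}, and stabilization by finite support. However, your bookkeeping of the columns contains a false step. For $(i_m,k)\in V_m$ you treat $\bg^{(m)}_{(i_m,k)}=z^k\be_{i_m}$ as a unit column that is unaffected up to relabelling. But $k\le\rho_m$, so $(i_m,k)=(i_m,\rho_m-2h)$ is exactly the vertex mutated at step $h$. Its coordinate there is $1\ge 0$, so the column becomes $M_m(z^k\be_{i_m})$ and is never touched again. This is what \eqref{eq:gvec-M} prescribes for $\bg^{(m-1)}_{(i_m,k)}$, since $(i_m,k)\in V_m$ and $m>m-1$. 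It is not a unit vector: in the $B_3$ example, $\bg_{(3,2)}=\be_{(2,4)}+\be_{(2,2)}-\be_{(3,2)}$. Nor is there any relabelling to account for. The vertex set $V$ stays fixed, and a single mutation at the current green vertex already turns $\G_{(i_1,\ldots,i_m),(r_1,\ldots,r_m-2k)}$ into $\G_{(i_1,\ldots,i_m),(r_1,\ldots,r_m-2k-2)}$ as labelled quivers (Remark~\ref{rem:lowering}). What fails for $d_{i_m}\ge 2$ is mutating first at $(i_m,\rho_m-2)$, so your grouping into blocks is harmless but unnecessary. The twist $Z_{m-1}Z_m^{-1}$ is already encoded in the arrow degrees of Proposition~\ref{prp:2.7}.

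The second gap is the passage to $M_m\bg$ for the columns $\bg=M_{mp}(z^k\be_{i_p})$ with $p>m$. The sequence only acts on the coordinates $s=(i_m,\rho_m-2h)$, $h\ge 0$. Step $h$ replaces $g_s\be_s$ by $g_sM_m(\be_s)$ and leaves every other coordinate of row $i_m$ unchanged, because $(M_m)_{i_mi_m}=-1$. So nothing is ever written into row $i_m$, and your first point of care is moot. Write $M_m^{(h)}$ for the operator applying $M_m$ to the basis vector at $(i_m,\rho_m-2h)$ only. The limit is then the stabilized product $\cdots M_m^{(1)}M_m^{(0)}(\bg)$, which equals $M_m\bg$ only if $g_{(i_m,r)}=0$ for all $r>\rho_m$. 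Your phrase about summing over all $i_m$-coordinates takes this for granted, but it is a genuine input. It is the degree bound $\delta(m,p)+\rho_p\le\rho_m$ of Lemma~\ref{lem:key-est}, which rests on the entries of the $t_i$ lying in $\Z[z^{-1}]$ and on $r_p\le r_m$. You never invoke it; Proposition~\ref{lem-3-16} gives only the signs. The same point shows that you cannot apply $M_m$ to every column. For the unit columns $z^k\be_{i_m}$ with $k>\rho_m$ (indices in $V_p$ with $p<m$ and $i_p=i_m$), the sequence acts as the identity, which is what $G^{(m-1)}$ requires, whereas $M_m$ would move them. The correct formulation is the paper's: the sequence acts through the commuting operators $M_m^{(h)}$, whose product agrees with $M_m$ on vectors whose row-$i_m$ support lies in $\{r\le\rho_m\}$. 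The sign condition \eqref{eq:gvec+} is what makes each individual step linear.
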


The proof of Proposition~\ref{prp:key} will use the following lemma.
\begin{Lem}\label{lem:key-est}
For $1\le m < p \le l$, let $\del(m,p)$ denote the degree of the Laurent polynomial 
\[
\left(M_{mp}\be_{i_p}\right)_{i_m} \in \Z[z^{\pm 1}].
\]
(If this polynomial is equal to 0 we put $\del(m,p) := -\infty$.)
Then we have
\[
    \del(m,p)+\rho_p\leq \rho_m.
\]
In particular, if $(i,k)\in V_{p+1}\cup V_{p+2}\cup\cdots\cup V_l$,
and $\left(\bg_{(i,k)}^{(m)}\right)_{(i_m,r)}\neq 0$, then $r\leq \rho_m$.
Moreover, in this case $\left(\bg_{(i,k)}^{(m)}\right)_{(i_m,r)}>0$. 
\end{Lem}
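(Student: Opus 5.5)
I would prove the main inequality $\del(m,p)+\rho_p\le\rho_m$ by descending induction on $m$ for fixed $p$, starting from $m=p-1$ and using the factorization $M_{mp}=M_{m+1}M_{m+1,p}$. By Proposition~\ref{prp:2.7}, $M_{m+1}$ differs from the identity only in column $i_{m+1}$. Hence the row $i_m$ of $M_{mp}\be_{i_p}$ is
\[
(M_{mp}\be_{i_p})_{i_m}=\delta_{i_m,i_{m+1}}\cdot(\ldots)+(M_{m+1,p}\be_{i_p})_{i_m}+z^{\deg\gamma}\,(M_{m+1,p}\be_{i_p})_{i_{m+1}},
\]
where $\gamma$ runs over the arrows from $(i_{m+1},\rho_{m+1})$ to vertices of the form $(i_m,\cdot)$. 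These arrows are listed in Corollary~\ref{cor:sgam}(a), and their degrees are given in Corollary~\ref{Cor3-10}.

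Since consecutive letters of a reduced word differ, $i_m\ne i_{m+1}$, so the first term vanishes. The degree of the third term is bounded by induction: by the hypothesis for the pair $(m+1,p)$ it is at most $\rho_{m+1}-\rho_p+\deg\gamma$. The target of $\gamma$ is a vertex $(i_m,\rho_{m+1}+\deg\gamma)$, and Proposition~\ref{prop:Gam-basic} shows that this target lies in $V_{m'}^-$ with $m'=m^-(i_m,m+1)=m$. Since $V_m\subset\{(i_m,k): k\le\rho_m\}$, this gives $\rho_{m+1}+\deg\gamma\le\rho_m$, which bounds the third term as required.

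The middle term $(M_{m+1,p}\be_{i_p})_{i_m}$ is where the main obstacle lies, because the induction hypothesis concerns the row $i_{m+1}$ rather than the row $i_m$. I would therefore strengthen the statement. For every $j\in I$ and every $a<p$, I claim
\[
\deg\,(M_{ap}\be_{i_p})_j+\rho_p\le \rho_{m^-(j,a+1)}\quad\text{if } m^-(j,a+1)\ge 1,
\]
where $m^-(j,a+1)$ is the last occurrence of $j$ in $i_1,\dots,i_a$. For $j=i_a$ this is exactly $\rho_a$, which recovers the lemma. The same one-step computation proves this stronger claim:
\begin{itemize}
\item For $j\ne i_{a+1}$, the entry in row $j$ is the entry in row $j$ for $a+1$, plus arrow contributions. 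The last occurrence of $j$ is the same for $a$ and $a+1$, so the untouched row is handled directly by induction.
\item The arrow contributions land in $V_{m^-(j,a+1)}$ by Proposition~\ref{prop:Gam-basic}, and are therefore bounded by the corresponding $\rho$.
\item For $j=i_{a+1}$, the new entry is $-z^{0}$ times the old one, possibly shifted by the twisting $Z_a,Z_{a+1}$. Using $\rho_{a+1}\le\rho_{m^-(j,a+1)}$, which holds because $\rho$ decreases along occurrences of the same letter, we get the bound.
\end{itemize}
For the base case $a=p-1$, the vector $M_p\be_{i_p}$ has row-$j$ entries given by arrows out of $(i_p,\rho_p)$, whose targets lie in $V_{m^-(j,p)}$ by Proposition~\ref{prop:Gam-basic}. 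I must also account for the fact that $\bg$ involves $z^k\be_{i_p}$ with $k\le\rho_p$, and this is harmless since multiplying by $z^{k}$ only lowers degrees.

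The ``in particular'' follows at once. For $(i,k)\in V_{p'}$ with $p'>m$ we have $k\le\rho_{p'}$, so the $(i_m,r)$ coordinates of $\bg^{(m)}_{(i,k)}=M_{mp'}(z^k\be_{i_p'})$ have $r\le\del(m,p')+k\le\rho_m$. Positivity is immediate from Proposition~\ref{prp:Msgn-coh}, since row $i_m$ of $M_{mp'}$ has nonnegative coefficients. The hardest point is getting the strengthened induction statement exactly right, especially checking that the twisting exponents $k^{(a)}$ produce the $\rho$'s with the correct indices in the case $j=i_{a+1}$.
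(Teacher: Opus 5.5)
Your argument is correct, but it takes a different route from the paper's.

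\textbf{The paper's route.} The paper proves the inequality in one stroke from $M_{mp}=Z_m\,t_{i_{m+1}}\cdots t_{i_p}\,Z_p^{-1}$. It asserts that all entries of the $t_i$ lie in $\Z[z^{-1}]$, so the untwisted product has entries of degree $\le 0$. Hence $\del(m,p)\le k^{(m)}_{i_m}-k^{(p)}_{i_p}$, and then
\[
\del(m,p)+\rho_p\le k^{(m)}_{i_m}+r_p\le k^{(m)}_{i_m}+r_m=\rho_m ,
\]
using only that $\br$ is weakly decreasing. No quiver combinatorics and no strengthened induction are needed. Your row-wise bound for every $j$ drops out of the same computation.

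\textbf{Your route.} You peel off one factor $M_{a+1}$ at a time. You read its column $i_{a+1}$ from Proposition~\ref{prp:2.7} and control the new terms by where the arrow targets lie (Proposition~\ref{prop:Gam-basic}). This forces you to carry the bound for every row.

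\textbf{Comparison.} Your argument is longer and depends on Proposition~\ref{prop:Gam-basic}, whose proof the paper omits. In exchange, it never uses a global degree bound on the $t_i$. That bound is in fact false when $c_{ij}=-3$, since $(t_i)_{ji}=z+z^{-1}+z^{-3}$. For instance, in type $G_2$ with $p=m+1$ and $c_{i_p i_m}=-3$, one has $\deg\bigl(\be_{i_m}^T t_{i_p}\be_{i_p}\bigr)=1$. So the paper's chain of inequalities needs a small repair there, using the slack $r_p\le r_m+b_{i_m i_p}=r_m-3$ from adaptedness. Your arrow-target inequality $\rho_{a+1}+\deg\gamma\le\rho_{m^-(j,a+1)}$ already encodes that slack, so it works uniformly in all types.

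\textbf{A minor point.} In the case $j=i_{a+1}$, the entry $(M_{a+1})_{i_{a+1}i_{a+1}}$ is exactly $-1$, by Proposition~\ref{prp:2.7} and Corollary~\ref{cor:sgam}(a). So there is no twisting shift to account for, and the obstacle you anticipated with the exponents $k^{(a)}$ does not arise.
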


\begin{proof}
Recall that $\rho_m=r_m+ k_{i_m}^{(m)}$. On the other hand,
\[
    \del(m,p) =\deg(\be_{i_m}^T M_{m+1}\cdots M_m \be_{i_p})
    =\deg(\be_{i_m}^T Z_m \,t_{i_{m+1}}\cdots t_{i_p} Z_p^{-1}\be_{i_p}).
\]
Since for every $i\in I$ all entries of the matrix $t_i$ belong to $\Z[z^{-1}]$, we have
\[
\deg(\be_{i_m}^T t_{i_{m+1}}\cdots t_{i_p}\be_{i_p})\leq 0.
\]
Thus, in view of the definition of $Z_m$ and $Z_p^{-1}$ we find
\[
\del(m,p)+\rho_p \leq (k_{i_m}^{(m)}-k_{i_p}^{(p)})+(k_{i_p}^{(p)}+r_p)
    = k_{i_m}^{(m)}+r_p\leq k_{i_m}^{(m)}+r_m=\rho_m.
\]
The final statements follow from the definition of $\bg_{(i,k)}^{(m)}$, and from 
Proposition~\ref{lem-3-16}.
\cqfd
\end{proof}

\noindent
\emph{Proof of Proposition~\ref{prp:key}} ---\ 
Let $\bi'=(i_1,i_2,\ldots,i_m)$ and $\br'=(r_1,r_2,\ldots, r_m)$.
Then $\G^{(m)}=\G_{\bi',\br'}$ and
\[
\mu_{(i_m,\rho_m-2(k-1))}\circ\cdots\circ \mu_{(i_m,\rho_m)}(\G^{(m)})=
\G_{\bi',\br'[k]},\qquad (k\ge 1),
\]
where $\br'[k] := (r_1,\ldots, r_{m-1}, r_m - 2k)$.
The green vertex $(i_m,\rho_m-2k)$ of $\G_{\bi',\br'(k)}$ is a sink for
the vertical arrows incident to it, and a source for the
oblique arrows incident to it. 

Moreover, if we denote by 
$\gam_1, \ldots, \gam_s$ the arrows of $\G_{\bi',\br'}$ with source
$(i_m, \rho_m)$, and  if we put $t(\gam_h) = (j_h,t_h)\ (1\le h \le s)$,
then we also have $s$ arrows $\gam'_1, \ldots, \gam'_s$ of $\G_{\bi',\br'[k]}$ with source 
$(i_m, \rho_m-2k)$, and we have $t(\gam'_h) = (j_h,t_h-2k)\ (1\le h \le s)$.
Note that $b_{i_mj_h}<0$ for all $1\le h \le s$.

Next, for $k\in\Z_{\geq 0}$, define  $\Z$-linear endomorphisms $M_m^{(k)} : \Z^{(V)}\ra\Z^{(V)}$ by:
\[
M_m^{(k)}(\be_v) := 
\left\{
\begin{array}{ll}
    M_m\left(z^{\rho_m-2k}\be_{i_m}\right) &\mbox{if } v=(i_m,\rho_m-2k),\\[2mm]
    \be_v  &\mbox{otherwise}.
\end{array}
\right.
\]
For $\bg \in \Z^{(V)}$, we have:
\[
\left(M_m^{(k)}(\bg)\right)_{(i_m,\rho_m-2h)} = 
\left\{
\begin{array}{ll}
 -g_{(i_m,\rho_m-2k)}    &\mbox{if } h=k,\\[2mm]
 g_{(i_m,\rho_m-2h)}&\mbox{otherwise}.
\end{array}
\right.
\]
It follows that the operators $M_m^{(0)}, M_m^{(1)},\ldots $ pairwise commute. 
Now, let $\bg\in\Z^{(V)}$ be a $g$-vector of $\G_{\bi',\br'(k)}$ with
component $g_{i_m,\rho_m-2k}\geq 0$. 
Using Proposition~\ref{prp:2.7} and the mutation rule for $g$-vectors recalled above, we obtain
\[
\mu_{(i_m,\rho_m-2k)}\left(\G_{\bi',\br'[k]},\bg\right)=\left(\G_{\bi',\br'[k+1]}, {M}_m^{(k)}(\bg)\right).
\]
By Lemma~\ref{lem:key-est}, if $\bg=(g_v)_{v\in V}\in\Z^{(V)}$ is a g-vector for $\G_{\bi',\br'}=\G^{(m)}$ then
\begin{equation} \label{eq:gvec+}
g_{(i_m,r)}=
\left\{
\begin{array}{ll}
    0      &\mbox{if } r>\rho_m,\\
    \geq 0 &\mbox{if } r\leq\rho_m. 
\end{array}
\right.
\end{equation}
It follows that 
\[
\mu_{(i_m,\rho_m-2k)}\circ\cdots\circ\mu_{(i_m,\rho_m-2)}\circ
\mu_{(i_m,\rho_m)}\left(\G_{\bi',\br'},\bg\right)=\left(\G_{\bi',\br'[k+1]},
\,{M}_m^{(k)}\cdots M_m^{(1)}{M}_m^{(0)}(\bg)\right).
\]
Since $\bg\in\Z^{(V)}$ has by definition finite support, for $k\gg 0$ we have:
\[
M_m^{(k)}\cdots M_m^{(1)}{M}_m^{(0)}(\bg)=M_m(\bg).
\]
By Lemma~\ref{lem:key-est} and Proposition~\ref{lem-3-16} all the vectors 
\[
M_{m,p}(z^l\be_{i_p})\quad\mbox{with}\quad m<p\leq l\mbox{ and } (i_p,k)\in V_p
\]
also fulfill condition~(\ref{eq:gvec+}). This implies our claim.
\cqfd

We can now conclude by descending induction on $1\le m\le l$. 
Since $G^{(l)}$ is equal to the unit matrix, it can be seen as the matrix with column vectors 
the $g$-vectors of the cluster variables of the initial seed $\G_{\bi,\br} = \G^{(l)}$,
with reference to itself. Then by induction, Proposition~\ref{prp:key} implies that for every 
$0\le m\le l$, the column vectors of the matrix $G^{(m)}$ are the stabilized $g$-vectors of the initial cluster variables $z_{(i,k)}$ (with $(i,k)\in V$) associated to
$\G_{\bi,\br}$, with respect to $\G^{(m)}$. The desired result is obtained for $m=0$.

\begin{Rem}\label{comps1} For later use, let us unpack the relevant definitions for the construction of the limit g-vectors 
$\bg_{(i,r)}^{\infty}=\bb_{(i,r);(\bi,\br)}^\infty:=\bg_{(i,r)}^{(0)}$ for $(i,r)\in V$, which
depends on the reduced expression $\bi=(i_1,i_2,\ldots, i_l)$ and the (weakly decreasing)  sequence $\br=(r_1, r_2,..,r_l)\in\Z^l$,
which is adapted to~$\bi$.

Let $L:=\{1,2,\ldots, l\}$ and recall that in Section~\ref{sub-sec-311} we defined the function
\[
\sig\df V\ra\Z,  (i,r)\mapsto 2d_i\cdot\abs{\{j\in L\mid i_j=i_r \text{ and } r_j\geq r\}}.
\]
Moreover, we set $s_0:=0$, and for $m\in L$ we define 
\begin{align*}
s_m &:=\sig(i_m, r_m)=2d_{i_m}\cdot\abs{\{1\leq j\leq m\mid i_j=i_m\}},\\
    \rho_m &:= r_m-s_m,\\ 
    \rho_{\infty}&:=-\infty,\\
    m^+ &:= \begin{cases}
        \min\{j\in L\mid j> m \text{ and } i_j= i_m\} &\text{if this set is nonempty},\\
        \infty &\text{else,}
    \end{cases}\\
    V_m &:= \{(i_m,r)\in V\mid \rho_{m^+}< r\leq \rho_m\},\\
    V_0 &:= V\setminus (\cup_{m\in L} V_m).
\end{align*}
It follows from our constructions that
for $0\leq m\leq l$ and $(i,r)\in V_m$ we have $i=i_m$. 
Since the diagonal entry  $(i_m,i_m)$ of $Z_m^{-1}$ is $z^{s_m}$ we find
\begin{equation} \label{eq:lgect}
\bg_{(i,r)}^{\infty}= t_{i_1} t_{i_2}\cdots t_{i_m}(\be_{i_m}) z^{r+s_m}
=t_{i_1}t_{i_2}\cdots t_{i_m}(\be_{i, r+s_m}),
\end{equation}
where we recall that $\be_{i,r}:=\be_i\, z^r$, and that the operators $t_i$ are $\Z[z^{\pm 1}]$-linear. This formula is very similar to the one from Proposition~4.19 in~\cite[Sec.4.3.4]{GHL} for the limit g-vectors  $\bg_{(i_p,\rho_p)}^\infty$ corresponding to the green vertices $(i_p,\rho_p)$ with $p\in L$.  
However, in~\cite{GHL} we were dealing only with simply laced Cartan matrices, reduced expressions which were adapted to some orientation of the corresponding Dynkin diagram, and (implicitly)
sequences $\br$ of minimal descent.
Note that in \emph{loc.\@ cit.\@} we used the operator $?[s]$ (defined in~\cite[Sec.~4.3.3]{GHL}) which in our notation here is identified with the multiplication by $z^{2s}$. Next, for $i\in I$, let  $m_i:=\max\{r_j\mid i_j=i\}$. Then in \emph{loc.\@ cit.\@} we used the “highest red vertex”
$(i_p, m_{i_p})$ as reference point rather than the (green) vertex $(i_p,\rho_p)$ itself.  For these reasons, the shift
parameter $s_m$ there was defined differently. Finally, note that in the  
formula~\cite[(12)]{GHL}, which is valid for all vertices $(i,r)\in V$, the shift parameter $s$ was not explicitly specified. 
\end{Rem}

\subsection{A knitting algorithm}

In this section we present a simple algorithm (similar to the classical knitting algorithm 
for dimension vectors in Aus\-lan\-der-Reiten combinatorics) for calculating the stabilized $g$-vectors of the cluster variables of the initial seed 
of $\AA_{w_0}$ with quiver $\G_{\bi,\br}$. 

As explained above, these stabilized $g$-vectors are the column vectors
\[
\bg_{(i,k)}:= \bg_{(i,k)}^{(0)},\qquad ((i,k)\in V),
\]
of the matrix $G^{(0)}$, and this matrix can be calculated by induction using Proposition~\ref{prp:key} (we omit in this section for typographic reasons the superindex $\infty$).
For its column vectors, this boils down to the following simple recipe.

\begin{Rem} \label{rem:vert-Vk}
Recall that for $1\le p \le l$ and
$0\leq k\leq d_{i_p}-1$, we have a vertical down arrow $\beta(p,k):= \bet_{i_pi_p}^{(r_p-d_{i_p}-2k)}\in\G_{\bi,\br}$ with 
\[
t(\bet(p))=(i_p, \rho_p-2k)\in V_p, \qquad 
s(\bet(p))=(i_p,\rho_p+2d_{i_p}-2k)\in V_{m^-(i_p,p)}.
\]
All the remaining vertical arrows of $\G_{\bi,\br}$ are up arrows of the form 
$\alpha_{ii}^{(r)}$ for some 
 $ (i,r)\in V$. Note that 
if $s\left(\alp_{ii}^{(r)}\right) \in V_p$ for some $0\le p \le l$, then
we also have $t\left(\alp_{ii}^{(r)}\right)\in V_p$. Indeed, by Corollary~\ref{cor:sgam}~(a), the only arrows with source $(i_p, \rho_p)$ 
are oblique arrows. Moreover, the same is true for 
$(i_p, \rho_p -2)$ if $d_{i_p}=2$, by Corollary~\ref{cor:sgam}~(b).
\end{Rem}
 
\begin{Prop} \label{prp:gknit}
The $g$-vectors $\bg_{(i,k)}$ are determined by the following rules:
\begin{itemize}
 \item[(a)] For every $(i,k)\in V_0$, we have $\bg_{(i,k)} = \be_{(i,k)} := z^k\be_i$.
 \item[(b)]
 For $i\in I$ and for every vertical up arrow $\alpha := \alpha_{ii}^{(r)}$ we have 
 $\bg_{s(\alpha)} = z^{-2d_i}\bg_{t(\alpha)}.$
 \item[(c)] For $1\le p \le l$  and  $0\leq k\leq d_{i_p}-1$, we have
 \begin{equation}\label{eq-14}
 \bg_{t(\bet(p,k))} =\sum_\gam \bg_{t(\gam)} - z^{-2d_{i_p}}\bg_{s(\bet(p,k))},
 \end{equation}
where the sum is over all arrows $\gamma$ with source $t(\beta(p,k))$. 
\end{itemize}
\end{Prop}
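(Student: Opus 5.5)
We would derive all three rules directly from the explicit description \eqref{eq:gvec-M} of the columns of $G^{(0)}$, i.e. from \eqref{eq:lgect}: for $(i,k)\in V_p$ with $p\ge1$ we have $\bg_{(i,k)}=M_{0p}(z^k\be_{i_p})=t_{i_1}\cdots t_{i_p}(\be_{i_p})\,z^{k+s_p}$, and $\bg_{(i,k)}=z^k\be_i$ for $(i,k)\in V_0$. Rule (a) is then immediate. That the rules \emph{determine} the $g$-vectors follows by induction on $p$: every vertex of $V_p$ is reached from the green targets $t(\beta(p,k))$ by following vertical up arrows $\alpha_{ii}^{(r)}$, which stay inside $V_p$ by Remark~\ref{rem:vert-Vk}. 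For each such target, the right-hand side of \eqref{eq-14} only involves $s(\beta(p,k))\in V_{m^-(i_p,p)}$ and targets of arrows in $V_m$ with $m<p$, by Corollary~\ref{cor:sgam}~(c).

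For (b), let $\alpha=\alpha_{ii}^{(r)}$ with $s(\alpha)\in V_p$. By Remark~\ref{rem:vert-Vk} we also have $t(\alpha)\in V_p$, so $i=i_p$. The definition of the source and target of $\alpha_{ii}^{(r)}$ shows that the two second coordinates differ by $b_{ii}=2d_i$. Since both vertices lie in the same $V_p$, the same shift $s_p$ appears in both formulas, and so $\bg_{s(\alpha)}=z^{-2d_i}\bg_{t(\alpha)}$ by $\Z[z^{\pm1}]$-linearity. If $s(\alpha)\in V_0$, the claim is trivial from (a).

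For (c), we fix $p$ and $k$ and write $v=(i_p,\rho_p-2k)=t(\beta(p,k))$ and $u=(i_p,\rho_p+2d_{i_p}-2k)=s(\beta(p,k))\in V_{m}$ with $m=m^-(i_p,p)$ (or $V_0$ if $m=0$). The idea is to apply $M_{0,p-1}$ to the column identity $M_p(z^{\rho_p-2k}\be_{i_p})=z^{\rho_p-2k}\bigl(-\be_{i_p}+\sum_\gamma z^{\deg\gamma}\be_{j(\gamma)}\bigr)$. This identity comes from Proposition~\ref{prp:2.7} for $k=0$; for $k\ge1$ we would use Corollary~\ref{cor:sgam}~(b) together with Corollary~\ref{Cor3-10}, whose degree formula is independent of $k$. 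Hence
\[
\bg_v=M_{0,p-1}M_p(z^{\rho_p-2k}\be_{i_p})=-M_{0,p-1}(z^{\rho_p-2k}\be_{i_p})+\sum_\gamma M_{0,p-1}(\be_{t(\gamma)}).
\]
Two checks remain.

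First, $M_{0,p-1}(\be_{t(\gamma)})=\bg_{t(\gamma)}$. By Proposition~\ref{prop:Gam-basic} and Corollary~\ref{cor:sgam}~(c), $t(\gamma)$ lies in $V^-_{m'}$ for the last $m'<p$ with $i_{m'}=j(\gamma)$ (or in $V_0$). Then $M_{0,p-1}=M_{0m'}M_{m',p-1}$, and $M_{m',p-1}$ fixes $\be_{j}$ because no later reflection in positions $m'+1,\dots,p-1$ involves $j$, and $M_q$ acts as the identity on the columns $\ne i_q$.

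Second, $M_{0,p-1}(z^{\rho_p-2k}\be_{i_p})=z^{-2d_{i_p}}\bg_u$. By the same argument, $M_{m,p-1}$ fixes $\be_{i_p}$, so the left side equals $M_{0m}(z^{\rho_p-2k}\be_{i_p})$. Since $u\in V_m$, this equals $\bg_{(i_p,\rho_p-2k)}$ computed as an element of $V_m$, which by the computation in (b) is $z^{-2d_{i_p}}\bg_u$.

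The main obstacle is the bookkeeping in these two checks: we must verify that targets of oblique arrows (including the $\beta$-arrows when $c_{i_pj}\le-2$) land exactly in $V_{m'}$ for the last occurrence $m'$ of $j$ before $p$. That membership is what lets $M_{0,p-1}$ collapse to $M_{0m'}$. We expect the remark in the proof of Proposition~\ref{prop:Gam-basic}, together with Lemma~\ref{lem-3-9}, to supply precisely these coordinates.
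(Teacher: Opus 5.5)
Your proposal is correct and follows the paper's proof essentially step by step. Rules (a) and (b) come directly from the definition \eqref{eq:gvec-M} of $G^{(0)}$ together with Remark~\ref{rem:vert-Vk}. Rule (c) is obtained, as in the paper, by applying $M_{0,p-1}$ to the column identity of Proposition~\ref{prp:2.7} and identifying each term via Proposition~\ref{prop:Gam-basic} and the fact that $M_q$ fixes $\be_j$ for $j\neq i_q$.

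Your write-up is in fact slightly more complete than the paper's. You treat the case $k\ge 1$ explicitly via Corollary~\ref{Cor3-10}, and you justify by induction on $p$ why these rules actually determine the $g$-vectors.
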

\begin{proof}
Statement (a) is obvious.  

{ For (b), recall from Section~\ref{ssec:g-vect} that the stabilized $g$-vectors are the column vectors of the matrix $G^{(0)}$.  
In view of Remark~\ref{rem:vert-Vk},
we have $\left\{s\left(\alp_{ii}^{(r)}\right),\, t\left(\alp_{ii}^{(r)}\right)\right\}\subset V_p$ for some $p$.
Thus, our claim follows from the definition of $G^{(0)}$ in Equation~(\ref{eq:gvec-M}).}

Let us check (c).
Recall from  Proposition~\ref{prop:Gam-basic} and
from Corollary~\ref{cor:sgam}, that for each arrow $\gam$ of $\G_{\bi,\br}$ with $s(\gam)=t(\bet(p,k))$ we have
$t(\gam)\in V_{m^-(j(\gam),p)}$ for some $j(\gam)\in I\setminus\{i_p\}$. Thus, writing $m(\gam):=m^-(j(\gam),p)$, we have by Corollary~\ref{cor:sgam} and Lemma~\ref{lem-3-9} 
\[
 \bg_{t(\gam)}= M_{0,m(\gam)}\left(z^{k(\gam)}\be_{i_{m(\gam)}}\right)
 = M_{0,p-1}\left(z^{k(\gam)}\be_{i_{m(\gam)}}\right),
\]
where
\[
 k(\gam) =
 \left\{
 \begin{array}{lll}
    b_{ji_p}+r_p+k_j^{(p-1)} &\mbox{if } 
    \gam=\alp_{ji_p}^{(r_p)} &\mbox{and } c_{i_pj}\leq -1,\\[2mm]
    r_p\ +k_j^{(p-1)} &\mbox{if } 
    \gam=\bet_{ji_p}^{(r_p+1)} &\mbox{and } c_{i_pj}=-2,\\[2mm]
   k-1+r_p +k_j^{(p-1)} &\mbox{if } 
    \gam=\bet_{ji_p}^{(r_p+h)} &\mbox{and } c_{i_pj}=-3,\ h\in\{0,2\}.\\[2mm]    
\end{array}
\right.
\]
Similarly,
\[
z^{-2d_{i_p}}\bg_{s(\bet(p,k))}= M_{0,p-1}\left(z^{k'(p)}\be_{i_p}\right),
\]
where $k'(p):=r_p+k_{i_p}^{(p)}$.
Thus, the right-hand side of Equation~(\ref{eq-14}) becomes
\begin{eqnarray*}
 M_{0,p-1}\left(
\sum_{\gam}z^{k(\gam)}\be_{i_{m(\gam)}} - z^{k'(p)}\be_{i_p}z^{k'(p)} \right)
&=&
  M_{0,p-1}\left(z^{k'(p)}\left(
\sum_{\gam}z^{\deg(\gam)}\be_{i_{m(\gam)}} - \be_{i_p} \right)\right)
\\[2mm]
&=& 
M_{0,p}\left(z^{\rho_p}\be_{i_p})\right)
\\[2mm]
&=& 
\bg_{(i_p,\rho_p)},
\end{eqnarray*}
where the first equality follows from Corollary~\ref{Cor3-10}, and the second from Proposition~\ref{prp:2.7}.
\cqfd
\end{proof}

To calculate the stabilized $g$-vectors, we can now proceed as follows. By (a), in the upper
half-infinite part of $\G_{\bi,\br}$ with vertex set $V_0$, the $g$-vectors are trivial. We then go down 
and calculate successively the $g$-vectors of $V_1$, \ldots , $V_l$ using at each step the relevant rule 
(b) or (c). 

\begin{example}
We continue our running example in type $B_3$. 
The first non trivial $g$-vector is $\bg_{(3,2)}$. By (c) we have:
\[
\bg_{(3,2)} = \bg_{(2,4)}+\bg_{(2,2)}-z^{-2}\bg_{(3,4)} 
= \be_{(2,4)}+\be_{(2,2)}-z^{-2}\be_{(3,4)}
= \be_{(2,4)}+\be_{(2,2)} -\be_{(3,2)}.
\]

Then, using (b) we deduce the values of $\bg_v$ for the remaining vertices $v$ of $V_1$, 
namely,
\[
\bg_{(3,0)} =  \be_{(2,2)}+\be_{(2,0)} -\be_{(3,0)},
\quad \ldots, \quad 
\bg_{(3,-6)} =  \be_{(2,-4)}+\be_{(2,-6)} -\be_{(3,-6)}.
\]
After that we go to $V_2$, whose higher vertices are $(1,0)$ and $(1,-2)$ (here we have $d_{i_2} = 2$).
By (c) we have
\[
\bg_{(1,0)} = \bg_{(2,2)}-z^{-4}\bg_{(1,4)} 
= \be_{(2,2)}-z^{-4}\be_{(1,4)} 
=  \be_{(2,2)} -\be_{(1,0)},
\]
and similarly
\[
\bg_{(1,-2)} = \bg_{(2,0)}-z^{-4}\bg_{(1,2)} 
= \be_{(2,0)}-z^{-4}\be_{(1,2)} 
= \be_{(2,0)} -\be_{(1,-2)}. 
\]
The remaining $\bg^v$ for $v\in V_2$ are again readily calculated by means of (b).
Then we go to $V_3$, whose highest vertices are $(2,-2)$ and $(2,-4)$. By (c) we have
\begin{eqnarray*}
\bg_{(2,-2)} &=& \bg_{(3,-2)}+\bg_{(1,-4)}-z^{-4}\bg_{(2,2)}\\ 
& =& (\be_{(2,0)}+\be_{(2,-2)}-\be_{(3,-2)}) + (\be_{(2,-2)}-\be_{(1,-4)}) - \be_{(2,-2)} \\
& = & \be_{(2,0)}+\be_{(2,-2)}-\be_{(3,-2)} -\be_{(1,-4)},
\end{eqnarray*}
and similarly we find 
\[
\bg_{(2,-4)} = \be_{(2,-2)}+\be_{(2,-4)}-\be_{(3,-4)} -\be_{(1,-6)}. 
\]
The remaining $\bg_v$ for $v\in V_3$, again, are immediately given by (b).
And so on for $V_4, \ldots, V_9$. We leave it to the reader to finish the calculation of this
example.
\end{example}


\section{Mutation equivalence of the  initial quivers}\label{mutequiv}

In this Section, we finish the proof of Theorem \ref{Prop2-10}. Recall the basic quiver $\Gam=\Gam(C)$ from  Section~\ref{subsec-basic-quiver}, as well as the fine description of the modified quivers
$\Gam_{\bi,\br}$ from Section~\ref{subsec-fine}.
The aim of this section is to show that the mutation class of 
$\Gam_{\bi,\br}$ does \emph{not} depend on the choice of the reduced
expression $\bi$. More precisely, we have the following result:

\begin{Thm} \label{thm:mut_equivQ}
Let $C\in \Z^{I\times I}$ be a connected Cartan matrix of finite type, and $\bi, \bi'$ two reduced expression of an element $w$ in the corresponding Weyl group
$W(C)$, with length $l(w)=l$. If $\br\in\Z^l$ (\resp $\br'\in\Z^l$) is a weakly decreasing sequence adapted to $\bi$ (\resp to $\bi'$), then there exists a finite mutation sequence $\mu_\bullet$, such that $\Gam(C)_{\bi,\br'}$ is isomorphic to $\mu_\bullet(\Gam(C)_{\bi,\br})$.  
\end{Thm}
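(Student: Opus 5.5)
The plan has two parts. First, for a fixed reduced word $\bi$, I will change $\br$ freely by mutations. Second, I will pass between reduced words by a Matsumoto-type argument, which reduces everything to rank-$2$ braid moves.

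\emph{Independence of $\br$.} Fix $\bi$ and two adapted tuples $\br,\br'$. I will move one entry $r_p$ at a time by $\pm 2$ while keeping the tuple adapted. Lemma~\ref{Lem28}, applied locally at the letter $i_p$, gives the mutations that realise these moves. Concretely, lowering $r_p$ by $2d_{i_p}$ is achieved by mutating at the $d_{i_p}$ green vertices $(i_p,\rho_p),\dots,(i_p,\rho_p-2(d_{i_p}-1))$, as in Remark~\ref{rem:lowering}; raising is achieved by mutating at the red vertices above them.

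The only care needed is that the construction of Definition~\ref{Def2-7} at letter $p$ interacts with the other letters only through the arrows into $V_{m^-(j,p)}$ (Proposition~\ref{prop:Gam-basic}). By the inequalities (\ref{eq:1.1b})--(\ref{eq:1.1c}), these arrows are untouched as long as $r_p$ stays within the adapted range. I will first lower $r_l$, then $r_{l-1}$, and so on, bringing both tuples to a common, very spread-out tuple $\br''$. This shows $\Gam_{\bi,\br}\sim\Gam_{\bi,\br''}$ and $\Gam_{\bi,\br'}\sim\Gam_{\bi,\br''}$. Parity constraints $(i_p,r_p)\in V$ are harmless because $\br''$ can be chosen in the correct classes.

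\emph{Independence of $\bi$.} By Matsumoto's theorem, $\bi$ and $\bi'$ are related by a sequence of braid moves of the form $(\dots,i,j,i,j,\dots)\leftrightarrow(\dots,j,i,j,i,\dots)$ of length $m_{ij}\in\{2,3,4,6\}$. It therefore suffices to treat one braid move, with the surrounding entries of $\br$ chosen very far apart from the block being changed, which the first part allows. In that situation the quiver $\Gam_{\bi,\br}$ differs from $\Gam_{\bi',\br'}$ only in a finite region. That region consists of the strips of columns $i$ and $j$ between heights roughly $r_{\text{first}}$ and $\rho_{\text{last}}$ of the block, together with the arrows joining these columns to their other neighbours $k$.

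The cases are as follows.
\begin{itemize}
\item For $m_{ij}=2$ (so $c_{ij}=0$), the constructions at $i$ and at $j$ touch disjoint arrow sets. After re-choosing $\br$, the two quivers coincide with no mutation needed.
\item For $m_{ij}=3$, this is the simply-laced computation of \cite[Proposition 3.5]{GHL}.
\item For $m_{ij}=4$ (types $B_2=C_2$) and $m_{ij}=6$ ($G_2$), I will exhibit an explicit finite mutation sequence in the rank-$2$ quivers, as announced for $B_2$ in \S\ref{subsec-5.2}, and in Appendix~\ref{App:G2-braid} for $G_2$.
\end{itemize}

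The key point making the rank-$2$ reduction legitimate is a locality statement. The mutation sequence must be performed only at vertices of columns $i,j$ inside the block region. Each such vertex must have, throughout the sequence, the same arrows to any third column $k$ in both quivers, since these arrows are of type $\alpha_{ki}$ and $\alpha_{kj}$ and get shifted coherently. I will verify this using the explicit sources and targets of Lemma~\ref{lem-3-9}. The bookkeeping shifts $\sigma,\tau$ are additive in the number of occurrences of each letter, and a braid move preserves these counts. Hence below the block all labels agree, and the external arrows match after the rank-$2$ mutation sequence.

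\emph{Main obstacle.} I expect the rank-$2$ verification in types $B_2$ and especially $G_2$ to be the hardest step. In $G_2$ the quivers carry the oblique $\beta$-arrows with $c_{ij}=-3$, and the sequence is long, with $6$ letters and up to $3$ green vertices per letter. I would search for the sequence by first mutating, in the $\bi$ quiver, the green vertices of the last letter until its vertical arrows become the last letter of $\bi'$, and then iterating. Where this greedy search fails, I would fall back on a computer-checked explicit sequence, compared against $g$-vector data via the knitting rule of Proposition~\ref{prp:gknit}.
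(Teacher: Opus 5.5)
Your first part (independence of $\br$ for fixed $\bi$), the commutation case $m_{ij}=2$, and the reduction to single braid moves all agree with the paper. The gap is in reducing each braid move to a computation in the rank-$2$ quiver on the columns $i,j$: the ``locality statement'' you rely on is false as formulated.

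First, the arrows joining a block column to a third column $k$ are not only of type $\alp_{ki},\alp_{kj}$. Suppose $d_i=1$ and $c_{ik}=-2$; this happens for instance in type $\sfC_n$ with $\{i,j\}=\{n-2,n-1\}$ and $k=n$, or in $\sfF_4$ with $\{i,j\}=\{3,4\}$ and $k=2$. Then step (iv) of Definition~\ref{Def2-7} inserts oblique arrows $\bet_{ki},\bet_{ik}$, which form $3$-cycles with the vertical $\bet$-arrows of column $i$, and the braid move has to relocate these $3$-cycles.

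Second, and more basically, a mutation at a vertex $v$ of column $i$ or $j$ modifies the arrows between every predecessor and every successor of $v$. So along the sequence, arrows are created and cancelled between block columns and neighbour columns, inside a neighbour column (vertical arrows), and, at a branch point, between two different neighbour columns. The identity $\mu_v(Q)|_S=\mu_v(Q|_S)$ only controls the full subquiver on the block columns $S$. Checking with Lemma~\ref{lem-3-9} that the external arrows of $\Gam_{\bi,\br}$ and $\Gam_{\bi',\br'}$ correspond under relabelling does not show that your mutation sequence actually carries one set of external arrows to the other.

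Consequently \cite{GHL} does not settle $m_{ij}=3$ outside types $\sfA,\sfD,\sfE$. Two kinds of move need genuinely new verifications: moves among long roots with $d=2$, where each column splits into two interleaved strands, and moves next to a double bond (configurations (B) and (C) in the proof of Proposition~\ref{prp:A2-braid}). Likewise, a $\sfB_2$ computation in rank $2$ does not give the $\sfB_2$ move inside $\sfB_n$, $\sfC_n$ ($n\ge 3$) or $\sfF_4$.

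What is needed, and what the paper does, is the following. Check each braid-move sequence in a local configuration containing all Dynkin neighbours of $\{i,j\}$. For $\sfA_2$ moves these are the simply-laced chain with one neighbour on each side, the $\sfB_3$ and $\sfC_3$ configurations, and the branch point. For the $\sfB_2$ move it is $\sfF_4$ (Lemma~\ref{Lem:B2-F4-braid}), from which $\sfB_3$ and $\sfC_3$ are obtained by deleting columns. Then verify that at the end the boundary columns' own arrows, in particular their vertical arrows, are unchanged. Since no column at distance $\ge 2$ from $\{i,j\}$ is ever adjacent to a mutated vertex, this extends to arbitrary rank.

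In $\sfG_2$ no neighbour problem arises. Be aware, though, that the intermediate quivers leave the class of the $\Gam_{\bi,\br}$. This is why the paper organizes its $163$ mutations (Lemma~\ref{lem:G2-key}) through quivers attached to reduced words in the unfolded group $W(\widetilde{\sfG}_2)\cong W(\sfD_4)$, rather than by a greedy search.
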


We outline here the main steps of the proof. The details
are worked out in the following subsections.

If $\bi$ and $\bi'$ are in the same commutation class, it is  an easy exercise  to find a mutation sequence $\mu_\bullet$ such that in fact 
$\Gam(C)_{\bi',\br'}=\mu_\bullet(\Gam(C)_{\bi,\br})$.  

Thus, it is sufficient to show our claim for the case when $\bi$ and $\bi'$ are related
by a single braid relation and moreover $\br$ and $\br'$ are given by specific (admissible) choices. If $C$ is \emph{not} of type $\sfG_2$, this is covered by
Proposition~\ref{prp:A2-braid} and Proposition~\ref{prp:B2-braid} below.

If $C$ is of type~$\sfG_2$, only the longest element $w_0$ has two different reduced expressions, namely $\bi:=(2,1,2,1,2,1)$ and $\bi':=(1,2,1,2,1,2)$. Thus, in this case it is sufficient to relate two quivers $\Gam(\sfG_2)_{\bi,\br}$
and $\Gam(\sfG_2)_{\bi',\br'}$, for specific (admissible) choices $\br,\br'\in\Z^6$ by
an explicit sequence of  mutations. This is done in Lemma~\ref{lem:G2-key}  of Appendix~\ref{App:G2-braid}, where we connect $\Gam(\sfG_2)_{(2,1,2,1,2,1),(26,23,16,13,6,3)}$ and
$\Gam(\sfG_2)_{(1,2,1,2,1,2),(35,28,25,18,15,8)}$ by an explicit sequence of 163 mutations. 

For the rest of this section we will assume that 
$C=(c_{ij})_{i,j\in\Z}$ is a connected Cartan matrix of finite type with minimal symmetrizer $D=\diag((d_i)_{i\in I})$ and $B:=D\cdot B$. 
Moreover, we will assume that $\bi=(i_1,\ldots,i_l)$ is a reduced expression for some $w\in W(C)$,
and that $\br=(r_1,\ldots, r_l)\in\Z^l$ is adapted to $\bi$.

\subsection{The $\sfA_2$-braid relation}
Suppose that for some $k$ the parabolic subgroup $W(C)_{\{i_k,i_{k+1}\}}$ is isomorphic to $W(\sfA_2)$, and
that $i_{k+2}=i_k$.
Then  we may consider the sequence $\bi'=(i'_1, \ldots, i'_l)$, 
which is obtained from $\bi$  by the $\sfA_2$-braid move at $k$,
i.e.  $(i'_k, i'_{k+1}, i'_{k+2})=(i_{k+1},i_k, i_{k+1})$
whilst all other letters remain unchanged. 
Then, $\bi'$ is also a reduced expression for the same element $w\in W(C)$.
The following proposition implies that in this situation  $\Gam_{\bi',\br'}$ is mutation equivalent to $\Gam_{\bi,\br}$.
A similar statement was previously proved in~\cite[Prop.~3.4]{GHL} for $C$ symmetric.  The general case is slightly more complicated, and in fact, several cases have to be distinguished. 
For this reason we give here a complete proof.

\begin{Prop} \label{prp:A2-braid}
Consider a reduced expression
$\bi=(i_1,\ldots, i_l)$ of some
element $w\in W(C)$, and  $\br=(r_1,\ldots, r_l)\in\Z^l$  adapted to $\bi$.

Suppose that for some $1\leq k\leq l-2$ we have
\begin{itemize}
\item $i_{k+2}=i_k$,
\item $c_{i_{k+1},i_k}=-1=c_{i_k, i_{k-1}}$ (and thus $d:=d_{i_k}=d_{i_{k+1}}\in\{1,2\}$),
\item $(r_k, r_{k+1}, r_{k+2})=(r_k,r_k+b_{i_k,i_{k+1}}, r_k+2b_{i_k, i_{k+1}})$ and
\item $r_{k-1}+b_{i_{k-1},i_{k+1}}\geq r_k+d_{i_k}$ where we agree that $r_0=\infty$.
\end{itemize}
Then we have a quiver isomorphism
\[
\Gam_{\bi',\br'}\cong\begin{cases}
    \mu_{(i_{k+1}, \rho_{k+1}+2d)}\circ\mu_{(i_k,\rho_k)}\circ\mu_{(i_k,\rho_{k}-d)}(\Gam_{\bi,\br}) &\text{if }d=1,\\
    (\mu_{(i_{k+1}, \rho_{k+1}+2d)}\circ\mu_{(i_{k+1}, \rho_{k+1}+d)})
    \circ(\mu_{(i_k,\rho_k)}\circ\mu_{(i_k,\rho_k-d)})\\
    \hspace{4.5cm}\circ(\mu_{(i_k,\rho_{k}-d)} \circ\mu_{(i_k,\rho_{k}-2d)})(\Gam_{\bi,\br})  &\text{if }d=2,
\end{cases}
\]
where
$\bi'$ is obtained from $\bi$ by replacing the subsequence $(i_k,i_{k+1},i_{k+2})$ by
$(i_{k+1},i_k,i_{k+1})$ and $\br'$ is obtained from $\br$ by replacing the subsequence
$(r_k,r_{k+1},r_{k+2})$ by $(r_k+d_{i_k}, r_k, r_{k+1})$.
In particular, the reduced expression $\bi'$ is obtained from $\bi$ by a braid move of type $\sfA_2$.
\end{Prop}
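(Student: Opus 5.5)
The proof is a local computation, and I will organise it as follows. By the fine description of $\Gam_{\bi,\br}$ in \S\ref{subsec-fine} (Proposition~\ref{prop:Gam-basic}, Corollary~\ref{cor:sgam}, Lemma~\ref{lem-3-9}), the quivers $\Gam_{\bi,\br}$ and $\Gam_{\bi',\br'}$ agree outside a finite window. Indeed, the functions $\sig,\tau$ and the vertices $(i_p,\rho_p)$ with $p\notin\{k,k+1,k+2\}$ are unchanged. The sets $V_m$ for $m<k$ and $m>k+2$ coincide, and the $\alpha$-arrows far from the rows $i_k,i_{k+1}$ are the same. This uses $r'_k=r_k+d$, $r'_{k+1}=r_k$, $r'_{k+2}=r_{k+1}$: the number of letters $i_k$ (resp.\ $i_{k+1}$) with $r_j\ge r$ changes only for $r$ in a bounded interval around $r_k$. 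The hypothesis $r_{k-1}+b_{i_{k-1},i_{k+1}}\ge r_k+d$ is exactly what makes $\br'$ adapted to $\bi'$, via conditions \eqref{eq:1.1b}--\eqref{eq:1.1c}. So first I verify adaptedness. Then I list explicitly the vertices of both quivers in the window $\{i_k,i_{k+1}\}\times[\rho_{k+2}-c,\rho_k+c]$ together with all arrows incident to them, including arrows to neighbours $j$ of $i_k$ or $i_{k+1}$. These are computed from the source/target formulas, with $\rho_k,\rho_{k+1},\rho_{k+2}$ and their primed versions expressed through $r_k$ and $b:=b_{i_k,i_{k+1}}=-d$.

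Next I perform the mutations in the order stated and track only the arrows touching the mutated vertices. For $d=1$ this is the simply-laced computation of \cite[Prop.~3.4]{GHL}. Here I also need to include, with the same recipe, the third neighbours $j$ of $i_k,i_{k+1}$, including those with $c_{ij}\le-2$ where oblique $\beta$-arrows and the 3-cycles of Figure~\ref{fig:1-1c} appear. Since $c_{i_{k+1},i_k}=-1$, such $j$ are never $i_k$ or $i_{k+1}$, so their local pictures just ride along: each mutation at a vertex in row $i_k$ composes the arrows through it, and cancellation of 2-cycles reproduces the $\beta$-arrows at the new position. For $d=2$ (type $A_2$ inside $B,C,F$ on long-root or $C$-side nodes), the two "layers" $\rho-2h$, $h=0,1$, behave as two interleaved copies of the $d=1$ picture. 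The mutation sequence is the $d=1$ sequence doubled, with one step per layer. Here Remark~\ref{rem:lowering} warns that intermediate quivers are not of the form $\Gam_{\bi,\br'}$, so I will just track arrows by hand.

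Finally I compare the resulting quiver with the explicit description of $\Gam_{\bi',\br'}$ and write down the isomorphism. This isomorphism is the relabelling of vertices in the window: the mutated vertices acquire the names $(i'_p,\rho'_p-2h)$. I check that it matches every arrow, including arrows from the window to $V_m$ for $m<k$ and $m>k+2$, using Corollary~\ref{cor:sgam}(c).

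The main obstacle is bookkeeping in the $d=2$ case, and in the case where a neighbour $j$ has $c_{i_k j}\le-2$ or $c_{i_{k+1}j}\le -2$. In these cases oblique $\beta$ arrows attached to other letters of $\bi$ may land in the window, and I must check that mutation sends them to the correct positions and creates no spurious 2-cycles. I will handle this by a degree argument. Arrows are determined by their degree, which is computed by Corollary~\ref{Cor3-10} from the twisting vectors $\bk^{(p)}$. The braid move changes only $\bk^{(k)},\bk^{(k+1)}$, so outside the window the degrees of arrows are preserved. Inside the window they can be checked on the rank 2 subquiver plus one neighbour at a time.
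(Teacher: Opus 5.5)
Your plan (follow the arrows through the mutations around the braid window, treat the third neighbours of $i_k,i_{k+1}$ one at a time, handle $d=2$ as two interleaved layers) is essentially the local verification the paper carries out on the configurations (A)--(D) in Figures~\ref{fig:A2-A4}, \ref{fig:A2-B3} and~\ref{fig:A2-C3}. But the reduction you build it on is false.

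The move replaces $(i_k,i_{k+1},i_k)$ by $(i_{k+1},i_k,i_{k+1})$, so letter multiplicities change. The counts of letters $i_k$, $i_{k+1}$ with $r_j\ge r$ therefore differ for \emph{all} small $r$, not just on a bounded interval. Write $\widetilde{\sig}$, $\widetilde{\bk}^{(p)}$ for the data of $(\bi',\br')$. Then $\widetilde{\sig}(i_k,r)=\sig(i_k,r)-2d$ for all $r\le r_k-2d$, and $\widetilde{\sig}(i_{k+1},r)=\sig(i_{k+1},r)+2d$ for all $r\le r_k+d$. Likewise $\widetilde{\bk}^{(k+1)}=\bk^{(k+1)}$, whereas $\widetilde{\bk}^{(p)}=\bk^{(p)}+2d(\be_{i_k}-\be_{i_{k+1}})$ for every $p\ge k+2$.

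Hence the sets $V_m$ with $m>k+2$ and $i_m\in\{i_k,i_{k+1}\}$, and all vertices of the columns $i_k,i_{k+1}$ below the window, are relabelled. By the source and target formulas of \S\ref{subsec-fine}, the oblique arrows between these two columns down there change degree by $\pm4d$, and those to a third column by $\pm2d$. For the same reason, in configuration (C) the number of window letters carrying an oblique $\beta$-$3$-cycle changes by one. So these cycles do not simply ``ride along'': one must be created or destroyed.

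Your mutations touch only finitely many vertices, so the lower part of $\mu_\bullet(\Gam_{\bi,\br})$ keeps the old labels and degrees. Consequently no isomorphism with $\Gam_{\bi',\br'}$ can be the identity outside a finite set. It must shift the lower tails, $(i_k,y)\mapsto(i_k,y+2d)$ and $(i_{k+1},y)\mapsto(i_{k+1},y-2d)$. Counting then shows that $d$ vertices of column $i_k$ in the window must go to column $i_{k+1}$; in the last quiver of Figure~\ref{fig:A2-A4}, the vertex $(2,8)$ sits in column $3$. So your final step (relabel only the mutated vertices, then match arrows into $V_m$, $m>k+2$, with unchanged labels) fails as written, and so does your degree argument.

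To repair this, first prove that below the window $\Gam_{\bi,\br}$ and $\Gam_{\bi',\br'}$ are identified by this tail shift. All source and target formulas there depend on $\sig$ and $\tau$, which shift uniformly; this includes the $\beta$-arrows of the letters $p>k+2$. Only then do the finite computation in the window modulo this identification.

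Separately, the hypothesis on $r_{k-1}$ is not what makes $\br'$ adapted. Since $b_{i_{k-1},i_{k+1}}=0$ and $b_{i_{k-1},i_k}=-d$, it already follows from \eqref{eq:1.1b} for the pair $(k-1,k)$. It says nothing about the pairs $(m,k)$ with $m<k-1$ and $i_m$ adjacent to $i_{k+1}$. For instance, in type $B_5$ take $\bi=(4,1,2,3,2)$, $\br=(2,2,0,-2,-4)$ and $k=3$. Then $\br'=(2,2,2,0,-2)$ violates \eqref{eq:1.1b}, since $r'_3=2>r'_1+b_{43}=0$. So adaptedness of $\br'$ has to be imposed (e.g.\ by requiring the inequality for all $m<k$) before the fine description can be applied to $\Gam_{\bi',\br'}$.
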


\begin{proof}
Let us consider the Dynkin diagram $X$ corresponding to $C$. 
By our hypothesis, $X$ has a simple edge $i_k-i_{k+1}$. In particular, $X$ is not of type $\sfG_2$, and it has at most one edge of weight 2.  Thus, it is sufficient to verify our claim
for the following "local" configurations: 
\[\xymatrix{
(A) &h\ar@{-}[r]& i_k\ar@{-}[r] &i_{k+1}\ar@{-}[r] &j\\
(B) &h\ar@{-}[r]& i_k\ar@{-}[r] &i_{k+1}\ar@{=}[r]|{>} &j\\
(C) &h\ar@{-}[r]& i_k\ar@{-}[r] &i_{k+1}\ar@{=}[r]|{<} &j\\
(D) &h\ar@{-}[r]& i_k\ar@{-}[r] &i_{k+1}\ar@{-}[r]\ar@{-}[d] &j\\
    &           &               &l
}\]
In fact, the case by case analysis below shows that the vertical arrows in the boundary
columns corresponding to $h,j$ (and $l$) are not affected by the above-mentioned mutation sequence.  In each case, for typographical reasons, we choose specific labels for the vertices $i_k$ and 
the spectral parameters $r_k$.

(A) This case occurs, with $d=1$, for all simply laced types, when none of the vertices $\{i_k,i_{k+1}\}$ is a branching point of $X$. It also occurs in type $\sfC_n$ if 
$\{i_k,i_{k+1}\}\cap\{n-1,n\}=\emptyset$. 
This case was already discussed in~\cite[Prp.~3.4]{GHL}. For the convenience of the reader we verify this case in Figure~\ref{fig:A2-A4} with a little more detail.
\begin{figure}[ht]
    \centering
   \scalebox{0.5}{\begin{tikzcd}[cramped,sep=tiny]
	&& {(2,14)} &&&& {(4,14)} &&&&&&&&&&&&&&&&&&&&&&& {(2,14)} &&&& {(4,14)} \\
	{(1,13)} &&&& {(3,13)} &&&&&&& {(2,14)} &&&& {(4,14)} &&&&& {(2,14)} &&&& {(4,14)} &&& {(1,13)} &&&& {(3,13)} \\
	&& {(2,12)} &&&& {(4,12)} &&& {(1,13)} &&&& {(3,13)} &&&&& {(1,13)} &&&& {(3,13)} &&&&&&& {(2,12)} &&&& {(4,12)} \\
	{(1,11)} &&&& {(3,11)} &&&&&&& {(2,12)} &&&& {(4,12)} &&&&& {(2,12)} &&&& {(4,12)} &&&&&&& \textcolor{rgb,255:red,255;green,0;blue,4}{{(3,11)}} \\
	&& \textcolor{rgb,255:red,255;green,0;blue,4}{{(2,10)}} &&&&&&& {(1,11)} &&&& {(3,11)} &&&&& {(1,11)} &&&& {(3,11)} &&&&& {(1,11)} \\
	&&&&&& {(4,10)} &&&&& {(2,10)} &&&& {(4,10)} &&&&& {(2,10)} &&&& {(4,10)} &&&&&&& \textcolor{rgb,255:red,0;green,235;blue,16}{{(3,9)}} \\
	&& \textcolor{rgb,255:red,0;green,235;blue,16}{{(2,8)}} &&&&&&&&&&& {(3,9)} &&&&&&&&& {\mathbf{(3,9)}} &&&&&&& \textcolor{rgb,255:red,255;green,0;blue,4}{{(2,10)}} \\
	&&&& \textcolor{rgb,255:red,255;green,0;blue,4}{{(3,9)}} &&&&&&& {\mathbf{(2,8)}} &&&&&&&&&&&&&&&&&&&&&& {(4,10)} \\
	{(1,9)} &&&&&&&&&&&&&&&&&&&&&& {(2,8)} &&&&&&& \textcolor{rgb,255:red,0;green,235;blue,16}{{(2,6)}} \\
	&&&& \textcolor{rgb,255:red,0;green,235;blue,16}{{(3,7)}} &&&&&&& {(2,6)} &&&&&&&&& {(2,6)} &&&&&&&&&&& \textcolor{rgb,255:red,255;green,0;blue,4}{{(2,8)}} \\
	&& \textcolor{rgb,255:red,255;green,0;blue,4}{{\mathbf{(2,6)}}} &&&&&&& {(1,9)} &&&& {(3,7)} &&&&& {(1,9)} &&&& {(3,7)} &&&&& {(1,9)} \\
	&&&&&& {(4,8)} &&&&& {(2,4)} &&&& {(4,8)} &&&&& {(2,4)} &&&& {(4,8)} &&&&&&& \textcolor{rgb,255:red,0;green,235;blue,16}{{(3,7)}} \\
	&& \textcolor{rgb,255:red,0;green,235;blue,16}{{(2,4)}} &&&&&&& {(1,7)} &&&& {(3,5)} &&&&& {(1,7)} &&&& {(3,5)} &&&&&&& {(2,4)} &&&& {(4,8)} \\
	{(1,7)} &&&& {(3,5)} &&&&&&& {(2,2)} &&&& {(4,6)} &&&&& {(2,2)} &&&& {(4,6)} &&& {(1,7)} &&&& {(3,5)} \\
	&& {(2,2)} &&&& {(4,6)} &&& {(1,5)} &&&& {(3,3)} &&&&& {(1,5)} &&&& {(3,3)} &&&&&&& {(2,2)} &&&& {(4,6)} \\
	{(1,5)} &&&& {(3,3)} &&&&&&& {(2,0)} &&&& {(4,4)} &&&&& {(2,0)} &&&& {(4,4)} &&& {(1,5)} &&&& {(3,3)} \\
	&& {(2,0)} &&&& {(4,4)} &&&&&&&&&&&&&&&&&&&&&&& {(2,0)} &&&& {(4,4)}
	\arrow[from=1-3, to=2-1]
	\arrow[from=1-3, to=2-5]
	\arrow[from=1-7, to=2-5]
	\arrow[from=1-30, to=2-28]
	\arrow[from=1-30, to=2-32]
	\arrow[from=1-34, to=2-32]
	\arrow[from=2-1, to=3-3]
	\arrow[from=2-5, to=3-3]
	\arrow[from=2-5, to=3-7]
	\arrow[from=2-12, to=3-10]
	\arrow[from=2-12, to=3-14]
	\arrow[from=2-16, to=3-14]
	\arrow[from=2-21, to=3-19]
	\arrow[from=2-21, to=3-23]
	\arrow[from=2-25, to=3-23]
	\arrow[from=2-28, to=3-30]
	\arrow[from=2-32, to=3-30]
	\arrow[from=2-32, to=3-34]
	\arrow[from=3-3, to=1-3]
	\arrow[from=3-3, to=4-1]
	\arrow[from=3-3, to=4-5]
	\arrow[from=3-7, to=1-7]
	\arrow[from=3-7, to=4-5]
	\arrow[from=3-10, to=4-12]
	\arrow[from=3-14, to=4-12]
	\arrow[from=3-14, to=4-16]
	\arrow[from=3-19, to=4-21]
	\arrow[from=3-23, to=4-21]
	\arrow[from=3-23, to=4-25]
	\arrow[from=3-30, to=1-30]
	\arrow[from=3-30, to=4-32]
	\arrow[from=3-30, to=5-28]
	\arrow[from=3-34, to=1-34]
	\arrow[from=3-34, to=4-32]
	\arrow[from=4-1, to=2-1]
	\arrow[from=4-1, to=5-3]
	\arrow[from=4-5, to=2-5]
	\arrow[from=4-5, to=5-3]
	\arrow[from=4-5, to=6-7]
	\arrow[from=4-12, to=2-12]
	\arrow[from=4-12, to=5-10]
	\arrow[from=4-12, to=5-14]
	\arrow[from=4-16, to=2-16]
	\arrow[from=4-16, to=5-14]
	\arrow[from=4-21, to=2-21]
	\arrow[from=4-21, to=5-19]
	\arrow[from=4-21, to=5-23]
	\arrow[from=4-25, to=2-25]
	\arrow[from=4-25, to=5-23]
	\arrow[from=4-32, to=2-32]
	\arrow[color={rgb,255:red,0;green,4;blue,255}, from=4-32, to=6-32]
	\arrow[from=5-3, to=3-3]
	\arrow[color={rgb,255:red,0;green,4;blue,255}, from=5-3, to=7-3]
	\arrow[from=5-10, to=3-10]
	\arrow[from=5-10, to=6-12]
	\arrow[from=5-14, to=3-14]
	\arrow[from=5-14, to=6-12]
	\arrow[from=5-14, to=6-16]
	\arrow[from=5-19, to=3-19]
	\arrow[from=5-19, to=6-21]
	\arrow[from=5-23, to=3-23]
	\arrow[from=5-23, to=6-21]
	\arrow[from=5-23, to=6-25]
	\arrow[from=5-28, to=2-28]
	\arrow[from=5-28, to=7-30]
	\arrow[from=6-7, to=3-7]
	\arrow[from=6-7, to=8-5]
	\arrow[from=6-12, to=4-12]
	\arrow[from=6-12, to=8-12]
	\arrow[from=6-16, to=4-16]
	\arrow[from=6-16, to=7-14]
	\arrow[from=6-21, to=4-21]
	\arrow[from=6-21, to=7-23]
	\arrow[from=6-21, to=10-21]
	\arrow[from=6-25, to=4-25]
	\arrow[from=6-25, to=7-23]
	\arrow[from=6-32, to=7-30]
	\arrow[from=6-32, to=8-34]
	\arrow[from=7-3, to=8-5]
	\arrow[from=7-3, to=9-1]
	\arrow[from=7-14, to=5-14]
	\arrow[from=7-14, to=11-14]
	\arrow[from=7-23, to=5-23]
	\arrow[from=7-23, to=9-23]
	\arrow[from=7-30, to=3-30]
	\arrow[color={rgb,255:red,0;green,4;blue,255}, from=7-30, to=9-30]
	\arrow[from=8-5, to=4-5]
	\arrow[color={rgb,255:red,0;green,4;blue,255}, from=8-5, to=10-5]
	\arrow[from=8-12, to=7-14]
	\arrow[from=8-12, to=10-12]
	\arrow[from=8-34, to=3-34]
	\arrow[from=8-34, to=10-32]
	\arrow[from=9-1, to=4-1]
	\arrow[from=9-1, to=11-3]
	\arrow[from=9-23, to=6-21]
	\arrow[from=9-23, to=11-23]
	\arrow[from=9-30, to=10-32]
	\arrow[from=9-30, to=11-28]
	\arrow[from=10-5, to=11-3]
	\arrow[from=10-5, to=12-7]
	\arrow[from=10-12, to=11-10]
	\arrow[from=10-12, to=11-14]
	\arrow[from=10-21, to=9-23]
	\arrow[from=10-21, to=11-19]
	\arrow[from=10-32, to=6-32]
	\arrow[color={rgb,255:red,0;green,4;blue,255}, from=10-32, to=12-32]
	\arrow[from=11-3, to=7-3]
	\arrow[color={rgb,255:red,0;green,4;blue,255}, from=11-3, to=13-3]
	\arrow[from=11-10, to=5-10]
	\arrow[from=11-10, to=12-12]
	\arrow[from=11-14, to=8-12]
	\arrow[from=11-14, to=12-12]
	\arrow[from=11-14, to=12-16]
	\arrow[from=11-19, to=5-19]
	\arrow[from=11-19, to=12-21]
	\arrow[from=11-23, to=12-21]
	\arrow[from=11-23, to=12-25]
	\arrow[from=11-28, to=5-28]
	\arrow[from=11-28, to=13-30]
	\arrow[from=12-7, to=6-7]
	\arrow[from=12-7, to=14-5]
	\arrow[from=12-12, to=10-12]
	\arrow[from=12-12, to=13-10]
	\arrow[from=12-12, to=13-14]
	\arrow[from=12-16, to=6-16]
	\arrow[from=12-16, to=13-14]
	\arrow[from=12-21, to=10-21]
	\arrow[from=12-21, to=13-19]
	\arrow[from=12-21, to=13-23]
	\arrow[from=12-25, to=6-25]
	\arrow[from=12-25, to=13-23]
	\arrow[from=12-32, to=13-30]
	\arrow[from=12-32, to=13-34]
	\arrow[from=13-3, to=14-1]
	\arrow[from=13-3, to=14-5]
	\arrow[from=13-10, to=11-10]
	\arrow[from=13-10, to=14-12]
	\arrow[from=13-14, to=11-14]
	\arrow[from=13-14, to=14-12]
	\arrow[from=13-14, to=14-16]
	\arrow[from=13-19, to=11-19]
	\arrow[from=13-19, to=14-21]
	\arrow[from=13-23, to=11-23]
	\arrow[from=13-23, to=14-21]
	\arrow[from=13-23, to=14-25]
	\arrow[from=13-30, to=9-30]
	\arrow[from=13-30, to=14-28]
	\arrow[from=13-30, to=14-32]
	\arrow[from=13-34, to=8-34]
	\arrow[from=13-34, to=14-32]
	\arrow[from=14-1, to=9-1]
	\arrow[from=14-1, to=15-3]
	\arrow[from=14-5, to=10-5]
	\arrow[from=14-5, to=15-3]
	\arrow[from=14-5, to=15-7]
	\arrow[from=14-12, to=12-12]
	\arrow[from=14-12, to=15-10]
	\arrow[from=14-12, to=15-14]
	\arrow[from=14-16, to=12-16]
	\arrow[from=14-16, to=15-14]
	\arrow[from=14-21, to=12-21]
	\arrow[from=14-21, to=15-19]
	\arrow[from=14-21, to=15-23]
	\arrow[from=14-25, to=12-25]
	\arrow[from=14-25, to=15-23]
	\arrow[from=14-28, to=11-28]
	\arrow[from=14-28, to=15-30]
	\arrow[from=14-32, to=12-32]
	\arrow[from=14-32, to=15-30]
	\arrow[from=14-32, to=15-34]
	\arrow[from=15-3, to=13-3]
	\arrow[from=15-3, to=16-1]
	\arrow[from=15-3, to=16-5]
	\arrow[from=15-7, to=12-7]
	\arrow[from=15-7, to=16-5]
	\arrow[from=15-10, to=13-10]
	\arrow[from=15-10, to=16-12]
	\arrow[from=15-14, to=13-14]
	\arrow[from=15-14, to=16-12]
	\arrow[from=15-14, to=16-16]
	\arrow[from=15-19, to=13-19]
	\arrow[from=15-19, to=16-21]
	\arrow[from=15-23, to=13-23]
	\arrow[from=15-23, to=16-21]
	\arrow[from=15-23, to=16-25]
	\arrow[from=15-30, to=13-30]
	\arrow[from=15-30, to=16-28]
	\arrow[from=15-30, to=16-32]
	\arrow[from=15-34, to=13-34]
	\arrow[from=15-34, to=16-32]
	\arrow[from=16-1, to=14-1]
	\arrow[from=16-1, to=17-3]
	\arrow[from=16-5, to=14-5]
	\arrow[from=16-5, to=17-3]
	\arrow[from=16-5, to=17-7]
	\arrow[from=16-12, to=14-12]
	\arrow[from=16-16, to=14-16]
	\arrow[from=16-21, to=14-21]
	\arrow[from=16-25, to=14-25]
	\arrow[from=16-28, to=14-28]
	\arrow[from=16-28, to=17-30]
	\arrow[from=16-32, to=14-32]
	\arrow[from=16-32, to=17-30]
	\arrow[from=16-32, to=17-34]
	\arrow[from=17-3, to=15-3]
	\arrow[from=17-7, to=15-7]
	\arrow[from=17-30, to=15-30]
	\arrow[from=17-34, to=15-34]
\end{tikzcd}
}
    \caption{$\Gam(\sfA_4)_{(2,3,2),(10,9,8)}\xlrarrow{\mu_{(2,6)}}\bullet\xlrarrow{\mu_{(2,8)}}\bullet\xlrarrow{\mu_{(3,9)}}\bullet\cong\Gam(\sfA_4)_{(3,2,3),(11,10,9)}$}
    \label{fig:A2-A4}
\end{figure}
The situation $d=2$ can only occur in type $\sfB_n$ with 
$\{i_k,i_{k+1}\}\cap\{n-1,n\}=\emptyset$. 
This case is very similar to the case (B) below. 

(B) This situation can only occur if $X$ is of type $\sfB_n$ with $\{i_k,i_{k+1}\}=\{n-2,n-1\}$, or in type $\sfF_4$ with $\{i_k,i_{k+1}\}=\{1,2\}$. 
Note that in both cases we have $d=2$.
In view of the behavior of our mutation sequence in Figure~\ref{fig:A2-A4}, we can omit the vertex $h=n-3$ for our analysis in Figure~\ref{fig:A2-B3}. 
\begin{figure}
    \centering
   \scalebox{0.45}{\input{Fig_A2-B3}}
    \caption{$\!\Gam(\sfB_3)_{(2,1,2),(12,10,8)}\!\xlrarrow{\mu_{(2,2)}\mu_{(2,4)}}\!\bullet\!\xlrarrow{\mu_{(2,6)}\mu_{(2,8)}}\!\bullet\!\xlrarrow{\mu_{(1,8)}\mu_{(1,10)}}\!\bullet\cong\Gam(\sfB_3)_{(1,2,1),(14,12,10)}$}
    \label{fig:A2-B3}
\end{figure}

(C) Note that this configuration can only occur in a Diagram of type $\sfC_n$ with
$\{i_k,i_{k+1}\}=\{n-2,n-1\}$, or in a Diagram of type $\sfF_4$ with $\{i_k,i_{k+1}\}=\{3,4\}$.
We verify in Figure~\ref{fig:A2-C3} that in this configuration our 3-step mutation sequence has
the desired effect. Again, in view of Figure~\ref{fig:A2-A4} we may omit the vertex $h$.
\begin{figure}
\scalebox{0.5}{\begin{tikzcd}[cramped,sep=tiny]
	&& {(2,12)} &&&&&&&&& {(2,12)} &&&&&&&&& {(2,12)} &&&&&&&&& {(2,12)} &&&& \\
	&&&& {(3,11)} && {(1,12)} &&&&&&& {(3,11)} && {(1,12)} &&&&&&& {(3,11)} && {(1,12)} &&&&&&& {(3,11)} && {(1,12)} \\
	{(1,10)} && {(2,10)} &&&&&&&&& {(2,10)} &&&&&&&&& {(2,10)} &&&&&&&&& {(2,10)} \\
	&&&& {(3,9)} &&&&& {(1,10)} &&&& {(3,9)} &&&&& {(1,10)} &&&& {(3,9)} &&&&& {(1,10)} &&&& {(3,9)} \\
	&& {(2,8)} &&&&&&&&& {(2,8)} &&&&&&&&& {(2,8)} &&&&&&&&& \textcolor{rgb,255:red,255;green,0;blue,8}{{(2,8)}} \\
	&&&& \textcolor{rgb,255:red,255;green,0;blue,8}{{(3,7)}} && {(1,8)} &&&&&&& {(3,7)} && {(1,8)} &&&&&&&&&&&&&&&&&& {(1,8)} \\
	&&&&&&&&&&&&&&&&&&&&&& {(3,7)} && {(1,8)} &&&&& \textcolor{rgb,255:red,5;green,228;blue,1}{{(2,6)}} \\
	&&&& \textcolor{rgb,255:red,5;green,228;blue,1}{{(3,5)}} &&&&&&&&& {\mathbf{(3,5)}} &&&&&&& {\mathbf{(2,6)}} &&&&&&&&&&& \textcolor{rgb,255:red,255;green,0;blue,8}{{(3,7)}} \\
	&& \textcolor{rgb,255:red,255;green,0;blue,8}{{(2,6)}} &&&&&&& {(1,6)} && {(2,6)} &&&&&&& {(1,6)} \\
	{(1,6)} &&&&&&&&&&&&&&&&&&&& {(3,5)} &&&&&&&&&&& \textcolor{rgb,255:red,5;green,228;blue,1}{{(3,3)}} \\
	&& \textcolor{rgb,255:red,5;green,228;blue,1}{{(2,4)}} &&&&&&&&& {(2,4)} &&&&&&&&&&& {(3,3)} &&&&&&& \textcolor{rgb,255:red,255;green,0;blue,8}{{(3,5)}} \\
	&&&& \textcolor{rgb,255:red,255;green,0;blue,8}{{\mathbf{(3,3)}}} && {(1,4)} &&&&&&& {(3,3)} && {(1,4)} &&&&& {(2,4)} &&&&&&& {(1,6)} \\
	&&&&&&&&&&&&&&&&&&&&&& {(3,1)} && {(1,4)} &&&&& \textcolor{rgb,255:red,5;green,228;blue,1}{{(2,4)}} \\
	&&&& \textcolor{rgb,255:red,5;green,228;blue,1}{{(3,1)}} &&&&&&&&& {(3,1)} &&&&&&& {(2,0)} &&&&&&&&&&& {(3,1)} && {(1,4)} \\
	&& {(2,0)} &&&&&&&&& {(2,0)} &&&&&&&&&&& {(3,-1)} &&&&&&& {(2,0)} \\
	&&&& {(3,-1)} &&&&&&&&& {(3,-1)} &&&&& {(1,2)} && {(2,-2)} &&&&&&&&&&& {(3,-1)} \\
	{(1,2)} && {(2,-2)} &&&&&&& {(1,2)} && {(2,-2)} &&&&&&&&&&&&&&&& {(1,2)} && {(2,-2)}
	\arrow[from=1-3, to=2-5]
	\arrow[from=1-3, to=3-1]
	\arrow[from=1-12, to=2-14]
	\arrow[from=1-12, to=4-10]
	\arrow[from=1-21, to=2-23]
	\arrow[from=1-21, to=4-19]
	\arrow[from=1-30, to=2-32]
	\arrow[from=1-30, to=4-28]
	\arrow[from=2-5, to=3-3]
	\arrow[from=2-7, to=3-3]
	\arrow[from=2-14, to=3-12]
	\arrow[from=2-16, to=3-12]
	\arrow[from=2-23, to=3-21]
	\arrow[from=2-25, to=3-21]
	\arrow[from=2-32, to=3-30]
	\arrow[from=2-34, to=3-30]
	\arrow[from=3-1, to=5-3]
	\arrow[from=3-3, to=1-3]
	\arrow[from=3-3, to=4-5]
	\arrow[from=3-3, to=6-7]
	\arrow[from=3-12, to=1-12]
	\arrow[from=3-12, to=4-14]
	\arrow[from=3-12, to=6-16]
	\arrow[from=3-21, to=1-21]
	\arrow[from=3-21, to=4-23]
	\arrow[from=3-21, to=7-25]
	\arrow[from=3-30, to=1-30]
	\arrow[from=3-30, to=4-32]
	\arrow[from=3-30, to=6-34]
	\arrow[from=4-5, to=2-5]
	\arrow[from=4-5, to=5-3]
	\arrow[from=4-10, to=5-12]
	\arrow[from=4-14, to=2-14]
	\arrow[from=4-14, to=5-12]
	\arrow[from=4-19, to=5-21]
	\arrow[from=4-23, to=2-23]
	\arrow[from=4-23, to=5-21]
	\arrow[from=4-28, to=5-30]
	\arrow[from=4-32, to=2-32]
	\arrow[from=4-32, to=5-30]
	\arrow[from=5-3, to=3-3]
	\arrow[from=5-3, to=6-5]
	\arrow[from=5-3, to=10-1]
	\arrow[from=5-12, to=3-12]
	\arrow[from=5-12, to=6-14]
	\arrow[from=5-12, to=9-10]
	\arrow[from=5-21, to=3-21]
	\arrow[from=5-21, to=7-23]
	\arrow[from=5-21, to=9-19]
	\arrow[from=5-30, to=3-30]
	\arrow[draw=blue, from=5-30, to=7-30]
	\arrow[from=6-5, to=4-5]
	\arrow[draw=blue, from=6-5, to=8-5]
	\arrow[from=6-7, to=2-7]
	\arrow[from=6-7, to=9-3]
	\arrow[from=6-14, to=4-14]
	\arrow[from=6-14, to=8-14]
	\arrow[from=6-16, to=2-16]
	\arrow[from=6-16, to=9-12]
	\arrow[from=6-34, to=2-34]
	\arrow[draw=blue, from=6-34, to=5-30]
	\arrow[from=6-34, to=11-30]
	\arrow[from=7-23, to=4-23]
	\arrow[from=7-23, to=8-21]
	\arrow[from=7-23, to=11-23]
	\arrow[from=7-25, to=2-25]
	\arrow[from=7-25, to=8-21]
	\arrow[draw=blue, from=7-30, to=6-34]
	\arrow[from=7-30, to=8-32]
	\arrow[from=7-30, to=12-28]
	\arrow[from=8-5, to=9-3]
	\arrow[from=8-14, to=9-12]
	\arrow[from=8-14, to=12-14]
	\arrow[from=8-21, to=5-21]
	\arrow[from=8-21, to=10-21]
	\arrow[from=8-32, to=4-32]
	\arrow[draw=blue, from=8-32, to=10-32]
	\arrow[from=9-3, to=5-3]
	\arrow[draw=blue, from=9-3, to=11-3]
	\arrow[from=9-10, to=4-10]
	\arrow[from=9-10, to=9-12]
	\arrow[from=9-10, to=15-12]
	\arrow[from=9-12, to=5-12]
	\arrow[from=9-12, to=11-12]
	\arrow[from=9-19, to=4-19]
	\arrow[from=9-19, to=8-21]
	\arrow[from=9-19, to=14-21]
	\arrow[from=10-1, to=3-1]
	\arrow[draw=blue, from=10-1, to=9-3]
	\arrow[from=10-1, to=15-3]
	\arrow[from=10-21, to=7-23]
	\arrow[from=10-21, to=12-21]
	\arrow[from=10-32, to=11-30]
	\arrow[draw=blue, from=11-3, to=10-1]
	\arrow[from=11-3, to=12-5]
	\arrow[from=11-3, to=12-7]
	\arrow[from=11-12, to=8-14]
	\arrow[from=11-12, to=9-10]
	\arrow[from=11-12, to=12-16]
	\arrow[from=11-12, to=14-14]
	\arrow[from=11-23, to=10-21]
	\arrow[from=11-30, to=7-30]
	\arrow[draw=blue, from=11-30, to=13-30]
	\arrow[from=12-5, to=8-5]
	\arrow[draw=blue, from=12-5, to=14-5]
	\arrow[from=12-7, to=6-7]
	\arrow[from=12-7, to=17-3]
	\arrow[from=12-14, to=11-12]
	\arrow[from=12-16, to=6-16]
	\arrow[from=12-16, to=17-12]
	\arrow[from=12-21, to=9-19]
	\arrow[from=12-21, to=13-23]
	\arrow[from=12-21, to=13-25]
	\arrow[from=12-28, to=4-28]
	\arrow[draw=blue, from=12-28, to=11-30]
	\arrow[from=12-28, to=15-30]
	\arrow[from=13-23, to=11-23]
	\arrow[from=13-23, to=14-21]
	\arrow[from=13-25, to=7-25]
	\arrow[from=13-25, to=16-21]
	\arrow[draw=blue, from=13-30, to=12-28]
	\arrow[from=13-30, to=14-32]
	\arrow[from=13-30, to=14-34]
	\arrow[from=14-5, to=15-3]
	\arrow[from=14-14, to=12-14]
	\arrow[from=14-14, to=15-12]
	\arrow[from=14-21, to=12-21]
	\arrow[from=14-21, to=15-23]
	\arrow[from=14-21, to=16-19]
	\arrow[from=14-32, to=10-32]
	\arrow[from=14-32, to=15-30]
	\arrow[from=14-34, to=6-34]
	\arrow[from=14-34, to=17-30]
	\arrow[from=15-3, to=11-3]
	\arrow[from=15-3, to=16-5]
	\arrow[from=15-3, to=17-1]
	\arrow[from=15-12, to=11-12]
	\arrow[from=15-12, to=16-14]
	\arrow[from=15-12, to=17-10]
	\arrow[from=15-23, to=13-23]
	\arrow[from=15-23, to=16-21]
	\arrow[from=15-30, to=13-30]
	\arrow[from=15-30, to=16-32]
	\arrow[from=15-30, to=17-28]
	\arrow[from=16-5, to=14-5]
	\arrow[from=16-5, to=17-3]
	\arrow[from=16-14, to=14-14]
	\arrow[from=16-14, to=17-12]
	\arrow[from=16-19, to=9-19]
	\arrow[from=16-21, to=14-21]
	\arrow[from=16-32, to=14-32]
	\arrow[from=16-32, to=17-30]
	\arrow[from=17-1, to=10-1]
	\arrow[from=17-3, to=15-3]
	\arrow[from=17-10, to=9-10]
	\arrow[from=17-12, to=15-12]
	\arrow[from=17-28, to=12-28]
	\arrow[from=17-30, to=15-30]
\end{tikzcd}
}
\caption{$\Gam(\sfC_3)_{(3,2,3),(7,6,5)}\xlrarrow{\mu_{(3,3)}}\bullet\xlrarrow{\mu_{(3,5)}}\bullet\xlrarrow{\mu_{(2,6)}}
\bullet\cong\Gam(\sfC_3)_{(2,3,2),(8,7,6)}$}
\label{fig:A2-C3}
\end{figure}

(D) Note, that this configuration, with a branching point, can only occur (with $d=1$) in the simply laced cases.  The verification is very similar to type (A) above, and we omit it. 
\end{proof}

\subsection{The  $\sfB_2$-braid relation}
\label{subsec-5.2}
Suppose now that the parabolic subgroup $W(C)_{\{i_k,i_{k+1}\}}$ is isomorphic to $W(\sfB_2)$, and
$(i_{k+2},i_{k+3})=(i_k,i_{k+1})$. Then we may consider the reduced expression $\bi'$, which is obtained
 from $\bi$ by the $\sfB_2$-braid move at $k$ with 
 $(i'_k, i'_{k+1},i'_{k+2},i'_{k+3})=(i_{k+1},i_{k},i_{k+1},i_k)$ and all other letters unchanged. 
 Our aim is here to connect, for an adequately modified $\br'$, the quiver $\Gam_{\bi',\br'}$ to $\Gam_{\bi,\br}$ by an explicit sequence of mutations. More precisely:

\begin{Prop} \label{prp:B2-braid}
Consider a reduced expression $\bi=(i_1,\ldots, i_l)$ of some
element $w\in W(C)$, and  $\br=(r_1,\cdots, r_l)\in\Z^l$  adapted to $\bi$.
\begin{itemize}
\item[(a)] 
Suppose that for some $1\leq k\leq l-3$  we  have
\begin{itemize}
\item $i_k=i_{k+2}$ and $i_{k+1}=i_{k+3}$ ,
\item $c_{i_{k+1},i_k}=-2$,
\item $(r_k,r_{k+1},r_{k+2},r_{k+3})=(r_k,r_k-4,r_k-6,r_k-10)$ and 
\item $r_{k-1}+b_{i_{k-1},i_{k+1}}\geq r_k+6$ where we agree that $r_0=\infty$.
\end{itemize}
Then we have a quiver isomorphism $\Gam_{\bi',\br'}\cong\mu_\bullet(\Gam_{\bi,\br})$, where  $\mu_\bullet$ is a sequence of 23 quiver mutations,  $\bi'$ is obtained from $\bi$
by replacing $(i_k, i_{k+1},i_{k+2},i_{k+3})$ by $(i_{k+1},i_k,i_{k+3},i_{k+2})$
and $\br'$ is obtained from $\br$ by replacing $(r_k,r_{k+1},r_{k+2},r_{k+3})$ by
$(r_p+6, r_{p+1}+8, r_{p+2}+6, r_{p+3}+8)$. In particular, the reduced expression $\bi'$ is obtained from $\bi$ by a braid move of type $\sfB_2$.
\item[(b)]
Suppose that  for some $4\leq k\leq l$ we have
\begin{itemize}
\item $i_{k-2}=i_k$ and $i_{k-3}=i_{k-1}$,
\item $c_{i_{k-1},i_k}=-2$,
\item $(r_{k-3},r_{k-2},r_{k-1},r_{k})=(r_k+8,r_k+6,r_k+2,r_k)$ and 
\item $r_{k}+b_{i_{k},i_{k+1}}\geq r_{k+1}+8$ where we agree that $r_{l+1}=-\infty$.
\end{itemize}
Then we have a quiver isomorphism $\Gam_{\bi',\br'}\cong\mu'_\bullet(\Gam_{\bi,\br})$, where  $\mu'_\bullet$ is a sequence of 23 quiver mutations,
$\bi'$ is obtained from $\bi$
by replacing $(i_{k-3}, i_{k-2},i_{k-1},i_{k})$ by $(i_{k-2},i_{k-3},i_{k},i_{k-1})$
and $\br'$ is obtained from $\br$ by replacing $(r_{k-3},r_{k-2},r_{k-1},r_{k})$ by
$(r_{k-3}-6, r_{k-2}-8, r_{p+2}-6, r_{k}-8)$.
In particular, the reduced expression $\bi'$ is obtained from $\bi$ by a braid move of type $\sfB_2$.
\end{itemize}
\end{Prop}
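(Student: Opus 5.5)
The plan is a local verification: reduce to a rank-two computation in type $\sfB_2$ (with the neighbouring columns as spectators), then exhibit the 23 mutations explicitly, in the same way as Proposition~\ref{prp:A2-braid} was handled.

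\textbf{Step 1: localization.} The Dynkin diagram contains the double edge $i_k = i_{k+1}$. The quiver $\Gam_{\bi,\br}$ differs from the basic quiver $\Gam$ only through the vertical and oblique $\beta$-arrows of Definition~\ref{Def2-7}, whose positions are recorded by the $\rho_p$. The descent conditions $r_{k-1}+b_{i_{k-1},i_{k+1}}\geq r_k+6$ in (a), and $r_k+b_{i_k,i_{k+1}}\geq r_{k+1}+8$ in (b), guarantee that the letters of $\bi$ outside positions $k,\dots,k+3$ (resp.\ $k-3,\dots,k$) create inserted vertices far away from the window of columns $i_k,i_{k+1}$ where the mutations will take place. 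Using Proposition~\ref{prop:Gam-basic} and Corollary~\ref{cor:sgam}, all arrows at the vertices being mutated are described explicitly in terms of $\rho_k,\dots,\rho_{k+3}$. Consequently it suffices to check the statement when:
\begin{itemize}
\item $\bi=(i_k,i_{k+1},i_k,i_{k+1})$ in a rank-two or rank-three configuration;
\item a possible third column $h$ adjacent to one of $i_k,i_{k+1}$ is present (types $\sfB_n$, $\sfC_n$, $\sfF_4$).
\end{itemize}
As in Figure~\ref{fig:A2-A4}, I expect the column $h$ to be unaffected. The only arrows between column $h$ and the window are $\alpha$-arrows. The mutations occur at inner vertices of the window, and after each mutation these arrows either reappear with the same shape or get shifted together with the window vertices. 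This follows from the fact that $b_{h,i}\neq 0$ only for one window column $i$, together with the mutation rule: a 2-path through a mutated vertex produces a new arrow to $h$ that cancels against an existing one. I will check this once, on the vertices of the window adjacent to $h$.

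\textbf{Step 2: the rank-two computation.} For (a), take type $\sfB_2$ with $\bi=(2,1,2,1)$ or $(1,2,1,2)$, according to whether $d_{i_k}=1$ or $2$, and with $\br=(r,r-4,r-6,r-10)$. I will write down the finite window of $\Gam_{\bi,\br}$ between the topmost red vertex and the lowest green vertex. I then construct the 23-step sequence greedily. Start with the mutations that move the green/red pairs as in Lemma~\ref{Lem28} and Remark~\ref{rem:lowering}. Then alternate between the two columns, guided by the $g$-vector knitting of Proposition~\ref{prp:gknit}: at each stage mutate at a vertex whose stabilized $g$-vector must change to match the target seed $\Gam_{\bi',\br'}$. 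This provides a matching criterion. The target seed has known stabilized $g$-vectors, computed via $t_{i_{k+1}}t_{i_k}t_{i_{k+1}}t_{i_k}=t_{i_k}t_{i_{k+1}}t_{i_k}t_{i_{k+1}}$, the braid relation for the $t_i$. The sequence is complete when the $g$-vectors of the two seeds coincide, and the quiver is then checked directly. After recording the 23 mutations, the isomorphism with $\Gam_{\bi',\br'}$ is verified vertex by vertex, with $\br'$ shifted by $(6,8,6,8)$ as stated.

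\textbf{Step 3: part (b).} Part (b) should follow from (a) and does not need a new computation. The conditions in (b) say that the four letters sit at the end of the window with the minimal descent pattern $(r+8,r+6,r+2,r)$. Using Lemma~\ref{Lem28}(i),(ii), that is, upward and downward translations of green vertices by finitely many mutations, I first move $\br$ into the configuration of (a) and apply the 23-step sequence. I then translate back to the claimed $\br'$. Alternatively, one can take the inverse of the sequence of (a), read for the opposite reduced word; $\mu'_\bullet$ would be this reversed sequence, up to relabelling.

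The main obstacle is Step 2: actually finding the 23 mutations, and, for the non-simply-laced case, keeping track of the oblique $\beta$-arrows forming 3-cycles (Figure~\ref{fig:1-1c}). These can create or cancel arrows in unexpected ways. The $g$-vector guidance is my first attempt. If it fails, I would fall back on a computer-assisted search over mutation sequences inside the finite window, as was evidently done for $\sfG_2$ in Appendix~\ref{App:G2-braid}.
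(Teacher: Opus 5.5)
Your overall architecture matches the paper's: a local verification in rank two with spectator columns, and (b) obtained by reversing (a). But the proof itself is missing. The content of the proposition is an explicit mutation sequence together with a check of its effect, and Step~2 only describes a search procedure. Greedy matching of stabilized $g$-vectors comes with no guarantee of terminating, let alone after exactly 23 steps. A computer search ``if it fails'' is not an argument either. Until a sequence is written down and the resulting quiver is compared with $\Gam_{\bi',\br'}$, nothing has been proved.

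The idea the paper uses to find the sequence is unfolding. It does the computation in type $\sfF_4$, whose unfolding is $\tsfF_4\cong\sfE_6$. There the $\sfB_2$-parabolic becomes the $\sfA_3$-parabolic $\{2^+,3^o,2^-\}$, and two reduced words of its longest element are joined by commutation moves and $\sfA_2$-braid moves. Each $\sfA_2$-braid move lifts to a 4-step mutation sequence on intermediate quivers attached to reduced words in the unfolded Weyl group, analogously to Proposition~\ref{prp:A2-braid}. Interleaving these with shift mutations gives the explicit sequence of Lemma~\ref{Lem:B2-F4-braid}, of length $1+4+1+4+2+4+2+4+1=23$, checked on the pictures.

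Your localization in Step~1 is also wrong as stated. The arrows between a spectator column and the window do not merely reappear or get shifted along with the window. As remarked after Lemma~\ref{Lem:B2-F4-braid}, several oblique arrows joining columns $1$ and $4$ of $\sfF_4$ to columns $2$ and $3$ change nontrivially during the sequence; only the vertical arrows of the spectator columns stay fixed. So the spectators must be carried through the whole computation. There are two kinds of them: attached to the long node, as in $\sfB_n$, or to the short node, as in $\sfC_n$. Your single spectator column $h$ does not cover $\sfF_4$, where both kinds occur at once and could a priori interact through the mutated vertices. The paper handles everything with the single $\sfF_4$ computation and then deletes never-mutated columns to get $\sfB_2$, $\sfB_n$ and $\sfC_n$. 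This is legitimate because mutation at $v$ commutes with passing to a full subquiver containing $v$.

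Two smaller points:
\begin{itemize}
\item In (a), Lemma~\ref{lem:CartanM} forces $d_{i_k}=2$ and $d_{i_{k+1}}=1$, so only one word occurs.
\item In Step~3 your first option cannot work: the word in (b) starts at the short node, and no shift of $\br$ puts it in the configuration of (a). Your second option is the right one and is what the paper does. The output $(r+6,r+4,r,r-2)$ of (a) has exactly the pattern $(s+8,s+6,s+2,s)$ of (b) with $s=r-2$, so $\mu'_\bullet$ is $\mu_\bullet$ run backwards.
\end{itemize}
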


\begin{proof}
It is sufficient to show part~(a), since part~(b) follows from part~(a) by applying the same mutation sequence, up to permutation and shift of spectral parameters, in opposite order. 

For~(a)  we distinguish several cases, where we note that, under the given hypothesis,
$C$ is either of type $\sfF_4$, of type $\sfB_n$ with $n\geq 2$, or of type $\sfC_n$ with
$n\geq 3$. 

If $C$ is of type $\sfF_4$, this follows readily from  Lemma~\ref{Lem:B2-F4-braid} below, up to a shift of spectral parameters. Moreover, condition on $r_{k-1}$ ensures that $\br'$ is indeed adapted to $\bi'$.

If $C$ is of type $\sfB_2$, under the given hypothesis $w=w_0$, and we can apply Lemma~\ref{Lem:B2-F4-braid} directly up to relabelling, after removing from 
$\Gam(\sfF_4)_{(2,3,2,3),(8,4,3,-3)}$ the $1$- and $4$-columns. 

Now, let $C$ be of type $\sfB_n$ with $n\geq 3$. After removing from 
$\Gam(\sfF_4)_{(2,3,2,3),(8,4,2,-2)}$ the
$1$-column, we obtain $\Gam(\sfB_3)_{(2,3,2,3),(8,4,2,-2)}$ and the same mutation sequence from Lemma~\ref{Lem:B2-F4-braid} works.  After replacing the first components of the vertices of this quiver by $i\mapsto n+i-3$ for $i=1,2,3$ and an adequate shift
of the spectral parameters, we can identify the non-periodic part of this new quiver with a convex subquiver of $\Gam(C)_{\bi,\br}$. Since, according to Lemma~\ref{Lem:B2-F4-braid}, the vertical arrows in the
$1$-column of $\Gam(\sfB_3)_{(2,3,2,3),(8,4,2,-2)}$ are not affected by our mutation sequence, we conclude that our mutation sequence (up to relabelling)  indeed transforms
$\Gam_{\bi,\br}$ into a quiver which is isomorphic to $\Gam_{\bi',\br'}$ in this case.

Finally, if $C$ is of type $\sfC_n$ with $n\geq 3$, we proceed similarly to the case
$\sfB_n$ with $n\geq 3$:  In this case we remove from $\Gam(\sfF_4)_{(2,3,2,3),(8,4,2,-2)}$ the two columns with vertices of the form $(1,r)$ in order to obtain
a quiver which is (up to relabelling) of the form $\Gam(\sfC_3)_{(3,2,3,2),(8,4,2,-2)}$
(note, that in a Dynkin Diagram of type $\sfC_3$ the unique long simple root corresponds to the vertex $3$).  Then we can argue as in the case $\sfB_n$. 
\end{proof}

\subsection{The $\sfB_2$-braid relation in Dynkin type $\sfF_4$} \label{ssec:B2-F4}
Let us consider the Dynkin diagram of type $F_4$ and its unfolding $\widetilde{\sfF}_4$:
\[
\sfF_4\df\quad \xymatrix{1\ar@{-}[r] &2 \ar@{=}[r]|{>} & 3 \ar@{-}[r] &4}
\qquad\widetilde{\sfF}_4\cong\sfE_6\df\quad
\vcenter{\xymatrix{
1^{+}\ar@{-}[r] & 2^{+}\ar@{-}[rd]\\
&& 3^{o}\ar@{-}[r] &4^{o}\\
1^{-}\ar@{-}[r] & 2^{-}\ar@{-}[ru]
}}
\]
together with the projection $?'\df\{1^{\pm}, 2^{\pm}, 3^{o}, 4^{o}\}\ra \{1,2,3,4\}, i^{*}\mapsto i$. In this section we agree that
$I=\{1,2,3,4\}$ and $C\in\Z^{I\times I}$ is the Cartan matrix corresponding
to our (standard) labeling of the Dynkin Diagram $\sfF_4$.  In particular,
we have $c_{3,2}=-2$ and $D=\diag(2,2,1,1)$ is the minimal left symmetrizer
of $C$. As usual $B:=D\cdot C$. 

Moreover, we may choose the basic quiver $\Gam=\Gam(\sfF_4)$ such that $V=(\{1,2,3\}\times 2\Z)\cup (\{4\}\times (2\Z+1)\Z)$. 

\begin{Lem} \label{Lem:B2-F4-braid}
With the above conventions, consider the quiver $\Gam^{(0)}:=\Gam(\sfF_4)_{(2,3,2,3),(8,4,2,-2)}$,
and the following sequence of recursively defined quivers:
\begin{align*}
\Gam^{(1)} &:=  \mu_{(2,8)}(\Gam^{(0)}), &
\Gam^{(2)} &:=  \mu_{(3,6)}\circ\mu_{(3,4)}\circ\mu_{(2,2)}\circ\mu_{(2,-2)}(\Gam^{(1)}),\\
\Gam^{(3)} &:=  \mu_{(3,2)}(\Gam^{(2)}), &
\Gam^{(4)} &:=  \mu_{(2,-4)}\circ\mu_{(3,0)}\circ\mu_{(3,-2)}\circ\mu_{(3,-4)}(\Gam^{(3)}),\\
\Gam^{(5)} &:=  \mu_{(2,0)}\circ\mu_{(3,8))}(\Gam^{(4)}), &
\Gam^{(6)} &:=  \mu_{(3,12)}\circ\mu_{(3,10)}\circ\mu_{(2,8)}\circ\mu_{(2,4)}(\Gam^{(5)}),\\
\Gam^{(7)} &:=  \mu_{(3,2)}\circ\mu_{(3,8)}(\Gam^{(6)}), &
\Gam^{(8)} &:=  \mu_{(2,6)}\circ\mu_{(3,6)}\circ\mu_{(3,4))}\circ\mu_{(2,2)}(\Gam^{(7)}),\\
\Gam^{(9)} &:=  \mu_{(3,0)}(\Gam^{(8)}).
\end{align*}
Then $\Gam^{(9)}\cong\Gam(\sfF_4)_{(3,2,3,2),(14,12,8,6)}$.  
The relevant parts of those two quivers are displayed in Figure~\ref{fig:6-F4-braid}.
\begin{figure}
    {\scalebox{0.6}{\begin{tikzcd}[cramped,sep=small]
	&& {(2,16)} && {(3,16)} &&& \textcolor{rgb,255:red,128;green,128;blue,128}{{(1,16)}} \\
	\textcolor{rgb,255:red,128;green,128;blue,128}{{(1,14)}} & \textcolor{rgb,255:red,128;green,128;blue,128}{{(1,15)}} \\
	&&&& {(3,14)} && {(2,14)} \\
	& \textcolor{rgb,255:red,128;green,128;blue,128}{{(1,13)}} \\
	&& {(2,12)} && {(3,12)} &&& \textcolor{rgb,255:red,128;green,128;blue,128}{{(1,12)}} \\
	\textcolor{rgb,255:red,128;green,128;blue,128}{{(1,10)}} & \textcolor{rgb,255:red,128;green,128;blue,128}{{(4,11)}} \\
	&&&& {(3,10)} && {(2,10)} \\
	& \textcolor{rgb,255:red,128;green,128;blue,128}{{(4,9)}} & \textcolor{rgb,255:red,255;green,0;blue,17}{{(2,8)}} &&&&& \textcolor{rgb,255:red,128;green,128;blue,128}{{(1,8)}} \\
	&&&& {(3,8)} \\
	& \textcolor{rgb,255:red,128;green,128;blue,128}{{(4,7)}} & \textcolor{rgb,255:red,0;green,255;blue,17}{{(2,4)}} &&&& \textcolor{rgb,255:red,255;green,0;blue,17}{{(2,6)}} \\
	&&&& {(3,6)} \\
	\textcolor{rgb,255:red,128;green,128;blue,128}{{(1,6)}} & \textcolor{rgb,255:red,128;green,128;blue,128}{{(4,5)}} &&&&& \textcolor{rgb,255:red,0;green,255;blue,17}{{(2,2)}} \\
	&&&& \textcolor{rgb,255:red,255;green,0;blue,17}{{(3,4)}} \\
	&& {(2,0)} &&&&& \textcolor{rgb,255:red,128;green,128;blue,128}{{(1,4)}} \\
	&&&& \textcolor{rgb,255:red,0;green,255;blue,17}{{(3,2)}} \\
	\textcolor{rgb,255:red,128;green,128;blue,128}{{(1,2)}} & \textcolor{rgb,255:red,128;green,128;blue,128}{{(4,3)}} &&&&& \textcolor{rgb,255:red,255;green,0;blue,17}{{(2,-2)}} \\
	&&&& {(3,0)} \\
	& \textcolor{rgb,255:red,128;green,128;blue,128}{{(4,1)}} & \textcolor{rgb,255:red,255;green,0;blue,17}{{(2,-4)}} &&&& \textcolor{rgb,255:red,0;green,255;blue,17}{{(2,-6)}} \\
	&&&& {(3,-2)} \\
	& \textcolor{rgb,255:red,128;green,128;blue,128}{{(4,-1)}} & \textcolor{rgb,255:red,0;green,255;blue,17}{{(2,-8)}} &&&&& \textcolor{rgb,255:red,128;green,128;blue,128}{{(1,0)}} \\
	&&&& \textcolor{rgb,255:red,255;green,0;blue,17}{{(3,-4)}} \\
	\textcolor{rgb,255:red,128;green,128;blue,128}{{(1,-2)}} &&&&&& {(2,-10)} \\
	&&&& \textcolor{rgb,255:red,0;green,255;blue,17}{{(3,-6)}} \\
	& \textcolor{rgb,255:red,128;green,128;blue,128}{{(4,-3)}} & {(2,-12)} \\
	&&&& {(3,-8)} &&& \textcolor{rgb,255:red,128;green,128;blue,128}{{(1,-4)}}
	\arrow[draw={rgb,255:red,128;green,128;blue,128}, from=1-3, to=2-1]
	\arrow[from=1-3, to=3-5]
	\arrow[color={rgb,255:red,128;green,128;blue,128}, from=1-5, to=2-2]
	\arrow[from=1-5, to=3-7]
	\arrow[draw={rgb,255:red,128;green,128;blue,128}, from=1-8, to=3-7]
	\arrow[color={rgb,255:red,128;green,128;blue,128}, from=2-1, to=5-3]
	\arrow[draw={rgb,255:red,128;green,128;blue,128}, from=2-2, to=3-5]
	\arrow[from=3-5, to=1-5]
	\arrow[draw={rgb,255:red,128;green,128;blue,128}, from=3-5, to=4-2]
	\arrow[from=3-5, to=5-3]
	\arrow[from=3-7, to=5-5]
	\arrow[draw={rgb,255:red,128;green,128;blue,128}, from=3-7, to=5-8]
	\arrow[draw={rgb,255:red,128;green,128;blue,128}, from=4-2, to=2-2]
	\arrow[draw={rgb,255:red,128;green,128;blue,128}, from=4-2, to=5-5]
	\arrow[from=5-3, to=1-3]
	\arrow[draw={rgb,255:red,128;green,128;blue,128}, from=5-3, to=6-1]
	\arrow[from=5-3, to=7-5]
	\arrow[from=5-5, to=3-5]
	\arrow[draw={rgb,255:red,128;green,128;blue,128}, from=5-5, to=6-2]
	\arrow[from=5-5, to=7-7]
	\arrow[draw={rgb,255:red,128;green,128;blue,128}, from=5-8, to=1-8]
	\arrow[draw={rgb,255:red,128;green,128;blue,128}, from=5-8, to=7-7]
	\arrow[draw={rgb,255:red,128;green,128;blue,128}, from=6-1, to=2-1]
	\arrow[draw={rgb,255:red,128;green,128;blue,128}, from=6-1, to=8-3]
	\arrow[draw={rgb,255:red,128;green,128;blue,128}, from=6-2, to=4-2]
	\arrow[draw={rgb,255:red,128;green,128;blue,128}, from=6-2, to=7-5]
	\arrow[from=7-5, to=5-5]
	\arrow[draw={rgb,255:red,128;green,128;blue,128}, from=7-5, to=8-2]
	\arrow[from=7-5, to=8-3]
	\arrow[from=7-7, to=3-7]
	\arrow[draw={rgb,255:red,128;green,128;blue,128}, from=7-7, to=8-8]
	\arrow[from=7-7, to=9-5]
	\arrow[draw={rgb,255:red,128;green,128;blue,128}, from=8-2, to=6-2]
	\arrow[draw={rgb,255:red,128;green,128;blue,128}, from=8-2, to=9-5]
	\arrow[from=8-3, to=5-3]
	\arrow[draw={rgb,255:red,0;green,8;blue,255}, from=8-3, to=10-3]
	\arrow[draw={rgb,255:red,128;green,128;blue,128}, from=8-8, to=5-8]
	\arrow[draw={rgb,255:red,128;green,128;blue,128}, from=8-8, to=10-7]
	\arrow[from=9-5, to=7-5]
	\arrow[draw={rgb,255:red,128;green,128;blue,128}, from=9-5, to=10-2]
	\arrow[from=9-5, to=10-7]
	\arrow[draw={rgb,255:red,128;green,128;blue,128}, from=10-2, to=8-2]
	\arrow[draw={rgb,255:red,128;green,128;blue,128}, from=10-2, to=11-5]
	\arrow[from=10-3, to=11-5]
	\arrow[draw={rgb,255:red,128;green,128;blue,128}, from=10-3, to=12-1]
	\arrow[from=10-7, to=7-7]
	\arrow[draw={rgb,255:red,0;green,8;blue,255}, from=10-7, to=12-7]
	\arrow[from=11-5, to=9-5]
	\arrow[draw={rgb,255:red,128;green,128;blue,128}, from=11-5, to=12-2]
	\arrow[from=11-5, to=14-3]
	\arrow[draw={rgb,255:red,128;green,128;blue,128}, from=12-1, to=6-1]
	\arrow[draw={rgb,255:red,128;green,128;blue,128}, from=12-1, to=14-3]
	\arrow[draw={rgb,255:red,128;green,128;blue,128}, from=12-2, to=10-2]
	\arrow[draw={rgb,255:red,128;green,128;blue,128}, from=12-2, to=13-5]
	\arrow[from=12-7, to=13-5]
	\arrow[draw={rgb,255:red,128;green,128;blue,128}, from=12-7, to=14-8]
	\arrow[from=13-5, to=11-5]
	\arrow[draw={rgb,255:red,0;green,8;blue,255}, from=13-5, to=15-5]
	\arrow[from=14-3, to=10-3]
	\arrow[from=14-3, to=13-5]
	\arrow[draw={rgb,255:red,128;green,128;blue,128}, from=14-3, to=16-1]
	\arrow[from=14-3, to=17-5]
	\arrow[draw={rgb,255:red,128;green,128;blue,128}, from=14-8, to=8-8]
	\arrow[draw={rgb,255:red,128;green,128;blue,128}, from=14-8, to=16-7]
	\arrow[from=15-5, to=14-3]
	\arrow[draw={rgb,255:red,128;green,128;blue,128}, from=15-5, to=16-2]
	\arrow[from=15-5, to=16-7]
	\arrow[draw={rgb,255:red,128;green,128;blue,128}, from=16-1, to=12-1]
	\arrow[draw={rgb,255:red,128;green,128;blue,128}, from=16-1, to=18-3]
	\arrow[draw={rgb,255:red,128;green,128;blue,128}, from=16-2, to=12-2]
	\arrow[draw={rgb,255:red,128;green,128;blue,128}, from=16-2, to=17-5]
	\arrow[from=16-7, to=12-7]
	\arrow[draw={rgb,255:red,0;green,8;blue,255}, from=16-7, to=18-7]
	\arrow[from=17-5, to=15-5]
	\arrow[draw={rgb,255:red,128;green,128;blue,128}, from=17-5, to=18-2]
	\arrow[from=17-5, to=18-3]
	\arrow[draw={rgb,255:red,128;green,128;blue,128}, from=18-2, to=16-2]
	\arrow[draw={rgb,255:red,128;green,128;blue,128}, from=18-2, to=19-5]
	\arrow[from=18-3, to=14-3]
	\arrow[draw={rgb,255:red,0;green,8;blue,255}, from=18-3, to=20-3]
	\arrow[from=18-7, to=19-5]
	\arrow[draw={rgb,255:red,128;green,128;blue,128}, from=18-7, to=20-8]
	\arrow[from=19-5, to=17-5]
	\arrow[draw={rgb,255:red,128;green,128;blue,128}, from=19-5, to=20-2]
	\arrow[from=19-5, to=22-7]
	\arrow[draw={rgb,255:red,128;green,128;blue,128}, from=20-2, to=18-2]
	\arrow[draw={rgb,255:red,128;green,128;blue,128}, from=20-2, to=21-5]
	\arrow[from=20-3, to=21-5]
	\arrow[draw={rgb,255:red,128;green,128;blue,128}, from=20-3, to=22-1]
	\arrow[draw={rgb,255:red,128;green,128;blue,128}, from=20-8, to=14-8]
	\arrow[draw={rgb,255:red,128;green,128;blue,128}, from=20-8, to=22-7]
	\arrow[from=21-5, to=19-5]
	\arrow[draw={rgb,255:red,0;green,8;blue,255}, from=21-5, to=23-5]
	\arrow[draw={rgb,255:red,128;green,128;blue,128}, from=22-1, to=16-1]
	\arrow[draw={rgb,255:red,128;green,128;blue,128}, from=22-1, to=24-3]
	\arrow[from=22-7, to=18-7]
	\arrow[draw={rgb,255:red,0;green,8;blue,255}, from=22-7, to=21-5]
	\arrow[from=22-7, to=25-5]
	\arrow[draw={rgb,255:red,128;green,128;blue,128}, from=22-7, to=25-8]
	\arrow[draw={rgb,255:red,0;green,8;blue,255}, from=23-5, to=22-7]
	\arrow[draw={rgb,255:red,128;green,128;blue,128}, from=23-5, to=24-2]
	\arrow[from=23-5, to=24-3]
	\arrow[draw={rgb,255:red,128;green,128;blue,128}, from=24-2, to=20-2]
	\arrow[draw={rgb,255:red,128;green,128;blue,128}, from=24-2, to=25-5]
	\arrow[from=24-3, to=20-3]
	\arrow[from=25-5, to=23-5]
	\arrow[draw={rgb,255:red,128;green,128;blue,128}, from=25-8, to=20-8]
\end{tikzcd}
}\quad
    \scalebox{0.6}{\begin{tikzcd}[cramped,sep=small]
	&&&& {(3,16)} \\
	& \textcolor{rgb,255:red,128;green,128;blue,128}{{(4,15)}} & {(2,16)} &&&&& \textcolor{rgb,255:red,128;green,128;blue,128}{{(1,16)}} \\
	&&&& \textcolor{rgb,255:red,250;green,0;blue,12}{{(3,14)}} \\
	\textcolor{rgb,255:red,128;green,128;blue,128}{{(1,14)}} &&&&&& {(2,14)} \\
	&&&& \textcolor{rgb,255:red,21;green,255;blue,0}{{(3,12)}} \\
	& \textcolor{rgb,255:red,128;green,128;blue,128}{{(4,13)}} & \textcolor{rgb,255:red,250;green,0;blue,12}{{(2,12)}} &&&&& \textcolor{rgb,255:red,128;green,128;blue,128}{{(1,12)}} \\
	&&&& {(3,10)} \\
	& \textcolor{rgb,255:red,128;green,128;blue,128}{{(4,11)}} & \textcolor{rgb,255:red,21;green,255;blue,0}{{(2,4)}} &&&& \textcolor{rgb,255:red,250;green,0;blue,12}{{(2,10)}} \\
	&&&& {\mathbf{(2,8)}} \\
	\textcolor{rgb,255:red,128;green,128;blue,128}{{(1,10)}} & \textcolor{rgb,255:red,128;green,128;blue,128}{{(4,9)}} &&&&& \textcolor{rgb,255:red,21;green,255;blue,0}{{(2,6)}} \\
	&&&& \textcolor{rgb,255:red,250;green,0;blue,12}{{(3,8)}} \\
	&& {(2,0)} &&&&& \textcolor{rgb,255:red,128;green,128;blue,128}{{(1,8)}} \\
	&&&& \textcolor{rgb,255:red,21;green,255;blue,0}{{(3,4)}} \\
	\textcolor{rgb,255:red,128;green,128;blue,128}{{(1,6)}} & \textcolor{rgb,255:red,128;green,128;blue,128}{{(4,7)}} &&&&& \textcolor{rgb,255:red,250;green,0;blue,12}{{\mathbf{(3,6)}}} \\
	&&&& {\mathbf{(2,2)}} \\
	& \textcolor{rgb,255:red,128;green,128;blue,128}{{(4,5)}} & \textcolor{rgb,255:red,250;green,0;blue,12}{{(2,-4)}} &&&& \textcolor{rgb,255:red,21;green,255;blue,0}{{(2,-2)}} \\
	&&&& {(3,2)} &&& \textcolor{rgb,255:red,128;green,128;blue,128}{{(1,4)}} \\
	& \textcolor{rgb,255:red,128;green,128;blue,128}{{(4,3)}} & \textcolor{rgb,255:red,21;green,255;blue,0}{{\mathbf{(3,0)}}} \\
	&&&& {(3,-2)} && {(2,-6)} \\
	\textcolor{rgb,255:red,128;green,128;blue,128}{{(1,2)}} & \textcolor{rgb,255:red,128;green,128;blue,128}{{(4,1)}} \\
	&& {(2,-8)} && {(3,-4)} &&& \textcolor{rgb,255:red,128;green,128;blue,128}{{(1,0)}} \\
	& \textcolor{rgb,255:red,128;green,128;blue,128}{{(4,-1)}} \\
	\textcolor{rgb,255:red,128;green,128;blue,128}{{(1,-2)}} &&&& {(3,-6)} && {(2,-10)} \\
	& \textcolor{rgb,255:red,128;green,128;blue,128}{{(4,-3)}} \\
	&& {(2,-12)} && {(3,-8)} &&& \textcolor{rgb,255:red,128;green,128;blue,128}{{(1,-4)}}
	\arrow[draw={rgb,255:red,128;green,128;blue,128}, from=1-5, to=2-2]
	\arrow[from=1-5, to=4-7]
	\arrow[draw={rgb,255:red,128;green,128;blue,128}, from=2-2, to=3-5]
	\arrow[from=2-3, to=3-5]
	\arrow[draw={rgb,255:red,128;green,128;blue,128}, from=2-3, to=4-1]
	\arrow[draw={rgb,255:red,128;green,128;blue,128}, from=2-8, to=4-7]
	\arrow[from=3-5, to=1-5]
	\arrow[draw={rgb,255:red,0;green,4;blue,255}, from=3-5, to=5-5]
	\arrow[draw={rgb,255:red,128;green,128;blue,128}, from=4-1, to=6-3]
	\arrow[from=4-7, to=3-5]
	\arrow[draw={rgb,255:red,128;green,128;blue,128}, from=4-7, to=6-8]
	\arrow[from=4-7, to=7-5]
	\arrow[from=5-5, to=4-7]
	\arrow[draw={rgb,255:red,128;green,128;blue,128}, from=5-5, to=6-2]
	\arrow[from=5-5, to=6-3]
	\arrow[draw={rgb,255:red,128;green,128;blue,128}, from=6-2, to=2-2]
	\arrow[draw={rgb,255:red,128;green,128;blue,128}, from=6-2, to=7-5]
	\arrow[from=6-3, to=2-3]
	\arrow[draw={rgb,255:red,0;green,4;blue,255}, from=6-3, to=8-3]
	\arrow[draw={rgb,255:red,128;green,128;blue,128}, from=6-8, to=2-8]
	\arrow[draw={rgb,255:red,128;green,128;blue,128}, from=6-8, to=8-7]
	\arrow[from=7-5, to=5-5]
	\arrow[draw={rgb,255:red,128;green,128;blue,128}, from=7-5, to=8-2]
	\arrow[from=7-5, to=8-7]
	\arrow[draw={rgb,255:red,128;green,128;blue,128}, from=8-2, to=6-2]
	\arrow[draw={rgb,255:red,128;green,128;blue,128}, from=8-2, to=9-5]
	\arrow[from=8-3, to=9-5]
	\arrow[draw={rgb,255:red,128;green,128;blue,128}, from=8-3, to=10-1]
	\arrow[from=8-7, to=4-7]
	\arrow[draw={rgb,255:red,0;green,4;blue,255}, from=8-7, to=10-7]
	\arrow[from=9-5, to=7-5]
	\arrow[draw={rgb,255:red,128;green,128;blue,128}, from=9-5, to=10-2]
	\arrow[from=9-5, to=12-3]
	\arrow[draw={rgb,255:red,128;green,128;blue,128}, from=10-1, to=4-1]
	\arrow[draw={rgb,255:red,128;green,128;blue,128}, from=10-1, to=12-3]
	\arrow[draw={rgb,255:red,128;green,128;blue,128}, from=10-2, to=8-2]
	\arrow[draw={rgb,255:red,128;green,128;blue,128}, from=10-2, to=11-5]
	\arrow[from=10-7, to=11-5]
	\arrow[draw={rgb,255:red,128;green,128;blue,128}, from=10-7, to=12-8]
	\arrow[from=11-5, to=9-5]
	\arrow[draw={rgb,255:red,0;green,4;blue,255}, from=11-5, to=13-5]
	\arrow[from=12-3, to=8-3]
	\arrow[from=12-3, to=11-5]
	\arrow[from=12-3, to=13-5]
	\arrow[draw={rgb,255:red,128;green,128;blue,128}, from=12-3, to=14-1]
	\arrow[from=12-3, to=15-5]
	\arrow[draw={rgb,255:red,128;green,128;blue,128}, from=12-8, to=6-8]
	\arrow[draw={rgb,255:red,128;green,128;blue,128}, from=12-8, to=14-7]
	\arrow[draw={rgb,255:red,128;green,128;blue,128}, from=13-5, to=14-2]
	\arrow[from=13-5, to=14-7]
	\arrow[draw={rgb,255:red,128;green,128;blue,128}, from=14-1, to=10-1]
	\arrow[draw={rgb,255:red,128;green,128;blue,128}, from=14-1, to=16-3]
	\arrow[draw={rgb,255:red,128;green,128;blue,128}, from=14-2, to=10-2]
	\arrow[draw={rgb,255:red,128;green,128;blue,128}, from=14-2, to=15-5]
	\arrow[from=14-7, to=10-7]
	\arrow[draw={rgb,255:red,0;green,4;blue,255}, from=14-7, to=16-7]
	\arrow[from=15-5, to=13-5]
	\arrow[draw={rgb,255:red,128;green,128;blue,128}, from=15-5, to=16-2]
	\arrow[from=15-5, to=16-3]
	\arrow[draw={rgb,255:red,128;green,128;blue,128}, from=16-2, to=14-2]
	\arrow[draw={rgb,255:red,128;green,128;blue,128}, from=16-2, to=17-5]
	\arrow[from=16-3, to=12-3]
	\arrow[draw={rgb,255:red,0;green,4;blue,255}, from=16-3, to=18-3]
	\arrow[from=16-7, to=17-5]
	\arrow[draw={rgb,255:red,128;green,128;blue,128}, from=16-7, to=17-8]
	\arrow[from=17-5, to=15-5]
	\arrow[draw={rgb,255:red,128;green,128;blue,128}, from=17-5, to=18-2]
	\arrow[from=17-5, to=19-7]
	\arrow[draw={rgb,255:red,128;green,128;blue,128}, from=17-8, to=12-8]
	\arrow[draw={rgb,255:red,128;green,128;blue,128}, from=17-8, to=19-7]
	\arrow[draw={rgb,255:red,128;green,128;blue,128}, from=18-2, to=16-2]
	\arrow[draw={rgb,255:red,128;green,128;blue,128}, from=18-2, to=19-5]
	\arrow[from=18-3, to=19-5]
	\arrow[draw={rgb,255:red,128;green,128;blue,128}, from=18-3, to=20-1]
	\arrow[from=19-5, to=17-5]
	\arrow[draw={rgb,255:red,128;green,128;blue,128}, from=19-5, to=20-2]
	\arrow[from=19-5, to=21-3]
	\arrow[from=19-7, to=16-7]
	\arrow[from=19-7, to=21-5]
	\arrow[draw={rgb,255:red,128;green,128;blue,128}, from=19-7, to=21-8]
	\arrow[draw={rgb,255:red,128;green,128;blue,128}, from=20-1, to=14-1]
	\arrow[draw={rgb,255:red,128;green,128;blue,128}, from=20-1, to=21-3]
	\arrow[draw={rgb,255:red,128;green,128;blue,128}, from=20-2, to=18-2]
	\arrow[draw={rgb,255:red,128;green,128;blue,128}, from=20-2, to=21-5]
	\arrow[from=21-3, to=18-3]
	\arrow[draw={rgb,255:red,128;green,128;blue,128}, from=21-3, to=23-1]
	\arrow[from=21-3, to=23-5]
	\arrow[from=21-5, to=19-5]
	\arrow[draw={rgb,255:red,128;green,128;blue,128}, from=21-5, to=22-2]
	\arrow[from=21-5, to=23-7]
	\arrow[draw={rgb,255:red,128;green,128;blue,128}, from=21-8, to=17-8]
	\arrow[draw={rgb,255:red,128;green,128;blue,128}, from=21-8, to=23-7]
	\arrow[draw={rgb,255:red,128;green,128;blue,128}, from=22-2, to=20-2]
	\arrow[draw={rgb,255:red,128;green,128;blue,128}, from=22-2, to=23-5]
	\arrow[draw={rgb,255:red,128;green,128;blue,128}, from=23-1, to=20-1]
	\arrow[color={rgb,255:red,128;green,128;blue,128}, from=23-1, to=25-3]
	\arrow[from=23-5, to=21-5]
	\arrow[color={rgb,255:red,128;green,128;blue,128}, from=23-5, to=24-2]
	\arrow[from=23-5, to=25-3]
	\arrow[from=23-7, to=19-7]
	\arrow[from=23-7, to=25-5]
	\arrow[color={rgb,255:red,128;green,128;blue,128}, from=23-7, to=25-8]
	\arrow[color={rgb,255:red,128;green,128;blue,128}, from=24-2, to=22-2]
	\arrow[color={rgb,255:red,128;green,128;blue,128}, from=24-2, to=25-5]
	\arrow[from=25-3, to=21-3]
	\arrow[from=25-5, to=23-5]
	\arrow[color={rgb,255:red,128;green,128;blue,128}, from=25-8, to=21-8]
\end{tikzcd}
}}
    \caption{$\Gam(\sfF_4)_{(2323), (8,4,2,-2)}=\Gam^{(0)}$ and $\Gam^{(9)}\cong\Gam_{(3232), (14,12,8,6)}$ (\emph{mutated})}
    \label{fig:6-F4-braid}
\end{figure}
Moreover, under this mutation sequence, at each step 
the vertical (up) arrows in the $1$- and $4$-columns are never changed.
\end{Lem}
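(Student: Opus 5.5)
The statement is a finite combinatorial identity about a specific locally finite quiver, so the plan is a direct verification that exploits the periodic structure of $\Gam(\sfF_4)_{\bi,\br}$. First I reduce to a finite window. Both $\Gam^{(0)}=\Gam(\sfF_4)_{(2,3,2,3),(8,4,2,-2)}$ and $\Gam(\sfF_4)_{(3,2,3,2),(14,12,8,6)}$ coincide with the basic quiver $\Gam$ of \S\ref{subsec-basic-quiver} far above, up to the labelled vertices. Below the modified region they coincide with $\Gam$ after the uniform relabelling of Definition~\ref{Def2-7}(iv), with second components shifted by $\sigma$. Each of the 23 mutations only changes arrows between neighbours of the mutated vertex. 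So I will fix a finite full subquiver $\Gam_{\mathrm{loc}}$ containing all mutated vertices and their neighbours at every stage. Concretely this means columns $2,3$ in the range $-12\le r\le 16$, together with the adjacent vertices of columns $1$ and $4$, as displayed in Figure~\ref{fig:6-F4-braid}. Outside this window nothing changes, and on its boundary only arrows to mutated vertices change.

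Next I write down $\Gam^{(0)}$ explicitly on this window using the fine description of \S\ref{subsec-fine}. I compute $\sigma,\tau,\rho_p$ for $\bi=(2,3,2,3)$ and $\br=(8,4,2,-2)$, which gives $d_2=2$ and $d_3=1$. From these I list the $\alpha$-arrows with their shifted endpoints, the vertical $\beta$-arrows $\beta_{i_pi_p}^{(r_p-d_{i_p}-2k)}$, and the oblique $\beta$-arrows attached to the $c_{32}=-2$ positions, namely the 3-cycles of Figure~\ref{fig:1-1c}. I will do the same for the target $\Gam(\sfF_4)_{(3,2,3,2),(14,12,8,6)}$. As a sanity check, I will confirm that $(14,12,8,6)$ is adapted to $(3,2,3,2)$ in the sense of Definition~\ref{def:adapt}.

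Then I apply the nine blocks $\Gam^{(1)},\dots,\Gam^{(9)}$ in order, recording the arrow set of the window after each elementary mutation by the Fomin--Zelevinsky rule. Within each block, the mutations at pairwise non-adjacent vertices commute, which cuts down the bookkeeping. I expect several intermediate stages to resemble quivers $\Gam_{\bi'',\br''}$ for commutation-equivalent words, or partial lowerings as in Lemma~\ref{Lem28} and Remark~\ref{rem:lowering}. Where that happens I will invoke those results instead of recomputing arrow by arrow. For instance, the block $\mu_{(3,0)}\mu_{(3,-2)}\mu_{(3,-4)}$ should act as a lowering/raising of a $3$-column pair.

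Finally I exhibit an explicit bijection of vertex labels from $\Gam^{(9)}$ to $\Gam(\sfF_4)_{(3,2,3,2),(14,12,8,6)}$, read off from the right-hand picture in Figure~\ref{fig:6-F4-braid} (e.g.\ the boldface relabelled vertices), and check that it preserves arrows with multiplicities. The supplementary claim needs its own check: at each step I must verify that the mutated vertex is never the source or target of a path of length two through a column-$1$ or column-$4$ vertical arrow. That is, no mutated vertex lies in columns $1,4$, and no 2-path $(1,r)\to v\to(1,r')$ or $(4,r)\to v\to(4,r')$ appears. Here $v$ must be a vertex of columns $2,3$ adjacent to column $1$ resp.\ $4$. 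The main obstacle is this middle stage: 23 mutations on a quiver with double-bond 3-cycles, where cancellations of 2-cycles must be tracked carefully. I would do it by machine, running a quiver-mutation script on the finite window, and present the key intermediate quivers as figures.
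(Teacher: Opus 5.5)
Your proposal is correct and follows the same route as the paper. The paper's proof is also a direct verification: it displays the relevant finite parts of the intermediate quivers $\Gam^{(1)},\dots,\Gam^{(8)}$ and reads off the isomorphism with $\Gam(\sfF_4)_{(3,2,3,2),(14,12,8,6)}$, together with the invariance of the column-$1$ and column-$4$ vertical arrows.

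Two small remarks, neither of which affects your full-window machine computation, which handles both automatically. First, the intermediate quivers are in general not of the form $\Gam(\sfF_4)_{\bi'',\br''}$; the remark after the lemma identifies them with quivers attached to reduced words in the unfolded Weyl group of type $\sfE_6$, the four-step blocks being $\sfA_2$-braid moves there. So your optional shortcuts via Lemma~\ref{Lem28} and Remark~\ref{rem:lowering} will not apply. Second, your statement that on the boundary only arrows to mutated vertices change is inaccurate: oblique arrows between columns $1,4$ and non-mutated vertices of columns $2,3$ do change. For instance, $\mu_{(2,8)}$ already cancels $(2,12)\to(1,10)$ and creates $(1,10)\to(2,4)$.
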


\begin{proof}
 The relevant parts of the quivers $\Gam^{(1)}$ to $\Gam^{(4)}$ are displayed in 
 Figure~\ref{fig:B2-mutA}
 \begin{sidewaysfigure}
   \begin{minipage}[t][8cm][b]{27cm}  
  \scalebox{0.55}{\input{Fig6-B2-F4-mutA}}  
 \end{minipage}
\caption{Proof of Lemma~\ref{Lem:B2-F4-braid} - Quivers $\Gam^{(1)} - \Gam^{(4)}$} \label{fig:B2-mutA}
 \end{sidewaysfigure}

 The relevant parts of the quivers $\Gam^{(5)}$ to $\Gam^{(8)}$ are displayed in 
 Figure~\ref{fig:B2-mutB}
 \begin{sidewaysfigure}
   \begin{minipage}[t][8cm][b]{27cm}  
  \scalebox{0.55}{\input{Fig6-B2-F4-mutB}}  
 \end{minipage}
\caption{Proof of Lemma~\ref{Lem:B2-F4-braid} - Quivers $\Gam^{(5)} - \Gam^{(8)}$} \label{fig:B2-mutB}
 \end{sidewaysfigure}
\end{proof}

\begin{Rem} 
(1) Note that $\Gam^{(9)}$ is only \emph{isomorphic}, but not equal
to $\Gam(\sfF_4)_{(3,2,3,2),(14,12,8,6)}$, since under the mutation procedure four vertices, namely $(2,8), (2,2),(3,6)$ and $(3,0)$ “moved" to another column.  Note also that during the mutation procedure several oblique arrows which
connect vertices in columns $1$ and $4$ with vertices in columns $2$ and $3$ change in a nontrivial way. 

(2) The quivers $\Gam^{(1)} - \Gam^{(8)}$ may be thought of as part of a larger
class of quivers associated to reduced expressions for elements of the above-mentioned
unfolding $\tsfF_4\cong\sfE_6$. Moreover, each of the four-step mutations
from $\Gam^{(2k-1)}$ to $\Gam^{(2k)}$ for $k=1,2,3,4$ are related to an 
$\sfA_2$-braid move in the unfolded Weyl group. For example, the four-step mutation sequence connecting 
the quivers 
  $\Gam^{(3)}\cong\tGam_{(2^+,3^o,2^-,3^o,2^+,3^o),(12,8,6,2,0,-2)}$
  and 
  $\Gam^{(4)}\cong\tGam_{(2^+,3^o,2^-,2^+,3^o,2^+).(12,8,6,4,2,\ 0)}$
correspond to an $A_2$-braid move $ (3^o,2^+,3^o)\leftrightarrow (2^+,3^o,2^+)$. In fact, our  strategy to find the above-mentioned 23-step mutation sequence was to first connect two adequate reduced expressions for the longest element 
$w_0$ in the parabolic subgroup  $W(\sfA_3)\cong W_{\{2^+,3^o.2^-\}} W(\tsfF_4)$ by
braid and commutation relations. 

We will explore this kind of observations more systematically in a forthcoming publication.
\end{Rem}

\section{Stabilized g-vectors of cluster variables}\label{stabg}

 The aim of this section is to show that \emph{every} cluster variable $x$ of $\mathcal{A}_w = \mathcal{A}(\Gam_{\bi,\br}, \bz)$ has a stabilized $g$-vector $\bg^\infty(x)$.
 Using this new notation we have in particular $\bg^\infty(z_{(i,r)})=\bg^\infty_{(i,r)}$ for the cluster variables $z_{(i,r)}$ of the initial seed  $\SS_{\bi,\br}$ attached to the quiver $\Gam_{\bi,\br}$.
Moreover, the stabilized 
$g$-vector of $x$ is intrinsic and does not depend on the choice
of the initial  seed $\SS_{\bi,\br}$ used to calculate it,
as long as we work with compatible seeds, see Section~\ref{ssec:compseeds} below.
In particular, $\bg^\infty(x)$ is independent of the choice of reduced expression $\bi$ of $w$. 

\subsection{Calculation of stabilized g-vectors with respect to a standard seed}\label{ssec:csgvect}

Let us start with a reminder on ordinary g-vectors.

\begin{Rem} \label{rem:wdef-gvect}
Recall that g-vectors were introduced in~\cite[Sec.~6]{CA4} as (multi-) degrees  
$\deg(X^{B^0,t_0}_{l,t})$  of the cluster variables in the principal coefficient cluster algebras 
$\AA_\bullet(B^0,\bz)$.  Now, for a skew-symmetric cluster algebra the cluster complex does not depend on the choice of coefficients, see~\cite[Thm.~4.7]{CKLP13}.  
Thus, for a skew-symmetric cluster algebra with trivial coefficients, the g-vector $\bg^\SS(x)$ of a cluster variable $x$ with respect to a seed $\SS$ is well-defined. In particular, it does not depend on the
relative position of $x$ and $\SS$ in the cluster pattern. 
It is not difficult to see that this is also valid for our skew-symmetric cluster algebras of infinite rank. 
\end{Rem}

Let $\bi\in I^l$ be a reduced expression for $w\in W$ with $l=l(w)\geq 1$ and $\br\in\Z^l$ be adapted to 
$\bi$. For $k\in\Z_{\geq 0}$ we set
\[
D_k:=\{\bd=(d_1,\ldots, d_l)\in (2\Z)^l\mid k\leq d_1\leq\cdots \leq d_l\}.
\]
Moreover,  for $k$ even, we set $k\bll:=(k,\ldots, k)\in D_k$. Given $\bd\in D_0$ we define  the sequences of mutations
\begin{align*}
    \mu^{(d_p)}_{(i_p,\rho_p)} &:= \mu_{(i_p,\rho_p-d_p+2)}\circ\cdots\circ \mu_{(i_p,\rho_p-2)}\circ\mu_{(i_p,\rho_p)}\quad\text{and}\\
    \mu_{\bd,\br} &:= \mu^{(d_1)}_{(i_1,\rho_1)}\circ\mu^{( d_2})_{(i_2,\rho_2)}\circ\cdots\circ\mu^{(d_l)}_{(i_l,\rho_l)}. 
\end{align*}
With this notation, we obtain as a slight generalization of Remark~\ref{rem:lowering} 
\[
\mu_{\bd,\br}(\Gam_{\bi,\br})=\Gam_{\bi,\br-\bd}.
\]
Similarly, if $\bv=((j_m, s_m),\ldots (j_2,s_2), (j_1,s_1))\in V^m$ is a sequence of vertices, we set
\begin{align}
\mu_\bv &:= \mu_{(j_m,s_m)}\circ\cdots\circ\mu_{(j_2,s_2)}\circ\mu_{(j_1,s_1)},\quad\text{and} \label{eq:muv}\\
\mu_{\bv-k\bll} &:= \mu_{(j_m,s_m-k)}\circ\cdots\circ\mu_{(j_2,s_2-k)}\circ\mu_{(j_1,s_1-k)}.
\end{align}
Suppose that we have $\mu_\bv(\Gam_{\bi,\br})\cong \Gam_{\bi',\br'}$ for some
reduced expression $\bi'$ of $w$ and $\br'$ adapted to $\bi$. Then we also have
\[
\mu_{\bv -k\bll}(\Gam_{\bi,\br- k\bll})\cong \Gam_{\bi',\br'-k\bll}.
\]
Now, in the cluster algebra $\AA(\Gam_{\bi,\br},\bz)$ with $\bz=(z_{(j,s)})_{(j,s)\in V}$ 
consider the
initial seed $\SS_{\bi,\br}=(\Gam_{\bi,\br},\bz)$, as well as the mutated seeds  
\[
\SS_{\bi,\br-\bd}:= \mu_{\bd,\br}(\SS_{\bi,\br})=(\Gam_{\bi,\br-\bd},\bz^{(-\bd)}),
\qquad (\bd\in D_0).
\]
Note that the mutation sequence $\mu_{\bd,\br}$, when applied to $\SS_{\bi,\br}$, at the level of cluster variables, only involves QQ-relations.  Recall the definition of the stabilized g-vectors $\bg_{(j,s)}^\infty$ from  Remark~\ref{comps1}.

\begin{Lem}
Let $(j,s)\in V$, then we have, with the above notations, the following:
\begin{itemize}
\item[(a)]
There exists $d=d(j,s)\in 2\Z_{\geq 0}$ such that for all $\bd\in D_d$
\[
\bg^\infty_{(j,s)}=\bg^{\SS_{\bi,\br-\bd}}(z_{(j,s)}).
\]
\item[(b)]
More generally, for each $k\in 2\Z_{\geq 0}$ we have
\[
z^{-k}\bg^\infty_{(j,s)}=\bg^{\SS_{\bi,\br-\bd-k\bll}}(z^{(-k\bll)}_{(j,s-k)}), \qquad (\bd\in D_d).
\]
In particular, for $(j,s)\in V_0=V_0(\bi,\br)$ we have $\be_{(j,s-k)}=\bg^{\SS_{\bi,\br-\bd-k\bll}}(z_{(j,s-k)}^{(-k\bll)})$.
\end{itemize}
\end{Lem}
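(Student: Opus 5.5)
The plan is to reduce (a) to the stabilization statement of Proposition~\ref{prp:key}, read at a finite but sufficiently deep stage, and then obtain (b) from the translation invariance of the whole construction.

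\textbf{Part (a).} The $g$-vectors in question are those of $z_{(j,s)}$ with respect to the reference seed $\SS_{\bi,\br-\bd}$. Equivalently, we start from $\bg=\be_{(j,s)}$ with respect to $\SS_{\bi,\br}$ and transport it along $\mu_{\bd,\br}$ using rule~(\ref{eq:g-transf}).

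The proof of Proposition~\ref{prp:key} shows more than its statement. Take a $g$-vector whose entry at the vertex being mutated is nonnegative, and mutate $\G_{\bi',\br'[k]}$ at its green vertex $(i_m,\rho_m-2k)$. Then the vector is transformed by the commuting operator $M_m^{(k)}$. Since $\bg$ has finite support, $M_m^{(k)}\cdots M_m^{(0)}(\bg)=M_m(\bg)$ once $k$ exceeds a bound depending only on the support.

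One caveat: $\mu_{\bd,\br}$ lowers the last letter first and keeps the other letters present. So it is not literally the truncation of the infinite sequence from $\G^{(l)}$ to $\G^{(0)}$. I would handle this as follows.
\begin{itemize}
\item Mutations of $\mu^{(d_l)}_{(i_l,\rho_l)}$ at green vertices far below the support of the current vector act trivially on it. Hence it suffices that $d_l$ is large enough to sweep past the support, and then the vector becomes $M_l\cdots$ applied to it.
\item Next, $\mu^{(d_{l-1})}_{(i_{l-1},\rho_{l-1})}$ acts on the quiver $\G_{\bi,\br-(0,\ldots,0,d_l)}$. Its $(l-1)$-st green strand has the same local structure as in $\G^{(l-1)}$: Proposition~\ref{prp:2.7} and Corollary~\ref{cor:sgam}(a) only involve arrows out of $(i_p,\rho_p)$, whose targets lie in $V_m$ with $m<p$.
\item The sign condition (\ref{eq:gvec+}) is again supplied by Lemma~\ref{lem:key-est} and Proposition~\ref{lem-3-16}.
\end{itemize}
Inducting downward on $p$, after the $p$-th block the vector equals $M_{p-1,l}$-type expressions agreeing with $\bg^{(p-1)}_{(j,s)}$. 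After the first block it equals $\bg^{(0)}_{(j,s)}=\bg^\infty_{(j,s)}$.

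The threshold $d(j,s)$ is chosen as follows. Let $N$ exceed all second coordinates appearing in the supports of $\bg^{(p)}_{(j,s)}$ for $0\le p\le l$. Take $d\ge \max_p(\rho_p-N)+2$. The monotonicity $d_1\le\cdots\le d_l$ in $D_d$ guarantees that each later block has swept at least as deep as needed. Further mutations in a block beyond this depth touch only vertices where the vector has zero entry, so they change nothing. This gives independence of the choice of $\bd\in D_d$.

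\textbf{Part (b).} The key input is translation invariance: shifting every second coordinate by $-k$ maps $\G_{\bi,\br}$ isomorphically onto $\G_{\bi,\br-k\bll}$. This vertex shift intertwines mutations, as noted above via $\mu_{\bv-k\bll}$.

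Under this identification, the seed $\SS_{\bi,\br-\bd-k\bll}$ together with the variable $z^{(-k\bll)}_{(j,s-k)}$ corresponds to $\SS_{\bi,\br-\bd}$ together with $z_{(j,s)}$. Consequently the $g$-vector is the shift of $\bg^\infty_{(j,s)}$ by $-k$. Since $\be_{(i,r)}=z^r\be_i$, this shift is multiplication by $z^{-k}$.

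The last assertion of (b) then follows from Proposition~\ref{prp:gknit}(a), which gives $\bg^\infty_{(j,s)}=\be_{(j,s)}$ on $V_0$.

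\textbf{Main obstacle.} The hardest step is the careful justification in (a) that the finite, interleaved mutation sequence $\mu_{\bd,\br}$ reproduces the block-by-block limit of Proposition~\ref{prp:key}. This requires checking that deep mutations of later blocks commute past, or act trivially on, the relevant vectors. I would first try to prove a version of Proposition~\ref{prp:key} for the quivers $\G_{\bi,\br-(0,\ldots,0,d_p,\ldots,d_l)}$ directly, using Corollary~\ref{cor:sgam}, rather than passing to the limit quivers $\G^{(m)}$.
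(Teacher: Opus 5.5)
Your proposal is correct and follows the paper's route. For (a), the paper simply says it follows from the proof of Proposition~\ref{prp:key}, whereas you explain why the finite, interleaved sequence $\mu_{\bd,\br}$ still acts by the operators $M_p^{(k)}$, since only the arrows out of the green vertex and the sign of the mutated entry matter. For (b), both use the shift automorphism $z_{(j,s)}\mapsto z^{(-k\bll)}_{(j,s-k)}$ carrying $\SS_{\bi,\br-\bd}$ to $\SS_{\bi,\br-\bd-k\bll}$.

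One small slip: in choosing the threshold, $N$ must lie \emph{below} all second coordinates in the supports of the $\bg^{(p)}_{(j,s)}$, not exceed them. Only then does $d\ge\max_p(\rho_p-N)+2$ force each block to sweep past the support.
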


\begin{proof}
(a) This follows, in view of the definition of $\bg^\infty_{(j,s)}$, directly from the proof of Proposition~\ref{prp:key}. 

(b) We observe that the assignation $z_{{j,s}}\mapsto z_{(j,s-k)}^{(-k\bll)}$ for all $(j,s)\in V$
induces a cluster algebra automorphism of $\AA(\Gam_{\bi,\br},\bz)$ which sends $\SS_{\bi,\br}$ to $\SS_{\bi,\br-k\bll}$, and thus sends $\SS_{\bi,\br-\bd}$ to $\SS_{\bi,\br-\bd-k\bll}$ for all $\bd\in D_0$. Using this
this observation, we see that (b) follows from (a). 
\cqfd
\end{proof}

\begin{Lem} \label{lem:lgvect1}
Let $x\in\AA(\Gam_{\bi,\br},\bz)$ be a cluster variable.  Then there exists $k=k(x)\in 2\Z_{\geq 0}$ such that
$\bg^\infty(x):=\bg^{\SS_{\bi,\br-\bd}}(x)$ for all $\bd\in D_k$.
\end{Lem}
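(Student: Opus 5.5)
Every cluster variable $x$ is obtained from the initial seed $\SS_{\bi,\br}$ by a finite mutation sequence, $x = \mu_\bv(\bz)_{(j_0,s_0)}$ for some finite sequence $\bv$ of vertices. The plan is to express the $g$-vector of $x$ with respect to $\SS_{\bi,\br-\bd}$ through the $g$-vectors of finitely many initial cluster variables, and then use the previous lemma, which says these stabilize.

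First, since $x$ is a Laurent polynomial in only finitely many initial cluster variables (Laurent phenomenon, applied to a finite mutation sequence), let $F\subset V$ be the finite set of vertices that are involved in $\bv$ or adjacent to them in the quivers along the path. Choose $k$ as the maximum of the integers $d(j,s)$ from part~(a) of the previous lemma over $(j,s)\in F$. Enlarge $k$ further so that every mutation $\mu_{\bd,\br}$ with $\bd\in D_k$ is applied only at vertices lying below $F$, in the region where the relevant green vertices $(i_p,\rho_p-2h)$ are far from $F$.

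Next, I would use the transformation rule for $g$-vectors under change of reference seed, as in \cite[(1.3) \& Thm.~1.7]{DWZ2} and Remark~\ref{rem:wdef-gvect}. The key observation is that going from $\SS_{\bi,\br-\bd}$ to $\SS_{\bi,\br-\bd'}$ with $\bd\le\bd'$ in $D_k$ consists of mutations $\mu_{(i_p,\rho_p-2h)}$. In the proof of Proposition~\ref{prp:key} the $g$-vectors of all initial variables have nonnegative component at the vertex being mutated; there the transformation is just the linear map $M_p^{(h)}$. I would show that the $g$-vector of $x$ also has a zero component at each vertex $(i_p,\rho_p-2h)$ being mutated with $2h\ge d_p$ large. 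The argument: by Lemma~\ref{lem:key-est} and Proposition~\ref{lem-3-16}, the supports of the stabilized vectors $\bg^\infty_{(j,s)}$ for $(j,s)\in F$ avoid these deep vertices. One can then argue that $\bg^{\SS_{\bi,\br-\bd}}(x)$ is supported in the union of these supports, since the $g$-vector of $x$ relative to a seed is obtained by tropical (piecewise-linear) mutation from the $g$-vectors of the seed's cluster variables. The cleaner route is to use sign-coherence of $c$-vectors (valid in skew-symmetric type, \cite{DWZ2}) together with the fact that the mutation at a vertex $v$ with $g_v=0$ leaves the $g$-vector unchanged except through terms $[b_{uv}]_+g_v=0$. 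Hence once every deeper mutation hits a zero coordinate, $\bg^{\SS_{\bi,\br-\bd}}(x)$ is independent of $\bd\in D_k$, and this common value is defined to be $\bg^\infty(x)$.

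The main obstacle is the support claim for $x$ itself, which is not an initial variable. I would attack it by induction along the mutation sequence $\bv$. In the seed $\mu_\bv(\SS_{\bi,\br-\bd})$, the $g$-vectors transform by the tropical rule~\eqref{eq:g-transf}, and I would check that all the coordinates used stay within the span of $\{\be_u\}$ for $u$ in the finite region determined by $F$ and the stabilized vectors. This relies on the fact that, with $\bd$ deep enough, the quivers $\mu_{\bv}(\Gam_{\bi,\br-\bd})$ agree with $\mu_\bv(\Gam_{\bi,\br-\bd'})$ in that region, which follows from locality of mutation. Alternatively, I would use the translation symmetry $\mu_{\bv-k\bll}$ together with part~(b) of the previous lemma to reduce to the case where $\bv$ lies entirely in $V_0$, where everything is trivial.
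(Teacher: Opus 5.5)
Your argument is correct in outline, but it obtains stabilization by a different mechanism than the paper.

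\emph{The paper's route.} The paper inducts along the mutation path from $\SS_{\bi,\br}$ to $x$, using the one-sided $g$-vector rule \eqref{eq:gvmut} (\cite[Prop.~6.6]{CA4} plus sign-coherence). It shows inductively that every $g$-vector on the right-hand side is independent of $\bd\in D_k$. It then enlarges $k$ so that the choice of side also stops depending on $\bd$; this choice is the sign of $\delta(\bd)$, i.e.\ of the relevant $c$-vector.

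\emph{Your route.} You use the same rule only to get a uniform support bound. Whichever side occurs, the new $g$-vector is an integer combination of the $g$-vectors at $(j_m,s_m)$ and its neighbours in $\mu_{\bv'}(\Gam_{\bi,\br})$. Hence $\bg^{\SS_{\bi,\br-\bd}}(x)$ lies in the span of the $\bg^\infty_{(j,s)}$, $(j,s)\in F$, for all $\bd\in D_k$. Independence of $\bd$ then follows from \eqref{eq:g-transf}, since deepening $\bd$ mutates the reference seed only at vertices where this vector vanishes.

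\emph{What each route buys.} Yours gives a genuine proof of the step the paper leaves implicit: the constancy of the sign follows a posteriori. The cost is a structural fact you assert without proof. Since $\mu_{\bd,\br}$ processes the columns in the order $l,l-1,\dots,1$, the seeds $\SS_{\bi,\br-\bd}$ and $\SS_{\bi,\br-\bd'}$ are not obviously linked by deep mutations alone. You should compare both with $\SS_{\bi,\br-\bd-\bd'}$ and check that $\mu_{\bd',\br-\bd}\circ\mu_{\bd,\br}=\mu_{\bd+\bd',\br}$ as seed mutations. Concretely, each extra mutation at $(i_p,\rho_p-d_p-2h)$ must commute with the mutations at $(i_q,\rho_q-2h')$, $q<p$, $2h'\le d_q-2$, that it has to pass. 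They are non-adjacent, by $d_q\le d_p$ and adaptedness of $\br$.

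\emph{Corrections to your write-up.}
\begin{itemize}
\item Your ``main obstacle'' paragraph works in the wrong setting. There you switch to \eqref{eq:g-transf} and the quivers $\mu_\bv(\Gam_{\bi,\br-\bd})$. But \eqref{eq:g-transf} governs a change of \emph{reference} seed, and $x$ is a variable of $\mu_\bv(\SS_{\bi,\br})$, not of $\mu_\bv(\SS_{\bi,\br-\bd})$. The quiver entering the induction along $\bv$ is $\mu_{\bv'}(\Gam_{\bi,\br})$, which does not depend on $\bd$. So the support claim is immediate from \eqref{eq:gvmut} and needs no locality argument.
\item The alternative via $\mu_{\bv-k\bll}$ and part~(b) of the previous lemma fails, because translation replaces $x$ by a different cluster variable. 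Drop it.
\item The Laurent phenomenon plays no role in the argument.
\item Lemma~\ref{lem:key-est} is not what keeps the supports of the $\bg^\infty_{(j,s)}$ away from the deep vertices. Finiteness of these supports and a large $k$ suffice.
\item ``Every mutation $\mu_{\bd,\br}$ is applied only at vertices below $F$'' is false as stated. Only the additional mutations beyond depth $k$ are.
\end{itemize}
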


\begin{proof}
We may assume that there exists a sequence of vertices $\bv=((j_m,s_m),\ldots, (j_2,s_2))$  in $V$ such that 
$x=z^{(m)}_{(j_m,s_m)}$ for some $(j,s)\in V$ and $\mu_\bv(\SS_{\bi,\br})=:(\Gam^{(m)},\bz^{(m)})$. 

Let $\bv'=((j_{m-1},s_{m-1}),\ldots. (j_2,s_2),(j_1,s_1))$ and $\mu_{\bv'}(\SS_{\bi,\br})=:(\Gam^{(m-1)},\bz^{(m-1)})$. By induction, we may assume that our claim holds for all cluster variables $z^{(m-1)}_{(j,s)}$
with $(j,s)\in V$. Set
\[
\de(\bd):= 
\sum_{\substack{\alp\in \Gam^{(m-1)}_1:\\ s(\alp)=(j_m,s_m)}} \bg^{\SS_{\bi,\br-\bd}}(x^{(m-1)}_{t(\alp)}) 
\;\;-\!\!
\sum_{\substack{\bet\in \Gam^{(m-1)}_1:\\ t(\bet)=(j_m,s_m)}} \bg^{\SS_{\bi,\br-\bd}}(x^{(m-1)}_{s(\bet)})
\]
In view of the mutation rule for g-vectors~\cite[Prp.~6.6]{CA4} and the sign-coherence of c-vectors 
(which follows by~\cite{DWZ2} from~\cite[Prp.~5.6(iii)]{CA4}) we have 
\begin{equation} \label{eq:gvmut}
\bg^{\SS_{\bi,\br-\bd}}(x^{(m)}_{(j_m,s_m)})= -\bg^{\SS_{\bi,\br-\bd}}(x^{(m-1)}_{(j_m,s_m)}) +
\begin{cases}
\sum_{\alp\in \Gam^{(m-1)}_1: s(\alp)=(j_m,s_m)} \bg^{\SS_{\bi,\br-\bd}}(x^{(m-1)}_{t(\alp)}) &
\text{if } \del(\bd)<0,\\
\sum_{\bet\in \Gam^{(m-1)}_1: t(\bet)=(j_m,s_m)} \bg^{\SS_{\bi,\br-\bd}}(x^{(m-1)}_{s(\bet)}) &
\text{if } \de(\bd)>0. 
\end{cases}
\end{equation}
Here, we say that 
\begin{align*}
\del(\bd)>0 &\text{ if } \del(\bd)\in\operatorname{span}_{\Z_{\geq 0}}\{\mathbf{b}^{(\bd)} _{(j,s)}\mid_{(j,s)\in V}\},\text{ and}\\
\del(\bd)<0 &\text{ if } \del(\bd)\in\operatorname{span}_{\Z_{\le 0}}\{\mathbf{b}^{(\bd)}_{(j,s)}\mid_{(j,s)\in V}\},
\end{align*}
where $\mathrm{b}^{(\bd)}_{(j,s)}\in\Z^{(V)}$ is the $(j,s)$-column of the exchange matrix 
$B(\Gam_{\bi,\br-\bd})\in \Z^{(V\times V)}$. 
Now, by induction hypothesis, we can choose $k\in 2\Z_{\geq 0}$ such that for all $\bd\in D_k$ the
summands of $\de(\bd)$ as well as  $\bg^{\SS_{\bi,\br-\bd}}(x^{(m-1)}_{(j_m,s_m)})$  do not depend on $\bd$. 
If necessary, we can increase $k$ further so that the condition on $\del(\bd)$ in equation~\eqref{eq:gvmut}
also becomes independent of $\bd$. 
\cqfd    
\end{proof}

\subsection{Compatible seeds} \label{ssec:compseeds}

Recall the lacing number $t$ defined in Section \ref{ssecCartan}.

\begin{Rem}
Let $\Gam(\bv):=\mu_\bv(\Gam_{\bi,\br})$ for a mutation sequence
$\mu_\bv$ as in~\eqref{eq:muv}, where $\mu_\bv$ is composed of a sequence of braid moves
or shifts as in Section~\ref{mutequiv}. Then, there exists a sequence of mutations
\[
\mu^-_{\Gam(\bv)}= \mu_{w_m}\circ\cdots\circ\mu_{w_2}\circ\mu_{w_1} 
\]
with the following properties:
\begin{itemize}
\item[(1)] 
For $\Gam(\bv,k):=\mu_{w_k}\circ\cdots\mu_{w_1}(\Gam(\bv))$ the vertex $w_{k+1}\in V$ is a sink for the vertical arrows and a source for the oblique arrows
($k=0,1,\ldots,m$ and $\Gam(\bv,0):=\Gam(\bv)$).
\item[(2)]
$\Gam^-(\bv)=\Gam(\bv,m)$ has an arrow $(i,r)\ra(j,s)$ if and only if $\Gam(\bv)$ has an arrow $(i,r+t)\ra (j,s+t)$.
\end{itemize}
For example, with the notation of Section~\ref{ssec:csgvect}, we have 
$\Gam^-_{\bi,\br}=\Gam_{\bi,\br-2d\bll}$ and $\mu^-_{\Gam_{\bi,\br}}=\mu_{2d\bll,\br}$.
On the other hand, for the quiver $\Gam^{(4)}$ from Figure~\ref{fig:B2-mutA} we have
$t=2$, and $\mu_{\Gam^{(4)}}^-$ is determined by the sequence of vertices
\[
\bw=((2,-8),(3,-4),(3,-6),(2,-4),(2,-2),(3,-4),(3,-2),(2,-8),(3,-8),(3,-6)).\]
Similarly, we can define a mutation sequence $\mu^+_{\Gam(\bv)}$ that shifts all arrows
of $\Gam(\bv)$ up by degree $2t$ in such a way that $\mu^+_{\Gam^-(\bv)}=\mu_{\bw'}$
with $\bw'=(w_1,w_2,\ldots, w_m)$ if $\mu^-_{\Gam(\bv)}=\mu_\bw$ for $\bw=(w_m,\ldots, w_2,w_1)$.

\end{Rem}

\begin{Prop} \label{prop:intertw}
Consider the cluster algebra $\AA(\Gam_{\bi,\br},\bz)$ from Section~\ref{ssec:csgvect} 
with initial seed $\SS_{\bi,\br}=(\Gam_{\bi,\br},\bz)$.
Let $\mu_\bv$ be a mutation sequence of the type considered in Section~\ref{mutequiv} such that $\Gam(\bv):=\mu_\bv(\Gam_{\bi,\br})\cong \Gam_{\bi',\br'}$ and set 
$\SS_{\bi',\br'}:=\mu_\bv(\SS_{\bi,\br})$, then we have
\[
\mu^-_{\Gam(\bv)}(\SS_{\bi',\br'})\cong \mu_{\bv-2d\bll}(\SS_{\bi,\br-2t\bll}), 
\]
where we recall that $\SS_{\bi,\br-2t\bll}:=\mu^-_{\Gam_{\bi,\br}}(\SS_{\bi,\br})$.
\end{Prop}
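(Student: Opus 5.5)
The plan is to prove the isomorphism of seeds by reducing it to a statement about quivers plus a "translation invariance" property of mutation sequences, and then comparing two routes in the exchange graph.

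\emph{Step 1: both sides have translated quivers.} By property (2) of the Remark preceding the statement, the quiver of $\mu^-_{\Gam(\bv)}(\SS_{\bi',\br'})$ is $\Gam(\bv)$ with every arrow translated down by the appropriate shift. On the other side, $\SS_{\bi,\br-2t\bll}=\mu^-_{\Gam_{\bi,\br}}(\SS_{\bi,\br})$ has quiver $\Gam_{\bi,\br-2t\bll}$, which is $\Gam_{\bi,\br}$ shifted down. The shift compatibility $\mu_{\bv-k\bll}(\Gam_{\bi,\br-k\bll})\cong\Gam_{\bi',\br'-k\bll}$ from Section~\ref{ssec:csgvect} then identifies $\mu_{\bv-2t\bll}(\SS_{\bi,\br-2t\bll})$ with the shifted copy of $\Gam(\bv)$. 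So the two quivers agree, and what remains is to match the cluster variables.

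\emph{Step 2: reduce the cluster-variable comparison to a commutation of mutation sequences.} Both seeds lie in the same cluster pattern, so it suffices to exhibit an equality of mutation words, up to reordering and relabelling of vertices:
\[
\mu^-_{\Gam(\bv)}\circ\mu_\bv \;\sim\; \mu_{\bv-2t\bll}\circ\mu^-_{\Gam_{\bi,\br}}.
\]
I would prove this by decomposing $\mu_\bv$ into its elementary pieces from Section~\ref{mutequiv}: commutation moves, the $\sfA_2$-braid mutations of Proposition~\ref{prp:A2-braid}, and the 23-step $\sfB_2$-moves of Proposition~\ref{prp:B2-braid}, together with the $\sfG_2$ sequence of the appendix. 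Then I would induct on the number of pieces, so that it is enough to treat a single piece.

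\emph{Step 3: a single piece.} Each elementary piece mutates only at vertices in a bounded window, near the red/green vertices involved in the move. The lowering sequence $\mu^-$ mutates green vertices, each a sink for vertical arrows and a source for oblique ones by property (1). The idea is to split $\mu^-$ into two parts:
\begin{itemize}
\item mutations at vertices far from the window, which commute with the piece, since they involve non-adjacent vertices at each stage;
\item mutations inside the window, where one checks directly that performing the move and then lowering produces the same seed as lowering and then performing the translated move.
\end{itemize}
For the second part, the explicit $\bw$ given for $\Gam^{(4)}$ in the Remark indicates that $\mu^-_{\Gam(\bv)}$ is built exactly so that this holds. The real content is the identity of seeds obtained by the two orders inside the window. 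For $\sfA_2$ pieces I would show it is a pentagon/commutation identity in an $\sfA_2$-type subquiver. For $\sfB_2$ and $\sfG_2$ pieces I would verify it by the same kind of explicit finite quiver computation used in Lemma~\ref{Lem:B2-F4-braid}, tracking exchange relations rather than only arrows.

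\emph{Main obstacle.} The hard step is this local verification for the $\sfB_2$ (and $\sfG_2$) moves, because it concerns cluster variables and not just quivers, and intermediate quivers are not of the form $\Gam_{\bi,\br}$. To avoid tracking cluster variables by hand, I would rely on the fact that in a skew-symmetric cluster algebra a seed is determined by its $g$-vectors together with its quiver (Remark~\ref{rem:wdef-gvect} and the cluster-complex results cited there). The plan would then be:
\begin{itemize}
\item compute the $g$-vectors, with respect to $\SS_{\bi,\br}$, of both candidate clusters in the window, using the tropical rule \eqref{eq:g-transf};
\item check that the two sets of $g$-vectors coincide; the sign-coherence of Proposition~\ref{prp:Msgn-coh} should keep these computations uniform.
\end{itemize}
Since $g$-vectors with respect to a fixed seed separate cluster variables, equality of the $g$-vector sets gives the isomorphism of seeds.
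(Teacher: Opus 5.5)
Your proposal is correct and has the same architecture as the paper's proof. Both induct over the pieces of $\mu_\bv$, localize to a single move, and verify the local identity \eqref{eq:ExchangeCycle}. The differences are in how the pieces are organized and how the local identity is checked.

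\textbf{How the paper proceeds.} The paper disposes of shift moves with the one-line identity $\mu^-_{\Gam_{\bi,\br-\be}}(\SS_{\bi,\br-\be})=\mu_{\be,\br-2t\bll}(\SS_{\bi,\br-2t\bll})$. It does not treat the 23-step $\sfB_2$ move as a block. Apart from shifts, that move consists of four 4-step ``unfolded $\sfA_2$'' moves of identical shape (those of Lemma~\ref{Lem:B2-F4-braid}), so one computation on the quiver $\Gam^{(*)}$ suffices. That computation, like the $\sfA_2$ one, writes out every new cluster variable on both sides as an explicit rational expression. It then observes that the two sides agree under a cyclic permutation of three or four vertices.

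\textbf{How your route differs.} You replace this with a tropical $g$-vector computation plus structural theorems:
\begin{itemize}
\item injectivity of $x\mapsto\bg(x)$ on cluster variables (Derksen--Weyman--Zelevinsky);
\item if you compare unlabelled sets, the fact that in the skew-symmetric case a cluster determines its seed. Alternatively, match vertex by vertex along your Step~1 isomorphism.
\end{itemize}
These theorems, not Remark~\ref{rem:wdef-gvect} itself, are what you are invoking. Their passage to infinite rank is of the same nature as that Remark. Your route is piecewise-linear and easily automated, which suits the 163-step $\sfG_2$ sequence the paper leaves to the reader. The price is dependence on deep results. The paper's route is elementary and self-contained, but tractable only because of the $\sfA_2$-block structure.

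Your explicit handling of far-away lowering mutations spells out what the paper compresses into ``we may assume''. It suffices that the two vertex sets have no arrows between them initially. A mutation at $v$ only changes arrows among neighbours of $v$, so this non-adjacency persists throughout.

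\textbf{Two small corrections.}
\begin{itemize}
\item Proposition~\ref{prp:Msgn-coh} concerns the matrices $M_{ab}$ along lowering sequences and plays no role here.
\item \eqref{eq:g-transf} is the rule for changing the \emph{reference} seed. To use it, start from unit vectors at each final seed and mutate the reference back to $\SS_{\bi,\br}$. Alternatively, compute forward with the $c$-vector form \eqref{eq:gvmut}.
\end{itemize}
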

Note, that the quivers $\mu^-_{\Gam(\bv)}\circ\mu_\bv(\Gam_{\bi,\br})$ and
$\mu_{\bv-2t\bll}\circ\mu_{2t\bll,\br}(\Gam_{\bi,\br})$ are in general only
isomorphic, but not equal. Typically, they are related by a (finite cyclic)
permutation of $V$.

\begin{proof}
Recalling that $\SS_{\bi,\br-\bd}=\mu_{\bd,\br}(\SS_{\bi,\br})$ it is easy to see that
\[
\mu^-_{\Gam_{\ii,\br-\be}}(\SS_{\bi,\br-\be})=\mu_{\be,\br-2t\bll}(\SS_{\bi,\br-2t\bll})
\quad\text{for all } \be\in D(0).
\]
Thus, by induction, it is enough to show the claim for the case where $\bi$ and $\bi'$ are related by a braid move. In other words, following the case by case analysis in Section~\ref{mutequiv}, we have to show for an abstract seed $\SS'_{\bi,\br}=(\Gam_{\bi,\br},\bx)$ corresponding to one of the quivers
$\Gam_{\bi,\br}$ considered in Proposition~\ref{prp:A2-braid}, Proposition~\ref{prp:B2-braid} or Appendix~\ref{App:G2-braid}, together
with the corresponding (explicit) mutation sequence $\mu_\bv$ (mentioned in
Proposition~\ref{prp:A2-braid}, Lemma~\ref{Lem:B2-F4-braid},  and Lemma~\ref{lem:G2-key}), that
\begin{equation} \label{eq:ExchangeCycle}
\mu^-_{\mu_\bv(\Gam_{\bi,\br})}\circ\mu_\bv(\SS'_{\bi,\br})\cong 
\mu_{\bv-t\bll}\circ\mu^-_{\Gam_{\bi,\br}}(\SS'_{\bi,\br}).
\end{equation}
Note, that for the braid relations of type $A_2$ and $B_2$, similar to the discussion in Section~\ref{mutequiv}, we may assume that $\bi=(i_1,i_2,i_1)$
resp.\@ $\bi=(i_1,i_2,i_1, i_2)$ embedded into a small list of Cartan matrices
of rank 4 or 5. We will deal with those cases in the next few paragraphs separately.

\medskip
\textbf{Braid relation of type $A_2$}. We deal only with the case
$\Gam_{\bi,\br}=\Gam(A_4)_{(2,3,2),(10,9,8)}$, as illustrated in Figure~\ref{fig:A2-A4}.  
In this situation, $\bv=((3,9),(2,8),(2,6))$,
$\mu^-_{\Gam(\bv)}=\mu_{(3,9)}\circ\mu_{(2,6)}\circ\mu_{(3,7)}$ and $t=1$.
Thus,  Equation~\ref{eq:ExchangeCycle} becomes in this case explicitly
\begin{multline*}    
(\mu_{(3,9)}\mu_{(2,6)}\mu_{(3,7)})\circ(\mu_{(3,9)}\mu_{(2,8)}\mu_{(2,6)})(\SS'_{(2,3,2),(10,9,8)})\\\cong
(\mu_{(3,7)}\mu_{(2,6)}\mu_{(2,4)})\circ(\mu_{(2,8)}\mu_{(3,7)}\mu_{(2,4)})(\SS'_{(2,3,2),(10,9,8)})
\end{multline*}
In order to verify this, we perform the corresponding seed mutations explicitly:
\begin{align*}
x_{(2,6)} = &(x_{(2,4)} x_{(2,8)} + x_{(3,7)} x_{(1,9)})/x_{(2,6)};\\
x^+_{(2,8)} =&(x_{(2,4)} x_{(2,8)} x_{(3,9)} + x_{(2,6)} x_{(3,7)} x_{(2,10)} + x_{(3,7)} x_{(3,9)} x_{(1,9)})/(x_{(2,6)} x_{(2,8)});\\
x_{(3,9)} =& (x_{(2,4)} x_{(2,8)} x_{(3,9)} + x_{(2,6)} x_{(3,7)} x_{(2,10)}) x_{(3,11)} + x_{(3,7)} x_{(3,9)} x_{(3,11)} x_{(1,9)} \\ 
&+x_{(2,6)} x_{(2,8)} x_{(2,10)} x_{(4,10)})/(x_{(2,6)} x_{(2,8)} x_{(3,9)});\\
x^+_{(3,7)} =&(x_{(2,4)} x_{(3,5)} x_{(2,8)} x_{(3,9)} + x_{(3,5)} x_{(2,6)} x_{(3,7)} x_{(2,10)} + x_{(2,4)} x_{(2,6)} x_{(2,8)} x_{(4,8)} \\
&+ x_{(3,5)} x_{(3,7)} x_{(3,9)} x_{(1,9)})/(x_{(2,6)} x_{(3,7)} x_{(2,8)});\\
x^+_{(2,6)}=&(x_{(2,6)} x_{(2,10)} + x_{(3,9)} x_{(1,9)})/x_{(2,8)};\\
x^+_{(3,9)}=&x_{(3,9)};\\
\end{align*}
and similarly:
\begin{align*}
x_{(2,4)}=&(x_{(2,2)} x_{(2,6)} + x_{(3,5)} x_{(1,7)})/x_{(2,4)};\\
x_{(3,7)}=&(x_{(3,5)} x_{(3,9)} + x_{(2,6)} x_{(4,8)})/x_{(3,7)};\\
x^-_{(2,8)}=&(x_{(2,6)} x_{(2,10)} + x_{(3,9)} x_{(1,9)})/x_{(2,8)};\\
x^-_{(2,4)}=&x_{(2,4)};\\
x^-_{(2,6)}=&(x_{(2,4)} x_{(3,5)} x_{(2,8)} x_{(3,9)} + x_{(3,5)} x_{(2,6)} x_{(3,7)} x_{(2,10)} + x_{(2,4)} x_{(2,6)} x_{(2,8)} x_{(4,8)}\\ 
&+ x_{(3,5)} x_{(3,7)} x_{(3,9)} x_{(1,9)})/(x_{(2,6)} x_{(3,7)} x_{(2,8)});\\
x^-_{(3,7)}=&(x_{(2,4)} x_{(2,8)} x_{(3,9)} + x_{(2,6)} x_{(3,7)} x_{(2,10)} + x_{(3,7)} x^-_{(3,9)} x_{(1,9)})/(x_{(2,6)} x_{(2,8)});
\end{align*}
In this case the quivers $\mu^-_{\Gam(\bv)}\circ\mu_\bv(\Gam_{\bi,\br})$ and
$\mu_{\bv-2t\bll}\circ\mu_{2t\bll,\br}(\Gam_{\bi,\br})$ are related by the cyclic permutation $((2,6), (2,8), (3,7)$.
Accordingly, we have
\[
x^+_{(2,6)} =  x^-_{(2,8)};\quad
x^+_{(3,7)} =  x^-_{(2,6)};\quad
x^+_{(2,8)} =  x^-_{(3,7)};
\]
The other cases, $B,C$ and $D$ are quite similar. 

\medskip
\textbf{Braid relation of type $B_2$.}
In this case it is sufficient to show, that the following isomorphisms hold for the
“unfolded $A_2$-type braid-move” mutations in Lemma~\ref{Lem:B2-F4-braid}, namely
\begin{align*}
\mu^-_{\Gam^{(2)}}\circ(\mu_{(3,6)}\mu_{(3,4)}\mu_{(2,2)}\mu_{(2,-2)})(\Gam^{(1)},\bx^{(1)}) 
&\cong (\mu_{(3,2)}\mu_{(3,0)}\mu_{(2,-2)}\mu_{(2,-4)})  \circ\mu^-_{\Gam^{(1)}} (\Gam^{(1)},\bx^{(1)}), \\
\mu^-_{\Gam^{(4)}}\circ(\mu_{(2,-4)}\mu_{(3,0)}\mu_{(3,-2)}\mu_{(3,-4)})(\Gam^{(3)},\bx^{(3)}) 
&\cong  (\mu_{(2,-8)}\mu_{(3,-4)}\mu_{(3,-6)}\mu_{(3,-8)})  \circ\mu^-_{\Gam^{(3)}} (\Gam^{(3)},\bx^{(3)}),  \\
\mu^-_{\Gam^{(6)}}\circ(\mu_{(3,12)}\mu_{(3,10)}\mu_{(2,8)}\mu_{(2,4)})(\Gam^{(5)},\bx^{(5)}) &\cong  (\mu_{(3,8)}\mu_{(3,6)}\mu_{(2,4)}\mu_{(2,0)}) \circ\mu^-_{\Gam^{(5)}} (\Gam^{(5)},\bx^{(5)}),  \\
\mu^-_{\Gam^{(8)}}\circ(\mu_{(2,6)}\mu_{(3,6)}\mu_{(3,4))}\mu_{(2,2)})(\Gam^{(7)},\bx^{(7)}) &\cong  (\mu_{(2,-2)}\mu_{(2,2)}\mu_{(3,2))}\mu_{(3,-2)}) \circ\mu^-_{\Gam^{(7)}} (\Gam^{(7)},\bx^{(7)}), \\
\end{align*}
where we recall that the relevant parts of the quiver $\Gam^{(i)}$ for $1\leq i\leq 8$ are displayed in
Figure~\ref{fig:B2-mutA} and in Figure~\ref{fig:B2-mutB}. 
Note that for these isomorphisms of seeds have all essentially the same structure. So, it comes down to show, for $\bv=((3,6),(3,4),(2,2),(2,-2))$ and $t=2$ the following  isomorphism 
\[
    \mu^-_{\mu_\bv(\Gam^{*})}\circ\mu_\bv(\Gam^{(*)},\bx)\cong \mu_{\bv-4\bll}\circ\mu^-_{\Gam^{(*)}}(\Gam^{(*)},\bx)
\]
for the quiver $\Gam^{(*)}$ which is displayed in Figure~\ref{fig:B2-F4-Abraid}.
\begin{figure}
    \centering
    \scalebox{0.5}{\begin{tikzcd}[cramped,sep=scriptsize]
	{(1,10)} &&& {(3,10)} &&& {(2,10)} &&&&& {(2,8)} &&& {(4,7)} &&& {(1,8)} \\
	&&&& {(4,9)} &&&&&&&&& {(3,6)} &&& {(2,6)} \\
	& {(2,8)} && {(3,8)} &&&& {(1,8)} &&& {(1,6)} &&&& {(4,5)} \\
	&&&& {(4,7)} && \textcolor{rgb,255:red,255;green,0;blue,13}{{(2,6)}} &&&&&&& \textcolor{rgb,255:red,255;green,0;blue,13}{{(3,4)}} \\
	{(1,6)} &&& {(3,6)} &&&&&&&& {(2,4)} \\
	&&&& {(4,5)} && \textcolor{rgb,255:red,14;green,204;blue,0}{{(2,2)}} &&&&&&& \textcolor{rgb,255:red,14;green,204;blue,0}{{(2,2)}} &&&& {(1,4)} \\
	&&& \textcolor{rgb,255:red,255;green,0;blue,13}{{(3,4)}} &&&&&&&&&&& {(4,3)} && \textcolor{rgb,255:red,255;green,0;blue,13}{{(2,-2)}} \\
	& {(2,4)} &&&&&& {(1,4)} &&& {(1,2)} &&& {(3,2)} \\
	&&& \textcolor{rgb,255:red,14;green,204;blue,0}{{(3,2)}} &&&&&&&&&&& {(4,1)} && \textcolor{rgb,255:red,14;green,204;blue,0}{{(2,-6)}} \\
	&&&& {(4,3)} && \textcolor{rgb,255:red,255;green,0;blue,13}{{(2,-2)}} &&&&&&& \textcolor{rgb,255:red,255;green,0;blue,13}{{(3,0)}} &&&& {(1,0)} \\
	{(1,2)} &&& {(3,0)} &&&&&&&& {(2,0)} \\
	&&&& {(4,1)} && \textcolor{rgb,255:red,14;green,204;blue,0}{{(2,-6)}} &&&&&&& \textcolor{rgb,255:red,14;green,204;blue,0}{{(3,-2)}} \\
	& {(2,0)} && {(3,-2)} &&&& {(1,0)} &&& {(1,-2)} &&&& {(4,-1)} \\
	&&&& {(4,-1)} &&&&&&&&& {(3,-4)} &&& {(2,-10)} \\
	{(1,-2)} &&& {(3,-4)} &&& {(2,-10)} &&&&& {(2,-4)} &&& {(4,-3)} &&& {(1,-4)} \\
	&&&&&&&& {} &&&&& {(3,-6)}
	\arrow[from=1-1, to=3-2]
	\arrow[from=1-4, to=2-5]
	\arrow[from=1-4, to=3-2]
	\arrow[from=1-7, to=3-4]
	\arrow[from=1-7, to=3-8]
	\arrow[from=1-12, to=2-14]
	\arrow[from=1-12, to=3-11]
	\arrow[from=1-15, to=2-14]
	\arrow[from=1-18, to=2-17]
	\arrow[from=2-5, to=3-4]
	\arrow[from=2-14, to=3-15]
	\arrow[from=2-14, to=5-12]
	\arrow[from=2-17, to=4-14]
	\arrow[from=2-17, to=6-18]
	\arrow[from=3-2, to=5-1]
	\arrow[from=3-2, to=5-4]
	\arrow[from=3-4, to=1-4]
	\arrow[from=3-4, to=4-5]
	\arrow[from=3-4, to=4-7]
	\arrow[from=3-8, to=4-7]
	\arrow[from=3-11, to=5-12]
	\arrow[from=3-15, to=1-15]
	\arrow[from=3-15, to=4-14]
	\arrow[from=4-5, to=2-5]
	\arrow[from=4-5, to=5-4]
	\arrow[from=4-7, to=1-7]
	\arrow[draw={rgb,255:red,4;green,8;blue,251}, from=4-7, to=6-7]
	\arrow[from=4-14, to=2-14]
	\arrow[draw={rgb,255:red,4;green,8;blue,251}, from=4-14, to=6-14]
	\arrow[from=5-1, to=1-1]
	\arrow[from=5-1, to=8-2]
	\arrow[from=5-4, to=3-4]
	\arrow[from=5-4, to=6-5]
	\arrow[from=5-4, to=8-2]
	\arrow[from=5-12, to=1-12]
	\arrow[draw={rgb,255:red,4;green,8;blue,251}, from=5-12, to=4-14]
	\arrow[from=5-12, to=8-11]
	\arrow[from=5-12, to=8-14]
	\arrow[from=6-5, to=4-5]
	\arrow[from=6-5, to=7-4]
	\arrow[from=6-7, to=7-4]
	\arrow[from=6-7, to=8-8]
	\arrow[draw={rgb,255:red,4;green,8;blue,251}, from=6-14, to=5-12]
	\arrow[from=6-14, to=7-15]
	\arrow[from=6-14, to=7-17]
	\arrow[from=6-18, to=1-18]
	\arrow[from=6-18, to=7-17]
	\arrow[from=7-4, to=5-4]
	\arrow[draw={rgb,255:red,4;green,8;blue,251}, from=7-4, to=9-4]
	\arrow[from=7-15, to=3-15]
	\arrow[from=7-15, to=8-14]
	\arrow[from=7-17, to=2-17]
	\arrow[draw={rgb,255:red,4;green,8;blue,251}, from=7-17, to=9-17]
	\arrow[from=8-2, to=3-2]
	\arrow[draw={rgb,255:red,4;green,8;blue,251}, from=8-2, to=7-4]
	\arrow[from=8-2, to=11-1]
	\arrow[from=8-2, to=11-4]
	\arrow[from=8-8, to=3-8]
	\arrow[from=8-8, to=10-7]
	\arrow[from=8-11, to=3-11]
	\arrow[from=8-11, to=11-12]
	\arrow[from=8-14, to=6-14]
	\arrow[from=8-14, to=9-15]
	\arrow[from=8-14, to=11-12]
	\arrow[draw={rgb,255:red,4;green,8;blue,251}, from=9-4, to=8-2]
	\arrow[from=9-4, to=10-5]
	\arrow[from=9-4, to=10-7]
	\arrow[from=9-15, to=7-15]
	\arrow[from=9-15, to=10-14]
	\arrow[from=9-17, to=10-14]
	\arrow[from=9-17, to=10-18]
	\arrow[from=10-5, to=6-5]
	\arrow[from=10-5, to=11-4]
	\arrow[from=10-7, to=6-7]
	\arrow[draw={rgb,255:red,4;green,8;blue,251}, from=10-7, to=12-7]
	\arrow[from=10-14, to=8-14]
	\arrow[draw={rgb,255:red,4;green,8;blue,251}, from=10-14, to=12-14]
	\arrow[from=10-18, to=6-18]
	\arrow[from=10-18, to=14-17]
	\arrow[from=11-1, to=5-1]
	\arrow[from=11-1, to=13-2]
	\arrow[from=11-4, to=9-4]
	\arrow[from=11-4, to=12-5]
	\arrow[from=11-4, to=13-2]
	\arrow[from=11-12, to=5-12]
	\arrow[draw={rgb,255:red,4;green,8;blue,251}, from=11-12, to=10-14]
	\arrow[from=11-12, to=13-11]
	\arrow[from=11-12, to=14-14]
	\arrow[from=12-5, to=10-5]
	\arrow[from=12-5, to=13-4]
	\arrow[from=12-7, to=13-4]
	\arrow[from=12-7, to=13-8]
	\arrow[draw={rgb,255:red,4;green,8;blue,251}, from=12-14, to=11-12]
	\arrow[from=12-14, to=13-15]
	\arrow[from=12-14, to=14-17]
	\arrow[from=13-2, to=8-2]
	\arrow[from=13-2, to=15-1]
	\arrow[from=13-2, to=15-4]
	\arrow[from=13-4, to=11-4]
	\arrow[from=13-4, to=14-5]
	\arrow[from=13-4, to=15-7]
	\arrow[from=13-8, to=8-8]
	\arrow[from=13-8, to=15-7]
	\arrow[from=13-11, to=8-11]
	\arrow[from=13-11, to=15-12]
	\arrow[from=13-15, to=9-15]
	\arrow[from=13-15, to=14-14]
	\arrow[from=14-5, to=12-5]
	\arrow[from=14-5, to=15-4]
	\arrow[from=14-14, to=12-14]
	\arrow[from=14-14, to=15-12]
	\arrow[from=14-14, to=15-15]
	\arrow[from=14-17, to=9-17]
	\arrow[from=14-17, to=15-18]
	\arrow[from=14-17, to=16-14]
	\arrow[from=15-1, to=11-1]
	\arrow[from=15-4, to=13-4]
	\arrow[from=15-7, to=12-7]
	\arrow[from=15-12, to=11-12]
	\arrow[from=15-15, to=13-15]
	\arrow[from=15-15, to=16-14]
	\arrow[from=15-18, to=10-18]
	\arrow[from=16-14, to=14-14]
\end{tikzcd}
}
    \caption{Quivers $\Gam^{(*)}$ and $\mu^-_{\mu_\bv(\Gam^{(*)})}\circ\mu_\bv(\Gam^{(*)})$}
    \label{fig:B2-F4-Abraid}
\end{figure}
In this case we have 
\[
\mu^-_{\Gam^{(*)}}=\mu_{(2,2))}\mu_{(3,0)}\mu_{(3,2)}\mu_{(2,-6)}\quad\text{and}\quad
\mu^-_{\mu_\bv(\Gam^{*})}= \mu_{((3,4)}\mu_{(3,6)}\mu_{(2,-2)}\mu_{(3,0)}\mu_{(3,2)}.
\]
We carry out the corresponding sequences of seed mutations:
\begin{align*}
x'_{(2,-2)} = &(x_{(2,-6)} x_{(2,2)} + x_{(3,2)} x_{(1,4)})/x_{(2,-2)};\\
x^+_{(2,2)} = &(x_{(2,-6)} x_{(3,4)} x_{(2,2)} + x_{(3,2)} x_{(2,-2)} x_{(2,6)} + x_{(3,2)} x_{(3,4)} x_{(1,4)})/(x_{(2,-2)} x_{(2,2)});\\
x'_{(3,4)}  = &((x_{(2,-6)} x_{(3,4)} x_{(3,6)} x_{(2,2)} + x_{(3,2)} x_{(2,-2)} x_{(3,6)} x_{(2,6)} + x_{(3,2)} x_{(3,4)} x_{(3,6)} x_{(1,4)} \\ &+x_{(2,-2)} x_{(2,4)} x_{(2,2)} x_{(2,6)} x_{4,5})/(x_{(2,-2)} x_{(3,4)} x_{(2,2)});\\
x'_{(3,6)}  = &(x_{(2,-6)} x_{(3,4)} x_{(3,6)} x_{(2,2)} x_{(3,8)} + x_{(3,2)} x_{(2,-2)} x_{(3,6)} x_{(3,8)} x_{(2,6)}\\ & + x_{(3,2)} x_{(3,4)} x_{(3,6)} x_{(3,8)} x_{(1,4)} + x_{(2,-2)} x_{(2,4)} x_{(2,2)} x_{(3,8)} x_{(2,6)} x_{4,5}\\ &+ x_{(2,-2)} x_{(3,4)} x_{(2,2)} x_{(2,8)} x_{(2,6)} x_{(4,7)})/(x_{(2,-2)} x_{(3,4)} x_{(3,6)} x_{(2,2)});\\
x^+_{(3,2)}  := &(x_{(2,-6)} x_{(3,0)} x_{(3,4)} x_{(2,2)} + x_{(3,0)} x_{(3,2)} x_{(2,-2)} x_{(2,6)} + x_{(3,0)} x_{(3,2)} x_{(3,4)} x_{(1,4)}\\& + x_{(2,-6)} x_{(2,-2)} x_{(2,4)} x_{(2,2)} x_{(4,3)})/(x_{(3,2)} x_{(2,-2)} x_{(2,2)});\\
x+_{(3,0)}   = &(x_{(3,-2)} x_{(2,-6)} x_{(3,0)} x_{(3,4)} x_{(2,2)} + x_{(3,-2)} x_{(3,0)} x_{(3,2)} x_{(2,-2)} x_{(2,6)} \\ &+ x_{(2,-6)} x_{(2,0)} x_{(3,2)} x_{(2,-2)} x_{(2,2)} x_{(4,1)} + x_{(3,-2)} x_{(3,0)} x_{(3,2)} x_{(3,4)} x_{(1,4)}\\& + x_{(3,-2)} x_{(2,-6)} x_{(2,-2)} x_{(2,4)} x_{(2,2)} x_{(4,3)})/(x_{(3,0)} x_{(3,2)} x_{(2,-2)} x_{(2,2)});\\
x^+_{(2,-2)} := &(x_{(2,-2)} x_{(2,6)} + x_{(3,4)} x_{(1,4)})/x_{(2,2)};\\
x^+_{(3,6)}  := &(x_{(3,6)};\\
x^+_{(3,4)}  := &x_{(3,4)};\\   
\end{align*}
and
\begin{align*}
    x'_{(2,-6)}  = &(x_{(2,-10)} x_{(2,-2)} + x_{(3,-2)} x_{(1,0)})/x_{(2,-6)};\\
x'_{(3,2)}   = &(x_{(3,0)} x_{(3,4)} + x_{(2,-2)} x_{(2,4)} x_{(4,3)})/x_{(3,2)};\\
x'_{(3,0)}   = &(x_{(3,-2)} x_{(3,0)} x_{(3,4)} + x_{(2,0)} x_{(3,2)} x_{(2,-2)} x_{(4,1)} + x_{(3,-2)} x_{(2,-2)} x_{(2,4)} x_{(4,3)})/(x_{(3,0)} x_{(3,2)});\\
x^-_{(2,2)}  = &(x_{(2,-2)} x_{(2,6)} + x_{(3,4)} x_{(1,4)})/x_{(2,2)};\\
x^-_{(2,-6)} = &x_{(2,-6)};\\
x^-_{(2,-2)} = &(x_{(3,-2)} x_{(2,-6)} x_{(3,0)} x_{(3,4)} x_{(2,2)} + x_{(3,-2)} x_{(3,0)} x_{(3,2)} x_{(2,-2)} x_{(2,6)} \\& + x_{(2,-6)} x_{(2,0)} x_{(3,2)} x_{(2,-2)} x_{(2,2)} x_{(4,1)} + x_{(3,-2)} x_{(3,0)} x_{(3,2)} x_{(3,4)} x_{(1,4)} \\&+ x_{(3,-2)} x_{(2,-6)} x_{(2,-2)} x_{(2,4)} x_{(2,2)} x_{(4,3)})/(x_{(3,0)} x_{(3,2)} x_{(2,-2)} x_{(2,2)});\\
x^-_{(3,0)}  = &(x_{(2,-6)} x_{(3,0)} x_{(3,4)} x_{(2,2)} + x_{(3,0)} x_{(3,2)} x_{(2,-2)} x_{(2,6)} + x_{(3,0)} x_{(3,2)} x_{(3,4)} x_{(1,4)}\\& + x_{(2,-6)} x_{(2,-2)} x_{(2,4)} x_{(2,2)} x_{(4,3)})/(x_{(3,2)} x_{(2,-2)} x_{(2,2)});\\
x^-_{(3,2)} := &(x_{(2,-6)} x_{(3,4)} x_{(2,2)} + x_{(3,2)} x_{(2,-2)} x_{(2,6)} + x_{(3,2)} x_{(3,4)} x_{(1,4)})/(x_{(2,-2)} x_{(2,2)});
\end{align*}
Now, we  observe that the isomorphism between the quivers
$\mu^-_{\mu_\bv(\Gam^{*})}\circ\mu_\bv(\Gam^{(*)},\bx)$ and 
$ \mu_{\bv-4\bll}\circ\mu^-_{\Gam^{(*)}}(\Gam^{(*)},\bx)$ is given by the cyclic permutations of
vertices $((2,2), (3,2),(3,0),(2,-2))$. Accordingly, we find from the above calculations that
\[
x^+_{(2,2)}=x^-_{(3,2)},\quad x^+_{(3,2)}=x^-_{(3,0)},\quad x^+_{(3,0)}=x^-_{(2,-2)},\quad
x^+_{(2,-2)}=x^-_{(2,2)}.
\]
for all remaining vertices we have $x^+_{(i,r)}=x_{(i,r)}=x^-_{(i,r)}$. Thus, the two seeds are indeed isomorphic.

\medskip
{\bf Braid relation of type $G_2$.} This can be dealt with in a way similar to the relation of type~$B_2$.
It is in fact slightly easier because no embedding in a bigger diagram is needed. We leave the details to the reader. 
\cqfd
\end{proof}

\subsection{Stabilized g-vectors do not depend on the reduced expression}
\begin{Def}\label{comps}
Let $\mu_\bv$ be a sequence of mutations of the type considered in Section~\ref{mutequiv} such that
$\mu_\bv(\Gam_{\bi,\br})\cong\Gam_{\bi',\br'}$. Thus, $\bv$ is composed of mutation sequences which correspond either to shift moves, or to braid moves in the unfolded Weyl group. Then, we say that the seed $\SS_{\bi',\br'}:=\mu_\bv(\SS_{\bi,\br})$ is \emph{compatible} with the initial seed 
$\SS_{\bi,\br}$.
\end{Def}

\begin{Rem}
Apriori the seed $\SS_{\bi',\br'}$ might depend on the choice of $\bv$. We will see in Section~\ref{ssec:uniqueCS} that this is not the case as long as $\bv$ is of the above mentioned type.   
\end{Rem}

\begin{Prop}\label{uniquegv}
Let $x\in\AA(\Gam_{\bi,\br},\bz)$ be a cluster variable and $\SS_{\bi',\br'}=\mu_\bv(\SS_{\bi,\br})$
a compatible seed. Then there exists $k=k(x)\in\Z_{\geq 0}$ such that 
\begin{equation} \label{eq:limgvect+}
    \bg^\infty(x):=\bg^{\SS_{\bi,\br-\bd}}(x)=\bg^{\SS_{\bi',\br'-\bd'}}(x) \quad\text{for all } \bd,\bd'\in D_k.
\end{equation}
This holds in particular for the unit cluster variables $u_{(j,s)}:= z_{(j,s)}^{(-k'\bll)}$, where
$k'\in 2\Z_{\geq 0}$ is chosen so that $(j,s+k)\in V_0(\bi,\br)$.  In this case we have 
$\bg^\infty(u_{(j,s)})=\be_{j,s}$.
\end{Prop}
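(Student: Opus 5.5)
The plan is to reduce the statement to the two ingredients already established: Lemma~\ref{lem:lgvect1}, which gives stabilization of $\bg^{\SS_{\bi,\br-\bd}}(x)$ for $\bd\in D_k$ with respect to the family of seeds $\SS_{\bi,\br-\bd}$, and Proposition~\ref{prop:intertw}, which relates the downward shifts of $\SS_{\bi',\br'}$ to downward shifts of $\SS_{\bi,\br}$ followed by the shifted mutation sequence $\mu_{\bv-2t\bll}$. Applying Lemma~\ref{lem:lgvect1} separately in the two cluster algebras (which coincide, since $\SS_{\bi',\br'}$ is a seed of $\AA(\Gam_{\bi,\br},\bz)$) gives $k_1$ and $k_2$ such that $\bg^{\SS_{\bi,\br-\bd}}(x)$ is constant for $\bd\in D_{k_1}$ and $\bg^{\SS_{\bi',\br'-\bd'}}(x)$ is constant for $\bd'\in D_{k_2}$. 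It then remains to show that the two limits agree.

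To compare them, I iterate Proposition~\ref{prop:intertw} $N$ times. This yields $\SS_{\bi',\br'-2tN\bll}\cong\mu_{\bv-2tN\bll}(\SS_{\bi,\br-2tN\bll})$, where $\mu_{\bv-2tN\bll}$ is a \emph{fixed finite} mutation sequence supported at vertices shifted down by $2tN$. The key point is that the $g$-vector transformation \eqref{eq:g-transf} under a mutation of the reference seed at a vertex $s$ only changes $\bg$ if $g_s\neq0$ or if $s$ lies in the support of $\bg$. Since $\bg^\infty(x)$ has finite support, for $N\gg0$ all the vertices of $\bv-2tN\bll$ lie strictly below this support. Such vertices lie in the lower part of the quiver, where only finitely many arrows connect them to the support of $\bg$. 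So each successive mutation of the reference seed along $\mu_{\bv-2tN\bll}$ leaves the $g$-vector unchanged, because $g_s=0$ at each mutated vertex, and this remains true at every intermediate step by induction. Hence $\bg^{\SS_{\bi',\br'-2tN\bll}}(x)=\bg^{\SS_{\bi,\br-2tN\bll}}(x)$, provided the vertex identification in the isomorphism of Proposition~\ref{prop:intertw} is the identity on the support. That isomorphism is a finite cyclic permutation of vertices located near the shifted sequence $\bv-2tN\bll$, hence away from the support for large $N$. Choosing $k\geq\max(k_1,k_2,2tN)$ then gives \eqref{eq:limgvect+}. Note that $2tN\bll\in D_k$ only for the diagonal choice; I pass to general $\bd,\bd'$ using the constancy already provided by Lemma~\ref{lem:lgvect1}.

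The main obstacle is making rigorous the claim that mutations far below the support do not alter the $g$-vector. Using the transformation rule, $g'_v=g_v+\sgn(g_s)[b_{vs}g_s]_+$, this is automatic whenever $g_s=0$. So what one must check is that no intermediate mutation in the sequence creates a nonzero entry at a later mutated vertex. This follows by induction along the sequence from the same rule, since entries can only change at the mutated vertex itself or where $g_s\ne0$.

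For the final claim about the unit variables, $u_{(j,s)}=z^{(-k'\bll)}_{(j,s)}$ with $(j,s+k')\in V_0$. Part (b) of the preceding lemma gives $\bg^{\SS_{\bi,\br-\bd-k'\bll}}(u_{(j,s)})=\be_{(j,s)}$ for $\bd\in D_d$, since $\bg^\infty_{(j,s+k')}=\be_{(j,s+k')}$ by Proposition~\ref{prp:gknit}(a). Combined with \eqref{eq:limgvect+}, this yields $\bg^\infty(u_{(j,s)})=\be_{j,s}$.
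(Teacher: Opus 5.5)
Your proposal is the paper's proof: stabilize with Lemma~\ref{lem:lgvect1}, shift the finite sequence down to $\mu_{\bv-k\bll}$ so that every mutated vertex has $g_s=0$ and the $g$-vector is unchanged, then identify $\mu_{\bv-k\bll}(\SS_{\bi,\br-k\bll})$ with $\SS_{\bi',\br'-k\bll}$ via Proposition~\ref{prop:intertw}; where the paper says the rest is clear, you spell out the passage to arbitrary $\bd,\bd'\in D_k$ and the unit-variable case. One correction to your remark on labels: iterating Proposition~\ref{prop:intertw} in mutation-inherited labels does not give a cycle confined near $\bv-2tN\bll$ (in the $A_2$ case the first cycle $((2,6),(2,8),(3,7))$ already moves $(2,8)$, a vertex of $\bv$), so the correct justification is that $\SS_{\bi',\br'-\bd'}$ must carry the labels of $\Gam_{\bi',\br'-\bd'}$, as the definition of $\mu_{\bd',\br'}$ requires, and the isomorphism $\mu_{\bv-k\bll}(\Gam_{\bi,\br-k\bll})\cong\Gam_{\bi',\br'-k\bll}$ is the identity on all vertices above the region of $\bv-k\bll$.
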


\begin{proof}
By Lemma~\ref{lem:lgvect1} we find $l\in2Z_{\geq 0}$ such that $\bg^\infty(x)=\bg^{\SS_{\bi,\br-\bd}}(x)$
for all $\bd\in D_l$. Thus, we can find $k\geq l$ such that under $\mu_{\bv-k\bll}$, applied to
$\SS_{\bi,\br-k\bll}$, the vector $\bg^\infty(x)$ is not altered under the transformation rule for g-vectors~\eqref{eq:g-transf}. Thus, we have
\[
\bg^\infty(x)=\bg^{\SS_{\bi,\br-k\bll}}(x)=\bg^{\mu_{\bv-k\bll}(\SS_{\bi,\br-k\bll})}(x)=\bg^{\SS_{\bi',\br'-k\bll}}(x),
\]
where the last equality follows by Proposition~\ref{prop:intertw} and Remark~\ref{rem:wdef-gvect}.
The rest is clear. 
\cqfd
\end{proof}


\section{Rings of formal power series and Weyl group action}
\label{sec:rings}

Following \cite{FH2}, we introduce certain rings of formal power series related to the representation theory of  the quantum affine algebra $U_q(\widehat{\g})$ 
equipped with
automorphisms 
generating an action of the Weyl group $W$ of $\g$.
This was already reviewed in \cite{GHL} for the simply-laced types $A,D,E$, but certain formulas are modified for non simply-laced types and 
this is explained in this section. 
We also establish a new sign-coherence result (Theorem \ref{scoherence}) for the extended braid group action.

\subsection{Variables}

We fix $q\in\C^*$ of infinite multiplicative order. 
We denote by $\{\Psi_{i,a} \mid i\in I,a \in \C^*\}$ an infinite set of commuting variables. To every $\la\in P$ we also attach a commutative variable $[\la]$ such that 
\[
 [\la][\mu] = [\la + \mu],\quad (\la,\mu \in P).
\]
Next we introduce for $i\in I$ and $a\in\C^*$ the following Laurent monomials:
\begin{eqnarray}
 Y_{i,a} &:=& [\varpi_i]\frac{\Psi_{i,aq^{-d_i}}}{\Psi_{i,aq^{d_i}}}, 
\\\label{def-tilde-a}
A_{i,a} &:=&
Y_{i,aq^{-d_i}}Y_{i,aq^{d_i}}
\left(\prod_{j :\ c_{ji} = -1}Y_{j,a}
\prod_{j :\ c_{ji} = -2}Y_{j,aq^{-1}}Y_{j,aq}
\prod_{j :\ c_{ji} = -3}Y_{j,aq^{-2}}Y_{j,a}Y_{j,aq^2}\right)^{-1},
\\ \label{def-tilde-Psi}
\widetilde{\Psi}_{i,a}  &:=&  \Psi_{i,a}^{-1}\prod_{j :\ c_{ij}=-1}\Psi_{j,aq^{d_i}}
\prod_{j :\ c_{ij}=-2}\Psi_{j,a}\Psi_{j,aq^{2}}
\prod_{j :\ c_{ij}=-3}\Psi_{j,aq^{-1}}\Psi_{j,aq}\Psi_{j,aq^{3}}.
\end{eqnarray}

\begin{Rem} Note that the conditions on the index $j$ of the products in (\ref{def-tilde-a}) and (\ref{def-tilde-Psi}) involve matrix entries of transposed Cartan matrices, that is, Cartan matrices corresponding to Langlands dual simple finite-dimensional Lie algebras, see \cite[Section 6.2]{FH3}.
\end{Rem}

These Laurent monomials satisfy an important relation which generalizes \cite[Lemma 5.1]{GHL}. 
\begin{Lem}\label{Lem31}
For every $i\in I$ and $a\in \C^*$, we have:
\[
Y_{i,a} A_{i,aq^{-d_i}}^{-1}
=
[\varpi_i - \a_i] \frac{\widetilde{\Psi}_{i,aq^{-3d_i}}}{\widetilde{\Psi}_{i,aq^{-d_i}}}.
\]
\end{Lem}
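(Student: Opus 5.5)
This is a direct computation from the definitions: expand both sides as Laurent monomials in the $\Psi$ variables and the weight variables $[\lambda]$, then compare. Since $A_{i,b}$ is built from the $Y$'s and each $Y_{j,c}=[\varpi_j]\Psi_{j,cq^{-d_j}}/\Psi_{j,cq^{d_j}}$, the left-hand side is an explicit monomial. The right-hand side is explicit too, since $\widetilde{\Psi}_{i,a}$ is defined directly in terms of the $\Psi$'s. So the proof reduces to checking that the weight parts match and that the exponents of each $\Psi_{j,c}$ match.

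\textbf{Weight part.} The left side has weight $\varpi_i-(2\varpi_i-\sum_{j\neq i}|c_{ji}|\varpi_j)$. The number of $Y_{j,\cdot}$ factors in the bracket of \eqref{def-tilde-a} is exactly $|c_{ji}|$. By \eqref{fundw_roots} this weight equals $\varpi_i-\alpha_i$, which is the weight on the right.

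\textbf{$\Psi_{i,\cdot}$ factors.} Put $b=aq^{-d_i}$. We have $Y_{i,a}\,(Y_{i,bq^{-d_i}}Y_{i,bq^{d_i}})^{-1}=Y_{i,a}(Y_{i,aq^{-2d_i}}Y_{i,a})^{-1}=Y_{i,aq^{-2d_i}}^{-1}$. Its $\Psi$-part is $\Psi_{i,aq^{-d_i}}/\Psi_{i,aq^{-3d_i}}$. On the right, the $\Psi_i$ factors come from $\Psi_{i,\cdot}^{-1}$ in $\widetilde\Psi$, giving $\Psi_{i,aq^{-3d_i}}^{-1}\Psi_{i,aq^{-d_i}}$. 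These agree.

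\textbf{$\Psi_{j,\cdot}$ factors, $j\neq i$.} I would treat the cases $c_{ji}=-1,-2,-3$ separately, using Lemma~\ref{lem:CartanM} to pin down $d_i,d_j$ and the numerator of $\widetilde\Psi$, which is governed by $c_{ij}$.

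\emph{Case $c_{ji}=-1$.} The left side contributes $Y_{j,b}=[\varpi_j]\Psi_{j,bq^{-d_j}}/\Psi_{j,bq^{d_j}}$. If $c_{ij}=-1$, then $d_j=d_i$ and $Y_{j,b}$ gives $\Psi_{j,aq^{-2d_i}}/\Psi_{j,a}$. The right side gives $\Psi_{j,aq^{-3d_i}q^{d_i}}/\Psi_{j,aq^{-d_i}q^{d_i}}$, which is the same. If $c_{ij}=-2$, then $d_i=1$ and $d_j=2$. The left side gives $\Psi_{j,aq^{-3}}/\Psi_{j,aq}$. The right side gives $(\Psi_{j,aq^{-3}}\Psi_{j,aq^{-1}})/(\Psi_{j,aq^{-1}}\Psi_{j,aq})$, and the $\Psi_{j,aq^{-1}}$ factors cancel. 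The case $c_{ij}=-3$ works the same way, with a telescoping product of three terms.

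\emph{Case $c_{ji}=-2$.} Here $d_j=1$, $d_i=2$, $c_{ij}=-1$ and $b=aq^{-2}$. The left side contributes $Y_{j,aq^{-3}}Y_{j,aq^{-1}}$, which telescopes to $\Psi_{j,aq^{-4}}/\Psi_{j,a}$. The right side gives $\Psi_{j,aq^{-6}q^{2}}/\Psi_{j,aq^{-2}q^2}$, the same.

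\emph{Case $c_{ji}=-3$.} This is analogous, with $d_i=3$ and three telescoping $Y$'s.

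The main obstacle is only bookkeeping: the mixed cases $c_{ij}\le -2$ against $c_{ji}=-1$, where the telescoping on the two sides must line up. I would check these explicitly with Lemma~\ref{lem:CartanM} to fix which index is short.
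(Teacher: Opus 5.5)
Your proof is correct and is essentially the paper's argument: a direct comparison of the two sides as Laurent monomials in the $\Psi_{j,c}$ and $[\lambda]$. The only difference is that the paper avoids your case split on $(c_{ij},c_{ji})$ by using $b_{ij}=d_ic_{ij}=d_jc_{ji}$. This lets it write the $j$-part of $\widetilde{\Psi}_{i,aq^{-3d_i}}/\widetilde{\Psi}_{i,aq^{-d_i}}$ uniformly as $\Psi_{j,aq^{-d_i+b_{ij}}}/\Psi_{j,aq^{-d_i-b_{ij}}}$, which telescopes into $|c_{ji}|$ factors $Y_{j,\cdot}$ spaced by $q^{2d_j}$ and so matches the definition of $A_{i,aq^{-d_i}}^{-1}$ directly.
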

\begin{proof}
The proof is similar to that of \cite[Lemma 4.15]{HL1} and \cite[Lemma 5.1]{GHL}. First we can check by inspection on the Cartan matrix that if $i\in I$ is such that $d_i > 1$, then $(c_{ij}<0 \Longrightarrow c_{ij} = -1)$. 
Hence, if $c_{ij} < -1$, we have $d_i = 1$. 
It then follows from the definition of $\widetilde{\Psi}_{i,a}$ that
\begin{eqnarray*}
\frac{\widetilde{\Psi}_{i,aq^{-3d_i}}}{\widetilde{\Psi}_{i,aq^{-d_i}}} &=&
\frac{\Psi_{i,aq^{-d_i}}}{\Psi_{i,aq^{-3d_i}}}
\prod_{j :\ c_{ij}=-1} \frac{\Psi_{j,aq^{-2d_i}}}{\Psi_{j,a}}
\prod_{j :\ c_{ij}=-2} \frac{\Psi_{j,aq^{-3}}}{\Psi_{j,aq}}
\prod_{j :\ c_{ij}=-3} \frac{\Psi_{j,aq^{-4}}}{\Psi_{j,aq^2}}
\\
&=& 
[-\varpi_i] Y_{i,aq^{-2d_i}}
\prod_{j :\ c_{ij}<0} \frac{\Psi_{j,aq^{-d_i+b_{ij}}}}{\Psi_{j,aq^{-d_i-b_{ij}}}}. 
\end{eqnarray*}
Now, since $2|b_{ij}| = 2d_j|c_{ji}|$, we have
\[
 \frac{\Psi_{j,aq^{-d_i+b_{ij}}}}{\Psi_{j,aq^{-d_i-b_{ij}}}} = 
 [c_{ji}\varpi_j] Y_{j,aq^{-d_i+b_{ij}+d_j}} Y_{j,aq^{-d_i+b_{ij}+3d_j}} \cdots Y_{j,aq^{-d_i-b_{ij}-d_j}},
\]
where the product on the right-hand side contains $|c_{ji}|$ variables $Y$. The desired equality follows by comparison with the definition of $A_{i,aq^{-d_i}}^{-1}$. 
\cqfd 
\end{proof}

\subsection{The ring $\Pi'$}\label{subsec-Pi}\label{sect3-2}

Let $\YY$ denote the ring of Laurent polynomials in the variables $Y_{i,a}$ over $\Z$:
\[
 \YY := \Z[Y_{i,a}^{\pm1};\ i\in I,\ a\in\C^*].
\]
Let $\M\subset \YY$ denote the multiplicative group of Laurent monomials in the variables $Y_{i,a}$. We denote
by $\omega$ the group homomorphism from $\M$ to $P$ defined by 
\[
 \omega(Y_{i,a}) := \varpi_i, \qquad (i\in I,\ a\in \C^*).
\]
In \cite[Definition 2.1]{FH2}, completions $\widetilde{\YY}_w$ of this ring were introduced for every $w\in W$. The elements of $\widetilde{\YY}_w$ are formal power series of the form
\[
 \sum_{m\in S} a_m\,m,
\]
where $S$ is any subset of $\M$ such that $w(\omega(S))$ is contained in a finite union of cones of $P$ of the form
\[
 C_\la := \la - Q_+, \qquad (\la\in P),
\]
and for any $\la \in P$, the number of $a_m\not = 0$ with $\omega(m) = \la$ is finite.

\begin{example}\label{Exa3-2}
Fix $i\in I$ and $a\in\C^*$, and consider the series:
\begin{eqnarray}
\Si_{i,a}^+ &:=& \sum_{k\ge 0}\, \prod_{0<j\le k} A_{i,aq^{-2(j-1)d_i}}^{-1} 
= 1 + A_{i,a}^{-1} + A_{i,a}^{-1}A_{i,aq^{-2d_i}}^{-1} + A_{i,a}^{-1}A_{i,aq^{-2d_i}}^{-1}A_{i,aq^{-4d_i}}^{-1} +\cdots ,
\\
\Si_{i,a}^- &:=& -\sum_{k > 0}\, \prod_{0<j\le k} A_{i,aq^{2jd_i}} 
= -\left(A_{i,aq^{2d_i}} + A_{i,aq^{2d_i}}A_{i,aq^{4d_i}} + A_{i,aq^{2d_i}}A_{i,aq^{4d_i}}A_{i,aq^{6d_i}} +\cdots\right). 
\end{eqnarray}
Then $\Si_{i,a}^+ \in \widetilde{\YY}_{w}$ for every $w\in W$ such that $w(\a_i)>0$, and 
$\Si_{i,a}^- \in \widetilde{\YY}_{w}$ for every $w\in W$ such that $w(\a_i)<0$. \cqfd
\end{example}

Following \cite[Section 3.7]{FH3}, we extend the rings $\YY$  :
\[
\YY' := \Z[\Psi_{i,a}^{\pm1}, [\la];\ i\in I,\ a\in\C^*,\ \la\in P]\ \supset\ \YY,
\]
and we set
\[
 \Pi' := \bigoplus_{w\in W} \widetilde{\mathcal{Y}}'_{w}\supset\ \Pi.
\]
where
\[\widetilde{\mathcal{Y}}'_{w} = \YY'\otimes_\YY \widetilde{\YY_w}.\]
We denote by $E'_w : \Pi' \to \widetilde{\mathcal{Y}}'_{w}$ the projection morphism  for every $w\in W$. The diagonal embedding $\YY' \to \Pi$ endows $\Pi'$ with the structure of a $\YY'$-algebra.

The following elements of $\Pi'$ introduced in \cite[Definition 2.7, Lemma 2.8]{FH2} play a key role.

\begin{Def} Let $i\in I$, $a\in \C^*$. We define the following invertible element of the ring $\Pi\subset \Pi'$ :
\[
 \Si_{i,a} := (\Si_{i,a}^w)_{w\in W} \in \Pi.
\]
where for  $w\in W$ put: 
\[
\Si_{i,a}^w := \Si_{i,a}^\pm\ \mbox{if} \ \pm w(\a_i) > 0.  
\]
\end{Def}

\subsection{The Weyl group action on $\Pi'$}

Following \cite{FH2, FH3, GHL}, for every $i\in I$ we introduce a ring endomorphism $\widetilde{\Theta}_i = (\widetilde{\Theta}_i^w)_{w\in W}$ of $\Pi$. 
We follow the presentation in \cite[Section 5.4]{GHL}.

For every $i\in I$ we introduce a ring endomorphism $\widetilde{\Theta}_i = (\widetilde{\Theta}_i^w)_{w\in W}$ of $\Pi'$. Each component $\tilde{\Theta_i}^w$ is in fact a ring homomorphism from 
$\widetilde{\mathcal{Y}}'_{w}$ to $\widetilde{\mathcal{Y}}'_{ws_i}$ defined by :
\[
\widetilde{\Theta}_i^w([\la]) = [s_i(\la)],
\qquad
 \widetilde{\Theta}_i^w(\Psi_{j,a}) = 
\left\{ 
\begin{array}{lc}
\Psi_{j,a} & \mbox{ if }j \not = i, \\[2mm]
(1-[-\a_i])\widetilde{\Psi}_{i,aq^{-2d_i}}\Si_{i,aq^{-2d_i}}^{ws_i} & \mbox{ if }j = i,
\end{array}
\right.
\]
where the Laurent monomial $\widetilde{\Psi}_{i,aq^{-2d_i}}$ is defined in Eq.~(\ref{def-tilde-Psi}). 

\begin{Rem} The restriction of $\widetilde{\Theta}_i$ to $\Pi$ is equal to the operator $\Theta_i$ of \cite{FH2}. 
We quote the following important formulas proved in \cite{FH2}:
\begin{equation}\label{form-7}
\Theta_i(A_{i,a}^{-1}) = A_{i,aq^{-2d_i}}\frac{\Si_{i,a}}{\Si_{i,aq^{-4d_i}}},\qquad
\Theta_i(\Si_{i,a}) = 1 - \Si_{i,a} = -A_{i,a}^{-1}\Si_{i,aq^{-2d_i}}, \qquad (i\in I,\a \in \C^*), 
\end{equation}
in which $A_{i,a} \in \YY$ is identified with its diagonal embedding $(A_{i,a})_{w\in W} \in \Pi'$.
\end{Rem}

The following generalizes the Weyl group action defined in \cite{FH2}. These are variations of the action defined in 
\cite[Proposition 3.17]{FH3}.

\begin{Thm}\label{waction}
These formulas combine into a well-defined continuous ring endomorphism $\widetilde{\Theta}_i'$ of $\Pi'$. The assignment $s_i \mapsto \widetilde{\Theta}_i$ defines an action of the Weyl group $W$ on the ring $\Pi'$ by ring automorphisms. 
\end{Thm}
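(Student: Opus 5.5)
The plan is to reduce everything to the known action of Frenkel–Hernandez on $\Pi$ (\cite{FH2}) and its $\Psi$-extension in \cite[Proposition 3.17]{FH3}. The proof splits into three steps: well-definedness and continuity of each $\widetilde{\Theta}_i$, the relation $\widetilde{\Theta}_i^2=\mathrm{id}$, and the braid relations.

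\textbf{Well-definedness.} Fix $w\in W$. We must check that the assignment on generators $[\la]$ and $\Psi_{j,a}^{\pm1}$ extends to a continuous ring homomorphism $\widetilde{\mathcal{Y}}'_w\to\widetilde{\mathcal{Y}}'_{ws_i}$. Two ingredients are needed.
\begin{itemize}
\item The image $(1-[-\a_i])\widetilde{\Psi}_{i,aq^{-2d_i}}\Si^{ws_i}_{i,aq^{-2d_i}}$ must be invertible in $\widetilde{\mathcal{Y}}'_{ws_i}$. This holds because $\Si^{ws_i}$ is invertible (it equals $1+$ terms in a cone, or $-A(1+\cdots)$), and $1-[-\a_i]$ is invertible in the completion that contains $\Si^{ws_i}$. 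The latter is checked by splitting into the two cases $\pm ws_i(\a_i)>0$.
\item The map must send $\widetilde{\YY}_w$ into $\widetilde{\YY}_{ws_i}$ continuously, and it must agree on $\YY\subset\YY'$ with the operator $\Theta_i$ of \cite{FH2}.
\end{itemize}
For the compatibility, compute $\widetilde{\Theta}_i(Y_{j,a})$ from $Y_{j,a}=[\varpi_j]\Psi_{j,aq^{-d_j}}/\Psi_{j,aq^{d_j}}$. For $j\ne i$ this is trivially $Y_{j,a}$. For $j=i$ the quotient of the two images gives
\[
[s_i\varpi_i]\,\frac{\widetilde{\Psi}_{i,aq^{-3d_i}}}{\widetilde{\Psi}_{i,aq^{-d_i}}}\cdot\frac{\Si_{i,aq^{-3d_i}}}{\Si_{i,aq^{-d_i}}},
\]
and by Lemma~\ref{Lem31} this equals $Y_{i,a}A^{-1}_{i,aq^{-d_i}}\Si_{i,aq^{-3d_i}}/\Si_{i,aq^{-d_i}}$. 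This is exactly the Frenkel–Hernandez formula for $\Theta_i(Y_{i,a})$. So on $\Pi$ the map restricts to $\Theta_i$, whose continuity is proved in \cite{FH2}, and the $\YY'$-part is Laurent, so the tensor product $\YY'\otimes_\YY\widetilde{\YY}_w$ is respected.

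\textbf{Involutivity.} We compute $\widetilde{\Theta}_i^2$ on generators. On $[\la]$ and on $\Psi_{j,a}$ with $j\neq i$ it is clearly the identity. On $\Psi_{i,a}$, the image $\widetilde{\Theta}_i(\widetilde{\Psi}_{i,b})$ contains $\widetilde{\Theta}_i(\Psi_{i,b})^{-1}$ times the unchanged neighbours. Applying the formulas \eqref{form-7} for $\Theta_i(\Si_{i,b})$ and $\Theta_i([-\a_i])=[\a_i]$, the product should telescope back to $\Psi_{i,a}$. The key identity is $(1-[\a_i])(1-\Si)=\cdots$, which turns into $\Si\cdot(1-[-\a_i])^{-1}$-type cancellations, using $\Theta_i(\Si_{i,b})=-A_{i,b}^{-1}\Si_{i,bq^{-2d_i}}$ and the expression of $A_{i,b}$ in terms of $\Psi$ and $\widetilde\Psi$ from Lemma~\ref{Lem31}.

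\textbf{Braid relations.} For $i,j$ with $m_{ij}\in\{2,3,4,6\}$ we need $(\widetilde{\Theta}_i\widetilde{\Theta}_j)^{m_{ij}/2}$-type equalities. I would argue by restriction and density. On $\Pi$ these hold by \cite{FH2}. Both sides are continuous ring homomorphisms, so it suffices to check them on the generators $\Psi_{k,a}$. Moreover, since $Y_{k,a}$ determines $\Psi_{k,a}$ up to $q^{2d_k}$-periodic factors, one can use the argument of \cite[Proposition 3.17]{FH3}: the only indeterminacy is an element in the kernel of $\Psi_{k,a}\mapsto\Psi_{k,aq^{-d_k}}/\Psi_{k,aq^{d_k}}$. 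We then show that this element is $1$ by a leading-term (highest $\omega$-weight) comparison in the relevant completion. The main obstacle is precisely this step in the non simply-laced cases $B_2$ and $G_2$, where the factors $\widetilde{\Psi}$ involve products over $c_{ij}=-2,-3$. There I would first try to reduce to the rank-two parabolic subgroup and transport the verification from \cite{FH3}, whose action differs from ours only by the scalar factors $(1-[-\a_i])$. Those factors are shown to transform compatibly because $W$ permutes the roots $\a$ in the factors $1-[-\a]$ exactly as in the product over positive roots sent to negative ones.
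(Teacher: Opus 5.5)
Your proposal follows the paper's proof. Well-definedness is inherited from \cite{FH2}, and your check via Lemma~\ref{Lem31} that $\widetilde{\Theta}_i$ restricts to $\Theta_i$ on $\YY$ is exactly the compatibility needed. The braid relations are transported from \cite[Proposition 3.17]{FH3}, and $\widetilde{\Theta}_i^2=\mathrm{id}$ is a direct computation on $\Psi_{i,a}$ using \eqref{form-7} and Lemma~\ref{Lem31}.

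When you write out the involution, the cancellation comes from $\widetilde{\Theta}_i(\widetilde{\Psi}_{i,aq^{-2d_i}})=(1-[-\a_i])^{-1}[-\a_i]\,\Si_{i,aq^{-4d_i}}^{-1}\Psi_{i,a}A_{i,aq^{-2d_i}}$ together with $(1-[\a_i])(-[-\a_i])=1-[-\a_i]$. For the braid relations you can drop the periodicity/leading-term detour, which cannot detect the $a$-independent scalar factors anyway. What does the work is your bookkeeping of the factors $1-[-\beta]$ over the inversion set, on top of \cite{FH3}.
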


\begin{proof} The first point is proved as for the operators $\Theta_i$ in \cite{FH2}. 
The braid relations satisfied by the $s_i$'s are of the form
\[
(s_is_j)^{m_{ij}} = e, \qquad (i\not = j),
\]
where $m_{ij}=2$ (\resp $3$, $4$, $6$) if 
$c_{ij}c_{ji} = 0$ (\resp $1$, $2$, $3$). They are proved to be satisfied by the $\Theta_i'$ in \cite[Proposition 3.17]{FH3} (see also \cite[Proposition 5.10]{GHL} 
for simply-laced types). The involutive property of the $\Theta_i'$ is obtained as in \cite[Lemma 5.8]{GHL}. Let us give the details as there are slight modifications in the non-simply laced types.
First it follows from Equation (\ref{form-7}) that
\[
 \widetilde{\Theta}_i'(\Si_{i,aq^{-2d_i}}) = \Theta_i(\Si_{i,aq^{-2d_i}}) = -A_{i,aq^{-2d_i}}^{-1}\Si_{i,aq^{-4d_i}}.
\]
On the other hand, for $j\not = i$ we have $\widetilde{\Theta}_i'(\Psi_{j,b}) = \Psi_{j,b}$ for every $b\in\C^*$, therefore
\[
 \widetilde{\Theta}_i'(\widetilde{\Psi}_{i,aq^{-2d_i}}) = \widetilde{\Theta}_i'(\Psi_{i,aq^{-2d_i}}^{-1})
 \Psi_{i,aq^{-2d_i}}\widetilde{\Psi}_{i,aq^{-2d_i}}
 = (1-[-\a_i])^{-1}\widetilde{\Psi}_{i,aq^{-4d_i}}^{-1}\Si_{i,aq^{-4d_i}}^{-1}\Psi_{i,aq^{-2d_i}}\widetilde{\Psi}_{i,aq^{-2d_i}}. 
\]
Hence, using Lemma~\ref{Lem31}, we get 
\begin{eqnarray*}
 \widetilde{\Theta}_i'(\widetilde{\Psi}_{i,aq^{-2d_i}}) &=& (1-[-\a_i])^{-1}[-\a_i+\varpi_i]\Si_{i,aq^{-4d_i}}^{-1}\Psi_{i,aq^{-2d_i}} 
 Y_{i,aq^{-d_i}}^{-1}A_{i,aq^{-2d_i}} \\
 &=& (1-[-\a_i])^{-1}[-\a_i]\Si_{i,aq^{-4d_i}}^{-1}\Psi_{i,a} A_{i,aq^{-2d_i}},
\end{eqnarray*}
and finally, 
\begin{eqnarray*}
(\widetilde{\Theta}_i')^2(\Psi_{i,a}) &=& \widetilde{\Theta}_i'(1-[-\a_i])\widetilde{\Theta}_i'(\widetilde{\Psi}_{i,aq^{-2d_i}}) \widetilde{\Theta}_i'(\Si_{i,aq^{-2d_i}})
\\
&=& (1-[\alpha_i])(1-[-\a_i])^{-1}(-[-\a_i])\,\Psi_{i,a}
\\
&=& \Psi_{i,a}.
\end{eqnarray*}
\cqfd 

\end{proof}

The automorphisms $\widetilde{\Theta}_i'$ are compatible with the action of the ordinary simple reflexions $s_i$ 
(see \cite[Lemma 5.9]{GHL}.

Using Theorem \ref{waction} we can define for every $w\in W$ a ring automorphism $\widetilde{\Theta}_w'$ of $\Pi'$ by 
\begin{equation}\label{eq-9}
 \widetilde{\Theta}_w' := \widetilde{\Theta}_{i_1}'\ldots \widetilde{\Theta}_{i_r}',
\end{equation}
where $w=s_{i_1}\ldots s_{i_r}$ denotes an arbitrary factorization of $w$.

\begin{Rem}\label{Rem3-12}
The following braid group action also extends to $\Pi'$ the action of the operators $\Theta_i$ on $\Pi$. Define an operator $\Theta_i'$ on $\Pi'$ by: 
\[
\Theta_i'([\la]) = [s_i(\la)],
\qquad
\Theta_i'(\Psi_{j,a}) = 
\left\{ 
\begin{array}{lc}
\Psi_{j,a} & \mbox{ if }j \not = i, \\[2mm]
\widetilde{\Psi}_{i,aq^{-2d_i}}\Si_{i,aq^{-2d_i}} & \mbox{ if }j = i.
\end{array}
\right.
\]
The operators $\Theta_i'$ satisfy the braid relations (see \cite[Proposition 3.17]{FH3}), but the $\Theta_i'$ are not involutions (the order of $\Theta_i'$ is not finite).
\end{Rem}

\subsection{Sign-coherence}\label{scs}
Let us recall the extended braid group action \cite[Section 3.2]{FH3} on the ring $\YY'$.
The action is generated by the ring automorphisms\footnote{We denote here by $T_i'$ the operators that are denoted by $\sigma\circ T_i'\circ \sigma$ in \cite{FH3}.} $T_i'$ $(i\in I)$ defined by $T_i'([\lambda]) = [s_i(\lambda)]$ for $\lambda \in P$ and for $a\in\mathbb{C}^*$ by 
$$
T_i'(\Psi_{j,a}) = \Psi_{j,a}\quad if\quad j\neq i
$$ 
and 
$$
T_i'(\Psi_{i,a}) = \tilde{\Psi}_{i,aq^{-2d_i}}.
$$
This can be seen as a multiplicitive notation for the operators $t_i$ defined additively in Section \ref{tbga}.
These operators $T_i'$ extend  the braid group action defined by Chari in \cite{C} on the subring $\YY$ (see the discussion in \cite[Section 3.2]{FH3}).
As these operators satisfy the braid relations, by considering a reduced expression of $w\in W$, $T_w'$ is well-defined for any $w\in W$. Then for any $i\in I$ and $a\in\mathbb{C}^*$ one defines 
\begin{equation}\label{extl}\Psi_{w(\omega_i^\vee), a} := T_w'(\Psi_{i,a}).\end{equation}
The notation is legitimate because it is established in \cite[Lemma 2.25, Section 3.2]{FH3} that $T_w'(\Psi_{i,a})$ depends only on the coweight $w(\omega_i^\vee)$ and the complex number $a$.

\begin{Thm}\label{scoherence} Fix $i,k \in I$, $w\in W$, $a\in \C^*$.
Then the exponents of the variables $\Psi_{k,b}$ occuring in the Laurent monomial $\Psi_{w(\varpi_i^\vee),a}$ have the same sign for every $b\in \mathbb{C}^*$, 
namely, the sign of $(w(\varpi_i^\vee))(\alpha_k)$.
In particular, if $(s_k w)(\varpi_i^\vee) < w(\varpi_i^\vee)$, this sign is non-negative.
\end{Thm}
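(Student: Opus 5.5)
The plan is to work in the additive notation of \S\ref{tbga}. Identify the exponents of the $\Psi_{k,b}$ in $\Psi_{w(\varpi_i^\vee),a}=T'_w(\Psi_{i,a})$ with the $k$-th coordinate, a Laurent polynomial in $z$, of the vector $t_{i_1}\cdots t_{i_m}(z^{r}\be_i)\in\Z[z^{\pm1}]^I$ for a reduced word $(i_1,\ldots,i_m)$ of $w$. Since $T'_w(\Psi_{i,a})$ depends only on $w(\varpi_i^\vee)$, we may first replace $w$ by the minimal representative of $wW_{I\setminus\{i\}}$: every $t_j$ with $j\neq i$ fixes $\be_i$. Specializing $z\mapsto 1$ turns $t_j$ into the matrix of $s_j$ acting on $P^\vee$ in the basis $(\varpi^\vee_h)$, because $s_j(\varpi_j^\vee)=-\varpi_j^\vee-\sum_{h\neq j}c_{jh}\varpi_h^\vee$. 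Hence the sum of the coefficients in row $k$ equals $w(\varpi_i^\vee)(\alpha_k)$. So once sign-coherence of row $k$ is proved, the common sign is forced to be that of $w(\varpi_i^\vee)(\alpha_k)$. The final assertion then follows: if $(s_kw)(\varpi_i^\vee)<w(\varpi_i^\vee)$, then $w(\varpi_i^\vee)(\alpha_k)>0$.

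Sign-coherence itself will be proved by induction on $\ell(w)$, writing $w=s_jw'$ with $\ell(w)=\ell(w')+1$, $v:=T_{w'}(\be_i)$ and $T_w(\be_i)=t_jv$. Since $w'^{-1}(\alpha_j)>0$, we get $w'(\varpi_i^\vee)(\alpha_j)=\varpi_i^\vee(w'^{-1}\alpha_j)\ge 0$. By induction $v_j$ has nonnegative coefficients, so the new row $j$, namely $-z^{-2d_j}v_j$, is nonpositive, as required. A row $k$ not adjacent to $j$ is unchanged. An adjacent row $k$ becomes $v_k+(t_j)_{kj}v_j$, where $(t_j)_{kj}$ has positive coefficients. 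This is automatically sign-coherent when $v_k\ge 0$.

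The main obstacle is the remaining case, $k$ adjacent to $j$ and $w'(\varpi_i^\vee)(\alpha_k)<0$, where positive and negative contributions could mix. I would handle it by proving a stronger, rank-two statement. Using the braid relations, reorder the reduced word so that the last letters acting before $s_j$ form a maximal alternating segment in $W_{\{j,k\}}$. That is, write $w=u\,y$ with $u\in W_{\{j,k\}}$ of maximal length and $y$ minimal in $W_{\{j,k\}}\backslash W$. Rows $j$ and $k$ of $T_w(\be_i)=T_u\bigl(T_y(\be_i)\bigr)$ depend only on the pair $(x_j,x_k)$ of rows of $x:=T_y(\be_i)$, and minimality of $y$ gives $x_j,x_k\ge 0$ by the inductive claim applied to shorter elements. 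Everything then reduces to a finite check in the rank-two braid representations of type $A_1\times A_1$, $A_2$, $B_2$ and $G_2$ (the $2\times 2$ blocks of the $t_h$ in \eqref{eq:t'i}). The check to carry out is that each prefix of the longest element of $W_{\{j,k\}}$, applied to a vector with nonnegative entries, yields rows that are each sign-coherent. This is a direct computation with at most six factors, for instance with the matrices of Example~\ref{exa3-12} in type $B$. It uses only the fact that the entries $(t_h)_{h'h}$ are polynomials in $z^{-1}$ with positive coefficients, of which $z^{-d_h}$, $1+z^{-2}$ and $z+z^{-1}+z^{-3}$ are the only ones occurring. I expect this rank-two verification, in particular in type $G_2$, to be where care is needed.
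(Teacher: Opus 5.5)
Your proposal is correct and follows essentially the paper's proof. You handle the same easy cases, and in the critical case ($k$ adjacent to $j$, $w'(\varpi_i^\vee)(\alpha_k)<0$) you make the same reduction to rank two via a $\{j,k\}$-dominant element: your minimal coset representative $y$ plays the role of the paper's $w'=w''w$, and your induction on $\ell(w)$ replaces the paper's induction on height. Your finite check that every row of each $2\times 2$ block $M_u$ is sign-coherent is exactly the paper's explicit rank-two formulas in \S\ref{reduit} combined with Lemma~\ref{sign}; note that it is a genuine computation and does not follow from positivity of the entries of the $t_h$ alone, since cancellations occur (already in type $B_2$ the product $t_1t_2t_1$ has an entry $z^{-4}-z^{-4}$).
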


There is no classical analog of this statement for classical weights, since the point here is that the sign does not depend on the spectral parameter $b\in\mathbb{C}^*$. 
Before proving this theorem
we need some preliminaries on classical weights and some computations in small ranks.

\subsubsection{Preliminaries on classical weights}

\begin{Lem}\label{sign} Assume that $\mathfrak{g}$ has rank $2$ and let $I = \{j,k\}$. Then for any $w \in W$,
if $\varpi_j^\vee$ (\resp $\varpi_k^\vee$) occurs in both $w(\varpi_j^\vee)$ and $w(\varpi_k^\vee)$, then the coefficients
with which it occurs
have the same sign.
\end{Lem}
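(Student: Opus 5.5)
The statement is a finite check on rank $2$ Weyl groups, and I will argue directly with the dihedral structure rather than enumerate blindly. In rank $2$ with $I=\{j,k\}$ the Weyl group is dihedral of order $2m$, with $m=m_{jk}\in\{2,3,4,6\}$. Its elements are the alternating words $(s_js_k\cdots)$ and $(s_ks_j\cdots)$ of length at most $m$. The coweights $w(\varpi_j^\vee)$ and $w(\varpi_k^\vee)$ are determined by the dual root system of $\g^L$, so I will work in the coordinates $(\varpi_j^\vee,\varpi_k^\vee)$ and use the reflection formula $s_i(\lambda)=\lambda-\alpha_i(\lambda)\alpha_i^\vee$.

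\textbf{Reduction to a sign statement.} I will first observe that the coefficient of $\varpi_j^\vee$ in a coweight $\mu$ equals $\alpha_j(\mu)$. The claim therefore becomes: whenever $\alpha_j(w\varpi_j^\vee)$ and $\alpha_j(w\varpi_k^\vee)$ are both nonzero, they have the same sign. By $W$-invariance of the pairing, $\alpha_j(w\varpi_i^\vee)=(w^{-1}\alpha_j)(\varpi_i^\vee)$. The root $\beta:=w^{-1}\alpha_j$ is either positive or negative, so its coordinates $\beta(\varpi_j^\vee),\beta(\varpi_k^\vee)$ in the simple root basis all have the same sign. These coordinates are exactly the two coefficients in question. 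So the lemma reduces to the fact that every root lies in $Q_+\cup(-Q_+)$. By the symmetric argument, the same holds for $\varpi_k^\vee$.

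\textbf{Main obstacle.} The only delicate point is checking the identity $\alpha_j(w\mu)=(w^{-1}\alpha_j)(\mu)$ for the pairing $P\times P^\vee$ as normalized in the paper, namely $\alpha_i(\alpha_j^\vee)=c_{ji}$ and $s_i$ acting on $P^\vee$ as stated. I will verify it on generators by showing $(s_i\alpha)(s_i\mu)=\alpha(\mu)$. Expanding with $s_i\alpha=\alpha-\alpha(\alpha_i^\vee)\alpha_i$ and $s_i\mu=\mu-\alpha_i(\mu)\alpha_i^\vee$ reduces this to $\alpha_i(\alpha_i^\vee)=2$, which holds.

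\textbf{Fallback.} If one prefers to avoid this, I can instead list the $2m$ elements for $B_2$ and $G_2$ (types $A_1\times A_1$ and $A_2$ being immediate) and read off the coordinates directly. The argument above is uniform in the rank, though, so no such listing is needed.
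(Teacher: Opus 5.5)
Your proof is correct, and it takes a somewhat different route from the paper's.

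The paper works with reduced words in the dihedral group. It treats $w=e$ and $w=w_0$ by inspection. Otherwise it writes $w=s_{i_2}\cdots s_{i_N}$ so that $s_{i_1}w$ is reduced and longer, and then asserts two things:
\begin{itemize}
\item the $\varpi_{i_1}^\vee$-coefficient of every $w(\varpi_i^\vee)$ is nonnegative, because $s_{i_1}$ lengthens $w$;
\item the $\varpi_{i_2}^\vee$-coefficient is nonpositive, because it is nonnegative for $w''=s_{i_3}\cdots s_{i_N}$ and applying $s_{i_2}$ negates it.
\end{itemize}
The underlying principle, ``if $s_a v>v$ then the $\varpi_a^\vee$-coefficients of the $v(\varpi_i^\vee)$ are nonnegative,'' is used there without justification. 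Your identity $\alpha_j(w\varpi_i^\vee)=(w^{-1}\alpha_j)(\varpi_i^\vee)$, combined with $s_jw>w\iff w^{-1}\alpha_j>0$, is exactly what justifies it.

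Going directly to the root $\beta=w^{-1}\alpha_j$ and the dichotomy $\beta\in Q_+\cup(-Q_+)$ gives you several things:
\begin{itemize}
\item no case distinction at $e$ and $w_0$;
\item no use of the dihedral structure or of reduced words;
\item validity in arbitrary rank;
\item an explicit common sign, namely that of $w^{-1}\alpha_j$, which is positive iff $s_jw>w$.
\end{itemize}
The paper's ascent/descent formulation is closer to how the lemma is used in the proof of Theorem~\ref{scoherence}, where signs are tracked through lengths in the subgroup generated by $s_j$ and $s_k$.

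Your fallback enumeration is the ``direct computation'' that the paper mentions and sets aside.
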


\begin{proof} This can be treated by a direct computation, but for uniformity we give the following argument. The result is clear if $w = e$ (\resp $w = w_0$)
as we obtain the two fundamental coweights $\varpi_j^\vee$ and $\varpi_k^\vee$ (\resp the two opposite of fundamental coweights $-\varpi_{\nu(j)}^\vee$ and $-\varpi_{\nu(k)}^\vee$). Hence we can consider a reduced expression
$w= s_{i_2}\cdots s_{i_N}$ so that
$w' = s_{i_1}\cdots s_{i_N}$ and $w'' = s_{i_3}\cdots s_{i_N}$ are reduced expressions. Then the coefficient of $\varpi_{i_1}^\vee$ in $w(\varpi_i^\vee)$ is non-negative for any $i$. Similarly, the coefficient of $\varpi_{i_2}^\vee$ in $w''(\varpi_i^\vee)$ is non-negative for any $i$, and the coefficient of $\varpi_{i_2}^\vee$ in $w(\varpi_i^\vee)$ is non-positive for any $i$. \cqfd
\end{proof}

\begin{example} 
In type $A_2$, the images of $(\varpi_1^\vee,\varpi_2^\vee)$ by the Weyl group elements are:
\[
\begin{array}{lll}
e : (\varpi_1^\vee,\varpi_2^\vee) &s_1 : (\varpi_2^\vee-\varpi_1^\vee,\varpi_2^\vee) & s_2 : (\varpi_1^\vee, \varpi_1^\vee-\varpi_2^\vee) 
\\[2mm]
s_2s_1 : (-\varpi_2^\vee,\varpi_1^\vee-\varpi_2^\vee) &s_1s_2 : (\varpi_2^\vee-\varpi_1^\vee,-\varpi_1^\vee) & s_1s_2s_1 : (-\varpi_2^\vee, -\varpi_1^\vee)
\end{array}
\]
In type $B_2$, the images of $(\varpi_1^\vee,\varpi_2^\vee)$ by the Weyl group elements different from $e$ and $w_0$ are:
\[
\begin{array}{lll}
s_1 : (\varpi_2^\vee-\varpi_1^\vee,\varpi_2^\vee) & s_2 : (\varpi_1^\vee, 2\varpi_1^\vee-\varpi_2^\vee) &
s_2s_1 : (-\varpi_2^\vee+\varpi_1^\vee,2\varpi_1^\vee-\varpi_2^\vee) \\[2mm]
s_1s_2 : (\varpi_2^\vee-\varpi_1^\vee,\varpi_2^\vee-2\varpi_1^\vee) & s_1s_2s_1 : (-\varpi_1^\vee,-2\varpi_1^\vee + \varpi_2^\vee) & 
s_2s_1s_2 : (-\varpi_2^\vee+\varpi_1^\vee,-\varpi_2^\vee)
\end{array}
\]
\end{example}

\subsubsection{Rank $1$ and rank $2$}\label{reduit}

The explicit formulas for the $\ell$-weights $\Psi_{w(\varpi_i^\vee),1}$ in rank $1$ and $2$ are as follows. 
We can directly check Theorem \ref{scoherence} in these cases.

\medskip
Type $A_1$.
$$\Psi_{\varpi_1^\vee,1} = \Psi_{1,1},\quad \Psi_{-\varpi_1^\vee,1} = \Psi_{1,q^{-2}}^{-1}.$$

Type $A_1\times A_1$
$$\Psi_{\varpi_1^\vee,1} = \Psi_{1,1},\quad \Psi_{-\varpi_1^\vee,1} = \Psi_{1,q^{-2}}^{-1},\quad \Psi_{\varpi_2^\vee,1} = \Psi_{2,1}, \quad\Psi_{-\varpi_2^\vee,1} = \Psi_{2,q^{-2}}^{-1}.$$

Type $A_2$
$$\Psi_{\varpi_1^\vee,1} = \Psi_{1,1},\quad \Psi_{\varpi_2^\vee-\varpi_1^\vee,1} = \Psi_{2,q^{-1}}\Psi_{1,q^{-2}}^{-1}, \quad \Psi_{-\varpi_2^\vee,1} = \Psi_{2,q^{-3}}^{-1},$$
$$\Psi_{\varpi_2^\vee,1} = \Psi_{2,1},\quad \Psi_{\varpi_1^\vee-\varpi_2^\vee,1} = \Psi_{1,q^{-1}}\Psi_{2,q^{-2}}^{-1}, \quad\Psi_{-\varpi_1^\vee,1} = \Psi_{1,q^{-3}}^{-1}.$$

Type $B_2$

\[
\begin{array}{ccc}
 \Psi_{\varpi_1^\vee,1} = \Psi_{1,1}, & \Psi_{\varpi_2^\vee-\varpi_1^\vee,1} = \Psi_{2,q^{-2}}\Psi_{1,q^{-4}}^{-1}, &
 \Psi_{-\varpi_2^\vee + \varpi_1^\vee,1} = \Psi_{1,q^{-2}}\Psi_{2,q^{-4}}^{-1},
 \\[4mm]
 \Psi_{-\varpi_1^\vee ,1} = \Psi_{1,q^{-6}}^{-1},& \Psi_{\varpi_2^\vee,1} = \Psi_{2,1},& 
\Psi_{2\varpi_1^\vee-\varpi_2^\vee,1} = \Psi_{1,1}\Psi_{1,q^{-2}}\Psi_{2,q^{-2}}^{-1}, 
  \end{array}
\]
\[
\Psi_{-2\varpi_1^\vee+\varpi_2^\vee,1} = \Psi_{1,q^{-4}}^{-1}\Psi_{1,q^{-6}}^{-1}\Psi_{2,q^{-4}},
\qquad \Psi_{-\varpi_2^\vee,1} = \Psi_{2,q^{-6}}^{-1}.
\]

Type $G_2$
$$\Psi_{\varpi_1^\vee,1} = \Psi_{1,1},\quad \Psi_{\varpi_2^\vee-\varpi_1^\vee,1} = \Psi_{2,q^{-3}}\Psi_{1,q^{-6}}^{-1}, \quad \Psi_{-\varpi_2^\vee + 2\varpi_1^\vee,1} = \Psi_{1,q^{-2}}\Psi_{1,q^{-4}}\Psi_{2,q^{-5}}^{-1},$$ 
$$\Psi_{\varpi_2^\vee - 2\varpi_1^\vee,1} = \Psi_{1,q^{-8}}^{-1}\Psi_{1,q^{-10}}^{-1}\Psi_{2,q^{-7}},\quad 
\Psi_{-\varpi_2^\vee + \varpi_1^\vee,1} = \Psi_{2,q^{-9}}^{-1}\Psi_{1,q^{-6}}, 
\quad \Psi_{-\varpi_1^\vee,1} = \Psi_{1,q^{-12}}^{-1}.$$
$$\Psi_{\varpi_2^\vee,1} = \Psi_{2,1},\qquad \Psi_{3\varpi_1^\vee-\varpi_2^\vee,1} = \Psi_{1,q}\Psi_{1,q^{-1}}\Psi_{1,q^{-3}}\Psi_{2,q^{-2}}^{-1}, 
$$
$$
\Psi_{-3\varpi_1^\vee + 2\varpi_2^\vee,1} = \Psi_{1,q^{-5}}^{-1}\Psi_{1,q^{-7}}^{-1}\Psi_{1,q^{-9}}^{-1}\Psi_{2,q^{-4}}\Psi_{2,q^{-6}},
$$
$$
\Psi_{-2\varpi_2^\vee + 3\varpi_1^\vee,1} = \Psi_{1,q^{-3}}\Psi_{1,q^{-5}}\Psi_{1,q^{-7}}\Psi_{2,q^{-6}}^{-1}\Psi_{2,q^{-8}}^{-1}, \quad 
$$
$$
\Psi_{\varpi_2^\vee - 3\varpi_1^\vee,1} = \Psi_{1,q^{-9}}^{-1}\Psi_{1,q^{-11}}^{-1}\Psi_{1,q^{-13}}^{-1}\Psi_{2,q^{-10}}, \quad 
\Psi_{-\varpi_2^\vee,1} = \Psi_{2,q^{-12}}^{-1}.$$

\subsubsection{Proof of Theorem \ref{scoherence}}

First, it suffices to prove Theorem \ref{scoherence} for $a = 1$. 
Since the result is clear for $\mathfrak{g} = \mathfrak{sl}_2$ by \S\ref{reduit}, we may assume that $\mathfrak{g}$ has rank at least $2$.
Let $i,k\in I$ and $w\in W$. We will prove the following sign-coherence property: 
\[
\mathcal{S}_k(w(\varpi_i^\vee)):\   
\mbox{The exponents of the $\Psi_{k,b}\ (b\in\C^*)$ occurring in $\Psi_{w(\varpi_i^\vee),1}$ all have the same sign.}
\]
By \cite[Section 6.2]{FH3}, the sum of the exponents of the $\Psi_{k,b}$ occurring in $\Psi_{w(\varpi_i^\vee),1}$ is 
equal to $(w(\varpi_i^\vee))(\alpha_k)$. So, if $\mathcal{S}_k(w(\varpi_i^\vee))$ is true, this common sign is the sign of $(w(\varpi_i^\vee))(\alpha_k)$.
Let us fix $i\in I$. We will prove $\mathcal{S}_k(w(\varpi_i^\vee))$ for all $k\in I$ and $w\in W$ by induction on the height of $\varpi_i^\vee - w(\varpi_i^\vee)$. 

First $\mathcal{S}_k(\varpi_i^\vee)$ is clear for all $k\in I$, because $\Psi_{\varpi_i^\vee,1} = \Psi_{i,1}$. 
Suppose that $\mathcal{S}_k(w(\varpi_i^\vee))$ is proved for every $k\in I$, and let $j\in I$ be such that $(s_jw)(\varpi_i^\vee) < w(\varpi_i^\vee)$. We want to prove $\mathcal{S}_k((s_jw)(\varpi_i^\vee))$ for every $k\in I$, that is the sign-coherence for $\Psi_{(s_jw)(\varpi_i^\vee),1} = T_j'(\Psi_{w(\varpi_i^\vee),1})$.

If $k = j$, then $\mathcal{S}_k(w(\varpi_i^\vee))$ implies $\mathcal{S}_k(s_kw(\varpi_i^\vee))$ by definition of $T_k'$.
The same argument works if $w(\varpi_i^\vee)(\alpha_k) \geq 0$ or if $c_{kj} = 0$.

The remaining critical case is $c_{kj} < 0$ and $w(\varpi_i^\vee)(\alpha_k) < 0$. Indeed, in this case $\Psi_{w(\varpi_i^\vee),1}$ 
has factors $\Psi_{k,b}$ with negative exponents which are multiplied by factors $\Psi_{k,c}$ with positive exponents 
when $T_j'$ is applied to $\Psi_{w(\varpi_i^\vee),1}$ to obtain $\Psi_{s_jw(\varpi_i^\vee),1}$.
But, there is a Weyl group element $w''$ in the subgroup $\left\<s_j,s_k\right\>$ generated by $s_j$ and $s_k$ such that, if we write $w'= w'' w$, 
the weight $w'(\varpi_i)$ satisfies
$$\varpi_i > w'(\varpi_i) > w(\varpi_i)$$
and is dominant with respect to $j$ and $k$ (that is, the coefficients of $\varpi_j^\vee$ and $\varpi_k^\vee$ in $w'(\varpi_i)$ are non-negative). 
By the induction hypothesis for  $w'(\varpi_i^\vee)$, we have $\mathcal{S}_k(w'(\varpi_i^\vee))$ and $\mathcal{S}_j(w'(\varpi_i^\vee))$.
In particular, the exponents of the variables $\Psi_{k,c}$, $\Psi_{j,c}$ in $\Psi_{w'(\varpi_i^\vee),a}$ are non-negative. Hence, if we define $$\pi : \mathcal{Y}\rightarrow \mathbb{Z}[\Psi_{j,c}^{\pm 1}, \Psi_{k,c}^{\pm 1}]_{c\in\mathbb{C}^*}$$
by $\pi(\Psi_{j,c}) = \Psi_{j,c}$, $\pi(\Psi_{k,c}) = \Psi_{k,c}$ and $\pi(\Psi_{p,c}) = 1$ for $c\in\mathbb{C}^*$ and $p\in I\setminus \{j, k\}$, the monomial 
$$\Psi := \pi(\Psi_{w'(\varpi_i^\vee),a})$$ 
has only positive exponents.

Using $\pi$, we reduce to the rank 2 Lie algebra $\tilde{\mathfrak{g}} = \mathfrak{g}_{j,k}$ corresponding to the sub-Dynkin diagram associated to the vertices 
corresponding to $j$ and $k$. 
We regard $s_jw''$ as an element $\tilde{w}$ of the Weyl group of $\tilde{\mathfrak{g}}$. Let $\tilde{T}_j$, $\tilde{T}_k$, 
$\tilde{T}_{\tilde{w}}$ be the extended braid group operators as in \cite{FH3} but for $\tilde{\mathfrak{g}}$. We notice that 
$$\pi\circ T_j' = \tilde{T}_j\circ \pi,\quad \pi \circ T_k' = \tilde{T}_k\circ \pi.$$
Hence, we have 
$$\pi(\Psi_{ws_j(\varpi_i^\vee),1}) = \tilde{T}_{\tilde{w}}(\Psi).$$
Hence, it suffices to prove the sign-coherence for $\tilde{T}_{\tilde{w}}(\Psi)$. 
We factorize 
$$\Psi = (\Psi_{j,a_1}\Psi_{j,a_2}\cdots \Psi_{j,a_N})(\Psi_{k,b_1}\Psi_{k,b_2}\cdots \Psi_{k,b_M})$$
and so 
$$\tilde{T}_{\tilde{w}}(\Psi) =  \tilde{T}_{\tilde{w}}(\Psi_{j,a_1})\tilde{T}_{\tilde{w}}(\Psi_{j,a_2})\cdots \tilde{T}_{\tilde{w}}(\Psi_{j,a_N})\tilde{T}_{\tilde{w}}(\Psi_{k,b_1})\tilde{T}_{\tilde{w}}(\Psi_{k,b_2})\cdots \tilde{T}_{\tilde{w}}(\Psi_{k,b_M}).$$
By Section \ref{reduit}, we have the sign-coherence for each factor. 

Now, By Lemma \ref{sign}, the sign is the same for all factors. The result follows. \cqfd

\section{$QQ$-systems}
\label{sec:QQ}

Using the Weyl group action, we introduce distinguished elements $\QQ_{w(\varpi^\vee_i),\,a}$ 
of the ring $\Pi'$ satisfying a system of functional relations called the $QQ$-system.  We also establish a general uniqueness result of its set of solutions (Theorem \ref{uniqqsys}) which is valid in all completions $\widetilde{\mathcal{Y}}'_{w}$.

\subsection{Normalization factors $\chi_{w(\varpi_i)}$}

We first define some normalization factors $\chi_{w(\varpi_i)}\ (i\in I,\ w\in W)$ indexed by the extremal weights $w(\varpi_i)$ of the fundamental $\g$-modules (they are well-defined by \cite[Lemma 5.3]{FH3}).

\begin{Def}
There is a unique family $\chi_{w(\varpi_i)}$ of elements of $\Pi'$ defined by induction by the following conditions:
\begin{enumerate}
 \item[(i)] for every $i\in I$, $\chi_{\varpi_i} = 1$;
 \item[(ii)] for every $i\in I$ and $w\in W$ such that $ws_i > w$,
 \[
   \chi_{ws_i(\varpi_i)}\chi_{w(\varpi_i)} = \frac{\prod_{j\not = i} \chi_{w(\varpi_j)}^{|c_{ij}|}}{1 - [-w(\alpha_i)]}.
 \]
%
\end{enumerate}
\end{Def}

\begin{example}
In type $B_2$ we have:
\[
 \chi_{\varpi_1} = 1, \quad \chi_{s_1(\varpi_1)} = \frac{1}{1-[-\alpha_1]},\quad
 \chi_{s_2s_1(\varpi_1)} = \frac{1}{(1-[-\alpha_1-2\alpha_2])(1-[-\alpha_2])},
\]
\[
 \chi_{s_1s_2s_1(\varpi_1)} = \frac{1}{(1-[-\alpha_1-2\alpha_2])(1-[-\alpha_1])(1-[-\alpha_1-\alpha_2])},
\]
\[
 \chi_{\varpi_2} = 1, \quad \chi_{s_2(\varpi_2)} = \frac{1}{1-[-\alpha_2]},\quad
 \chi_{s_1s_2(\varpi_2)} = \frac{1}{(1-[-\alpha_1-\alpha_2])(1-[-\alpha_1])^2},
\]
\[
 \chi_{s_2s_1s_2(\varpi_2)} = \frac{1}{(1-[-\alpha_1-\alpha_2])(1-[-\alpha_2])(1-[-\alpha_1-2\alpha_2])^2}.
\]
\cqfd
\end{example}

\subsection{$\QQ$-variables}

Using the automorphisms $\widetilde{\Theta}_w$ of $\Pi'$ introduced in Eq.~(\ref{eq-9}), we can now define:
\begin{Def}
For $i\in I$, $w\in W$ and $a\in \C^*$, let
\[
 \QQ_{w(\varpi^\vee_i),\,a} := \chi_{w(\varpi_i)}\widetilde{\Theta}_w(\Psi_{i,a}).
\]
\end{Def}

$\widetilde{\Theta}_w(\Psi_{i,a})$ depends only on the weight $w(\varpi^\vee_i)$, hence the notation $\QQ_{w(\varpi^\vee_i),\,a}$.
The reason why these elements are indexed by co-weights rather than by weights will be explained later.

\begin{example}\label{Ex4-4}
 In type $B_2$ we have:
\begin{eqnarray*}
\QQ_{\varpi^\vee_1,\,a} &=& \Psi_{1,a},\\ 
\QQ_{\varpi^\vee_2,\,a} &=& \Psi_{2,a},\\
\QQ_{s_1(\varpi^\vee_1),\,a} &=& \Psi_{1,aq^{-4}}^{-1}\Psi_{2,aq^{-2}}\,\Si_{1,aq^{-4}},\\
\QQ_{s_2(\varpi^\vee_2),\,a} &=& \Psi_{2,aq^{-2}}^{-1}\Psi_{1,aq^{-2}}\Psi_{1,a}\,\Si_{2,aq^{-2}},\\
\QQ_{s_2s_1(\varpi^\vee_1),\,a} &=& \Psi_{2,aq^{-4}}^{-1}\Psi_{1,aq^{-2}}\,\Si_{21,aq^{-4}}, \\
\QQ_{s_1s_2(\varpi^\vee_2),\,a} &=& \Psi_{1,aq^{-6}}^{-1}\Psi_{1,aq^{-4}}^{-1}\Psi_{2,aq^{-4}}\,\Si_{12,aq^{-2}}, \\
\QQ_{s_1s_2s_1(\varpi^\vee_1),\,a} &=& \Psi_{1,aq^{-6}}^{-1}\,\Si_{121,aq^{-4}}, \\
\QQ_{s_2s_1s_2(\varpi^\vee_2),\,a} &=& \Psi_{2,aq^{-6}}^{-1}\,\Si_{212,aq^{-2}}. 
\end{eqnarray*}
Here, the elements $\Si_{21,aq^{-4}}$, $\Si_{12,aq^{-2}}$, $\Si_{121,aq^{-4}}$, and $\Si_{212,aq^{-2}}$ of $\Pi$ are defined as in \cite[\S4.6]{FH2}. For instance,
\begin{eqnarray*}
E_e'(\Si_{21,aq^{-4}}) &=& \sum_{0\le \ell \le k/2} \left(\prod_{i=0}^{k-1} A_{2,aq^{-4-2i}}^{-1} \prod_{j=0}^{\ell-1} A_{1,aq^{-4-4j}}^{-1}\right) \\ 
&=& 1 + A_{2,aq^{-4}}^{-1} + A_{2,aq^{-4}}^{-1}A_{2,aq^{-6}}^{-1} 
+ A_{2,aq^{-4}}^{-1}A_{2,aq^{-6}}^{-1}A_{1,aq^{-4}}^{-1} +\cdots 
\end{eqnarray*}

Note that the action of $W$ on $P^\vee$ gives:
\[
 s_1(\varpi^\vee_1) = -\varpi^\vee_1 + \varpi^\vee_2,\quad 
 s_2s_1(\varpi^\vee_1) = \varpi^\vee_1 - \varpi^\vee_2,\quad 
 s_1s_2s_1(\varpi^\vee_1) = -\varpi^\vee_1,
\] 
\[
 s_2(\varpi^\vee_2) = 2\varpi^\vee_1 - \varpi^\vee_2,\quad 
 s_1s_2(\varpi^\vee_2) = -2\varpi^\vee_1 + \varpi^\vee_2,\quad 
 s_2s_1s_2(\varpi^\vee_2) = -\varpi^\vee_2.
\] 
Hence, we see that if we declare $\Psi_{i,a}$ to be of degree $\varpi^\vee_i$, then $\QQ_{w(\varpi^\vee_i),\,a}$ is homogeneous of degree $w(\varpi^\vee_i)$.
\cqfd
\end{example}

\begin{Rem}
Recall the operators $\Theta_i'$ introduced in Remark~\ref{Rem3-12}. Since they satisfy the braid relations,  using an arbitrary reduced decomposition of $w\in W$ we can define a ring automorphism $\Theta_w'$ of $\Pi'$. We can establish as in \cite[Lemma 6.5]{GHL} that
\[
\QQ_{w(\varpi^\vee_i),\,a} = \Theta_w'(\Psi_{i,a}),\qquad (w\in W,\ i\in I,\ a\in\C^*). 
\]
\end{Rem}

\subsection{The $QQ$-system}

It is proved in \cite{FH3} that the elements $\QQ_{w(\varpi^\vee_i),\,a}$ satisfy the following functional relations.

\begin{Thm}[FH3]\label{Thm-QtQ-system}
For $i\in I$ and $w\in W$ such that $ws_i > w$, we have:
\begin{eqnarray*}
&&\QQ_{ws_i(\varpi^\vee_i),\,aq^{d_i}} \QQ_{w(\varpi^\vee_i),\,aq^{-d_i}} - [-w(\alpha_i)] \QQ_{ws_i(\varpi^\vee_i),\,aq^{-d_i}} \QQ_{w(\varpi^\vee_i),\,aq^{d_i}}
\\[4mm]
&=& \prod_{j:\ c_{ij}=-1} \QQ_{w(\varpi^\vee_j),\,a} 
\prod_{j:\ c_{ij}=-2} \QQ_{w(\varpi^\vee_j),\,aq}\QQ_{w(\varpi^\vee_j),\,aq^{-1}} 
\prod_{j:\ c_{ij}=-3} \QQ_{w(\varpi^\vee_j),\,aq^2}\QQ_{w(\varpi^\vee_j),\,a}\QQ_{w(\varpi^\vee_j),\,aq^{-2}}.
\end{eqnarray*}
\end{Thm}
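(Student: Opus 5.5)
The plan is to reduce the $QQ$-system to the single case $w=e$ and then transport it by the Weyl group action of Theorem~\ref{waction}. Fix $i\in I$ and $w$ with $ws_i>w$. Apply $\widetilde{\Theta}_w$ to an identity in the $\Psi$'s; this is legitimate because $\widetilde{\Theta}_w$ is a ring automorphism of $\Pi'$ which commutes with the spectral shift $a\mapsto aq^k$.

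\textbf{Step 1: the case $w=e$.} Here $\QQ_{\varpi_i^\vee,a}=\Psi_{i,a}$ and $\QQ_{s_i(\varpi_i^\vee),a}=\chi_{s_i(\varpi_i)}(1-[-\a_i])\widetilde{\Psi}_{i,aq^{-2d_i}}\Si_{i,aq^{-2d_i}}$, with $\chi_{s_i(\varpi_i)}=(1-[-\a_i])^{-1}$. So the left-hand side is
\[
\widetilde{\Psi}_{i,aq^{-d_i}}\Psi_{i,aq^{-d_i}}\Si_{i,aq^{-d_i}}-[-\a_i]\widetilde{\Psi}_{i,aq^{-3d_i}}\Psi_{i,aq^{d_i}}\Si_{i,aq^{-3d_i}}.
\]
By the definition of $\widetilde\Psi$, the product $\widetilde{\Psi}_{i,aq^{-d_i}}\Psi_{i,aq^{-d_i}}$ is exactly the right-hand side. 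We factor it out and use Lemma~\ref{Lem31}, which gives $[-\a_i]\widetilde{\Psi}_{i,aq^{-3d_i}}\Psi_{i,aq^{d_i}}/(\widetilde{\Psi}_{i,aq^{-d_i}}\Psi_{i,aq^{-d_i}})=A^{-1}_{i,aq^{-d_i}}$. The identity then reduces to $\Si_{i,b}-A_{i,b}^{-1}\Si_{i,bq^{-2d_i}}=1$ with $b=aq^{-d_i}$. This holds componentwise in every $\widetilde{\YY}'_{v}$ for both $\Si^\pm$, directly from the series in Example~\ref{Exa3-2}; it is equivalent to the relation $\Theta_i(\Si)=1-\Si$ of (\ref{form-7}).

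\textbf{Step 2: transport.} We apply $\widetilde{\Theta}_w$ to the $w=e$ identity. The $\Psi_{j,\cdot}$ become $\widetilde{\Theta}_w(\Psi_{j,\cdot})=\QQ_{w(\varpi_j^\vee),\cdot}/\chi_{w(\varpi_j)}$. The term $[-\a_i]$ becomes $[-w(\a_i)]$, and $\widetilde{\Theta}_w\widetilde{\Theta}_i=\widetilde{\Theta}_{ws_i}$ because $ws_i>w$. It remains to check that the normalization factors match. Using $\chi_{ws_i(\varpi_i)}\chi_{w(\varpi_i)}=\prod_{j\neq i}\chi_{w(\varpi_j)}^{|c_{ij}|}/(1-[-w(\a_i)])$, together with the extra factor $1-[-w(\a_i)]=\widetilde\Theta_w(1-[-\a_i])$ that sits inside $\widetilde\Theta_{ws_i}(\Psi_{i,\cdot})$, all $\chi$'s cancel. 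The count of right-hand-side factors, $|c_{ij}|$ factors for each $j$, matches the exponent $|c_{ij}|$ in the $\chi$ relation, so the cancellation is exact.

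\textbf{Main obstacle.} The delicate point is the bookkeeping in Step 2. Two things must be confirmed: that the $\chi$'s are elements of the $W$-invariant coefficient part, so that they pass through $\widetilde\Theta_w$ consistently, and that $1-[-w(\a_i)]$ is invertible in each component $\widetilde{\YY}'_v$. Both would be checked against the well-definedness of $\chi$ in \cite[Lemma 5.3]{FH3}. Alternatively, one can simply cite \cite{FH3}, where this theorem is proved.
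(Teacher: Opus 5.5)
Your argument is correct, and it takes a different route from the paper in the trivial sense that the paper gives no proof of this statement at all: it simply quotes it from \cite{FH3}. You derive it from ingredients the paper does state:
\begin{itemize}
\item the rank-one computation, i.e.\ Lemma~\ref{Lem31} together with $\Si_{i,b}-A_{i,b}^{-1}\Si_{i,bq^{-2d_i}}=1$, which is the second equality in (\ref{form-7});
\item the $W$-action of Theorem~\ref{waction};
\item the recursion (ii) defining the $\chi$'s.
\end{itemize}
This makes two things visible. The normalization factors are designed exactly so that the transported $w=e$ relation closes up. And the hypothesis $ws_i>w$ enters only through (ii). Your justification ``$\widetilde{\Theta}_w\widetilde{\Theta}_i=\widetilde{\Theta}_{ws_i}$ because $ws_i>w$'' is unnecessary, since this holds for every $w$ in a group action.

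It does not, however, make the result independent of \cite{FH3}. It relocates the dependence to the braid relations underlying Theorem~\ref{waction} and to the well-definedness of the $\chi_{w(\varpi_i)}$.

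Your worry that the $\chi$'s must be $W$-invariant is misplaced. They are not invariant, since $\widetilde{\Theta}_w$ sends $[\lambda]$ to $[w(\lambda)]$, but they never have to pass through $\widetilde{\Theta}_w$. Apply $\widetilde{\Theta}_w$ to the $\chi$-free, division-free form of your Step~1,
\[
\widetilde{\Theta}_i(\Psi_{i,aq^{d_i}})\Psi_{i,aq^{-d_i}}-[-\alpha_i]\,\widetilde{\Theta}_i(\Psi_{i,aq^{-d_i}})\Psi_{i,aq^{d_i}}=(1-[-\alpha_i])\,\widetilde{\Psi}_{i,aq^{-d_i}}\Psi_{i,aq^{-d_i}}.
\]
Only afterwards multiply by $\chi_{ws_i(\varpi_i)}\chi_{w(\varpi_i)}$, using (ii) in the form $(1-[-w(\alpha_i)])\chi_{ws_i(\varpi_i)}\chi_{w(\varpi_i)}=\prod_{j\neq i}\chi_{w(\varpi_j)}^{|c_{ij}|}$. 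No inverse of $1-[-w(\alpha_i)]$ is then needed beyond what is already built into the existence of the $\chi$'s.

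Alternatively, use the non-involutive operators $\Theta'_i$ of Remark~\ref{Rem3-12} together with the identity $\QQ_{w(\varpi^\vee_i),a}=\Theta'_w(\Psi_{i,a})$ noted after Example~\ref{Ex4-4}. Then the same transport needs no $\chi$ bookkeeping at all, and there $ws_i>w$ is genuinely what gives $\Theta'_w\Theta'_i=\Theta'_{ws_i}$.
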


We say that the family $\QQ_{w(\varpi^\vee_i),\,a}\ (w\in W,\, i\in I,\, a\in \mathbb{C}^*)$ is a \emph{solution of the $QQ$-system} in the ring $\Pi'$. Then
for each $v\in W$, the family $E_{v}(\QQ_{w(\varpi_i^\vee),\,a})\ (w\in W,\, i\in I,\, a\in \mathbb{C}^*)$ is a solution of the same $QQ$-system of equations in the completed ring $\widetilde{\mathcal{Y}}'_{v}$.
We will now discuss how unique this solution is. 

\begin{Thm}\label{uniqqsys} Consider a solution $S_{w(\varpi_i^\vee),\,a}\ (w\in W,\, i\in I,\, a\in \mathbb{C}^*)$
of the $QQ$-system in the ring~$\widetilde{\mathcal{Y}}'_{v}$.
Fix $\widetilde{w}\in W$ and suppose that for any $i\in I$, $a\in\mathbb{C}^*$, we have
$$S_{\widetilde{w}(\varpi_i^\vee),\,a} = E_{v}(\QQ_{\widetilde{w}(\varpi_i^\vee),\,a}).$$ 
Then for any $w\in W$, $i\in I$, $a\in\mathbb{C}^*$, we have
\begin{equation}\label{eqsol}S_{w(\varpi_i^\vee),\,a} = E_{v}(\QQ_{w(\varpi_i^\vee),\,a}).\end{equation} 
\end{Thm}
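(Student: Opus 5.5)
The plan is to propagate the equality \eqref{eqsol} from $\widetilde{w}$ to every $w\in W$ by induction along the weak Bruhat order. We use the $QQ$-system of Theorem~\ref{Thm-QtQ-system} as a recursion in both directions, and show that at each step the unknown is uniquely determined in $\widetilde{\mathcal{Y}}'_v$. Since $W$ is generated by simple reflections, any $w$ is reached from $\widetilde{w}$ by a chain $\widetilde{w}=w_0, w_1=w_0s_{i_1},\dots$. It therefore suffices to prove the following: if \eqref{eqsol} holds for $u$ and all $j\in I$, $a\in\C^*$, then it holds for $us_i$, for every $i$. Note that only the coweight $us_i(\varpi_j^\vee)$ with $j=i$ differs from $u(\varpi_j^\vee)$, because $s_i(\varpi_j^\vee)=\varpi_j^\vee$ for $j\neq i$. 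So the only new unknowns are $S_{us_i(\varpi_i^\vee),a}$ for $a\in\C^*$.

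Fix $u$ and $i$, and write $X_a:=S_{us_i(\varpi_i^\vee),a}-E_v(\QQ_{us_i(\varpi_i^\vee),a})$. Consider first the case $us_i>u$. Subtract the $QQ$-relation for $E_v(\QQ)$ from the one for $S$. The right-hand sides agree, since they involve only $u(\varpi_j^\vee)$. We obtain, with $F_a:=E_v(\QQ_{u(\varpi_i^\vee),a})$, the linear difference equation
\[
X_{aq^{d_i}}F_{aq^{-d_i}}-[-u(\alpha_i)]\,X_{aq^{-d_i}}F_{aq^{d_i}}=0 .
\]
If $us_i<u$, we apply the relation with $w=us_i$ and with the roles exchanged. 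This again gives a homogeneous linear equation in $X$, now with coefficient $[-us_i(\alpha_i)]=[u(\alpha_i)]$ on the other term. In both cases $F_a$ is invertible in $\widetilde{\mathcal{Y}}'_v$, being a Laurent monomial in the $\Psi$'s times $\chi$ times an invertible series $\Si$. Hence the equation reads $X_{aq^{2d_i}}=[\pm u(\alpha_i)]\,G_a\,X_a$ with $G_a:=F_{aq^{d_i}}F_{a}^{-1}$ (after shifting $a$). We must show that it forces $X_a=0$. The relation couples only parameters in $aq^{2d_i\Z}$, so we work on one such orbit.

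The main obstacle is this last step: a nonzero solution of the homogeneous difference equation must be excluded. The idea is to exploit the grading and the completion conditions defining $\widetilde{\mathcal{Y}}_v$. Every element involved is homogeneous in the $P$-grading given by the factors $[\lambda]$ together with the $\Psi$-degree. Iterating the recursion gives
\[
X_{aq^{2nd_i}}=[\pm n\,u(\alpha_i)]\Big(\prod_{k<n}G_{aq^{2kd_i}}\Big)X_a .
\]
Since $u(\alpha_i)\neq 0$, the weights $[\pm n\,u(\alpha_i)]$ drift without bound in one direction as $n\to+\infty$ or as $n\to-\infty$. I would first try to show that a nonzero $X_a$ produces, for suitable large $|n|$, a leading term whose $v$-twisted weight escapes every finite union of cones $C_\lambda$. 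Each $X_{b}$ is a difference of two elements of fixed $\Psi$-degree $us_i(\varpi_i^\vee)$ in $\widetilde{\mathcal{Y}}'_v$. One would then compare the highest-weight $\Psi$-monomial parts. The $\Psi$-monomial of $G_a$ has a fixed structure by Theorem~\ref{scoherence}, which controls the signs of the $\Psi$-exponents. Taking the $[\lambda]$-leading coefficient of $X_a$ with respect to the order $v(\cdot)$ and comparing both sides should give a contradiction: only the $[\,\cdot\,]$ factor shifts by $\pm u(\alpha_i)$ while everything else stays bounded. If this argument is unclear, I would fall back on an alternative. Specialize to a generic leading monomial of $X_a$ and use that $1-[\pm u(\alpha_i)]$-type factors from $\chi$ are invertible, so that the equation $X=[\mu]GX$ with $\mu\neq0$ has only the zero solution. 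This is analogous to the uniqueness argument in \cite{GHL} for simply-laced types, which I expect to go through verbatim once the non simply-laced $QQ$-relation is linearized as above.
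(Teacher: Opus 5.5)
Your skeleton matches the paper's proof: induction on $\ell(\widetilde w^{-1}w)$, the only new unknowns are the $S_{us_i(\varpi_i^\vee),\cdot}$, and you use the $QQ$-relation for $u$ or for $us_i$. Your linearization is also correct. In both cases, with $Z_a:=X_aF_a^{-1}$, it reduces to $Z_{aq^{2d_i}}=[-u(\alpha_i)]\,Z_a$. (Your $G_a$ should be $F_{aq^{2d_i}}F_a^{-1}$, and $\prod_{k<n}G_{aq^{2kd_i}}$ simply telescopes to $F_{aq^{2nd_i}}F_a^{-1}$.)

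The gap is the final step, which is the entire content of the theorem. The paper does not prove it by hand; it imports it from \cite[Lemmas 2.5, 6.2]{FH2}, and your arguments do not replace that. The drift argument fails because the cone condition defining $\widetilde{\mathcal Y}'_v$ is imposed on each element separately. Since $Z_{aq^{2nd_i}}$ and $Z_a$ are different elements, nothing you use stops the weight support of $Z_{aq^{2nd_i}}$ from drifting with $n$. Your phrase ``everything else stays bounded'' is exactly what is unproved. The fallback conflates $X_{aq^{2d_i}}$ with $X_a$: invertibility of $1-[\mu]G$ only helps for an equation $X=[\mu]GX$ on a single element.

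In fact, without a constraint linking different spectral parameters, the homogeneous equation has nonzero solutions. Take type $A_1$ with $v=\widetilde w=e$. Set $X_{a_0q^{2n}}:=[-n\alpha]\Psi_{a_0q^{2n}}$ for $n\in\Z$, and $X_b:=0$ for $b\notin a_0q^{2\Z}$. Then $X_{aq}\Psi_{aq^{-1}}=[-\alpha]X_{aq^{-1}}\Psi_{aq}$ for all $a$. Hence $S_{\varpi^\vee,b}:=\Psi_b$ and $S_{-\varpi^\vee,b}:=E_e(\QQ_{-\varpi^\vee,b})+X_b$ still satisfy the only $QQ$-relation. So some rigidity in the spectral parameter has to enter (and the statement has to be read with it), and your proposal never invokes any.

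What closes the argument is a bound on the weight supports of the $Z_b$ that is uniform in $b$. One way to get it is $\C^*$-equivariance: $S_{\cdot,ca}=\mathrm{sh}_c(S_{\cdot,a})$, where $\mathrm{sh}_c$ rescales all spectral parameters by $c$. This is the condition appearing in \cite[Lemma 6.2]{FH2}, and it holds for the $E_v(\QQ_{\cdot,a})$ because the $\widetilde\Theta_i$ commute with $\mathrm{sh}_c$.
\begin{itemize}
\item Grade by $\mathrm{wt}[\lambda]=\lambda$ and $\mathrm{wt}\,\Psi_{j,b}=0$, so that $\mathrm{wt}\,Y_{j,b}=\varpi_j$.
\item Then $Z_{aq^{2d_i}}=\mathrm{sh}_{q^{2d_i}}(Z_a)$ has the same weight support $\Lambda$ as $Z_a$.
\item On the other hand, $Z_{aq^{2d_i}}=[-u(\alpha_i)]Z_a$ has support $\Lambda-u(\alpha_i)$.
\item Hence $\Lambda=\Lambda+\Z\,u(\alpha_i)$.
\end{itemize}
This is impossible for nonempty $\Lambda$, since $v(\Lambda)$ lies in a finite union of cones $C_\lambda$ and $vu(\alpha_i)$ is a nonzero root. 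So $X_a=0$. Note that Theorem~\ref{scoherence} plays no role in this step.
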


\begin{proof}
We prove (\ref{eqsol}) by induction on the length of the Weyl group element $\widetilde{w}^{-1}w$. When the length is zero, this is just the hypothesis. Assume that (\ref{eqsol}) is known when the length is at most $l$ and consider $w$ such that the length of $\widetilde{w}^{-1}w$ is $l+1$. Then there is $i_0$ such that 
$\widetilde{w}^{-1}ws_{i_0} < \widetilde{w}^{-1}w$. Hence we have (\ref{eqsol}) for $ws_{i_0}$. For $i\neq i_0$, we have $ws_{i_0}(\varpi_i^\vee) = w(\varpi_i^\vee)$, and so 
(\ref{eqsol}) holds for $i$ by the induction hypothesis. Now the $S_{w(\varpi_{i_0}^\vee),\,b}$ ($b\in\mathbb{C}^*$) are uniquely determined by the 
$S_{ws_{i_0}(\varpi_{i_0}^\vee),\,b}$ ($b\in\mathbb{C}^*$) and the $S_{w(\varpi_j^\vee),\,b} = S_{ws_{i_0}(\varpi_j^\vee),\,b}$ ($j\neq i_0,\, b\in\mathbb{C}^*$)
from the $QQ$-relations and \cite[Lemma 2.5, 6.2]{FH2}\footnote{In the statement of \cite[Lemma 6.2]{FH2} a $\mathbb{C}^*$-equivariant condition is required for the solution of the relation; however, the proof of the uniqueness of the solution does not use this condition. In fact, this $\mathbb{C}^*$-equivariant condition follows from the proof therein. This is why we do not discuss the $\mathbb{C}^*$-equivariant condition here.} (if $w s_{i_0} > w$ we consider the $QQ$-relation for $w$, otherwise we consider the $QQ$-relation for $ws_{i_0}$ instead of $w$). 
\end{proof}

\section{Cluster algebra structures on formal power series rings}
\label{sec:main-thm}

\subsection{The ring $K_\Z$}\label{Kz}

Let $K:= E_e'(\Pi')$ denote the projection of $\Pi'$ on its first factor. In other words,
\[
 K = \widetilde{\YY}'_e
 =\YY'\otimes_\YY \widetilde{\YY_e}.
\]
The ring $K$ contains in particular the following $Q$-series of \cite{FH2, FH3}:
\begin{equation}\label{Qserfh}
Q_{w(\varpi^\vee_i),\,a} := E_e\left(\QQ_{w(\varpi^\vee_i),\,a}\right),\qquad (i\in I,\ w\in W,\ a\in \C^*).
\end{equation}
Each of these elements has a unique factorization of the form
\[
Q_{w(\varpi^\vee_i),\,a} = \Psi_{w(\varpi^\vee_i),\,a}\, \Si_{w(\varpi^\vee_i),\,a}, 
\]
where $\Psi_{w(\varpi^\vee_i),\,a}$ is a Laurent monomial in the variables $\Psi_{j,b}$, and $\Si_{w(\varpi^\vee_i),\,a}$ is a formal power series in the variables $A_{j,b}^{-1}$ with constant term equal to 1. We call $\Psi_{w(\varpi^\vee_i),\,a}$ the highest weight monomial of $Q_{w(\varpi^\vee_i),\,a}$. By \cite{FH3}, this monomial matches (\ref{extl}) and can be calculated from $\Psi_{\varpi_i,{\red b}}$ using (the extended version of) Chari's braid group action (see Examples in Section \ref{reduit}). 

Following \cite{FH2, GHL}, $K$ has the structure of a complete topological ring. Moreover, $K$ is an algebra over the ring $[P]$ generated by the elements $[\lambda]\ (\lambda \in P)$.

In the sequel, we will have to use weights in $\underline{P} := \frac{1}{2}\Z\otimes_\Z P$. 
%
Recall the vertex set $V \subset I \times \Z$ of the basic quiver $\Gamma_e$. Let $K_\Z$ denote the 
$[\underline{P}]$-subalgebra of $[\underline{P}]\otimes_{[P]} K$ topologically generated by the elements:
\[
Q_{w(\varpi^\vee_i),\,q^r},\quad ((i,r)\in V,\ w\in W). 
\]
 Using the above-mentioned braid group action, we see that if $(i,r)\in V$, $\Psi_{w(\varpi_i^\vee),q^r}$ is a Laurent monomial in the $\Psi_{j,q^s}^{\pm 1}$ with $(j,s)\in V$. 

 The following proposition is proved in the same way as  
\cite[Proposition 7.1]{GHL}.

\begin{Prop} \label{prop-invertible} (1) For every $(i,r)\in V$ and $w\in W$, the element $Q_{w(\varpi_i^\vee),\,q^r}$ 
is invertible in $K_{\Z}$.

(2) As a topological $[\underline{P}]$-algebra, $K_{\Z}$ is generated by the $\Psi_{i,q^r}^{\pm 1}$ with $(i,r)\in V$.
\end{Prop}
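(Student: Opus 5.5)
The proof of Proposition~\ref{prop-invertible} should follow \cite[Proposition 7.1]{GHL} closely, using the factorization $Q_{w(\varpi^\vee_i),\,q^r} = \Psi_{w(\varpi^\vee_i),\,q^r}\,\Si_{w(\varpi^\vee_i),\,q^r}$ recalled just above.

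For (1), the highest weight monomial $\Psi_{w(\varpi^\vee_i),\,q^r}$ is a Laurent monomial in the $\Psi_{j,q^s}^{\pm1}$ with $(j,s)\in V$, as noted just before the statement. So it suffices to show two things. First, that $\Si_{w(\varpi^\vee_i),\,q^r}$ is invertible in $K_\Z$. Second, that the $\Psi_{j,q^s}^{\pm 1}$ themselves lie in $K_\Z$.

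The series $\Si_{w(\varpi^\vee_i),\,q^r}$ is a formal power series in the $A_{j,b}^{-1}$ with constant term $1$, so it is invertible in the complete ring $K=\widetilde{\YY}'_e$ by a geometric series argument. The point is to check that the inverse lies in the closed subalgebra $K_\Z$. For this I would show, by induction on $\ell(w)$, that every $\Si_{w(\varpi^\vee_i),\,q^r}$ with $(i,r)\in V$ is a series in the $A_{j,q^s}^{-1}$ with $(j,s)$ in the appropriate lattice. Here $A_{j,q^s}$ is a Laurent monomial in $Y_{k,q^{t}}$, hence in the $\Psi_{k,q^{t\pm d_k}}$, with parities matching $V$.

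Given this, the proof of (1) reduces to the following. If $\Psi_{j,q^s}^{\pm1}\in K_\Z$ for all $(j,s)\in V$, then $K_\Z$ contains every $A^{-1}_{j,q^s}$ and every $[\la]$, so it contains the inverse series by completeness. Hence $Q^{-1}=\Psi^{-1}\Si^{-1}\in K_\Z$. This part rests on (2).

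For (2), the $\Psi_{i,q^r}=Q_{\varpi_i^\vee,q^r}$ are generators of $K_\Z$. What is needed is $\Psi_{i,q^r}^{-1}\in K_\Z$, and conversely that the topological algebra generated by the $\Psi^{\pm1}$ contains all the $Q$'s. The converse is immediate from the factorization, since $\Si$ is a convergent series in $A^{-1}$'s, which are Laurent monomials in the $\Psi^{\pm1}$ by Lemma~\ref{Lem31} and its definitions.

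To obtain $\Psi_{i,q^r}^{-1}$, I would use $w=w_0$, for which $Q_{w_0(\varpi^\vee_i),q^r}=Q_{-\varpi^\vee_{\nu(i)},q^r}$. Its highest weight monomial is, by the extended braid action (see \S\ref{reduit}), a single $\Psi_{\nu(i),q^{r-h}}^{-1}$ for a suitable shift $h$ (the examples give $h=3,6,12$, i.e.\ $t h^\vee$ type values). Multiplying, $\Psi_{\nu(i),q^{r-h}}\,Q_{-\varpi^\vee_{\nu(i)},q^r}=\Si$ has constant term $1$.

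One then shows $Q_{\varpi^\vee_{\nu(i)},q^{r-h}}\,Q_{-\varpi^\vee_{\nu(i)},q^{r}}$ is invertible in $K_\Z$, being of the form $1+{}$(topologically nilpotent element of $K_\Z$). Note that elements of the closed ideal generated by the $A^{-1}$'s within $K_\Z$ are expressible via the generators. Then $\Psi^{-1}_{\nu(i),q^{r-h}} = Q_{-\varpi^\vee_{\nu(i)},q^r}\,(\Psi Q_-)^{-1}$, and everything lies in $K_\Z$.

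The main obstacle is this last point: showing that $1+x$ with $x$ a series in $A^{-1}$'s is inverted inside $K_\Z$ and not only in $K$. I would handle it by writing $x$ as a polynomial expression in products $Q\cdot Q'$ minus $1$ and using completeness of $K_\Z$ with respect to the topology of \cite{FH2}. One must also verify the parity bookkeeping that $(\nu(i),r-h)\in V$ exhausts $V$ as $(i,r)$ varies.
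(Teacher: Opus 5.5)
Your proposal is correct and is essentially the argument of \cite[Proposition 7.1]{GHL} to which the paper defers. You factor $Q=\Psi\,\Sigma$, obtain $\Psi^{-1}_{\nu(i),q^{r-h}}\in K_\Z$ from the $w_0$-series, and invert elements of the form $1+x$ inside $K_\Z$. Two points should be tightened.

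First, the claim that the highest monomial of the $w_0$-series is a single inverse variable needs a general-type justification, not only the rank-two tables. It follows from Theorem~\ref{scoherence} together with the exponent-sum identity quoted in its proof: all exponents of each $\Psi_{k,\cdot}$ have the same sign and sum to $-\delta_{k\nu(i)}$.

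Second, the step you call the main obstacle is immediate. The element $x=Q_{\varpi^\vee_{\nu(i)},q^{r-h}}Q_{-\varpi^\vee_{\nu(i)},q^r}-1$ already lies in $K_\Z$. The weights of $x^n$ lie in $-Q_+$ at height at least $n$, so $\sum_{n\ge0}(-x)^n$ converges, and $K_\Z$ is closed by definition, so the sum lies in $K_\Z$. Likewise, in (1) you get $\Sigma_{w(\varpi_i^\vee),q^r}=\Psi_{w(\varpi_i^\vee),q^r}^{-1}Q_{w(\varpi_i^\vee),q^r}\in K_\Z$ directly. The lattice induction on $\ell(w)$ is therefore only needed for the inclusion of $K_\Z$ into the closed algebra generated by the $\Psi^{\pm1}_{i,q^r}$.
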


Note that for $(i,r)\in V$, we have 
$$Y_{i,q^{r-d_i}}^{\pm 1}\in K_{\Z}, \qquad A_{i,q^r}^{\pm 1}\in K_{\Z}.$$

\begin{example}
(i) In type $B_2$, the $[\underline{P}]$-algebra $K_{\Z}$ is topologically generated by the 
$$\Psi_{1,q^{2r}}^{\pm 1}, \Psi_{2,q^{2r}}^{\pm 1},\qquad (r\in \Z).$$

(ii) In type $B_3$, the $[\underline{P}]$-algebra $K_{\Z}$ is topologically generated by the 
$$\Psi_{1,q^{2r}}^{\pm 1}, \Psi_{2,q^{2r}}^{\pm 1}, \Psi_{3,q^{2r}}^{\pm 1},\qquad (r\in \Z).$$

(iii) In type $C_3$, the $[\underline{P}]$-algebra $K_{\Z}$ is topologically generated by the 
$$\Psi_{1,q^{2r+1}}^{\pm 1}, \Psi_{2,q^{2r}}^{\pm 1}, \Psi_{3,q^{2r}}^{\pm 1},\qquad (r\in \Z).$$

(iv) In type $G_2$, the $[\underline{P}]$-algebra $K_{\Z}$ is topologically generated by the 
$$\Psi_{1,q^{2r}}^{\pm 1}, \Psi_{2,q^{2r+1}}^{\pm 1},\qquad (r\in \Z).$$
\end{example}

\subsection{Renormalized $Q$-variables}

Let $\Omega$ be the group homomorphism from the (multiplicative) group of Laurent monomials in the variables $\Psi_{i,q^r}\ ((i,r)\in V)$ and $[\la] (\la\in P)$ to $P$, defined by 
\[
 \Omega(\Psi_{i,q^r}) := \frac{r}{2d_i}\, \varpi_i,\quad \Omega([\la]) := \la, \qquad ((i,r)\in V,\ \la\in P).
\]
Note that 
\begin{equation}\label{Eq-Omega-1}
\Omega(Y_{i,q^{r+d_i}}) = \Omega\left([\varpi_i]\frac{\Psi_{i,q^{r}}}{\Psi_{i,q^{r+2d_i}}}\right) = 0, \qquad ((i,r)\in V). 
\end{equation}
We set: 
\begin{equation}\label{Qsrfhr}
 \bQ_{w(\varpi^\vee_i),\,q^r} := \left[-\Omega(\Psi_{w(\varpi^\vee_i),\,q^r})\right]\,Q_{w(\varpi^\vee_i),\,q^r},
 \qquad ((i,r)\in V,\ w\in W). 
\end{equation}
In terms of these renormalized elements $\bQ_{w(\varpi^\vee_i),\,q^r}$, the $QQ$-system simplifies slightly and becomes the \emph{renormalized $QQ$-system}:
\begin{Prop}\label{renqq}
For $(i,r)\in V$ and $w\in W$ such that $ws_i > w$, 
we have:
\begin{eqnarray*}
&&{\bQ}_{ws_i(\varpi^\vee_i),\,q^{r}} \bQ_{w(\varpi^\vee_i),\,q^{r-2d_i}} -\ {\bQ}_{ws_i(\varpi^\vee_i),\,q^{r-2d_i}} \bQ_{w(\varpi^\vee_i),\,q^{r}}
\\[4mm]
&=& \prod_{j:\ c_{ij}=-1} \bQ_{w(\varpi^\vee_j),\,q^{r-d_i}} 
\prod_{j:\ c_{ij}=-2} \bQ_{w(\varpi^\vee_j),\,q^{r}}\bQ_{w(\varpi^\vee_j),\,q^{r-2}} 
\prod_{j:\ c_{ij}=-3} \bQ_{w(\varpi^\vee_j),\,q^{r+1}}\bQ_{w(\varpi^\vee_j),\,q^{r-1}}\bQ_{w(\varpi^\vee_j),\,q^{r-3}}.
\end{eqnarray*} 
\end{Prop}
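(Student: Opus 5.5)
We plan to derive the renormalized system directly from Theorem~\ref{Thm-QtQ-system}, applied with $a:=q^{r-d_i}$ and then projected by $E_e$, so that all $\QQ$ become the series $Q$ of (\ref{Qserfh}). With this choice, the left-hand side of Theorem~\ref{Thm-QtQ-system} reads
\[
Q_{ws_i(\varpi^\vee_i),\,q^{r}} Q_{w(\varpi^\vee_i),\,q^{r-2d_i}} - [-w(\alpha_i)]\, Q_{ws_i(\varpi^\vee_i),\,q^{r-2d_i}} Q_{w(\varpi^\vee_i),\,q^{r}} .
\]
On the right-hand side, the spectral parameters are $q^{r-d_i}$ when $c_{ij}=-1$. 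When $c_{ij}\le -2$ we have $d_i=1$ (Lemma~\ref{lem:CartanM}), so they are $q^{r},q^{r-2}$ (\resp $q^{r+1},q^{r-1},q^{r-3}$). These are exactly the indices in the claimed identity. We will also check that all the corresponding pairs $(j,s)$ lie in $V$, using the definition of the arrows of $\Gamma$ (one has $(i,r)\to(j,r+b_{ij})$ with $b_{ij}=-d_i$ or $-d_j|c_{ji}|$), so that every renormalized element is defined.

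Next we multiply through by the inverse normalization factors. Substituting $Q=[\Omega(\Psi)]\,\bQ$, the identity to prove becomes the statement that three weights coincide:
\[
\Omega_1:=\Omega(\Psi_{ws_i(\varpi_i^\vee),q^r})+\Omega(\Psi_{w(\varpi_i^\vee),q^{r-2d_i}}),
\]
\[
\Omega_2:=-w(\alpha_i)+\Omega(\Psi_{ws_i(\varpi_i^\vee),q^{r-2d_i}})+\Omega(\Psi_{w(\varpi_i^\vee),q^{r}}),
\]
and $\Omega_3:=$ the sum of $\Omega$ over the right-hand factors. Once these are equal, the common factor $[\Omega_1]$ cancels and the renormalized identity follows.

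The main step is computing how $\Omega(\Psi_{u(\varpi_j^\vee),q^s})$ depends on $s$. Since $\Omega(\Psi_{k,q^{s}})=\frac{s}{2d_k}\varpi_k$ is linear in $s$, and the highest weight monomial at parameter $q^s$ is the one at $q^{0}$ with all parameters multiplied by $q^s$ (the braid action $T'_w$ commutes with the spectral shift), we get
\[
\Omega(\Psi_{u(\varpi_j^\vee),q^s})=\Omega(\Psi_{u(\varpi_j^\vee),1})+s\,\lambda_u^j ,
\]
where $\lambda_u^j=\sum_k \frac{e_k}{2d_k}\varpi_k$ and $e_k=(u(\varpi_j^\vee))(\alpha_k)$ is the total exponent of the $\Psi_{k,\cdot}$ (see \S\ref{scs} and \cite[Section 6.2]{FH3}). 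Identifying $\varpi_k/d_k$ with the image of $\varpi_k^\vee$ under the symmetrized pairing, $\lambda_u^j$ corresponds to $\tfrac12 u(\varpi_j^\vee)$ viewed in $\underline{P}$. This explains why half-integral weights are needed.

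Then $\Omega_1-\Omega_2$ reduces to $w(\alpha_i)+2d_i(\lambda_{w}^i-\lambda_{ws_i}^i)$. We have $w(\varpi_i^\vee)-ws_i(\varpi_i^\vee)=w(\alpha_i^\vee)$, and under the identification above $d_i\alpha_i^\vee\mapsto\alpha_i$, so this difference is $0$. For $\Omega_1=\Omega_3$, the $s$-independent parts reduce to the $q^0$ case, which we intend to obtain by comparing highest-weight monomials in the $QQ$-relation itself: the highest monomial of the left side equals that of the right side, because the $\Sigma$ factors have constant term $1$ and the $[-w(\alpha_i)]$ term is lower. It remains to match the linear-in-$s$ parts, which gives the relation $ws_i(\varpi_i^\vee)+w(\varpi_i^\vee)=\sum_j|c_{ji}|\,w(\varpi_j^\vee)$, i.e.\ a reformulation of $\alpha_i^\vee=\sum_j c_{ij}\varpi_j^\vee$ in (\ref{cofundw_roots}). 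We expect the main obstacle to be bookkeeping this identification of coweights with elements of $\underline{P}$ via $D$, consistently in the non simply-laced cases.
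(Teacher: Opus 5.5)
Your approach works, and for the key cancellation $\Omega_1=\Omega_2$ it differs from the paper's. Two bookkeeping slips need fixing first.

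\textbf{The sign in $\Omega_1-\Omega_2$.} With your conventions,
\[
\Omega_1-\Omega_2=w(\alpha_i)+2d_i\bigl(\lambda^i_{ws_i}-\lambda^i_{w}\bigr),
\]
because $\bQ_{ws_i(\varpi_i^\vee),\cdot}$ carries $q^{r}$ in the first term and $q^{r-2d_i}$ in the second. Your version has the opposite sign, and as written it evaluates to $2w(\alpha_i)$, not $0$. With the correct sign, $2d_i(\lambda^i_{ws_i}-\lambda^i_w)$ is the image of $-d_i\,w(\alpha_i^\vee)$, namely $-w(\alpha_i)$, so the difference vanishes. This step uses that $\varpi_k^\vee\mapsto\varpi_k/d_k$ is $W$-equivariant. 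That holds because $DC$ is symmetric, and you should state it.

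\textbf{The coweight identity.} It should read
\[
ws_i(\varpi_i^\vee)+w(\varpi_i^\vee)=\sum_{j\neq i}|c_{ij}|\,w(\varpi_j^\vee),
\]
with $|c_{ij}|$, which is the number of right-hand factors indexed by $j$. This is what (\ref{cofundw_roots}) gives. Writing $|c_{ji}|$ is wrong outside simply-laced types.

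\textbf{The linear check is superfluous.} The comparison of weight-$0$ parts in the $QQ$-relation holds for every spectral parameter $a$. It therefore gives the monomial identity, and hence $\Omega_1=\Omega_3$, for all $r$ at once; no separate matching of constant and linear parts is needed.

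\textbf{Comparison with the paper.} For $\Omega_1=\Omega_3$ you argue as the paper does, with a more explicit justification. For $\Omega_1=\Omega_2$ the routes differ.

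The paper handles $w=e$ through Lemma~\ref{Lem31}: the quotient of the two highest monomials is $[\alpha_i]A^{-1}_{i,q^{r-2d_i}}$, and $\Omega(A^{-1}_{i,q^{r-2d_i}})=0$. For general $w$ it uses the formula
\[
\Lambda(\Theta_w(Y_{i,a}))=[w(\varpi_i)]\Psi_{w(\varpi_i^\vee),aq^{-d_i}}\Psi^{-1}_{w(\varpi_i^\vee),aq^{d_i}}
\]
from \cite{FH2,FH3}. This is a monomial in the $Y_{j,b}$, hence killed by $\Omega$. So the increment of $\Omega(\Psi_{u(\varpi_i^\vee),q^s})$ under $s\mapsto s+2d_i$ is $u(\varpi_i)$ for $u=w,ws_i$.

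You obtain the same increment, $2d_i\lambda^i_u$, i.e.\ the image of $d_i\,u(\varpi_i^\vee)$, which is $u(\varpi_i)$. You get it from three facts: $\Omega$ is additive in the spectral exponent, $T'_w$ commutes with spectral shifts, and the total exponents satisfy $e_k=(u(\varpi_j^\vee))(\alpha_k)$.

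Your route is uniform in $w$ and more elementary. The total-exponent formula follows by a direct induction from the definition of $T'_i$, and the argument makes the origin of the fractional weights transparent. The paper's route instead reuses the known highest-weight structure of $\Theta_w(Y_{i,a})$.
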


\begin{proof} The proof is similar to \cite[Proposition 7.2]{GHL}, but some technical point have to be modified in the non simply-laced types.
We have to check that, using this new normalization, the factor $[-w(\alpha_i)]$ in the second term of the left hand-side of Theorem~\ref{Thm-QtQ-system} cancels out.

Let us first check it for $w = e$. The highest weight monomial of ${\bQ}_{s_i(\varpi^\vee_i),\,q^{r}} \bQ_{\varpi^\vee_i,\,q^{r-2d_i}}$ is equal to 
\[
\widetilde{\Psi}_{i,q^{r-2d_i}}\Psi_{i,q^{r-2d_i}} =
\prod_{j:\ c_{ij}=-1} \Psi_{\varpi^\vee_j,\,q^{r-d_i}} 
\prod_{j:\ c_{ij}=-2} \Psi_{\varpi^\vee_j,\,q^{r}}\Psi_{\varpi^\vee_j,\,q^{r-2}} 
\prod_{j:\ c_{ij}=-3} \Psi_{\varpi^\vee_j,\,q^{r+1}}\Psi_{\varpi^\vee_j,\,q^{r-1}}\Psi_{\varpi^\vee_j,\,q^{r-3}},
\]
which is nothing else than the highest weight-monomial of the right-hand side. Therefore  
applying $\Omega$ to the highest weight-monomial of the right-hand side, we get exactly 
$\Omega(\widetilde{\Psi}_{i,q^{r-2d_i}}\Psi_{i,q^{r-2d_i}})$.
Now the quotient of the highest weight monomial of ${\bQ}_{s_i(\varpi^\vee_i),\,q^{r-2d_i}} \bQ_{\varpi^\vee_i,\,q^{r}}$ by the highest weight monomial of ${\bQ}_{s_i(\varpi^\vee_i),\,q^{r}} \bQ_{\varpi^\vee_i,\,q^{r-2d_i}}$ is equal to
\[
\frac{\widetilde{\Psi}_{i,q^{r-4d_i}}\Psi_{i,q^{r}}}{\widetilde{\Psi}_{i,q^{r-2d_i}}\Psi_{i,q^{r-2d_i}}} 
=
[\alpha_i-\varpi_i]Y_{i,q^{r-d_i}} A_{i,q^{r-2d_i}}^{-1} [\varpi_i]Y_{i,q^{r-d_i}}^{-1}
=
[\alpha_i] A_{i,q^{r-2d_i}}^{-1},
\]
by Lemma~\ref{Lem31}. Hence, since by Eq.~(\ref{Eq-Omega-1}) we have $\left[\Omega\left(A_{i,q^{r-2d_i}}^{-1}\right)\right] = 1$, we get:  
\[
 \left[-\Omega\left(\widetilde{\Psi}_{i,q^{r-4d_i}}\Psi_{i,q^{r}}\right)\right] =
   \left[-\Omega\left(\widetilde{\Psi}_{i,q^{r-2d_i}}\Psi_{i,q^{r-2d_i}}\right)\right]
   [-\alpha_i] 
\]
and the Proposition is proved in the case $w=e$.

The case of an arbitrary $w$ can be checked in a similar way using results from \cite{FH3}. First, one  can see that for every $w$, the highest weight monomial of the right-hand side coincides with the highest weight monomial of the first term of the left-hand side. Therefore we only have to compare the result of applying $\Omega$ to the highest weight monomials of the two terms of the left hand-side. Now, the highest weight monomial $\Lambda(\Theta_w(Y_{i,a}))$ of $\Theta_w(Y_{i,a})$, where $\Lambda$ is defined in \cite[§6.4]{FH2}, is known to be equal to
\[
\Lambda(\Theta_w(Y_{i,a})) = [w(\varpi_i)] \Psi_{w(\varpi^\vee_i),aq^{-d_i}}\Psi_{w(\varpi^\vee_i),aq^{d_i}}^{-1},
\quad (w\in W,\ a\in\C^*).
\]
Since $\Lambda(\Theta_w(Y_{i,a}))$ is a Laurent monomial in the variables $Y_{j,b}\ (j\in I,\ b\in\C^*)$, it follows from Eq.~(\ref{Eq-Omega-1}), that:
\[
\left[\Omega\left(\Lambda(\Theta_w(Y_{i,q^{r-d_i}}))\right)\right] = [w(\varpi_i)]\, \left[\Omega\left(\Psi_{w(\varpi^\vee_i),q^{r-2d_i}}\Psi_{w(\varpi^\vee_i),q^r}^{-1}\right)\right] = 1,
\]
and similarly
\[
\left[\Omega\left(\Lambda(\Theta_{ws_i}(Y_{i,q^{r-3d_i}}))\right)\right] = [ws_i(\varpi_i)]\, \left[\Omega\left(\Psi_{ws_i(\varpi^\vee_i),q^{r-4d_i}}\Psi_{ws_i(\varpi^\vee_i),q^{r-2d_i}}^{-1}\right)\right] = 1.
\]
Therefore,
\[
\left[\Omega\left(\frac{\Psi_{ws_i(\varpi^\vee_i),q^{r-4d_i}}\Psi_{w(\varpi^\vee_i),q^{r}}}{\Psi_{ws_i(\varpi^\vee_i),q^{r-2d_i}}\Psi_{w(\varpi^\vee_i),q^{r-2d_i}}} 
\right)\right] 
= [w(\varpi_i) - ws_i(\varpi_i)] = [w(\alpha_i)], 
\]
hence
\[
 \left[-\Omega\left(\Psi_{ws_i(\varpi^\vee_i),q^{r-4d_i}}\Psi_{w(\varpi^\vee_i),q^{r}}\right)\right] =
   \left[-\Omega\left(\Psi_{ws_i(\varpi^\vee_i),q^{r-2d_i}}\Psi_{w(\varpi^\vee_i),q^{r-2d_i}}\right)\right]
   [-w(\alpha_i)]. 
\]
\cqfd
\end{proof}

\subsection{Cluster structure on $K_\Z$}\label{ssubsec-main-th}

Recall the cluster algebra $\AA_{w_0}$ of Section \ref{SecAw}, defined by an initial seed $\mathcal{S}_{\bi,\br} = (\G_{\bi, \br},(z_{(i,r)})_{(i,r)\in V)})$, where  $\bi:=(i_1,\ldots, i_l)$ is a reduced word for $w_0$, and $\br := (r_1,\ldots, r_l)\in \Z^r$ is an adapted tuple of integers. By Theorem \ref{Prop2-10}, the cluster algebra $\AA_{w_o}$ is
independent (up to isomorphism) of the choice of $(\bi,\br)$.

By \S\ref{sgvect}, the initial cluster variables $z_{(i,r)}$ ($(i,r) \in V$) of 
$\mathcal{S}_{\bi,\br}$ are parametrized by their stabilized g-vectors
\[
\bg_{(i,r)}^\infty = t_{i_1} t_{i_2}\cdots t_{i_m}(\be_{i, r+s_m}),
\]
where  $m$ is determined by the condition $(i,r)\in V_m$,
and $s_m:=\sig(i_m,r_m)$,  
see Remark~\ref{comps1} for more details and Example~\ref{exam-5.3} for an illustration.

\begin{figure}[t]
\[
\def\objectstyle{\scriptstyle}
\def\lablestyle{\scriptstyle}
\def\objectstyle{\scriptstyle}
\def\lablestyle{\scriptstyle}
\xymatrix@-1.0pc{
&{}\save[]+<0cm,0ex>*{\vdots}\restore&{}\save[]+<0cm,0ex>*{\vdots}\restore
& \save[]+<0cm,0ex>*{\vdots}\restore
\\
&{\bf e}_{1,7}\ar[rd] & {\bf e}_{2,7} \ar[rd]&&
\\
&&\ar[ld] {\bf e}_{2,5}\ar[u]& \ar[ld]{\bf e}_{1,5}&
\\
&{\color{rgb,255:red,255;green,0;blue,4}{\bf e}_{1,3}}\ar[uu]\ar[d] &\ar[u] {\bf e}_{2,3} \ar[rd]&&
\\
&{\color{rgb,255:red,0;green,204;blue,3} t_1({\bf e}_{1,-1+4})}\ar[rd] &&  \ar[uu]\ar[d] {\color{rgb,255:red,255;green,0;blue,4}
{\bf e}_{1,1}}
\\
&&\ar[uu]\ar[ld] {\bf e}_{2,1} &\ar[ld]   {\color{rgb,255:red,0;green,204;blue,3} t_1({\bf e}_{1,-3+4})} &
\\
&t_1({\bf e}_{1,-5+4})\ar[uu]\ar[rdd]\ar[r]   & \ar[d]\ar[u]{\color{rgb,255:red,255;green,0;blue,4} {\bf e}_{2,-1}} &&
\\
&                          & {\color{rgb,255:red,0;green,204;blue,3} t_1t_2({\bf e}_{2,-3+2})} \ar[lu]\ar[rd]
\\
&&\ar[ld] \ar[u] t_1t_2({\bf e}_{2,-5+2}) &  {\color{rgb,255:red,255;green,0;blue,4} t_1({\bf e}_{1,-7+4})}\ar[uuu]\ar[d] &
\\
&{\color{rgb,255:red,255;green,0;blue,4}
 t_1({\bf e}_{1,-9+4})}\ar[uuu]\ar[d] & &\ar[ld] {\color{rgb,255:red,0;green,204;blue,3} t_1t_2t_1({\bf e}_{1,-11+8})}  &
\\
& {\color{rgb,255:red,0;green,204;blue,3} t_1t_2t_1({\bf e}_{1,-13+8})\ar[dr]}& t_1t_2({\bf e}_{2,-7+2}) \ar[uu]\ar[rd]&&
\\
&&{\color{rgb,255:red,255;green,0;blue,4} t_1t_2({\bf e}_{2,-9+2})} \ar[u] \ar[d] &t_1t_2t_1({\bf e}_{1,-15+8})\ar[l]\ar[ldd]  \ar[uu]  &
\\
&&\ar[ld] \ar[ru] {\color{rgb,255:red,0;green,204;blue,3}t_1t_2t_1t_2({\bf e}_{2,-11+4})}&&
\\
&t_1t_2t_1({\bf e}_{1,-17+8})\ar[uuu]
\ar[rd]   
& \ar[u] t_1t_2t_1t_2({\bf e}_{2,-13+4})
\ar[rd] &&
\\
&&\ar[u] & \ar[uuu]
}
\qquad
\xymatrix@-1.0pc{
&{}\save[]+<0cm,0ex>*{\vdots}\restore&{}\save[]+<0cm,0ex>*{\vdots}\restore
& \save[]+<0cm,0ex>*{\vdots}\restore
\\
&\bQ_{\varpi^\vee_1,q^7}\ar[rd] & \bQ_{\varpi^\vee_2,q^7} \ar[rd]&&
\\
&&\ar[ld] \bQ_{\varpi^\vee_2,q^5}\ar[u]& \ar[ld]\bQ_{\varpi^\vee_1,q^5}
\\
&\mathbf{\color{rgb,255:red,255;green,0;blue,4} \bQ_{\varpi^\vee_1,q^3}}\ar[uu]\ar[d] &\ar[u] \bQ_{\varpi^\vee_2,q^3} \ar[rd]&&
\\
&\mathbf{\color{rgb,255:red,0;green,204;blue,3} \bQ_{s_1(\varpi^\vee_1),q^3}}\ar[rd] && \mathbf{\color{rgb,255:red,255;green,0;blue,4} \bQ_{\varpi^\vee_1,q}} \ar[uu]\ar[d] 
\\
&&\ar[uu]\ar[ld] \bQ_{\varpi^\vee_2,q} &\ar[ld] \mathbf{\color{rgb,255:red,0;green,204;blue,3} \bQ_{s_1(\varpi^\vee_1),q}}   &
\\
&\ar[uu]\bQ_{s_1(\varpi^\vee_1),q^{-1}}\ar[rdd]\ar[r]   & \ar[d]\mathbf{\color{rgb,255:red,255;green,0;blue,4} \bQ_{\varpi^\vee_2,q^{-1}}}\ar[u] &&
\\
&                          & \ar[lu]\mathbf{\color{rgb,255:red,0;green,204;blue,3} \bQ_{s_1s_2(\varpi^\vee_2),q^{-1}}}\ar[rd]
\\
&&\ar[ld] \bQ_{s_1s_2(\varpi^\vee_2),q^{-3}} \ar[u] & \mathbf{\color{rgb,255:red,255;green,0;blue,4} \bQ_{s_1(\varpi^\vee_1),q^{-3}}} \ar[uuu]\ar[d] &
\\
&\mathbf{\color{rgb,255:red,255;green,0;blue,4} \bQ_{s_1(\varpi^\vee_1),q^{-5}}}\ar[uuu]\ar[d] & &\ar[ld] \mathbf{\color{rgb,255:red,0;green,204;blue,3} \bQ_{s_1s_2s_1(\varpi^\vee_1),q^{-3}}}  &
\\
&\mathbf{\color{rgb,255:red,0;green,204;blue,3}\ar[rd] \bQ_{s_1s_2s_1(\varpi^\vee_1),q^{-5}}} & \ar[uu]\ar[rd] \bQ_{s_1s_2(\varpi^\vee_2),q^{-5}}&&
\\
&&\ar[u] \mathbf{\color{rgb,255:red,255;green,0;blue,4} \bQ_{s_1s_2(\varpi^\vee_2),q^{-7}}}\ar[d] &\ar[l]\ar[ldd] \bQ_{s_1s_2s_1(\varpi^\vee_1),q^{-7}} \ar[uu]  &
\\
&&\ar[ld] \mathbf{\color{rgb,255:red,0;green,204;blue,3} \bQ_{s_1s_2s_1s_2(\varpi^\vee_2),q^{-7}}}\ar[ru] &&
\\
&\ar[uuu] \bQ_{s_1s_2s_1(\varpi^\vee_1),q^{-9}} 
& \ar[u]\bQ_{s_1s_2s_1s_2(\varpi^\vee_2),q^{-9}} 
}
\]
\caption{\label{Fig9}  An initial seed of $\AA_{w_0}$ and its image by $F$ in type $B_2$.} 
\end{figure}

Let $L$ denote the Laurent polynomial ring in the variables $z_{(i,r)}$ ($(i,r) \in V$). By the Laurent phenomenon, the cluster algebra $\AA_{w_0}$
embeds into $L$.

\begin{Thm}\label{main-Thm}
The assignment 
\[
z_{(i,r)} \mapsto 
\bQ_{s_{i_1}s_{i_2}\cdots s_{i_m}(\varpi_i^\vee),q^{r+s_m}},\qquad
((i,r)\in V),
\]
extends to an injective ring homomorphism $F : L \to K_{\Z}$. 
Moreover the topological closure of $ [\underline{P}]\otimes_\Z F(\AA_{w_0})$ is equal to $K_{\Z}$.
\end{Thm}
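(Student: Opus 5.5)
We follow the strategy of \cite[Theorem 7.4]{GHL}, now using the stabilized $g$-vectors of \S\ref{sgvect} in place of the Auslander--Reiten combinatorics. The argument has three steps: well-definedness and injectivity of $F$ on $L$, the fact that $F(\AA_{w_0})\subset K_\Z$, and density of the image.

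\emph{Well-definedness and injectivity.} Since $L$ is a Laurent polynomial ring, $F$ is well defined as soon as each image $\bQ_{s_{i_1}\cdots s_{i_m}(\varpi_i^\vee),q^{r+s_m}}$ is invertible in $K_\Z$. This is Proposition~\ref{prop-invertible}(1). For injectivity we use leading terms. Each $\bQ$ factors as a scalar $[\lambda]$, times its highest weight monomial $\Psi_{w(\varpi_i^\vee),q^{r+s_m}}$, times a series in the $A^{-1}_{j,b}$ with constant term $1$. The highest weight monomial is the multiplicative form of $t_{i_1}\cdots t_{i_m}(\be_{i,r+s_m})$, via \eqref{extl} and the dictionary of \S\ref{scs}. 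By Remark~\ref{comps1} this exponent vector is exactly the stabilized $g$-vector $\bg^\infty_{(i,r)}$.

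The matrix $G^{(0)}$ of these $g$-vectors is obtained from the identity matrix by the infinite mutation sequence of Proposition~\ref{prp:key}. It is therefore invertible: it is unitriangular up to sign with respect to the descending filtration by $V_0,V_1,\dots,V_l$, and each column has finite support. Hence distinct Laurent monomials in the $z_{(i,r)}$ have distinct highest weight monomials. Ordering by the partial order on weights, the leading term of $F(f)$ for $f\neq 0$ cannot cancel, so $F$ is injective.

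\emph{Image of the cluster algebra.} It suffices to show that $F$ sends cluster variables into $K_\Z$. The key point is that $F$ intertwines a distinguished family of mutations with the renormalized $QQ$-system, Proposition~\ref{renqq}. For the green/red mutations $\mu_{(i_p,\rho_p-2k)}$ of Remark~\ref{rem:lowering}, the exchange relation reads off from Corollary~\ref{cor:sgam} and the shape of the local configurations in Figures~\ref{fig:1-1ab}--\ref{fig:1-1d}. This exchange relation is precisely the renormalized $QQ$-relation with $w=s_{i_1}\cdots s_{i_{p-1}}$ and $i=i_p$. The $c_{ij}=-2,-3$ products on the right-hand side of Proposition~\ref{renqq} match the oblique $\beta$-arrows.

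Thus $F$ maps $\mu_{\bd,\br}(\SS_{\bi,\br})$ to a seed whose variables are again $\bQ$'s with shifted spectral parameters. By the Laurent phenomenon, any cluster variable is a Laurent polynomial in such a seed, so its image lies in $K_\Z$.

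\emph{Density.} For the density claim we use the limit seed. For $(j,s)$ with $s\ll 0$, the unit cluster variables $u_{(j,s)}$ of Proposition~\ref{uniquegv} have $\bg^\infty=\be_{j,s}$. We claim $F(u_{(j,s)})=\bQ_{\varpi_j^\vee,q^s}$, up to a factor in $[\underline P]$, which equals $\Psi_{j,q^s}$ up to such scalars. This follows by tracking the $QQ$-relations along the sequence of Proposition~\ref{prp:key}. Alternatively, apply Theorem~\ref{uniqqsys} with $\widetilde w=e$: the images form a solution of the $QQ$-system agreeing with the $\QQ$'s on the initial data, hence everywhere.

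By translation invariance, every $\Psi^{\pm1}_{j,q^s}$ with $(j,s)\in V$ is then the image of a cluster variable or its inverse, up to $[\underline P]$. Density follows from Proposition~\ref{prop-invertible}(2).

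The main obstacle is checking that the exchange relations of $\G_{\bi,\br}$ literally coincide with the renormalized $QQ$-relations in the non simply-laced cases. This requires precise matching of spectral shifts, namely $s_m$, $\rho_p$ and the degrees in Corollary~\ref{Cor3-10}, and the normalization factors $[-\Omega(\cdot)]$. I would verify this using Lemma~\ref{lem-3-9}, treating the cases $c_{i_pj}=-1,-2,-3$ separately.
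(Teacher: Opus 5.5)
\paragraph{The gap: density.} Your injectivity argument is the one the paper uses: the leading $\Psi$-monomials of the $F(z_{(i,r)})$ are the stabilized $g$-vectors, these are linearly independent, and a term of maximal weight cannot cancel. The density step, however, has a genuine gap.

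The unital cluster variables give $F(u_{(j,s)})=\bQ_{\varpi_j^\vee,q^s}=[-\Omega(\Psi_{j,q^s})]\Psi_{j,q^s}$. So you have only shown that the \emph{positive} generators lie in $[\uP]\otimes F(\AA_{w_0})$. The element $\Psi_{j,q^s}^{-1}$ is the image of $u_{(j,s)}^{-1}$, which is not in $\AA_{w_0}$, and the closure of a subring need not be stable under inversion. Hence Proposition~\ref{prop-invertible}(2) cannot be applied until you show that every $\Psi^{-1}_{j,q^s}$ is a limit of elements of $[\uP]\otimes F(\AA_{w_0})$.

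The paper obtains density differently. It shows that every $\bQ_{w(\varpi_i^\vee),q^r}$, for arbitrary $w\in W$, is the image of a ($Q$-)cluster variable, and then uses that $K_\Z$ is by definition topologically generated by these elements. Reaching all $w$, and not only the prefixes $s_{i_1}\cdots s_{i_m}$ of your fixed word, requires compatible seeds for other reduced words and Theorem~\ref{uniF}.

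Within your single-seed approach there is a short patch. Let $R$ be the closure of $[\uP]\otimes F(\AA_{w_0})$, and let $m$ be the last occurrence of $i$ in $\bi$.
\begin{itemize}
\item The vertices of $V_m$ are sent to $\bQ_{w_0(\varpi_i^\vee),\cdot}=\bQ_{-\varpi^\vee_{\nu(i)},\cdot}$.
\item By Theorem~\ref{scoherence}, the $\Psi_{k,\cdot}$-exponents of its leading monomial are sign-coherent with sum $-\delta_{k,\nu(i)}$, so that monomial is a single $\Psi^{-1}_{\nu(i),b}$.
\item Hence, for a suitable scalar in $[\uP]$, $\Sigma:=[\lambda]\,\bQ_{\varpi^\vee_{\nu(i)},b}\,\bQ_{-\varpi^\vee_{\nu(i)},\cdot}$ lies in $R$. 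It has constant term $1$ and all other terms of strictly lower weight.
\item Therefore $\Sigma^{-1}=\sum_{n\ge 0}(1-\Sigma)^n\in R$, and so $\Psi^{-1}_{\nu(i),b}\in R$.
\end{itemize}
Using the seeds $\Sigma_{\bi,\br'}$ for shifted $\br'$ then gives every $\Psi^{-1}_{j,q^s}$.

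Your ``alternative'' via Theorem~\ref{uniqqsys} with $\widetilde w=e$ is circular. Its initial condition is exactly the identity $F(u_{(j,s)})=\bQ_{\varpi_j^\vee,q^s}$ you are trying to prove, and it presupposes a cluster variable for every $w\in W$.

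\paragraph{A false step: linear independence.} The reason you give for linear independence is wrong: $G^{(0)}$ is not unitriangular up to sign for the filtration $V_0,\dots,V_l$. In the $B_3$ example after Proposition~\ref{prp:gknit}, we have $(2,-2)\in V_3$, yet
$\bg_{(2,-2)}=\be_{(2,0)}+\be_{(2,-2)}-\be_{(3,-2)}-\be_{(1,-4)}$ has entries in $V_0,V_1,V_2$. Meanwhile $\bg_{(2,-12)}=z^{-8}\bg_{(2,-4)}=\be_{(2,-10)}+\be_{(2,-12)}-\be_{(3,-12)}-\be_{(1,-14)}$ has entries in $V_4$ and $V_5$.

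What is true is the following. The $m$-th infinite block of mutations acts on $\Z^{(V)}$ by an operator $M_m^\infty$:
\begin{itemize}
\item it fixes $\be_v$ for $v\notin\{(i_m,\rho_m-2h)\mid h\geq 0\}$;
\item it sends $\be_{(i_m,\rho_m-2h)}$ to $M_m(z^{\rho_m-2h}\be_{i_m})$, which equals $-\be_{(i_m,\rho_m-2h)}$ plus a vector supported on columns $j\neq i_m$.
\end{itemize}
So $M_m^\infty$ is an involution. The proof of Proposition~\ref{prp:key} gives $G^{(0)}=M_1^\infty\cdots M_l^\infty$, and therefore the columns of $G^{(0)}$ form a $\Z$-basis of $\Z^{(V)}$.

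Your middle step is also unnecessary. The inclusion $F(\AA_{w_0})\subset F(L)\subset K_\Z$ already follows from the Laurent phenomenon and Proposition~\ref{prop-invertible}(1). The $QQ$-matching of exchange relations is needed only to compute $F(u_{(j,s)})$.
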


The seed $(\G_{\bi, \br}, (F(z_{(i,r)})_ {(i,r)\in V})$ in $K_{\Z}$ will be denoted by $\Sigma_{\bi, \br}$.

\begin{Rem} (i) We will establish in Theorem \ref{uniF} that the embedding $F$ does not depend on the choice of a \emph{compatible} seed, in the sense of Definition \ref{comps}. 

(ii) Note that using Lemma~\ref{Lem28} it is easy to see that one passes from a seed $\Sigma_{\bi,\br}$ to a seed $\Sigma_{\bi,\br'}$ with the same $\bi$ but different $\br,\br'$ via a sequence of mutations whose exchange relations are realized in $K_\mathbb{Z}$ as renormalized $QQ$-relations. But this is not true for different $\bi$, $\bi'$. 
Since the proof of the uniqueness of $F$ given in \cite[Remark 7.6]{GHL} for simply-laced type does not extend to general types, we will here prove Theorem \ref{uniF} by a new argument. \end{Rem}

\begin{example}\label{exam-5.3}
Let $\g$ be of type $B_2$. 
In Figure~\ref{Fig9} we display the initial seed of $\AA_{w_0}$ with quiver 
$\Gamma_{(1,2,1,2),\ (3,-1,-3,-7)}$, and its image under $F$.
Mutating the initial cluster variable $\bQ_{\varpi^\vee_1,q^3}$ we get the new cluster variable $\bQ_{s_1(\varpi^\vee_1),q^7}$, and the corresponding exchange relation is
\[
\bQ_{\varpi^\vee_1,q^3} \bQ_{s_1(\varpi^\vee_1),q^7} = \bQ_{\varpi^\vee_1,q^7} \bQ_{s_1(\varpi^\vee_1),q^3}, 
+ \, \bQ_{\varpi^\vee_2,q^5}, 
\]
an example of relation of the $QQ$-system. Mutating the initial cluster variable $\bQ_{s_1(\varpi^\vee_2),q^{-1}}$ we get the new  variable $\bQ_{s_1s_2(\varpi^\vee_2),q}$, and the corresponding exchange relation is also a relation of the $QQ$-system
\[
\bQ_{s_1(\varpi^\vee_2),q^{-1}} \bQ_{s_1s_2(\varpi^\vee_2),q} = \bQ_{\varpi^\vee_2,q} \bQ_{s_1s_2(\varpi^\vee_2),q^{-1}} 
+ \,\bQ_{s_1(\varpi^\vee_1),q^{-1}} \bQ_{s_1(\varpi^\vee_1),q}, 
\]

On the other hand, mutating the initial cluster variable $\bQ_{\varpi^\vee_2,q^{3}}$, we get a new cluster variable, say $x$, given by the exchange relation
\[
x\, \bQ_{\varpi^\vee_2,q^{3}} = \bQ_{\varpi^\vee_2,q^{5}} \bQ_{\varpi^\vee_1,q} +\, \bQ_{\varpi^\vee_2,q}\bQ_{\varpi^\vee_1,q^{5}} 
\]
which is \emph{not} a relation of the $QQ$-system, but a $QQ^*$-relation as studied in \cite{HL2} and \cite{FH}.
\cqfd
\end{example}

 The injectivity of the homomrphism $F$ is proved as in \cite[Proposition 8.1]{GHL}. The crucial point is the existence of stabilized $g$-vectors of an initial seed (Proposition \ref{uniquegv}) and their linear independence. To establish the surjectivity of $F$ after completion, we will need the following uniqueness result.

\subsection{Uniqueness of the homomorphism $F$ } \label{ssec:uniqueCS}

In Proposition \ref{uniquegv} we have established that each cluster variable $x$ of $\AA_{w_0}$ has a well-defined stabilized $g$-vector $\bg^\infty(x)$ which can be calculated by performing a finite number of mutations starting in a compatible initial seed.

We say that a cluster variable $x$ is \emph{unital} if $\bg^\infty(x)$ is unital (that is, it has a single non-zero coefficient, and this coefficient is $1$).
By Proposition~\ref{uniquegv} we have: 

\begin{Lem}\label{unital} The collection of unital cluster variables does not depend on the choice of a compatible initial seed. 
Moreover, 
for any $(i,r)\in V$, there is a unique unital cluster variable $u_{i,r}$ whose stabilized $g$-vector is ${\bf e}_{i,r}$.
\end{Lem}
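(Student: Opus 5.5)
The first assertion should follow from Proposition~\ref{uniquegv}. For any cluster variable $x$ of $\AA_{w_0}$, that proposition shows that the stabilized $g$-vector $\bg^\infty(x)$ computed from $\SS_{\bi,\br}$ equals the one computed from any compatible seed $\SS_{\bi',\br'}=\mu_\bv(\SS_{\bi,\br})$. The cluster variables themselves do not change when we pass to a compatible seed, since $\SS_{\bi',\br'}$ is obtained by mutation inside the same cluster algebra. So the predicate ``$\bg^\infty(x)$ is unital'' depends only on $x$, and the set of unital cluster variables is intrinsic.

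For existence, I will fix $(i,r)\in V$ and choose $k'\in 2\Z_{\ge 0}$ large enough that $(i,r+k')\in V_0(\bi,\br)$. This is possible because $V_0$ contains all vertices $(i,s)$ with $s$ sufficiently large. Proposition~\ref{uniquegv}, in its last sentence, which rests on part~(b) of the lemma preceding Lemma~\ref{lem:lgvect1}, then provides the cluster variable $u_{(i,r)}=z^{(-k'\bll)}_{(i,r)}$ of the seed $\SS_{\bi,\br-k'\bll}$, with $\bg^\infty(u_{(i,r)})=\be_{i,r}$. I will also check that this does not depend on the choice of $k'$. Increasing $k'$ by $2$ and applying the shift automorphism $z_{(j,s)}\mapsto z^{(-k\bll)}_{(j,s-k)}$ identifies the candidates, because their stabilized $g$-vectors agree; this will in any case be covered by the uniqueness argument below.

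Uniqueness is the main point. Suppose $x$ and $y$ are cluster variables with $\bg^\infty(x)=\bg^\infty(y)=\be_{i,r}$. By Lemma~\ref{lem:lgvect1}, there is a single $\bd\in D_k$, with $k$ large enough for both variables, such that $\bg^{\SS_{\bi,\br-\bd}}(x)=\bg^{\SS_{\bi,\br-\bd}}(y)$. So $x$ and $y$ have the same ordinary $g$-vector with respect to one seed $\SS$ of the skew-symmetric cluster algebra. I will then invoke the standard fact that distinct cluster variables, indeed distinct cluster monomials, have distinct $g$-vectors with respect to a fixed seed. For finite rank this is due to Derksen--Weyman--Zelevinsky via sign-coherence~\cite{DWZ2}. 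For our infinite rank setting I will reduce to finite rank: $x$ and $y$ are each reached by finitely many mutations from $\SS$, so both lie in the cluster algebra of a finite full subquiver with the remaining vertices frozen. Their $g$-vectors with respect to $\SS$ can be computed there, as noted in Remark~\ref{rem:wdef-gvect}, and the finite-rank result with frozen variables then gives $x=y$.

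I expect the main obstacle to be this infinite-rank reduction. One has to make sure that freezing does not change the $g$-vectors and that the finite-rank injectivity theorem applies to the coefficient-extended setting. To handle the first point, I will use the same argument as Remark~\ref{rem:wdef-gvect}: the cluster complex is independent of the choice of coefficients~\cite{CKLP13}. This justifies working with the finite principal-coefficient version.
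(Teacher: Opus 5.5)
Your proposal is correct and follows the paper. The paper deduces this lemma directly from Proposition~\ref{uniquegv}, which gives independence of $\bg^\infty$ from the compatible seed and $\bg^\infty(u_{(j,s)})=\be_{j,s}$ for the shifted initial variables; your uniqueness step (stabilizing both $g$-vectors at a common seed $\SS_{\bi,\br-\bd}$ via Lemma~\ref{lem:lgvect1}, then using injectivity of $g$-vectors on cluster variables after a finite-rank reduction as in Remark~\ref{rem:wdef-gvect}) makes explicit what the paper leaves implicit.
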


\begin{Rem} 
Heuristically, one can regard $\bg^\infty(x)$ as the $g$-vector of $x$ with respect to the \emph{limit seed} consisting of the 
collection of all unital cluster variables $u_{(i,r)}$ sitting at the vertices $(i,r)$ of 
the basic infinite quiver $\Gamma$. Although this limit seed is very important for our purposes, in a strict sense, this is not a seed of the cluster algebra $\mathcal{A}_{w_0}$, since it 
can not be reached with a finite number of mutations. Note however that any monomial in the unital cluster variables, that is any cluster monomial of the limit seed, is a cluster monomial of $\mathcal{A}_{w_0}$.
\end{Rem}

We can now prove:

\begin{Thm}\label{uniF} The homomorphism $F$ does not depend on the  choice of a compatible initial seed.
\end{Thm}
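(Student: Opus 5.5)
The plan is to show that, whichever compatible initial seed $\SS_{\bi',\br'}=\mu_\bv(\SS_{\bi,\br})$ is used, the two homomorphisms $F$ and $F'$ agree on all cluster variables. Since $F$ and $F'$ are ring homomorphisms from $\AA_{w_0}$, it suffices to check that they agree on a single seed. I would characterize $F$ intrinsically through the unital cluster variables of Lemma~\ref{unital}, which do not depend on the compatible seed.

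\textbf{Step 1: $F$ sends unital variables to $Q$-variables at the identity.} I first show that for every $(i,r)\in V$ one has $F(u_{(i,r)})=\bQ_{\varpi_i^\vee,q^{r}}$. By Proposition~\ref{uniquegv}, $u_{(i,r)}$ equals $z^{(-k'\bll)}_{(i,r)}$, obtained from $\SS_{\bi,\br}$ by a mutation sequence $\mu_{\bd,\br}$. By Remark~(ii) after Theorem~\ref{main-Thm}, the exchange relations along $\mu_{\bd,\br}$ are realized in $K_\Z$ as renormalized $QQ$-relations (Proposition~\ref{renqq}). Following the green and red vertices down, the image of a vertex in $V_0$ stays of the form $\bQ_{\varpi_i^\vee,\cdot}$, and the spectral parameter shifts exactly as the $g$-vector $\be_{(i,r)}$ dictates. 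The same holds for $F'$, so $F(u_{(i,r)})=F'(u_{(i,r)})=\bQ_{\varpi_i^\vee,q^r}$.

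\textbf{Step 2: images of a seed form a solution of the $QQ$-system.} I next consider the images under $F'$ of the cluster variables obtained from $\SS_{\bi',\br'}$ by the lowering sequences. Using the same reasoning as in Section~\ref{sgvect}, these lowering mutations together with the shifts realize, inside $K_\Z$, a family $S_{w(\varpi_i^\vee),q^r}$ for all $w\le w_0$ along the prefixes of $\bi'$. This family satisfies the renormalized $QQ$-relations, and after undoing the renormalization it gives a solution of the $QQ$-system in $\widetilde{\mathcal{Y}}'_e$. The essential point is that the exchange relations of the lowering sequences are exactly the $QQ$-relations attached to the reduced word. By Step~1, this solution agrees with $E_e(\QQ_{\varpi_i^\vee,\cdot})$ at $\widetilde w=e$, so Theorem~\ref{uniqqsys} forces $S_{w(\varpi_i^\vee),a}=Q_{w(\varpi_i^\vee),a}$ for all $w$. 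In particular, $F'(z'_{(i,r)})=\bQ_{s_{i'_1}\cdots s_{i'_m}(\varpi_i^\vee),q^{r+s'_m}}$, which is the defining value of the homomorphism attached to $(\bi',\br')$.

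\textbf{Step 3: conclude.} Applying the same argument to $F$, its values on the seed $\SS_{\bi',\br'}$ coincide with the same $Q$-variables. Hence $F$ and $F'$ agree on a cluster of $\AA_{w_0}$, and therefore agree on all of $L$.

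\textbf{Main obstacle.} The hardest part is Step~2: recovering a full solution of the $QQ$-system, indexed by all of $W$ and all spectral parameters, from finitely many reachable cluster variables. This is subtle because unital variables are only reached along infinite or unbounded mutation sequences. My first attempt would be to work with the truncated data $\mathcal{S}_{\bi,\br-\bd}$ for $\bd\in D_k$, and to use the stabilization of Lemma~\ref{lem:lgvect1} together with the intertwining of Proposition~\ref{prop:intertw}. The goal is to show that every variable needed is a cluster variable whose $F$-image and $F'$-image are both governed by the same $QQ$-relations. Injectivity of $F$ and the linear independence of stabilized $g$-vectors would then pin down the identification.
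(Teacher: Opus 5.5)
Your proposal is correct and follows essentially the paper's argument. Both proceed in two steps: first show $F(u_{i,s})=F'(u_{i,s})=\bQ_{\varpi_i^\vee,q^s}$ for all $(i,s)\in V$, using $QQ$-type shift mutations and the seed-independence of the unital variables; then invoke the uniqueness Theorem~\ref{uniqqsys}, with initial conditions at $w=e$, to pin down the images of all $Q$-labelled cluster variables of any compatible seed. The differences are cosmetic: the paper phrases the second step as seed-independence of the $Q$-labels (using injectivity of $F$), and your Step~2 is tautological for $F'$, so the real content sits in Step~3 applied to $F$, where the $w=e$ members of the $\SS_{\bi',\br'}$-family are identified with the $u_{(i,s)}$ via Proposition~\ref{uniquegv}.
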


In simply-laced types, the argument in \cite[Remark 7.6]{GHL} was partly based on the proof of 
exchange relations in type $A_2$ that are not $QQ$-relations. For general types, we propose here a different argument, which is uniform and does not involve specific computations in rank $2$.  This also gives an alternative new proof for the simply-laced types.

\begin{proof} Consider two homomorphisms 
$$ F:\mathcal{A}\rightarrow K_\mathbb{Z},\quad F' : \mathcal{A}'\rightarrow K_\mathbb{Z}$$
corresponding to two  compatible initial seeds $\mathcal{S}$ and $\mathcal{S}'$ as in Theorem \ref{main-Thm}. 
We have already established that the corresponding cluster algebras $\mathcal{A}$ and $\mathcal{A}'$ are isomorphic, via a finite sequence of mutations. One issue is that this sequence of mutations involves exchange relations which are not necessarily $QQ$-relations. Hence it is not clear a priori that $F$ and $F'$ coincide. We want to establish that we have a commutative diagram
\[\xymatrix{\mathcal{A} \ar[r]^{\simeq} \ar[d]_{F}& \mathcal{A}' \ar[d]_{F'}
\\ K_{\mathbb{Z}}\ar[r]^{=}& K_{\mathbb{Z}}}
\]

Recall the solutions ${\bQ}_{w(\varpi_i^\vee),\,q^{s}}$ of the renormalized $QQ$-system in $K_{\mathbb{Z}}$.

Let $V_u\subset V$ (resp. $V_u'\subset V$) be the set of indices $(i,s)\in V$ so that the unital cluster variable $u_{i,s}$ (as defined in Lemma \ref{unital}) is in  $\mathcal{S}$ (resp. in $\mathcal{S}'$). Note that Lemma \ref{unital} shows that the cluster variable $u_{i,s}$ is independent on the choices : it is the same cluster variable in the cluster algebras $\mathcal{A}$ and  $\mathcal{A}'$ identified through the isomorphism.

Then we have
$$F(u_{i,s}) = {\bQ}_{\varpi_i^\vee,\,q^s}\quad (\mbox{resp.}\quad F'(u_{i,s}) = {\bQ}_{\varpi_i^\vee,\,q^s}).$$
Note that for $(i,s)\in V$ with large enough $s$, we have $(i,s)\in V_u\cap V'_u$. Hence, for $s$ large enough we have
 \begin{equation}\label{init}F(u_{i, s}) = F'(u_{i, s}) = {\bQ}_{\varpi_i^\vee,\,q^s} = \left[ -\frac{s}{2d_i}\varpi_i\right][\Psi_{i,q^s}].\end{equation}

Let us prove that this formula (\ref{init}) is actually satisfied for any $(i,s)\in V$. Consider a general $(i,s)\in V$. Then starting from the initial seed $\mathcal{S}$ (resp. $\mathcal{S}'$), and  using only mutations whose corresponding exchange relations are $QQ$-relations, one can obtain the cluster variable $u_{i,s}$ of $\mathcal{A}$ such that $F(u_{i,s}) = {\bQ}_{\varpi_i^\vee,\,q^s}$ (resp. $F'(u_{i,s}) = {\bQ}_{\varpi_i^\vee,\,q^s})$). 
Hence we have established $(\ref{init})$ for any $(i,s)\in V$.

Recall (Theorem \ref{uniqqsys}) that the solution of the (non-renormalized) $QQ$-system in $K_\mathbb{Z}$ is uniquely determined by the \emph{initial conditions} corresponding to $w = e$, that is by the collection of the 
$$Q_{\varpi_i^\vee,\,q^s} = \left[ \frac{s}{2d_i}\varpi_i^\vee\right]{\bQ}_{\varpi_i^\vee,\,q^s} = [\Psi_{i,q^s}].$$ 
We have denoted this unique solution $(Q_{w(\varpi_i),\,q^s})_{w,i,s}$, and we have the corresponding normalized elements $(\bQ_{w(\varpi_i),\,q^s})_{w,i,s}$.

Consider all the cluster variables belonging to all the compatible initial seeds as in Theorem \ref{main-Thm}. To each such cluster variable is associated a $Q$-variable ${\bQ}_{w(\varpi_i^\vee),\,q^s}$. But the couple $(w(\varpi_i^\vee),q^s)$ does not depend on the initial seed, only on the cluster variable. Indeed, by applying $F$ which is injective, we obtain a solution of the renormalized $QQ$-system, which is is unique as discussed above, provided we have imposed the initial conditions  
$\left[ \frac{s}{2d_i}\varpi_i^\vee\right]F(u_{i,s}) = {\bQ}_{\varpi_i^\vee,\,q^s}$.
Hence, we obtain cluster variables \begin{equation}z_{w(\varpi_i^\vee),q^s}\in\mathcal{A}_{w_0},\end{equation}
which are well-defined (that is independent on the choice of an initial seed), and we have \begin{equation}F(z_{w(\varpi_i^\vee),q^s}) = {\bQ}_{w(\varpi_i),\,q^s}\end{equation} for any $w$, $i$, $s$. 

As we have the same for $F'$, we obtain that $F = F'$.
\cqfd
\end{proof}

By construction, the cluster variables $z_{w(\varpi_i^\vee),q^s}\in \mathcal{A}_{w_0}$ satisfy the renormalized $QQ$-system as in Proposition \ref{renqq}. They 
will be called \emph{$Q$-cluster variables}.
\begin{Rem} 
We have considered different families of cluster variables in this section: the unital cluster variables $u_{i,r}$, the initial cluster variables $z_{(i,r)}$ and the $Q$-cluster variables $z_{w(\varpi_i^\vee), q^s}$. The family of $Q$-cluster variables contains all initial cluster variables and all unital cluster variables. Wheareas the unital cluster variables and the $Q$-cluster variables do not depend on choices, the initial cluster variables depend on the choice of the initial seed.
\end{Rem}

\subsection{End of the proof of Theorem~\ref{main-Thm}}

Once we have established Theorem \ref{uniF} (independence of the homomorphism $F$ on the choices) and the existence of the stabilized $g$-vectors of $Q$-variables (Section \ref{sgvect}), one can conclude the proof of Theorem \ref{main-Thm} as in \cite[Section 8]{GHL}. The surjectivity of $F$ after completion follows from the fact that all $Q$-variables are images of cluster variables under $F$.

\section{Limits of cluster expansions}

\label{sect-limitF}

In this section, we establish the existence of a positive cluster expansion of any cluster monomial with respect to the limit reference seed (Theorem \ref{devtp}). This result, not included in \cite{GHL}, is also new for $A,D,E$ types. In the case of $Q$-cluster variables, it implies certain positivity conjectures of \cite{FH2} (Theorem \ref{posq}).

\subsection{Formal expansions of $Q$-variables}

Let $w\in W$, $i\in I$ and $r\in \mathbb{Z}$ such that $ws_i > w$. We denote by $P_{w,i,q^{r - d_i}}$ the right-hand side of the $QQ$-relation
\begin{equation}\label{useqq}
{\bQ}_{ws_i(\varpi_i^\vee),\,q^r} \bQ_{w(\varpi_i^\vee),\,q^{r-2d_i}} -\ {\bQ}_{ws_i(\varpi_i^\vee),\,q^{r-2d_i}} \bQ_{w(\varpi_i^\vee),\,q^r}
=P_{w,i,q^{r -d_i}}.\end{equation}
Thus, in the $ADE$-case, 
$$P_{w,i,q^r} = \prod_{j:\ c_{ij}=-1} \bQ_{w(\varpi_j^\vee),\,q^r}$$
and in general $P_{w,i,q^r}$ is equal to 
$$\prod_{j:\ c_{ij}=-1} \bQ_{w(\varpi^\vee_j),\,q^{r}} 
\prod_{j:\ c_{ij}=-2} \bQ_{w(\varpi^\vee_j),\,q^{r+1}}\bQ_{w(\varpi^\vee_j),\,q^{r-1}} 
\prod_{j:\ c_{ij}=-3} \bQ_{w(\varpi^\vee_j),\,q^{r+2}}\bQ_{w(\varpi^\vee_j),\,q^{r}}\bQ_{w(\varpi^\vee_j),\,q^{r-2}}.$$

Recall from Section \ref{Kz} that ${\bQ}_{w(\varpi_i^\vee),\,q^r}$ has a highest weight term. Hence, any Laurent monomial in the ${\bQ}_{w(\varpi_i^\vee),\,q^r}$ has a highest weight term. Let us compute the highest weight for some Laurent monomials that will be important in the sequel.

\begin{Lem}\label{hwt}
For $r\in \mathbb{Z}$, the highest weight of 
\begin{equation}\label{fqp}\frac{\bQ_{w(\varpi_i^\vee),\,q^{r}}P_{w,i,q^{r-3d_i}}}{\bQ_{w(\varpi_i^\vee),\,q^{r - 4d_i}}P_{w,i,q^{r-d_i}}}\end{equation}
does not depend on $r$ and is equal to 
$w(-\alpha_i)$
.\end{Lem}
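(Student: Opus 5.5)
The highest weight of a Laurent monomial in the $\bQ$'s is the sum (with signs) of the highest weights of its factors. By definition of $\bQ$, the highest weight monomial of $\bQ_{w(\varpi_j^\vee),q^s}$ is $[-\Omega(\Psi_{w(\varpi_j^\vee),q^s})]\Psi_{w(\varpi_j^\vee),q^s}$. I would therefore apply the highest-weight map to the expression (\ref{fqp}) and reduce it to a computation with highest weight monomials. I would not try to evaluate each factor separately.

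The first step is to use the renormalized $QQ$-relation (\ref{useqq}) as in the proof of Proposition~\ref{renqq}. That proof shows that the highest weight monomial of $P_{w,i,q^{s-d_i}}$ equals the highest weight monomial of $\bQ_{ws_i(\varpi_i^\vee),q^{s}}\bQ_{w(\varpi_i^\vee),q^{s-2d_i}}$, including the $[-\Omega(\cdot)]$ normalizations. Applying this with $s=r-2d_i$ and with $s=r$, the highest weight monomial of (\ref{fqp}) becomes that of
\[
\frac{\bQ_{w(\varpi_i^\vee),q^{r}}\,\bQ_{ws_i(\varpi_i^\vee),q^{r-2d_i}}\,\bQ_{w(\varpi_i^\vee),q^{r-4d_i}}}{\bQ_{w(\varpi_i^\vee),q^{r-4d_i}}\,\bQ_{ws_i(\varpi_i^\vee),q^{r}}\,\bQ_{w(\varpi_i^\vee),q^{r-2d_i}}}
=\frac{\bQ_{w(\varpi_i^\vee),q^{r}}\,\bQ_{ws_i(\varpi_i^\vee),q^{r-2d_i}}}{\bQ_{ws_i(\varpi_i^\vee),q^{r}}\,\bQ_{w(\varpi_i^\vee),q^{r-2d_i}}}.
\]
This is exactly the inverse of the ratio of the two terms on the left-hand side of (\ref{useqq}).

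The second step reads off this ratio from the end of the proof of Proposition~\ref{renqq}, applied with the index $r$ shifted by $2d_i$. There the $\Psi$-parts are rewritten as $[w(\varpi_i)]$ and $[ws_i(\varpi_i)]$ times the highest monomials $\Lambda(\Theta_w(Y))$ and $\Lambda(\Theta_{ws_i}(Y))$. These are monomials in $Y$'s with $\Omega=0$, so the $\Omega$-normalizations cancel them out. What remains of the weight is $w(\varpi_i)-ws_i(\varpi_i)=w(\alpha_i)$ for the ratio of the two left-hand terms, hence $-w(\alpha_i)=w(-\alpha_i)$ for our expression. Independence of $r$ is then clear, since $r$ drops out of the weight bookkeeping.

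The main obstacle is sign and shift conventions. One must make sure that "highest weight" here means the $P$-weight coming from the $[\lambda]$ factors together with $\omega$ of the $Y$-part, with the normalized $\bQ$'s contributing no $\Psi$-weight. One must also check which of the two left-hand terms carries the $[w(\alpha_i)]$ discrepancy, so that the final sign is $-w(\alpha_i)$ and not $+w(\alpha_i)$. I would verify this sign on $w=e$ using Lemma~\ref{Lem31}: the highest term of $\bQ_{\varpi_i^\vee,q^r}/\bQ_{\varpi_i^\vee,q^{r-2d_i}}$ gives $Y_{i,q^{r-d_i}}$ up to normalization, and the quotient should produce $A_{i}^{-1}$-type weight $-\alpha_i$.
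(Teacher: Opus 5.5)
Your proposal is correct and is essentially the paper's proof. The $QQ$-relation identifies the highest weight of $P_{w,i,q^{s-d_i}}$ with that of $\bQ_{ws_i(\varpi_i^\vee),q^{s}}\bQ_{w(\varpi_i^\vee),q^{s-2d_i}}$, the expression telescopes to $\bQ_{w(\varpi_i^\vee),q^r}\bQ_{ws_i(\varpi_i^\vee),q^{r-2d_i}}/(\bQ_{ws_i(\varpi_i^\vee),q^r}\bQ_{w(\varpi_i^\vee),q^{r-2d_i}})$, and the $\Omega$-bookkeeping from the proof of Proposition~\ref{renqq} gives $ws_i(\varpi_i)-w(\varpi_i)=w(-\alpha_i)$; the paper phrases this last step via the highest weights $-w(\varpi_i)$ and $-ws_i(\varpi_i)$ of the two consecutive ratios, which is the same computation.

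There are two harmless slips. First, no shift of $r$ is needed, since the telescoped ratio is exactly the second left-hand term of (\ref{useqq}) over the first at the same $r$. Second, in your $w=e$ check, $\bQ_{\varpi_i^\vee,q^r}/\bQ_{\varpi_i^\vee,q^{r-2d_i}}$ equals $Y_{i,q^{r-d_i}}^{-1}$, not $Y_{i,q^{r-d_i}}$; combined with $Y_{i,q^{r-d_i}}A_{i,q^{r-2d_i}}^{-1}$ from the $s_i$-factor, this yields $A_{i,q^{r-2d_i}}^{-1}$ as expected.
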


\begin{proof} We have established in the proof of Proposition \ref{renqq} that $-w(\varpi_i)$ is the highest weight of 
$\bQ_{w(\varpi_i^\vee),q^r}\bQ_{w(\varpi_i^\vee),q^{r - 2d_i}}^{-1}$
and that $-ws_i(\varpi_i)$ is the highest weight of 
$\bQ_{ws_i(\varpi_i^\vee),q^r}\bQ_{ws_i(\varpi_i^\vee),q^{r - 2d_i}}^{-1}$. Besides, the $QQ$-relation implies that the highest weight of $P_{w,i,q^{r -d_i}}$ is the highest weight of 
$${\bQ}_{ws_i(\varpi_i^\vee),\,q^r} \bQ_{w(\varpi_i^\vee),\,q^{r-2d_i}}.$$ 
Hence the highest weight of (\ref{fqp}) is the highest weight of 
$$\frac{\bQ_{ws_i(\varpi_i^\vee),q^{r - 2d_i}}\bQ_{w(\varpi_i^\vee)),q^{r}}}{\bQ_{ws_i(\varpi_i^\vee),q^{r}}\bQ_{w(\varpi_i^\vee),q^{r - 2d_i}}}$$
that is
$$ws_i(\varpi_i) - w(\varpi_i) =  w(-\alpha_i).$$
\qed
\end{proof}

\begin{Lem}\label{hwt2} For $r\in\mathbb{Z}$, the highest weight of
$$\frac{\bQ_{w(\varpi_i^\vee),\,q^{r}}}{P_{w,i,q^{r - d_i}}}{\bQ}_{ws_i(\varpi_i^\vee),\,q^{r-2d_i}}$$
does not depend on $r$ and is equal to $w(-\alpha_i)$.
\end{Lem}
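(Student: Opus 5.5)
The plan is to reduce Lemma~\ref{hwt2} to the highest-weight computations already carried out in the proof of Lemma~\ref{hwt}. Highest weights are additive under multiplication and division of elements that have a highest weight term; here the highest weight term is the term of leading $\Omega$-weight, with the $[\lambda]$-factors included. So it is enough to know the highest weights of the three factors, or of suitable ratios of them.

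\textbf{Step 1.} Use the $QQ$-relation (\ref{useqq}) at spectral parameter $q^r$. As recorded in the proof of Lemma~\ref{hwt} (and in the proof of Proposition~\ref{renqq}), the highest weight of $P_{w,i,q^{r-d_i}}$ equals the highest weight of the first term ${\bQ}_{ws_i(\varpi_i^\vee),q^r}\bQ_{w(\varpi_i^\vee),q^{r-2d_i}}$. The second term on the left-hand side has strictly lower weight: by the renormalization argument its weight is lower by $w(\alpha_i)$, which is a positive root because $ws_i>w$. Substituting this, the highest weight of the expression in Lemma~\ref{hwt2} becomes the highest weight of
\[
\frac{\bQ_{w(\varpi_i^\vee),q^{r}}\,\bQ_{ws_i(\varpi_i^\vee),q^{r-2d_i}}}{\bQ_{ws_i(\varpi_i^\vee),q^{r}}\,\bQ_{w(\varpi_i^\vee),q^{r-2d_i}}}.
\]

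\textbf{Step 2.} Split this as the product of $\bQ_{w(\varpi_i^\vee),q^r}\bQ_{w(\varpi_i^\vee),q^{r-2d_i}}^{-1}$ and $\bigl(\bQ_{ws_i(\varpi_i^\vee),q^r}\bQ_{ws_i(\varpi_i^\vee),q^{r-2d_i}}^{-1}\bigr)^{-1}$. The proof of Lemma~\ref{hwt} established that these two ratios have highest weights $-w(\varpi_i)$ and $-ws_i(\varpi_i)$ respectively. These come from the identity $\Lambda(\Theta_w(Y_{i,a}))=[w(\varpi_i)]\Psi_{w(\varpi_i^\vee),aq^{-d_i}}\Psi_{w(\varpi_i^\vee),aq^{d_i}}^{-1}$ together with (\ref{Eq-Omega-1}), and both are independent of $r$.

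\textbf{Step 3.} Adding the two contributions gives $-w(\varpi_i)+ws_i(\varpi_i)=w(s_i(\varpi_i)-\varpi_i)=-w(\alpha_i)$, which is independent of $r$, as claimed.

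\textbf{Main obstacle.} The only delicate point is in Step 1: one must check that $P_{w,i,q^{r-d_i}}$ genuinely has a highest weight term and that its leading weight matches the first term of the relation, i.e.\ that no cancellation occurs. I would settle this as in Proposition~\ref{renqq}. The highest weight monomial of the right-hand side of (\ref{useqq}) coincides with that of the first term, while the second term differs from it by $[-w(\alpha_i)]$ times lower terms. Since $w(\alpha_i)>0$, the second term is strictly lower and cannot cancel the leading term.
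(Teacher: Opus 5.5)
Your argument is correct and is essentially the paper's: both reduce, via the $QQ$-relation (\ref{useqq}), to the difference between the highest weights of the two left-hand terms, which the paper quotes from \cite{FH3} as $-w(\alpha_i)$ and which you instead compute from the shift ratios $-w(\varpi_i)$ and $-ws_i(\varpi_i)$, exactly as in the proof of Lemma~\ref{hwt}. One remark: the fact you use in Step~1 to rule out cancellation, namely that the second term lies lower by $w(\alpha_i)$, is already the statement of the lemma, so Steps~2--3 only re-derive it.
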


\begin{proof} The highest weight of this term is the difference of the highest weight terms of the two left terms of the $QQ$-relation, which 
is known to be $-w(\alpha_i)$ by \cite{FH3}.\qed
\end{proof}

\begin{Prop} We have the following relation for $w\in W$, $i\in I$ and $r\in\mathbb{Z}$
\begin{equation}\label{subst}{\bQ}_{ws_i(\varpi_i^\vee),\,q^{r}}
= \frac{P_{w,i,q^{r - d_i}}}{\bQ_{w(\varpi_i^\vee),\,q^{r-2d_i}}}
 \left(1 +  \frac{\bQ_{w(\varpi_i^\vee),\,q^{r}}
 P_{w,i,q^{r - 3d_i}}}
{\bQ_{w(\varpi_i^\vee),\,q^{r-4d_i}}P_{w,i,q^{r - d_i}}}
 \left(1 +  \frac{\bQ_{w(\varpi_i^\vee),\,q^{r-2}}P_{w,i,q^{r - 5d_i}}}{\bQ_{w(\varpi_i^\vee),\,q^{r-6}}P_{w,i,q^{r - 3d_i}}}
 \left(1 +  \cdots \right)\right)\right)\end{equation}
where the infinite sum converges in the topological ring $K_\Z$.
\end{Prop}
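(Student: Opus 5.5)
The plan is to iterate the renormalized $QQ$-relation (\ref{useqq}) downward in the spectral parameter and then pass to the limit. Rewrite (\ref{useqq}) as
\[
\bQ_{ws_i(\varpi_i^\vee),\,q^{r}} = \frac{P_{w,i,q^{r-d_i}}}{\bQ_{w(\varpi_i^\vee),\,q^{r-2d_i}}} + \frac{\bQ_{w(\varpi_i^\vee),\,q^{r}}}{\bQ_{w(\varpi_i^\vee),\,q^{r-2d_i}}}\,\bQ_{ws_i(\varpi_i^\vee),\,q^{r-2d_i}} .
\]
This is legitimate in $K_\Z$ because every $Q$-variable is invertible there (Proposition~\ref{prop-invertible}). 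Then substitute the same identity, with $r$ replaced by $r-2d_i$, into the last factor, and keep going. After $N$ steps the result is a finite sum $\frac{P_{w,i,q^{r-d_i}}}{\bQ_{w(\varpi_i^\vee),q^{r-2d_i}}}\bigl(1+X_1(1+X_2(1+\cdots X_{N-1}))\bigr)$ plus a remainder $R_N$. The factors $X_k$ telescope: multiplying the $k$-th correction by the prefactor gives exactly the ratio $\frac{\bQ_{w(\varpi_i^\vee),q^{r-2(k-1)d_i}}P_{w,i,q^{r-(2k+1)d_i}}}{\bQ_{w(\varpi_i^\vee),q^{r-2(k+1)d_i}}P_{w,i,q^{r-(2k-1)d_i}}}$, which is a shift of (\ref{fqp}). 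That matches the nested form stated in (\ref{subst}), reading the exponents there as multiples of $d_i$. The remainder is
\[
R_N=\frac{\bQ_{w(\varpi_i^\vee),q^{r}}\bQ_{w(\varpi_i^\vee),q^{r-2d_i}}}{\bQ_{w(\varpi_i^\vee),q^{r-2Nd_i}}\bQ_{w(\varpi_i^\vee),q^{r-2(N+1)d_i}}}\,\bQ_{ws_i(\varpi_i^\vee),q^{r-2Nd_i}} ,
\]
and this identity is proved by a direct induction on $N$.

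The key analytic step is convergence, that is, $R_N\to 0$ and the partial products converge in the topology of $K_\Z=\widetilde{\YY}'_e$ inherited from $\widetilde{\YY}_e$. By Lemma~\ref{hwt}, each factor $X_k$ has highest weight $w(-\alpha_i)$, independent of $k$. Lemma~\ref{hwt2} shows that $R_N$ divided by the prefactor also has highest weight $w(-\alpha_i)$ times the accumulated weights. Consequently the $N$-th term of the nested expansion has highest weight $N\,w(-\alpha_i)$ relative to the leading term.

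Since $ws_i>w$, we have $w(\alpha_i)>0$, so $N\,w(-\alpha_i)\in -Q_+$ goes to $-\infty$ in the dominance order. Each term is a highest-weight monomial times a power series in the $A_{j,b}^{-1}$ with constant term $1$. So all monomials of the $N$-th term lie in the cone $C_{\lambda-Nw(\alpha_i)}$, where $\lambda$ is the weight of the prefactor. For any fixed weight $\mu$, only finitely many terms contribute to the coefficient of weight $\mu$. This is exactly the condition for summability in $\widetilde{\YY}_e$, and it also gives $R_N\to0$.

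I expect the main obstacle to be making this last point rigorous. The weight filtration by $\omega$ must actually control the topology of $K_\Z$, including the $\Psi$-monomials and the $[\underline P]$-coefficients. One also needs the sum of the infinitely many contributions at a fixed weight to be finite; this holds because the highest weights strictly decrease by $w(\alpha_i)$ while the $\Psi$-parts of the monomials telescope. I would handle this as in the analogous convergence statements of \cite{FH2} and \cite[Section 8]{GHL}: apply $\Omega$, or equivalently track $\omega$ after factoring out highest-weight monomials, and use that the cones $\lambda-Nw(\alpha_i)-Q_+$ eventually exclude any given weight.
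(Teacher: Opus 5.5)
Your argument is the paper's own: rewrite the renormalized $QQ$-relation as (\ref{exr}), iterate it downward, identify the correction factors as shifts of (\ref{fqp}), and obtain convergence from Lemmas~\ref{hwt} and~\ref{hwt2} together with $w(\alpha_i)>0$. You are also right to read the exponents $r-2$, $r-6$ in (\ref{subst}) as $r-2d_i$, $r-6d_i$.

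There is one slip to fix: your formula for the remainder is wrong. Already for $N=1$ it gives $\frac{\bQ_{w(\varpi_i^\vee),q^{r}}}{\bQ_{w(\varpi_i^\vee),q^{r-4d_i}}}\bQ_{ws_i(\varpi_i^\vee),q^{r-2d_i}}$, which contradicts your own first display, so the induction you invoke cannot prove it. The correct remainder, which is what the paper uses and what the induction actually yields, is
\[
R_N=\frac{\bQ_{w(\varpi_i^\vee),q^{r}}}{\bQ_{w(\varpi_i^\vee),q^{r-2Nd_i}}}\,\bQ_{ws_i(\varpi_i^\vee),q^{r-2Nd_i}} .
\]
This is the last term of the partial sum multiplied by the factor $\frac{\bQ_{w(\varpi_i^\vee),q^{r-2(N-1)d_i}}}{P_{w,i,q^{r-(2N-1)d_i}}}\bQ_{ws_i(\varpi_i^\vee),q^{r-2Nd_i}}$ from Lemma~\ref{hwt2}. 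With this formula, your highest-weight bound $\lambda-N\,w(\alpha_i)$ for $R_N$, and hence the convergence argument, go through unchanged.

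Your extra discussion of why this weight bound controls the topology of $K_\Z$ goes beyond the paper, which simply asserts that the remainder tends to $0$.
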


\begin{proof} The $QQ$-relation (\ref{useqq}) can be written as
\begin{equation}\label{exr}{\bQ}_{ws_i(\varpi_i^\vee),\,q^r} = \frac{P_{w,i,q^{r - d_i}}}{\bQ_{w(\varpi_i^\vee),\,q^{r-2d_i}}}
 \left(1 +  \frac{\bQ_{w(\varpi_i^\vee),\,q^{r}}}{P_{w,i,q^{r-d_i}}}{\bQ}_{ws_i(\varpi_i^\vee),\,q^{r-2d_i}}  \right).\end{equation}
 By iterating this relation, we obtain for $r\in\mathbb{Z}$
\begin{equation}\label{exri}{\bQ}_{ws_i(\varpi_i^\vee),\,q^{r}}
= \frac{P_{w,i,q^{r-d_i}}}{\bQ_{w(\varpi_i^\vee),\,q^{r-2d_i}}}
 \left(1 +  \frac{\bQ_{w(\varpi_i^\vee),\,q^{r}}P_{w,i,q^{r-3d_i}}}{\bQ_{w(\varpi_i^\vee),\,q^{r-4d_i}}P_{w,i,q^{r - d_i}}}
 \left(1 +  \frac{\bQ_{w(\varpi_i^\vee),\,q^{r-2d_i}}}{P_{w,i,q^{r - 3d_i}}}{\bQ}_{ws_i(\varpi_i^\vee),\,q^{r-4d_i}} \right)\right)\end{equation}
$$= \frac{P_{w,i,q^{r-d_i}}}{\bQ_{w(\varpi_i^\vee),\,q^{r-2d_i}}}
 \left(1 +  \frac{\bQ_{w(\varpi_i^\vee),\,q^{r}}P_{w,i,q^{r - 3d_i}}}{\bQ_{w(\varpi_i^\vee),\,q^{r-4d_i}}P_{w,i,q^{r-d_i}}}
 \left(1 +  \frac{\bQ_{w(\varpi_i^\vee),\,q^{r-2d_i}}P_{w,i,q^{r-5d_i}}}{\bQ_{w(\varpi_i^\vee),\,q^{r-6d_i}}P_{w,i,q^{r-3d_i}}}
 \left(1 +  \frac{\bQ_{w(\varpi_i^\vee),\,q^{r-4d_i}}}{P_{w,i,q^{r-5d_i}}}{\bQ}_{ws_i(\varpi_i^\vee),\,q^{r-6d_i}}  \right)\right)\right)$$
$$= \cdots$$
$$=\left(\sum_{k=0}^s \frac{\bQ_{w(\varpi_i^\vee),q^r}P_{w,i,q^{r - d_i-2kd_i}}}{\bQ_{w(\varpi_i^\vee),q^{r-2kd_i}}\bQ_{w(\varpi_i^\vee),q^{r-2kd_i-2d_i}}}\right) + \frac{\bQ_{w(\varpi_i^\vee),q^r}\bQ_{ws_i(\varpi_i^\vee),q^{r-2sd_i-2d_i}}}{\bQ_{w(\varpi_i^\vee),q^{r-2d_i-2sd_i}}}
.$$
For $k\in \mathbb{Z}$, by Lemma \ref{hwt}
the highest weight of a factor
$$\frac{\bQ_{w(\varpi_i^\vee),\,q^{r-2kd_i}}P_{w,i,q^{r-2kd_i-3d_i}}}{\bQ_{w(\varpi_i^\vee),\,q^{r-2kd_i - 4d_i}}P_{w,i,q^{r-2kd_i-d_i}}}$$ 
is $w(-\alpha_i)$ which is a negative root as $ws_i < w$. 

Also, by Lemma \ref{hwt2} the highest weight of the factor $$\frac{\bQ_{w(\varpi_i^\vee),\,q^{r-2sd_i}}}{P_{w,i,q^{r - 2sd_i-d_i}}}{\bQ}_{ws_i(\varpi_i^\vee),\,q^{r-2sd_i-2d_i}}$$
is $w(-\alpha_i)$ and does not depend on $r$, $s$.

In the decomposition of ${\bQ}_{ws_i(\varpi_i^\vee),\,q^{r}}$ at step $s$, the monomial containing
 ${\bQ}_{ws_i(\varpi_i^\vee),\,q^{r-2sd_i-2d_i}}$ has highest weight  $sw(-\alpha_i)$ plus a constant (independent of $s$). So this monomial converges to $0$ as $s$ tends to infinity. Since this monomial is the rest of the series, this proves that the series converges and that the substitution (\ref{subst}) makes sense and is satisfied.
 \cqfd
\end{proof}

\begin{example}\label{exqsl2} Let us consider type $A_1$ with $w = e$. We remove the indices of the Dynkin diagram. We recover the known development in \cite{FH2} :
$${\bQ}_{-\varpi^\vee,\,q^{r}}$$
$$= \frac{1}{\bQ_{\varpi^\vee,\,q^{r-2}}}
 \left(1 +  \frac{\bQ_{\varpi^\vee,\,q^{r}}}{\bQ_{\varpi^\vee,\,q^{r-4}}}
 \left(1 +  \frac{\bQ_{\varpi^\vee,\,q^{r-2}}}{\bQ_{\varpi^\vee,\,q^{r-6}}}
 \left(1 +  \frac{\bQ_{\varpi^\vee,\,q^{r-4}}}{\bQ_{\varpi^\vee,\,q^{r-8}}}
 \left(1 + \cdots \right)\right)\right)\right.
= \sum_{k\geq 0} \frac{\bQ_{\varpi^\vee,\,q^{r}}}{\bQ_{\varpi^\vee,\,q^{r-2k}}\bQ_{\varpi^\vee,\,q^{r-2k-2}}}.$$
\end{example}

\begin{example} Let us consider type $A_2$ with $w = e$ and $i= 1$. Then for $r\in \mathbb{Z}$ :
$${\bQ}_{\varpi_2^\vee - \varpi_1^\vee,\,q^{r}}
 = \frac{\bQ_{\varpi_2^\vee,\,q^{r-1}}}{\bQ_{\varpi_1^\vee,\,q^{r-2}}}
 \left(1 +  \frac{\bQ_{\varpi_1^\vee,\,q^{r}}\bQ_{\varpi_2^\vee,\,q^{r-3}}}{\bQ_{\varpi_1^\vee,\,q^{r-4}}\bQ_{\varpi_2^\vee,\,q^{r-1}}}
 \left(1 +  \frac{\bQ_{\varpi_1^\vee,\,q^{r-2}}\bQ_{\varpi_2^\vee,\,q^{r-5}}}{\bQ_{\varpi_1^\vee,\,q^{r-6}} \bQ_{\varpi_2^\vee,\,q^{r-3}}}
 \left(1 +  \cdots \right)\right)\right)$$
$$= \sum_{k\geq 0} \frac{\bQ_{\varpi_1^\vee,\,q^{r}}\bQ_{\varpi_2^\vee,\,q^{r-2k-1}}}{\bQ_{\varpi_1^\vee,\,q^{r-2k}}\bQ_{\varpi_1^\vee,\,q^{r-2k-2}}}.$$
In the same way, for $w = e$ and $i = 2$, one gets 
$${\bQ}_{ \varpi_1^\vee - \varpi_2^\vee,\,q^{r}} = \sum_{k\geq 0} \frac{\bQ_{\varpi_2^\vee,\,q^{r}}\bQ_{\varpi_1^\vee,\,q^{r-2k-1}}}{\bQ_{\varpi_2^\vee,\,q^{r-2k}}\bQ_{\varpi_2^\vee,\,q^{r-2k-2}}}.$$
\end{example}

\begin{example}\label{lex} We consider type $A_2$. Then for $w = s_2$, $i= 1$, one obtains :
$${\bQ}_{-\varpi_2^\vee,\,q^{r}}$$
$$= \frac{\bQ_{\varpi_1^\vee - \varpi_2^\vee,\,q^{r-1}}}{\bQ_{\varpi_1^\vee,\,q^{r-2}}}
 \left(1 +  \frac{\bQ_{\varpi_1^\vee,\,q^{r}} \bQ_{\varpi_1^\vee - \varpi_2^\vee,\,q^{r-3}}}{\bQ_{\varpi_1^\vee,\,q^{r-4}} \bQ_{\varpi_1^\vee - \varpi_2^\vee,\,q^{r-1}}}
 \left(1 +  \frac{\bQ_{\varpi_1^\vee,\,q^{r-2}}\bQ_{\varpi_1^\vee - \varpi_2^\vee,\,q^{r-5}}}{\bQ_{\varpi_1^\vee,\,q^{r-6}} \bQ_{\varpi_1^\vee - \varpi_2^\vee,\,q^{r-3}}}
 \left(1 +  \cdots \right)\right)\right)$$
$$= \sum_{k\geq 0} \frac{\bQ_{\varpi_1^\vee,\,q^{r}}}{\bQ_{\varpi_1^\vee,\,q^{r-2k}}\bQ_{\varpi_1^\vee,\,q^{r-2k-2}}}\bQ_{\varpi_1^\vee - \varpi_2^\vee,\,q^{r-2k-1}}
.$$
\end{example}

\medskip
Using the substitution (\ref{subst}) with a decreasing induction on the length of $w$, one obtains a development of any $Q$-variable ${\bQ}_{w(\varpi_i^\vee),q^r}$ in terms of the 
${\bQ}_{\varpi_j^\vee,q^s}^{\pm 1}$.
As the ${\bQ}_{\varpi_j^\vee,q^s}^{\pm 1}$ generate freely the topological ring $K_\Z$, this development matches the developments of (normalized) $Q$-series (\ref{Qserfh}), (\ref{Qsrfhr}).

\begin{example} We continue Example \ref{lex} :
$${\bQ}_{-\varpi_2^\vee,\,q^{r}}
= \sum_{k\geq 0} \frac{\bQ_{\varpi_1^\vee,\,q^{r}}}{\bQ_{\varpi_1^\vee,\,q^{r-2k}}\bQ_{\varpi_1^\vee,\,q^{r-2k-2}}}
\sum_{k'\geq k} \frac{\bQ_{\varpi_2^\vee,\,q^{r-2k-1}}\bQ_{\varpi_1^\vee,\,q^{r-2k'-2}}}{\bQ_{\varpi_2^\vee,\,q^{r-2k'-1}}\bQ_{\varpi_2^\vee,\,q^{r-2k'-3}}}.$$
\end{example}

\medskip

 As a result, we have obtained a recursive process in $K_\Z$ to compute the development of a $Q$-series in terms of the ${\bQ}_{\varpi_j^\vee,q^s}^{\pm 1}$. Each step of this process is a $QQ$-relation.

\subsection{Interpretation as limits of cluster expansions}\label{lcle}

Let us interpret the above recursive process from the point of view of the cluster structure.

We will consider the same infinite sequences of mutations as in Section \ref{ssec:g-vect}. We will check that each exchange relation in these sequences corresponds to a $QQ$-relation in the recursive process. (For columns corresponding to $d_i > 1$, the sequence of mutations changes column at each step but the index $i$ of the columns remains the same. This means that we handle $d_i
$ recursive processes from the previous section in parallel. However, this is not a problem since the mutation for such different columns commute.)

We can work with the seed $\Sigma_{\bi,\br}$ in $K_{\Z}$ that we have associated to $\mathcal{S}_{\bi,\br}$. Recall the infinite sequence of mutations going from $\Sigma^{(l)} = \Sigma_{\bi,\br}$
to $\Sigma^{(0)}$, via the intermediate steps 
\[
\Sigma^{(m)} = \Sigma_{(i_1,\ldots,i_{m}),(r_1,\ldots,r_{m})} \ (1\le m \le l-1).
\]
To go from $\Sigma^{(m)}$ to $\Sigma^{(m-1)}$, we have a sequence of intermediate seeds
\[\Sigma^{(m,s)} := \Sigma_{(i_1,\ldots,i_{m}),(r_1,\ldots,r_{m}- 2s)}\] 
with $s\geq 0$ and $\Sigma^{(m,0)} = \Sigma^{(m)}$.

The exchange relation to go from $\Sigma^{(m,s)}$ to $\Sigma^{(m,s+1)}$ 
is a $QQ$-relation (\ref{useqq}), with index $i_m$. The Weyl group element $w_m = s_{i_1}\dots s_{i_{m-1}}$ depends only on $m$, not on $s$. The previous cluster variable is ${\bQ}_{w_ms_{i_m}(\varpi_{i_m}^\vee),\,q^{N - 2s}}$ and the new cluster variable is $\bQ_{w_m(\varpi_{i_m}^\vee),\,q^{N -2s - 2d_{i_m}}}$ for an integer $N$ which depends only on $m$ (and not on $s$). Besides, the exchange relation can be rewritten as (\ref{exr}).

For $s = 0$, this is the cluster expansion of 
${\bQ}_{w_ms_{i_m}(\varpi_{i_m}^\vee),\,q^{N}}$ in the seed $\Sigma^{m,1}$. We can iterate this relation as explained in the previous section. Indeed, the next $QQ$-relation in the iteration, where ${\bQ}_{w_ms_{i_m}(\varpi_{i_m}^\vee),\,q^{N - 2d_{i_m}}}$ and $\bQ_{w_m(\varpi_{i_m}^\vee),\,q^{N - 3d_{i_m}}}$ appear, corresponds to the mutation from 
$\Sigma^{(m,d_{i_m})}$ to $\Sigma^{(m,d_{i_m}+1)}$. Note that if $d_{i_m} > 1$, the intermediate mutation from 
$\Sigma^{(m,1)}$ to $\Sigma^{(m,2)}$
does not affect the arrows going from or to the vertices attached to the cluster variable ${\bQ}_{w_ms_{i_m}(\varpi_{i_m}^\vee),\,q^{N - 2d_{i_m}}}$ in the seed $\Sigma^{(m,0)}$ (for $G_2$ type, this is also true for the mutation from $\Sigma^{(m,2)}$ to $\Sigma^{(m,3)}$).

Then we obtain the cluster expansion of ${\bQ}_{w_ms_{i_m}(\varpi_{i_m}^\vee),\,q^{N}}$ in the seed $\Sigma^{(m,d_{i_m}+1)}$. Continuing the mutation sequence, we iterate the $QQ$-relation and we obtain the expansion of ${\bQ}_{w_ms_{i_m}(\varpi_{i_m}^\vee),\,q^{N}}$ in the seed $\Sigma^{(m,s)}$ for $s\geq 0$. The limit $s\rightarrow +\infty$ of the cluster expansions makes sense as we obtain  the substitution (\ref{subst}). It expresses each
${\bQ}_{w_ms_{i_m}(\varpi_{i_m}^\vee),\,q^{N}}$ in terms of the ${\bQ}_{w_m(\varpi_{j}^\vee),\,q^{N'}}$ with $j\in I$ and $N' < N$, that is cluster variables of the seed $\Sigma^{(m-1)}$.
In parallel, the mutations on the other columns have allowed to express ${\bQ}_{w_ms_{i_m}(\varpi_{i_m}^\vee),\,q^{N-2}}$ in terms of the ${\bQ}_{w_m(\varpi_{j}^\vee),\,q^{N'}}$ with $j\in I$ and $N' < N - 2$, which are cluster variables of the seed $\Sigma^{(m-1)}$. Then we can iterate to express 
${\bQ}_{w_ms_{i_m}(\varpi_{i_m}^\vee),\,q^{N}}$ in terms of the cluster variables of the seed 
$\Sigma^{(0)}$, that is we obtain an expression involving only the variables ${\bQ}_{\varpi_j^\vee,\,q^{N'}}$ with $j\in I$ and $N' < N$.
As the ${\bQ}_{\varpi_j^\vee,q^s}$ generate freely the topological ring $K_\Z$, as discussed above this matches the developments of $Q$-series (\ref{Qserfh}, \ref{Qsrfhr}).

Thus, we see that we can interpret the $Q$-series as a limit of cluster expansions with respect to the successive seeds. In particular, it has positive integral coefficients by \cite{LS}.

\begin{Thm}\label{posq} The $Q$-series (\ref{Qserfh}) have positive integral coefficients.
\end{Thm}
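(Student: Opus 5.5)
The plan is to realize each $Q$-series as a limit of cluster expansions in $K_\Z$, as sketched in \S\ref{lcle}, and to transfer positivity from the Laurent expansions. First reduce to the renormalized elements. Since $Q_{w(\varpi_i^\vee),q^r}=[\Omega(\Psi_{w(\varpi_i^\vee),q^r})]\,\bQ_{w(\varpi_i^\vee),q^r}$ and $[\lambda]$ is a monomial with coefficient $1$, positivity of $Q$ is equivalent to positivity of $\bQ$. Spectral shifts $a\mapsto aq^r$ are handled by the usual $\C^*$-equivariance, so it suffices to treat $a=q^r$ with $(i,r)\in V$. By Theorem~\ref{uniF}, every $\bQ_{w(\varpi_i^\vee),q^r}$ equals $F(z)$ for a $Q$-cluster variable $z=z_{w(\varpi_i^\vee),q^r}$ of $\AA_{w_0}$. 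Hence we may choose a reduced word $\bi$ for $w_0$ beginning with a reduced word of $w$ and an adapted $\br$ such that $z$ belongs to the initial seed $\Sigma_{\bi,\br}$.

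Next we run the infinite mutation sequence $\Sigma^{(l)}\to\Sigma^{(l-1)}\to\cdots\to\Sigma^{(0)}$ of \S\ref{ssec:g-vect}, with intermediate seeds $\Sigma^{(m,s)}$. By the Laurent phenomenon and the positivity theorem \cite{LS}, at every finite stage $z$ is a Laurent polynomial with nonnegative integer coefficients in the cluster variables of the current seed. Each exchange relation along the sequence is a renormalized $QQ$-relation (\ref{useqq}), so every current cluster variable is some $\bQ_{w'(\varpi_j^\vee),q^s}$. The key identification is that composing these finite expansions reproduces exactly the truncations of the iterated substitution (\ref{subst}). The limit along $\Sigma^{(m,s)}$, $s\to\infty$, is the expansion (\ref{subst}), whose remainder term tends to $0$ in $K_\Z$ by Lemmas~\ref{hwt} and~\ref{hwt2}. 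Descending in $m$ then yields an expression of $\bQ_{w(\varpi_i^\vee),q^r}$ as a convergent series in the $\bQ_{\varpi_j^\vee,q^{s}}^{\pm1}$.

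Then we compare this expression with the $Q$-series itself. The $\bQ_{\varpi_j^\vee,q^s}=[-\tfrac{s}{2d_j}\varpi_j]\Psi_{j,q^s}$ topologically freely generate $K_\Z$ (Proposition~\ref{prop-invertible}), so the expansion in these variables is unique. It therefore coincides with the $\Psi$-expansion of $\bQ_{w(\varpi_i^\vee),q^r}$, up to the harmless monomial factors $[\lambda]$.

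The main obstacle is positivity in the limit. One must show that every coefficient of the limit series is the coefficient of a stabilized monomial in finite cluster expansions, so that it is a limit of nonnegative integers that is eventually constant. I would argue weight by weight. For fixed highest weight, Lemma~\ref{hwt} shows each further iteration lowers the weight by the positive root $-w_m(\alpha_{i_m})$. Hence, modulo terms of weight below any prescribed cone, the expansion in $\Sigma^{(m,s)}$ stabilizes for $s\gg0$, and the remainder monomial contributes nothing there. A second subtlety is that the finite expansions are in variables $\bQ_{w'(\varpi_j^\vee),q^s}$ that are themselves infinite series. I expect to handle this by expanding only in the limit seed's unital variables $u_{j,s}$, whose images are the monomials $\bQ_{\varpi_j^\vee,q^s}$, and by using that each finite-stage expansion is already positive in cluster variables that are eventually unital. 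Positivity of the coefficients then follows, and integrality is automatic.
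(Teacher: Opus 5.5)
Your proposal is correct and follows the paper's own argument. You realize the $Q$-series as the limit of the cluster expansions along the infinite mutation sequence through the seeds $\Sigma^{(m,s)}$, each step being a renormalized $QQ$-relation, so that the limit reproduces the iterated substitution (\ref{subst}); by free topological generation of $K_\Z$ by the $\bQ_{\varpi_j^\vee,q^s}^{\pm1}$ this limit is the $Q$-series itself, and positivity comes from \cite{LS}. Your weight-by-weight stabilization for passing positivity to the limit is what the paper makes explicit in the proof of Theorem~\ref{devtp}, whereas for this statement the paper simply invokes \cite{LS} once the limit interpretation is in place (small slip: $-w_m(\alpha_{i_m})$ is a negative root, so each iteration lowers the weight by the positive root $w_m(\alpha_{i_m})$).
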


\begin{Rem} (i) This is a strong evidence for the conjecture in \cite{FH2} stating that $Q$-series (\ref{Qserfh}) are $q$-characters of representations of shifted quantum affine algebras (see the next section). 

(ii) This Theorem implies the positivity of the series $\chi_{w(s_i)}$ defined combinatorially in \cite[Lemma 5.3]{FH2} (which can be also obtained from $Q$-variables by forgetting the spectral parameter). This posivity was conjectured therein, see \cite[Remark 6.10]{FH2}.
\end{Rem}

\subsection{Limit cluster expansion of cluster monomials}

The result of the previous subsection can be generalized to cluster monomials in the following way.

\begin{Thm}\label{devtp} Every cluster monomial in $K_\Z$ admits a development as a Laurent formal power series in the ${\bQ}_{\varpi_j^\vee,q^s}$ with positive integral coefficients.
\end{Thm}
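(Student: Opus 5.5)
We plan to argue by a limiting procedure, exactly as for Theorem~\ref{posq}. Fix a cluster monomial $x$ of $\AA_{w_0}$, viewed in $K_\Z$ through the homomorphism $F$ of Theorem~\ref{main-Thm}. By Theorem~\ref{uniF} we may use any compatible initial seed. Fix $\Sigma_{\bi,\br}$ and the infinite mutation sequence of Section~\ref{lcle}, which passes through the seeds $\Sigma^{(m,s)}$ and ends at the limit seed $\Sigma^{(0)}$. The cluster variables of $\Sigma^{(0)}$ are the unital variables, with $F(u_{(j,s)})=\bQ_{\varpi_j^\vee,q^s}$.

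For every intermediate seed $\Sigma^{(m,s)}$, the positivity theorem \cite{LS} shows that $x$ has a Laurent expansion in the cluster variables of that seed with nonnegative integer coefficients. Here $\AA_{w_0}$ has infinite rank, but $x$ involves only finitely many variables, so only a finite full subquiver matters. We then substitute, inductively, the series expansions (\ref{subst}) of the cluster variables of $\Sigma^{(m,s)}$ in terms of the $\bQ_{\varpi_j^\vee,q^{s'}}^{\pm1}$. Each such substitution is itself an iterated exchange relation with positive terms.

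The key step is to show that these expansions converge, as $s\to\infty$ and then $m\to 0$, to a well-defined element of $K_\Z$ equal to $F(x)$, and that the limit has nonnegative coefficients. We proceed as follows.

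\textbf{Stabilization.} By Lemma~\ref{lem:lgvect1} and Proposition~\ref{uniquegv}, the $g$-vector of $x$ with respect to $\Sigma^{(m,s)}$ stabilizes for $s\gg0$. The $F$-polynomial part should also stabilize modulo terms of arbitrarily low weight. The reason is that the lowered variables $\bQ_{w_m(\varpi_{i_m}^\vee),q^{N-2s}}$ contribute monomials whose highest weight moves down by $w_m(-\alpha_{i_m})<0$ at each step, by Lemma~\ref{hwt}.

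\textbf{Truncation.} Fix a weight cone. Modulo monomials below that cone, the Laurent expansion of $x$ in $\Sigma^{(m,s)}$, once rewritten in the $\bQ_{\varpi_j^\vee}$, is independent of $s$ for $s$ large. This is the same tail estimate as in the proof of (\ref{subst}), applied to each factor.

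\textbf{Nonnegativity.} Each truncated expansion is a sum, with nonnegative coefficients, of products of positive Laurent expansions, using Theorem~\ref{posq} for the intermediate $Q$-variables. Hence the coefficients of the limit are nonnegative integers. Because the $\bQ_{\varpi_j^\vee,q^s}$ freely topologically generate $K_\Z$, this limit coincides with the unique expansion of $F(x)$.

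The main obstacle is the truncation step for a general cluster monomial. For a $Q$-variable each step is a single $QQ$-relation, so the weight drop is explicit. For a general $x$, we need a uniform bound: all monomials in the expansion of $x$ in $\Sigma^{(m,s)}$ whose coefficients differ from those of the stabilized expansion must have weight tending to $-\infty$ with $s$. We would obtain this by induction along the mutation sequence, in three parts:
\begin{enumerate}
\item Write the expansion as $\bg$-monomial times $F$-polynomial in the $\hat y$-variables of $\Sigma^{(m,s)}$.
\item Observe that the relevant $\hat y$-variables have highest weight $w_m(-\alpha_{i_m})$, shifted in spectral parameter, as in Lemma~\ref{hwt2}.
\item Use the stabilization of $\bg$-vectors from Lemma~\ref{lem:lgvect1} to control the finitely many $\hat y$-directions that do not move downward.
\end{enumerate}
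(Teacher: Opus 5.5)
Your skeleton (positivity from \cite{LS} in each seed $\Sigma^{(m,s)}$, plus the downward weight drift from Lemma~\ref{hwt}) is the right one. The gap is in how you transfer positivity to the limit.

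\textbf{The nonnegativity step fails.} The expansion of $x$ in $\Sigma^{(m,s)}$ is a positive \emph{Laurent} polynomial, so some cluster variables of that seed sit in denominators. The inverse of a $Q$-series with positive coefficients is not positive. Already in type $A_1$ one has $\bQ_{-\varpi^\vee,q^r}^{-1}=\bQ_{\varpi^\vee,q^{r-2}}\bigl(1-\bQ_{\varpi^\vee,q^{r}}\bQ_{\varpi^\vee,q^{r-4}}^{-1}+\cdots\bigr)$. Substituting the expansions of Theorem~\ref{posq} into such a Laurent polynomial therefore produces negative terms, which only cancel in the limit; this is exactly what happens in Example~\ref{excl}. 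So your claim that each truncated expansion is ``a sum, with nonnegative coefficients, of products of positive Laurent expansions'' is false as soon as a denominator appears.

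\textbf{The truncation step is empty as formulated.} Once everything is rewritten in the free topological generators $\bQ_{\varpi_j^\vee,q^{s'}}$ of $K_\Z$, you simply get the unique expansion of $F(x)$. That is independent of $s$ for trivial reasons and says nothing. What must stabilize is the expansion in the \emph{seed variables} of $\Sigma^{(m,s)}$, because that is where positivity lives.

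\textbf{What is missing.} You should never substitute series into denominators. Keep the LS-positive expansion of $x$ in the cluster variables of $\Sigma^{(m,s)}$ itself. Show that, weight by weight, its coefficients are eventually constant as $s\to\infty$, with the surviving monomials in cluster variables of $\Sigma^{(m-1)}$. The limit is then a power series in those variables whose coefficients are eventually constant nonnegative integers. Iterate down to $\Sigma^{(0)}$, whose variables are the $\bQ_{\varpi_j^\vee,q^s}$ themselves.

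The paper gets this stabilization without $F$-polynomials, as follows.
\begin{itemize}
\item Write $x=L(\bQ_1,\ldots,\bQ_T)$ in $\Sigma^{(m)}$.
\item Use the truncated iteration (\ref{exri}): in $\Sigma^{(m,s)}$ one has $\bQ_t=P_t^{(s)}+R_t^{(s)}$. Here $P_t^{(s)}$ is a Laurent polynomial in variables of $\Sigma^{(m-1)}$ of fixed highest weight $h_t$, and $R_t^{(s)}$ is a monomial of highest weight $h_t+s\alpha_t$ with $\alpha_t$ a negative root.
\item Handle inverses by $\bQ_t^{-1}=(P_t^{(s)})^{-1}+S_t^{(s)}$, where $S_t^{(s)}$ has highest weight $-h_t+s\alpha_t$.
\end{itemize}
This is precisely the control of denominators that your $\hat y$-variable plan leaves open. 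Your item 2 is asserted rather than proved, and item 3 does not say how the non-moving directions are handled.
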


This development will be called the {\it limit cluster expansion} of the cluster monomial.

\begin{proof} It suffices to prove the statement for cluster variables.
The reasoning is the same as for $Q$-variables, considering the cluster expansion in each seed $\Sigma^{(m,s)}$. 

More precisely, consider a cluster variable $\chi$. It has a positive cluster expansion in our initial seed $\Sigma^{(m)}$. There is a finite number of $Q$-variables $\bQ_t$, $1\leq t\leq T$, that occur in this expansion :  
$$\chi = L(\bQ_1,\bQ_2, \cdots, \bQ_T)$$
for a Laurent polynomial $L$. There might be some $t$ so that $\bQ_t$ is a cluster variable of $\Sigma^{(m,s)}$ for $s$ large enough (that is, it is a cluster variable of $\Sigma^{(m-1)}$). For the other $t$, for $s$ large enough, we have written (\ref{exri}) the explicit expansion of $\bQ_t$ in $\Sigma^{(m,s)}$. It is of the form 
$$\bQ_t = P_t^{(s)} + R_t^{(s)}$$
where $P_t^{(s)}$ is a Laurent polynomial in cluster variables of $\Sigma^{(m-1)}$ of highest weight $h_t$ that does not depend on $s$, and $R_t^{(s)}$ is a Laurent monomial of highest weight $h_t + s \alpha_t$ where $\alpha_t$ is a negative root that does not depend on $s$. If $\bQ_t$ is already a cluster variable of $\Sigma^{(m-1)}$, we will just write $P_t^{(s)} = \bQ_t$.

As we will consider Laurent polynomials, we note that we have also
$$\bQ_t^{-1} 
= \frac{1}{P_t^{(s)} + R_t^{(s)}}
= (P_t^{(s)})^{-1}\sum_{l\geq 0} (-(P_t^{(s)})^{-1}R_t^{(s)})^l$$
$$=(P_t^{(s)})^{-1} + \sum_{l\geq 1} (P_t^{(s)})^{-l-1}(-R_t^{(s)})^l
=(P_t^{(s)})^{-1} +  S_t^{(s)}$$
where $S_t^{(s)}$ has highest weight $-h_t + s \alpha_t$ as each $(P_t^{(s)})^{-l-1}(-R_t^{(s)})^l$
has highest weight 
$$-(l+1)h_t + l (h_t + s\alpha_t) = -h_t + ls\alpha_t.$$
Hence, for a fixed weight $\omega$ and $s$ large enough, the terms of weight $\omega$ are the same in $\chi$ and in 
$$P^{(s)} = L(P_1^{(s)},P_2^{(s)},\cdots , P_T^{(s)}).$$
Each $P_t^{(s)}$ converges to $\bQ_t$ as $s$ tends to infinity and $P^{(s)}$ converges to $\chi$. 
Hence, for a fixed weight $\omega$, the terms in $P^{(s)}$ of weight $\omega$ become stationary as $s$ tends to infinity. This implies that, for a given  Laurent monomials of cluster variables of $\Sigma^{(m)}$, its multiplicity in $P^{(s)}$ is stationary. From the positivity \cite{LS} of the cluster expansion in the seed $\Sigma^{(m,s)}$, this implies that $\chi$ is a formal power series of cluster variables in $\Sigma^{(m-1)}$ with positive coefficients. We continue inductively to establish that  is a formal power series with positive coefficients of cluster variables in $\Sigma^{(m-2)}$, $\Sigma^{(m-3)}$ and finally in $\Sigma^{(0)}$. 
\qed
\end{proof}

\begin{Rem}
(i) The cluster expansion of a cluster variable can be written in terms of its $F$-polynomial and its $g$-vector. We have already established the convergency of $g$-vectors. This result can be seen as a convergence property of $F$-polynomials.

(ii) We will explain in the next section how to extend the statement of the conjecture of \cite{FH2} to reachable objects.
\end{Rem}

\begin{example}\label{excl} We continue Example \ref{exqsl2}. We consider a cluster variable $\chi$ obtained after one mutation of a cluster variable of the form 
$\bQ_{-\varpi^\vee,q^{-2N}}$ in the seed $\Sigma^{(1)}$ with exchange relation :
$$\chi\bQ_{-\varpi^\vee,q^{-2N}} = \bQ_{-\varpi^\vee,q^{-2N - 2}} + \bQ_{-\varpi^\vee,q^{-2N + 2}}.$$
The expansion in $\Sigma^{(1)}$ is 
\begin{equation}\label{exp1}\chi = \frac{\bQ_{-\varpi^\vee,q^{-2N - 2}}}{\bQ_{-\varpi^\vee,q^{-2N}}} + \frac{\bQ_{-\varpi^\vee,q^{-2N + 2}}}{\bQ_{-\varpi^\vee,q^{-2N}}}.\end{equation}
We have seen above the expansion in $\Sigma^{(1,s)}$ for $s$ large enough :  
$$\bQ_{-\varpi^\vee,q^{-2N}} 
= \sum_{k=0}^s \frac{\bQ_{\varpi^\vee,q^{-2N}}}{\bQ_{\varpi^\vee,q^{-2N-2k}}\bQ_{\varpi^\vee,q^{-2N-2k-2}}} + \frac{\bQ_{\varpi^\vee,q^{-2N}}}{\bQ_{\varpi^\vee,q^{-2N-2-2s}}}
\bQ_{-\varpi^\vee,q^{-2N-2s-2}},$$
where the sum will be denoted
$$P_{q^{-2N}}^{(s)} = \sum_{k=0}^s \frac{\bQ_{\varpi^\vee,q^{-2N}}}{\bQ_{\varpi^\vee,q^{-2N-2k}}\bQ_{\varpi^\vee,q^{-2N-2k-2}}}.$$
It converges to $\bQ_{-\varpi,q^{-2N}}$ as $s$ tends to infinity. We can substitute in 
(\ref{exp1}) and we obtain 
$$P^{(s)} = \frac{P_{q^{-2N - 2}}^{(s)}}{P_{q^{-2N}}^{(s)}} + \frac{P_{q^{-2N + 2}}^{(s)}}
{P_{q^{-2N}}^{(s)}}$$
$$= \frac{\bQ_{\varpi^\vee,q^{-2N-2}}}{\bQ_{\varpi^\vee,q^{-2N}}} + \frac{\bQ_{\varpi^\vee,q^{-2N+2}}}{\bQ_{\varpi^\vee,q^{-2N}}}
+ \left(\frac{-\frac{\bQ_{\varpi^\vee,q^{-2N+2}}\bQ_{\varpi^\vee,q^{-2N-2}}}{\bQ_{\varpi^\vee,q^{-2N-2s}}\bQ_{\varpi^\vee,q^{-2N-2s-2}}} + \frac{\bQ_{\varpi^\vee,q^{-2N-2}}^2}{\bQ_{\varpi^\vee,q^{-2N-2s-2}}\bQ_{\varpi^\vee,q^{-2N-2s-4}}} }{\sum_{k = 0}^s\frac{\bQ_{\varpi^\vee,q^{-2N}}\bQ_{\varpi^\vee,q^{-2N-2}}}{\bQ_{\varpi^\vee,q^{-2N-2k}}\bQ_{\varpi^\vee,q^{-2N-2k-2}}}}\right).
$$
The highest weight of the third term of the right-hand side is $-\omega -s\alpha$. Hence the expression converges to 
\begin{equation}\label{excle}\chi = \frac{\bQ_{\varpi^\vee,q^{-2N-2}}}{\bQ_{\varpi^\vee,q^{-2N}}} + \frac{\bQ_{\varpi^\vee,q^{-2N+2}}}{\bQ_{\varpi^\vee,q^{-2N}}}.
\end{equation}
\end{example}

\section{Shifted quantum affine algebras and refined conjectures}
\label{sec:shifted-quantum}

In this section, we give a rapid overview of shifted quantum affine algebras and their
category~$\O^{\rm sh}$. The reader is referred to \cite{FT, H, GHL} for more details and references.
The main motivations for this study come from 
quantized $K$-theoretic Coulomb branches, 
and from representation theory of Borel subalgebras of quantum affine algebras.

We explain that the ring $K_\Z$ coincides with the $q$-character ring of a subcategory $\O_\Z$ of $\O^{\rm sh}$. 
This allows us to translate Theorem~\ref{main-Thm} into Theorem~\ref{isomgv}, and to give an interpretation of the cluster 
algebra $\AA_{w_0}$ in terms of the category $\O_\Z$. 

We also prove that the $q$-character of a simple object corresponding to a cluster monomial (a reachable object) is given by its limit cluster expansion (Theorem \ref{qc}). This allows us to state a refined Conjecture \ref{main-conj2}, stating that the limit cluster expansion of any cluster monomial is the $q$-character of a simple object. These last points are also new in the $A,D,E$-types.

\subsection{Representations of shifted quantum affine algebras}\label{sub-sec-shift-quantum}

We fix a non-zero complex number $q$, not a root of unity. 
 
\begin{Def}[\cite{FT}] Let $\mu$ be an integral coweight of $\mathfrak{g}$.
The shifted quantum affine algebra $U_q^\mu(\hat{\mathfrak{g}})$
  is the $\C$-algebra with the same Drinfeld generators $\phi_{i,r}^\pm$, $x_{i,r}^\pm$ ($i\in I$, $r\in\mathbb{Z}$) as the
  ordinary quantum affine algebra $U_q(\hat{\mathfrak{g}})$ and the same relations, except that
  the definition of the series $\phi_i^-(z)$ is
  modified as follows:
\begin{equation}\label{phrelshm}\phi_i^-(z) =
  \sum_{m\in\mathbb{Z}}\phi_{i, m}^- z^{ m} =
  \phi_{i,\alpha_i(\mu)}^-z^{\alpha_i(\mu)} \exp\left(- (q^{d_i} -
  q^{-d_i})\sum_{r > 0} h_{i,-r } z^{- r} \right).\end{equation}
In addition, we impose the condition that $\phi_{i,0}^+$ and
$\phi_{i,\alpha_i(\mu)}^-$ are invertible, but not necessarily inverse to each other.
\end{Def}

The complete list of relations of $U_q^\mu(\hat{\mathfrak{g}})$ can be found in \cite[Section 5]{FT}, see also \cite[Section 3.1]{H}.
Recall the basis $(h_i)_{i\in I}$ of the Cartan subalgebra $\mathfrak{h}$. For a $U_q^\mu(\hat{\mathfrak{g}})$-module $V$ and $\lambda\in P$, we define
\[
V_{\lambda}:=\left\{v\in V \mid \phi_{i,0}^+\, v = q^{d_i\lambda(h_i)} v,\ \mbox{ for every } i\in I\right\},
\]
and call it the weight space of $V$ of weight $\lambda$.
We say that $V$ is Cartan-diagonalizable
if
\[
V = \bigoplus_{\lambda\in P}V_{\lambda}.
\]

For $\lambda\in P$, recall the cone $C_\lambda = \lambda - Q_+\subset P$ defined in Section \ref{sect3-2}.

\begin{Def}[\cite{H}] A $U_q^\mu(\hat{\mathfrak{g}})$-module $V$ is said to be in category $\mathcal{O}^\mu$ if
\begin{itemize} 
\item[i)] $V$ is Cartan-diagonalizable;
\item[ii)] for all $\lambda\in P$ we have 
$\dim (V_{\lambda})<\infty$;
\item[iii)] there exist finitely many weights 
$\lambda_1,\ldots,\lambda_s\in P$ 
such that all weights of $V$ are contained in 
${\bigcup}_{j=1,\ldots, s}\,C_{\lambda_j}$.
\end{itemize}
\end{Def}

\begin{Def} A collection $\Psib=(\Psi_{i, m})_{i\in I, m\geq 0}$
of complex numbers $\Psi_{i,m}$ such that $\Psi_{i,0} \in q^\Z$ for all $i\in I$ is called an $\ell$-weight. 
\end{Def}
 
We will represent every $\ell$-weight as an
$I$-tuple of formal power series in $z$:
\[
\Psib = (\Psi_i(z))_{i\in I},
\qquad
\Psi_i(z) := \sum_{m\geq 0} \Psi_{i,m} z^m.
\]
Then the set $\tb^\times_\ell$ of $\ell$-weights has a natural group structure by pointwise multiplication. We have a surjective group homomorphism $\varpi :
\tb^\times_\ell\rightarrow P$ given by
\begin{equation}
\label{spep}
 \varpi(\Psib) := \sum_i a_i\varpi_i, \qquad (\Psib\in \tb^\times_\ell),
\end{equation}
where for $i\in I$ we have written $\Psib_i(0) = q^{d_ia_i}$.
This is well-defined because $q$ is not a root of unity.

\begin{Def}
We say that an $\ell$-weight $\Psib\in \tb^\times_\ell$ is rational if 
$\Psi_i(z)$ is the expansion of a rational function in $z$ for every $i\in I$.
We denote by $\mathfrak{r}$ the subgroup of $\tb^\times_\ell$ consisting of all
rational $\ell$-weights.
\end{Def}

\begin{Thm}[\cite{H}] \label{Th-classification}
The simple representations in $\mathcal{O}^\mu$ are parameterized by the subset $\mathfrak{r}_\mu$ of rational $\ell$-weights of degree $\mu$, that is, the rational $\ell$-weights $\Psib = (\Psi_i(z))_{i\in I}$  such that: 
$$ \sum_{i\in I} \deg(\Psi_i(z)) \varpi_i^\vee = \mu.$$ 
\end{Thm}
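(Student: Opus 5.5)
This classification of the simple objects of $\mathcal{O}^\mu$ is a highest $\ell$-weight theorem. I would prove it in the standard way for categories $\mathcal{O}$ of Borel subalgebras (Hernandez–Jimbo), adapted to the shifted setting. The argument has three parts: existence and uniqueness of simple highest $\ell$-weight modules, the fact that every simple object is of this form, and the rationality/degree criterion for the highest $\ell$-weight.

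First, for an arbitrary $\ell$-weight $\Psib$ I will build the Verma-type module $M(\Psib)$. This is the quotient of $U_q^\mu(\hat{\mathfrak{g}})$ by the left ideal generated by the $x_{i,r}^+$ and by $\phi_{i,m}^+ - \Psi_{i,m}$. For the $\phi^-_{i,m}$ I use the triangular decomposition of $U_q^\mu(\hat{\mathfrak{g}})$ from \cite{FT}: on a highest $\ell$-weight vector, the $\phi^-$ series is determined by the $\phi^+$ series through the relation $[x_{i,r}^+,x_{i,s}^-]\propto \phi^+_{i,r+s}-\phi^-_{i,r+s}$. This forces $\phi_i^-(z)$ to be the expansion at $z=\infty$ of the same function whose expansion at $0$ is $\Psi_i(z)$.

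The module $M(\Psib)$ has a unique simple quotient $L(\Psib)$. Simple quotients of highest $\ell$-weight modules are determined by their highest $\ell$-weight, so $\Psib\mapsto L(\Psib)$ is injective on isomorphism classes.

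Conversely, let $V$ be a simple object of $\mathcal{O}^\mu$. Condition (iii) gives weights that are maximal for the partial order. A maximal weight space is finite-dimensional by (ii), killed by all $x^+_{i,r}$, and stable under the commuting $\phi^+_{i,m}$, so it contains a common eigenvector. By simplicity, $V\simeq L(\Psib)$.

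The substantive part is to show that $L(\Psib)$ lies in $\mathcal{O}^\mu$ exactly when $\Psib\in\mathfrak{r}_\mu$.

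For necessity, the compatibility of $\phi_i^\pm$ with the shift $\mu$ says that the series at $\infty$ starts with $z^{\alpha_i(\mu)}$. This requires $\Psi_i(z)$ to be the expansion of a rational function, and the leading behavior at $\infty$ then forces the degree equation $\sum_i\deg(\Psi_i)\varpi_i^\vee=\mu$.

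For rationality itself, I will work with the weight space $V_{\lambda-\alpha_i}$, which is finite-dimensional by (ii). The vectors $x^-_{i,r}v$ span a finite-dimensional space, which gives a linear recurrence on the $\Psi_{i,m}$, as in \cite{HJ}.

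For sufficiency, I will realize $L(\Psib)$ for rational $\Psib$ of degree $\mu$ as a subquotient of tensor products, via the fusion product of \cite{H}, of elementary modules: one-dimensional representations, prefundamental-type positive and negative modules $L^\pm_{i,a}$ (which lie in $\mathcal{O}^{\pm\varpi_i^\vee}$), and finite-dimensional representations. Factoring each rational $\Psi_i$ into linear factors $(1-za)^{\pm1}$ times a constant yields such a decomposition, and the total degree matches $\mu$ because degrees add under fusion. Closure of $\mathcal{O}^{\rm sh}$ under fusion and subquotients then gives $L(\Psib)\in\mathcal{O}^\mu$.

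I expect the main obstacle to be this sufficiency step, and specifically the existence of the negative modules $L^-_{i,a}$ together with the finiteness of weight spaces for them. My first approach would be to construct them as limits of Kirillov–Reshetikhin modules, or to import them from Borel-subalgebra representations via the homomorphisms relating $U_q^\mu$ to the Borel subalgebra used in \cite{H}.
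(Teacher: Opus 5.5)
The paper gives no proof of this statement; it quotes it from \cite{H}. Your architecture is the standard one and can be made to work: necessity from the finite-dimensionality of $V_{\lambda-\alpha_i}$, and sufficiency by realizing $L(\Psi)$ as a subquotient of a fusion product of one-dimensional modules and prefundamental modules $L^\pm_{i,a}$. However, your treatment of $\phi_i^-(z)$ is wrong, and two steps fail as written.

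The relation $[x^+_{i,r},x^-_{i,s}]\propto\phi^+_{i,r+s}-\phi^-_{i,r+s}$ does not determine $\phi^-$ on a highest $\ell$-weight vector $v$. It only computes $x^+_{i,r}x^-_{i,s}v$ from whatever $\phi^-$ does. Hence your $M(\Psi)$, in which only $x^+v=0$ and $\phi^+v=\Psi v$ are imposed, has many non-isomorphic simple quotients. For example, take $\mathfrak{g}=\mathfrak{sl}_2$, $\mu=0$, $\Psi=1$. Two quotients of $M(1)$ are the trivial module and the simple module generated by a vector $v$ killed by all $x^+_r$, with $\phi^+(z)v=v$ and $\phi^-(z)v=(1+cz^{-1})v$ for some $c\neq 0$. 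In the second one $x^+_0x^-_{-1}v\neq 0$, so it is not trivial. Thus ``unique simple quotient'' and the injectivity of $\Psi\mapsto L(\Psi)$ are false as stated, and for non-rational $\Psi$ ``the expansion at $\infty$'' has no meaning.

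Your derivation of the degree condition uses exactly this false claim. The rationality argument ``as in \cite{HJ}'' does not supply the missing link. The Borel-type recurrence (on $x^-_{i,r}v$ for $r\geq 1$, paired with $x^+_{i,s}$ for $s\geq 0$) proves that $\Psi_i$ is rational. It says nothing about $\phi^-_i(z)$, hence nothing about $\deg\Psi_i$.

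The repair goes as follows.
\begin{enumerate}
\item Use Verma modules attached to characters $(\Psi^+,\Psi^-)$ of the commutative subalgebra generated by all the $\phi^\pm_{i,m}$; this is what the triangular decomposition you invoke provides.
\item In a maximal weight space of a simple $V\in\mathcal{O}^\mu$, take a common eigenvector $v$ of all the $\phi^\pm_{i,m}$.
\item Run the recurrence in both directions. On $W=\mathrm{span}\{x^-_{i,r}v\mid r\in\mathbb{Z}\}\subset V_{\lambda-\alpha_i}$, the operators $h_{i,1}$ and $h_{i,-1}$, shifted by their eigenvalues on $v$, act up to nonzero scalars as $x^-_{i,r}v\mapsto x^-_{i,r+1}v$ and $x^-_{i,r}v\mapsto x^-_{i,r-1}v$. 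These shifts are mutually inverse. Finite-dimensionality therefore gives $\sum_{k=0}^N c_k\,x^-_{i,r+k}v=0$ for all $r\in\mathbb{Z}$, with $c_0c_N\neq 0$.
\item Apply $x^+_{i,s}$ for all $s\in\mathbb{Z}$. The two-sided sequence $(\Psi^+_{i,m}-\Psi^-_{i,m})_{m\in\mathbb{Z}}$ then satisfies the same recurrence.
\item Since $\Psi^+_{i,m}=0$ for $m<0$ and $\Psi^-_{i,m}=0$ for $m>\alpha_i(\mu)$, this says precisely that $\Psi^+_i(z)$ and $\Psi^-_i(z)$ are the expansions at $0$ and at $\infty$ of one rational function $f_i$.
\item Invertibility of $\phi^-_{i,\alpha_i(\mu)}$ gives $\deg f_i=\alpha_i(\mu)$, i.e.\ $\sum_i\deg(\Psi_i)\varpi_i^\vee=\mu$.
\end{enumerate}
Only then is a simple object of $\mathcal{O}^\mu$ determined by $\Psi^+\in\mathfrak{r}_\mu$, and only then is $L(\Psi)$ well defined for $\Psi\in\mathfrak{r}_\mu$, with $\Psi^-$ the expansion at $\infty$.

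With this correction your sufficiency outline is sound, provided you respect the order. The fusion product of \cite{H} is only available for modules already known to lie in $\mathcal{O}$. So $L^-_{i,a}\in\mathcal{O}^{-\varpi_i^\vee}$ must be established independently, as you anticipate. The finite-dimensional modules in your list are not needed.
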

The simple representation associated with $\Psib\in\mathfrak{r}_\mu$ will be denoted by $L(\Psib)$. 

For instance, we have the prefundamental representations defined as simple representations from the simplest rational $\ell$-weights of degree $\pm\varpi_i^\vee$ as follows \cite{HJ, H}. For $i\in I$ and $a\in\C^\times$, let 
\begin{equation}
L_{i,a}^\pm := L\left(\Psib_{i,a}^{\pm 1}\right)
\quad \mbox{where}\quad 
(\Psib_{i,a})_j(z) := 
\left\{ 
\begin{array}{cc}
1 - za & (j=i)\,,\\
1 & (j\neq i)\,.\\
\end{array}
\right.
\label{fund-rep}
\end{equation}
Also, for $\lambda \in P$ and $\Psib_\lambda = (q^{d_i\lambda(h_i)})_{i\in I}$, $L(\Psib_\lambda)$ is a $1$-dimensional invertible representation in $\mathcal{O}^0$. 


\begin{Thm}[\cite{H}] \label{thmH}
The algebra $U_q^\mu(\hat{\mathfrak{g}})$ has a non-zero finite-dimensional representation if and only if $\mu$ is a dominant coweight.
\end{Thm}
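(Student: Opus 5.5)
The plan is to treat the two directions separately: an explicit one-dimensional construction for dominant $\mu$, and a reduction to rank one for the converse.

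For the ``if'' direction, I would write $\mu=\sum_i n_i\varpi_i^\vee$ with all $n_i=\alpha_i(\mu)\ge 0$ and build a one-dimensional module by hand. Choose polynomials $\Psi_i(z)$ of degree $n_i$ with $\Psi_i(0)\in q^{\Z}$, for instance $\Psi_i(z)=\prod_{k=1}^{n_i}(1-za_k)$. Let every $x^\pm_{i,r}$ act by $0$. Let $\phi_i^+(z)$ act by the expansion of $\Psi_i(z)$ at $z=0$, and $\phi_i^-(z)$ by its expansion at $z=\infty$. Since $\Psi_i$ is a polynomial, the two expansions are literally the same finite sum. So $\phi_i^+(z)-\phi_i^-(z)=0$, which is precisely what the relation $[x^+_i(z),x^-_i(w)]\propto\delta(z/w)(\phi^+_i(w)-\phi^-_i(w))$ demands when the $x^\pm$ act by zero. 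All other relations are trivially satisfied by scalars and zeros.

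It remains to check that the expansion at $\infty$ has the shape (\ref{phrelshm}). Its leading term is $z^{n_i}=z^{\alpha_i(\mu)}$, and the leading coefficient $\phi^-_{i,\alpha_i(\mu)}$ is the nonzero top coefficient of $\Psi_i$, hence invertible. This is the one-dimensional positive prefundamental-type module (a product of the $L^+_{i,a}$, as in Theorem~\ref{Th-classification}). Thus a nonzero finite-dimensional representation exists.

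For the ``only if'' direction, let $V\neq 0$ be finite-dimensional. Then $V$ lies in $\mathcal{O}^\mu$, so by Theorem~\ref{Th-classification} it has a simple subquotient $L(\Psib)$ with $\Psib$ rational of degree $\mu$. It therefore suffices to show that each $\deg\Psi_i=\alpha_i(\mu)$ is $\ge 0$, and I would do this one index $i$ at a time. Fix $i$ and consider the subalgebra generated by $x^\pm_{i,r}$ and $\phi^\pm_{i,m}$. It is a quotient of the shifted algebra $U_q^{\alpha_i(\mu)}(\widehat{\mathfrak{sl}}_2)$, with $q$ replaced by $q^{d_i}$. Take a highest $\ell$-weight vector $v$, meaning a vector of maximal weight, so that $x^+_{j,r}v=0$ for all $j,r$. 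The commutator relation then gives
\[
(q^{d_i}-q^{-d_i})\,x^+_{i,m}x^-_{i,r}v=(\phi^+_{i,m+r}-\phi^-_{i,m+r})v .
\]
Because $\mathrm{span}\{x^-_{i,r}v\}$ is finite-dimensional, the sequence $r\mapsto x^-_{i,r}v$ satisfies a linear recurrence. Feeding this into the relation, as in Chari--Pressley's rank-one argument, shows that $\Psi_i(z)$ and its expansion at $\infty$ agree up to a finite correction. Concretely, $\Psi_i$ should be a polynomial times a degree-zero ratio of the form $P(zq^{-d_i})/P(zq^{d_i})$ coming from the $\mathfrak{sl}_2$-string through $v$.

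From that form, $\deg\Psi_i=\deg(\text{polynomial part})\ge 0$, which gives $\alpha_i(\mu)\ge 0$ for every $i$, i.e. $\mu$ is dominant.

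The main obstacle is this rank-one step: making rigorous that a finite-dimensional highest $\ell$-weight module of a negatively shifted $U_q(\widehat{\mathfrak{sl}}_2)$ must vanish. I would first attempt a direct computation. If $\alpha_i(\mu)<0$, the series $\phi_i^-(z)$ starts at a negative power of $z$. Comparing coefficients in the displayed relation for large negative $m+r$ should then force $\phi^-_{i,\alpha_i(\mu)}v$ to be expressed through a nilpotent operator. That would contradict the invertibility of $\phi^-_{i,\alpha_i(\mu)}$ on a finite-dimensional space.
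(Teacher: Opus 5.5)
Your ``if'' direction is correct. With all $x^\pm_{i,r}$ acting by $0$, the only nontrivial relation is $\phi^+_i(z)=\phi^-_i(z)$. A polynomial of degree $\alpha_i(\mu)$ with nonzero constant and top coefficients satisfies it and has the shape \eqref{phrelshm}. The paper itself gives no proof; it quotes the theorem from \cite{H} and only remarks that $L^+_{i,a}$ is one-dimensional.

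The ``only if'' direction has a genuine gap. Everything rests on the claim that $\Psi_i$ is a polynomial times a degree-zero factor $P(zq^{-d_i})/P(zq^{d_i})$, and you only assert this (``should be''). That claim is essentially the classification of finite-dimensional simple objects of $\mathcal{O}^\mu$, which is stronger than what you are proving. The Chari--Pressley argument you invoke does not transfer. It uses that $x^\pm_{i,0}$ and $k_i^{\pm1}$ generate a copy of $U_q(\mathfrak{sl}_2)$, whose finite-dimensional theory forces the highest weight to be $q^{n}$ with $n\ge 0$. In $U_q^\mu(\hat{\mathfrak{g}})$, however, $\phi^-_{i,0}$ is not the inverse of $\phi^+_{i,0}$, so no such subalgebra exists. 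Your fallback does not work as stated either. At $m+r=\alpha_i(\mu)$ the relation only gives $x^+_{i,m}x^-_{i,r}v=-(q^{d_i}-q^{-d_i})^{-1}\phi^-_{i,\alpha_i(\mu)}v$, and nothing makes the weight-zero operator $x^+_{i,m}x^-_{i,r}$ nilpotent. Also, an arbitrary finite-dimensional module need not be Cartan-diagonalizable with weights in $P$, so ``$V$ lies in $\mathcal{O}^\mu$'' is not automatic; the argument below avoids this issue.

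The missing idea is that $\alpha_i(\mu)<0$ deletes a term from the commutator, and this makes the rank-one step elementary. Since $\phi^-_i(z)$ only involves powers $z^m$ with $m\le\alpha_i(\mu)<0$, we have $\phi^-_{i,0}=0$, hence
\[
(q^{d_i}-q^{-d_i})\,[x^+_{i,0},x^-_{i,0}]=\phi^+_{i,0}=:K,\qquad K\,x^\pm_{i,0}\,K^{-1}=q^{\pm 2d_i}x^\pm_{i,0},
\]
with $K$ invertible. Let $V\neq 0$ be finite-dimensional. Then $x^+_{i,0}$ maps the generalized $\lambda$-eigenspace of $K$ into the generalized $q^{2d_i}\lambda$-eigenspace. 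Since $q$ is not a root of unity, some eigenvalue $\lambda\neq 0$ has $q^{2d_i}\lambda$ outside the spectrum. This gives $v\neq 0$ with $Kv=\lambda v$ and $x^+_{i,0}v=0$. Setting $F:=x^-_{i,0}$, induction gives
\[
x^+_{i,0}F^nv=\frac{\lambda\,(1-q^{-2nd_i})}{(q^{d_i}-q^{-d_i})(1-q^{-2d_i})}\,F^{n-1}v ,
\]
a nonzero multiple of $F^{n-1}v$, so $F^nv\neq 0$ for all $n$. These vectors are $K$-eigenvectors with pairwise distinct eigenvalues $q^{-2nd_i}\lambda$, hence linearly independent, which contradicts $\dim V<\infty$.

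The same string computation also works at $m+r=\alpha_i(\mu)$, with the invertible $\phi^-_{i,\alpha_i(\mu)}$ in place of $K$. That is the correct way to exploit the observation in your last paragraph. This argument needs neither $\mathcal{O}^\mu$, nor $\ell$-weights, nor rationality, and it replaces your whole rank-one step.
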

To illustrate, the $U_q^{\varpi_i^\vee}(\hat{\mathfrak{g}})$-module $L_{i,a}^+$ is $1$-dimensional.

\subsection{Grothendieck ring}
 
Let $K_0(\mathcal{O}^\mu)$ be the Grothendieck group of the category $\mathcal{O}^\mu$ and $\mathcal{O}^{\,\mathrm{sh}}$ the direct sum of the abelian categories $\mathcal{O}^\mu$.
It is shown in \cite{H} that the direct sum of groups
$$K_0(\mathcal{O}^{\,\mathrm{sh}}) := \bigoplus_{\mu}K_0(\mathcal{O}^\mu)$$
has a ring structure, coming from a fusion product and the deformed Drinfeld coproduct.

Recall the ring $[P]$ in Section \ref{Kz}. For $\lambda \in P$, the class of $L(\lambda)$ will be denoted by $[\lambda]$, which makes $K_0(\mathcal{O})$ as a $[P]$-algebra.

Let $\mathcal{E}_\ell$ be the additive group of maps $\gamma : \mathfrak{r} \rightarrow \Z$
so that $\varpi(\mathrm{supp}(\gamma))$ is contained in
a finite union of sets of the form $C_\mu$, where
$$\mathrm{supp}(\gamma) := \{\Psib\in \mathfrak{r}\mid \gamma(\Psib) \neq 0\}.$$
Then $\mathcal{E}_\ell$ has a natural ring structure.
For $\Psib\in \tb^\times_\ell$, we define $[\Psib] = \delta_{\Psib,-}\in\mathcal{E}_\ell$. In particular, for $\lambda\in P$ we set $[\lambda] := [\Psib_\lambda]$ and $\mathcal{E}_\ell$ is also a $[P]$-algebra.

We say that $\Psib$ is an $\ell$-weight of a representation $V$ if we have a non-zero subspace 
$$V_\Psib = \{v\in V |(\phi_{i,m}^+ - \Psib_{i,m})^r.v = 0,\text{ for } i\in I, m\geq 0 \text{ and }r >> 0\}.$$

\begin{Thm}[\cite{H}] 
The $\ell$-weights of a representation in $\mathcal{O}^\mu$ belong to $\mathfrak{r}_\mu$.
\end{Thm}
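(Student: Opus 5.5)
The statement is Hernandez's theorem that every $\ell$-weight of a module $V$ in $\mathcal{O}^\mu$ lies in $\mathfrak{r}_\mu$, i.e.\ is rational of degree $\mu$. I would prove it in two steps: first reduce to simple modules, then reduce rationality to a finite-dimensional statement about the action of the Cartan–Drinfeld currents.

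\emph{Reduction to simples.} In category $\mathcal{O}^\mu$ every module has weights in finitely many cones and finite-dimensional weight spaces. This gives a (possibly infinite) composition series in the usual Kac-type sense. Each $\ell$-weight of $V$ is an $\ell$-weight of some simple subquotient $L(\Psib)$, because generalized eigenspaces for the commuting operators $\phi^+_{i,m}$ are exact in finite-dimensional weight spaces. So it suffices to treat $V=L(\Psib)$. By Theorem~\ref{Th-classification}, the highest $\ell$-weight $\Psib$ is already in $\mathfrak{r}_\mu$.

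\emph{Rationality via a Hankel argument.} Fix a weight $\lambda$ and the finite-dimensional space $V_\lambda$. The Drinfeld relations give a recursion of the form
\[
\phi^+_i(z)\,x^\pm_{j,r} \;\propto\; \text{(rational function in $z$ and a shift)}\cdot x^\pm_{j,r}\,\phi_i^+(z).
\]
Together with the relation $[x^+_{i,r},x^-_{i,s}]$ expressed through $\phi^\pm_{i,r+s}$, this shows that on a highest $\ell$-weight vector the current $\phi^+_i(z)$ acts as a rational function. I would then argue by descending induction on the weight, from $\lambda_{\max}$ downwards. Any vector of $V_{\lambda-\alpha_j}$ is a combination of $x^-_{j,r}v$ with $v$ of higher weight. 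Because $V_{\lambda-\alpha_j}$ is finite-dimensional, the vectors $x^-_{j,r}v$, $r\in\Z$, satisfy a finite linear recurrence in $r$. Combining this recurrence with the commutation relation shows that the matrix series of $\phi^+_i(z)$ on $V_{\lambda-\alpha_j}$ is a rational matrix function of $z$, and hence so are its generalized eigenvalues, i.e.\ the $\ell$-weights. This is the step I expect to be the main obstacle: one must control the recurrence uniformly, which is exactly where finite dimensionality of weight spaces is used, and handle the shifted relation~\eqref{phrelshm} for $\phi^-$.

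\emph{Degree.} Finally, the degree of an $\ell$-weight multiplying $\Psib$ by the ratio coming from $x^-_{j,r}$ changes by a factor $A_{j,a}^{-1}$-type rational function of degree $0$. Hence every $\ell$-weight of $L(\Psib)$ equals $\Psib$ times a product of degree-$0$ rational factors, which keeps the degree equal to $\mu$. This shows that all $\ell$-weights lie in $\mathfrak{r}_\mu$.
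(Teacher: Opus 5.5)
The paper gives no proof of this statement; it is quoted from \cite{H}. So your argument has to stand on its own.

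\textbf{Step 2 is workable but only sketched.} Steps 1 and 2 point in a workable direction, but two things need fixing.
\begin{itemize}
\item The displayed relation is not correct mode by mode. The Drinfeld relation between $\phi^+_i(z)$ and $x^-_j(w)$ mixes $x^-_{j,r}$ and $x^-_{j,r+1}$.
\item Finite-dimensionality alone gives some linear relation among the $x^-_{j,r}v$, not a recurrence with constant coefficients.
\end{itemize}
What makes it work is that $[h_{j,1},x^-_{j,r}]$ is a nonzero multiple of $x^-_{j,r+1}$. Hence the restrictions $X_r\colon V_{\nu+\alpha_j}\to V_\nu$ of $x^-_{j,r}$ satisfy $X_{r+1}=\mathcal{L}(X_r)$ for a single linear operator $\mathcal{L}$ on the finite-dimensional space $\mathrm{Hom}(V_{\nu+\alpha_j},V_\nu)$. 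Write $\Phi_\nu(z)$ for the matrix series of $\phi^+_i(z)$ on $V_\nu$. Up to conventions, the Drinfeld relation then reads $\Phi_\nu(z)\,(zX_r-cX_{r+1})=(czX_r-X_{r+1})\,\Phi_{\nu+\alpha_j}(z)$, with $c$ a power of $q$. Two facts let you solve for $\Phi_\nu(z)$ over $\C(z)$ and get rationality inductively:
\begin{itemize}
\item $zX_r-cX_{r+1}=(z-c\mathcal{L})(X_r)$, and $z-c\mathcal{L}$ is invertible over $\C(z)$;
\item by the triangular decomposition, $V_\nu=\sum_j\sum_r x^-_{j,r}V_{\nu+\alpha_j}$. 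This is a sum over all $j$, not over a single $j$ as you wrote.
\end{itemize}

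\textbf{The genuine gap is Step 3.} Nothing in Step 2 shows that every $\ell$-weight of $L(\Psib)$ is $\Psib$ times $A^{-1}_{j,a}$-type factors.
\begin{itemize}
\item The identity above is not a similarity: the matrices $zX_r-cX_{r+1}$ and $czX_r-X_{r+1}$ depend on $z$ and are not square. So it does not transport the $\ell$-weights of $V_{\nu+\alpha_j}$, or their degrees, to those of $V_\nu$.
\item $x^-_{j,r}$ does not send $\ell$-weight vectors to $\ell$-weight vectors. So ``the ratio coming from $x^-_{j,r}$'' is undefined without a separate Frenkel--Mukhin-type argument.
\item Step 3 never uses $\phi^-_i(z)$. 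Yet the degree of a rational function is exactly its behaviour at $z=\infty$, which is what $\phi^-_i(z)$ records.
\end{itemize}

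\textbf{A way to close the gap} is to run your induction for $\phi^-_i(z)$ as well.
\begin{itemize}
\item The relation of $\phi^-_i(z)$ with $x^-_j(w)$ has the same rational factor. So the same identity holds, now with expansions in $\C((z^{-1}))$.
\item At the highest weight vector, the computation with $[x^+_{i,s},x^-_{i,r}]$ plus the recurrence shows that $\phi^+_i(z)$ and $\phi^-_i(z)$ act by the expansions of $\Psib_i(z)$ at $0$ and at $\infty$ respectively.
\item Inductively, on each $V_\nu$ the matrix series of $\phi^-_i(z)$ is the expansion at $\infty$ of the rational matrix whose expansion at $0$ is $\Phi_\nu(z)$.
\item Triangularize all the commuting $\phi^\pm_{i,m}$ simultaneously. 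Then each $\ell$-weight $\Psib'$ of $V_\nu$ has $\Psib'_i(z)$ whose expansion at $\infty$ is a joint eigenvalue of $\phi^-_i(z)$.
\item That eigenvalue has the form $c'z^{\alpha_i(\mu)}+(\text{lower powers})$ with $c'\neq 0$, because $\phi^-_{i,m}=0$ for $m>\alpha_i(\mu)$ and $\phi^-_{i,\alpha_i(\mu)}$ is invertible.
\end{itemize}
This gives $\deg\Psib'_i=\alpha_i(\mu)$ for every $i$, i.e.\ $\Psib'\in\mathfrak{r}_\mu$.
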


It follows that every representation $V$ in the category $\mathcal{O}^\mu$ has a $q$-character
$$\chi_q(V) = \sum_{\Psib\in
\mathfrak{r_\mu}}\dim(V_{\Psib})[\Psib]\in\mathcal{E}_\ell.$$
It gives rise to an injective morphism of $[P]$-algebra \cite[Corollary 5.1]{H}:
$$\chi_q : K_0(\mathcal{O}^{\,\mathrm{sh}})\rightarrow \mathcal{E}_\ell.$$
In fact, $\chi_q$ is a ring isomorphism 
(the argument is the same as in \cite[Theorem 4.19]{W}).

Since $\mathcal{E}_\ell$ has the structure of a topological ring, $K_0(\mathcal{O}^{\,\mathrm{sh}})$ itself inherits the structure of a topological ring. The classes $[L(\Psib)]\ (\Psib \in \mathfrak{r})$ of simple modules form a topological basis of $K_0(\mathcal{O}^\mathrm{sh})$. 

\begin{Def}
Let $\O_\Z$ denote the full subcategory of $\mathcal{O}^{\,\mathrm{sh}}$ consisting of the objects $M$ which satisfy the following property:   
every simple subquotient of $M$ is of the form $L(\Psib)$ where $\Psib$ is a Laurent monomial in the variables $\Psib_\lambda\ (\la\in P)$ and $\Psib_{i,q^r}\ (i,r)\in V$.
\end{Def}

The Grothendieck group $K_0(\mathcal{O}_{\mathbb{Z}})$ is the $[P]$-topological subspace of $K_0(\mathcal{O}^{\,\mathrm{sh}})$
generated by the simple classes $[L(\Psib)]$ such that
$\Psib\in \mathbb{Z}[\Psib_{i,q^r}^{\pm 1}]_{(i,r)\in V}$.

The following was established in \cite{GLS} for simply-laced types, the proof is analog in general.

\begin{Prop}\label{KE} $K_0(\mathcal{O}_{\mathbb{Z}})$ is a subring of $K_0(\mathcal{O}^{\,\mathrm{sh}})$ and $\chi_q$ restricts to an isomorphism
$$\chi_q : K_0(\mathcal{O}_{\mathbb{Z}}) \rightarrow \mathcal{E}_{\ell, \mathbb{Z}}$$
where $\mathcal{E}_{\ell,\mathbb{Z}}$ is the $[P]$-topological subring of $\mathcal{E}_\ell$ generated by the $[\Psib_{i,q^r}^{\pm 1}]$, $(i,r)\in V$.
\end{Prop}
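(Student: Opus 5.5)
The plan is to follow the argument of \cite{GLS} for simply-laced types; it carries over essentially verbatim once the non simply-laced definitions of $Y_{i,a}$ and $A_{i,a}$ from Section~\ref{sec:rings} are used. There are two claims: $K_0(\mathcal{O}_{\mathbb{Z}})$ is closed under the fusion product, and $\chi_q$ maps $K_0(\mathcal{O}_{\mathbb{Z}})$ bijectively onto $\mathcal{E}_{\ell,\mathbb{Z}}$. Since $\chi_q$ is already an injective ring homomorphism (indeed an isomorphism) $K_0(\mathcal{O}^{\rm sh})\to\mathcal{E}_\ell$, both reduce to one comparison. Let $\mathfrak{r}_\Z$ be the group of $\ell$-weights that are Laurent monomials in the $\Psib_\lambda$ and the $\Psib_{i,q^r}$ with $(i,r)\in V$. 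The key statement is:
\[
\Psib\in\mathfrak{r}_\Z \quad\Longrightarrow\quad \text{every }\ell\text{-weight of }L(\Psib)\text{ lies in }\mathfrak{r}_\Z .
\]

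To prove this, I would use the description of $\ell$-weights of simple modules from \cite{H}: every $\ell$-weight of $L(\Psib)$ has the form $\Psib\cdot\prod A_{j,b}^{-1}$, a product of finitely many $A^{-1}$'s. I would first reduce to the products of positive and negative prefundamental modules $L^\pm_{i,q^r}$. One can either argue via the Jordan--Hölder/induction argument of \cite{H}, or bound the spectral parameters directly from the $q$-characters of $L^\pm_{i,a}$, which are $[\Psib^{\pm1}_{i,a}]$ times a $\chi$-factor in the $A_{j,b}^{-1}$ on the positive side (\cite{FH2,H}).

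The main obstacle is the parity/lattice check. One must show that the $A_{j,b}^{-1}$ that can occur satisfy $A_{j,b}\in\mathbb{Z}[\Psib^{\pm1}_{k,q^s}]_{(k,s)\in V}$. That is, $b=q^{s}$ with $(j,s)\in V$ in the appropriate shifted sense: $A_{j,q^s}$ is built from $Y_{j,q^{s\pm d_j}}$ and the neighbouring $Y_{k,q^{s+b_{jk}+\dots}}$. This follows from the definition of $V$ as the connected component of $\widetilde\Gamma$ together with formula \eqref{def-tilde-a}. The remark after Proposition~\ref{prop-invertible} already records $A_{i,q^r}^{\pm1}\in K_\Z$ for $(i,r)\in V$. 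I would then show by induction along the string/$\mathfrak{sl}_2$-reduction argument of \cite{H,FH2} that starting from an $\ell$-weight in $\mathfrak{r}_\Z$, each step multiplies by some $A^{-1}_{j,q^s}$ with $(j,s)\in V$. This is where the non simply-laced shifts $q^{\pm 1},q^{\pm 2}$ in \eqref{def-tilde-a} must be matched against the spacing of $V$, which differs by column in types $B,C,F,G$.

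Granting the key statement, $\chi_q(K_0(\mathcal{O}_\mathbb{Z}))\subset\mathcal{E}_{\ell,\mathbb{Z}}$. Since $\chi_q$ is multiplicative and $\mathcal{E}_{\ell,\mathbb{Z}}$ is a subring, the tensor product of two objects of $\mathcal{O}_\mathbb{Z}$ has character in $\mathcal{E}_{\ell,\mathbb{Z}}$. Decomposing into simples via the highest $\ell$-weight triangularity, every simple constituent is an $L(\Psib)$ with $\Psib\in\mathfrak{r}_\Z$. Hence $K_0(\mathcal{O}_\mathbb{Z})$ is a subring. For surjectivity, I would use the same triangularity: any element of $\mathcal{E}_{\ell,\mathbb{Z}}$ is expanded topologically by repeatedly subtracting $\chi_q(L(\Psib))$ for a maximal $\ell$-weight $\Psib\in\mathfrak{r}_\Z$. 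This converges because of the cone condition, exactly as in the proof that $\chi_q$ is onto $\mathcal{E}_\ell$ in \cite[Theorem 4.19]{W}.
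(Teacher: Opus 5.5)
Your plan follows the same route as the paper, which simply carries over the simply-laced argument. You reduce via \cite{H} to fusion products of prefundamental and one-dimensional modules, check that the $A^{-1}_{j,b}$ that occur are $A^{-1}_{j,q^s}$ with $(j,s)\in V$ (these lie in $\mathfrak{r}_\Z$ by \eqref{def-tilde-a} and the arrows of $\Gamma$), and conclude closure under products and surjectivity by highest $\ell$-weight triangularity as in \cite[Theorem 4.19]{W}. One correction: in $\mathcal{O}^{\rm sh}$ it is $L^+_{i,a}$ that is one-dimensional with $\chi_q(L^+_{i,a})=[\Psib_{i,a}]$, while $\chi_q(L^-_{i,a})$ is $[\Psib^{-1}_{i,a}]$ times a power series in the $A^{-1}_{j,b}$, so the lattice check must be done for $L^-_{i,q^r}$ (a $\chi$-factor on the positive side belongs to the Borel-subalgebra picture).
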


\subsection{Isomorphism and first conjecture}

Recall the subring $K_\mathbb{Z}\subset [\uP]\otimes_{[P]} K$.
As each generator of $K_\mathbb{Z}$ belongs to $\mathcal{E}_{\ell,\mathbb{Z}}$, we can identify $K_\mathbb{Z}$ with $ [\uP]\otimes_{[P]}\mathcal{E}_{\ell,\mathbb{Z}}$. Combining Proposition \ref{KE}, Theorem~\ref{main-Thm} now implies the following:

\begin{Thm}\label{isomgv} 
We have an injective ring homomorphism
$$I : \mathcal{A}_{w_0}\rightarrow [\uP]\otimes_{[P]}K_0(\mathcal{O}_{\mathbb{Z}})$$
and the topological closure of $ [\uP]\otimes_{[P]} I(\mathcal{A}_{w_0})$ is the entire topological ring $ [\uP]\otimes_{[P]} K_0(\mathcal{O}_{\mathbb{Z}})$.
\end{Thm}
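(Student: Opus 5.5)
The plan is to obtain Theorem~\ref{isomgv} by transporting Theorem~\ref{main-Thm} along the identification of $K_\Z$ with the $q$-character ring of $\O_\Z$. First I fix the identification. By Proposition~\ref{KE}, $\chi_q$ restricts to a topological ring isomorphism $K_0(\O_\Z)\to\mathcal{E}_{\ell,\Z}$, where $\mathcal{E}_{\ell,\Z}$ is topologically generated over $[P]$ by the $[\Psib_{i,q^r}^{\pm1}]$, $(i,r)\in V$. On the other side, Proposition~\ref{prop-invertible}(2) says $K_\Z$ is topologically generated over $[\uP]$ by the $\Psi_{i,q^r}^{\pm1}$, $(i,r)\in V$. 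I would define a map $K_\Z\to[\uP]\otimes_{[P]}\mathcal{E}_{\ell,\Z}$ by $\Psi_{i,q^r}\mapsto[\Psib_{i,q^r}]$ and $[\la]\mapsto[\la]$. It is a continuous ring isomorphism because both rings are completions of the same Laurent polynomial ring with compatible weight gradings: $\varpi$ of an $\ell$-weight corresponds to $\omega$ of a monomial, and the cone conditions defining $\widetilde{\YY}_e$ and $\mathcal{E}_\ell$ match. I then extend scalars to $[\uP]$ on the Grothendieck side.

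Second, I compose. Let $F:L\to K_\Z$ be the homomorphism of Theorem~\ref{main-Thm}; by Theorem~\ref{uniF} it is independent of the compatible seed. Restrict $F$ to $\AA_{w_0}\subset L$, using the Laurent phenomenon. Then set
\[
I:=(\mathrm{id}_{[\uP]}\otimes\chi_q^{-1})\circ F|_{\AA_{w_0}}.
\]
Injectivity of $I$ follows from injectivity of $F$ together with the fact that the two maps above are isomorphisms. Isomorphisms of topological rings preserve closures. So the density statement of Theorem~\ref{main-Thm}, that the closure of $[\uP]\otimes F(\AA_{w_0})$ is $K_\Z$, transfers directly to the closure of $[\uP]\otimes I(\AA_{w_0})$ being $[\uP]\otimes_{[P]}K_0(\O_\Z)$.

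The main obstacle is the first step: making the identification $K_\Z\cong[\uP]\otimes_{[P]}\mathcal{E}_{\ell,\Z}$ rigorous as topological rings. I need the generators $Q_{w(\varpi_i^\vee),q^r}$, which are power series in the $A_{j,b}^{-1}$, to land in $\mathcal{E}_{\ell,\Z}$. I also need the topologies to agree, which requires checking that the support conditions coincide. For this I would note that $A_{j,q^s}^{-1}$ and $Y_{j,q^{s}}$ are Laurent monomials in the $\Psi$'s indexed by $V$, and that weights decrease by $Q_+$ in both settings. I would also check that tensoring with $[\uP]$ over $[P]$ is flat here, since $[\uP]$ is free over $[P]$, so that injectivity survives scalar extension.
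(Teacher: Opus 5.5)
Your proposal is correct and follows the paper's argument. The paper also identifies $K_\Z$ with $[\uP]\otimes_{[P]}\mathcal{E}_{\ell,\Z}$, on the grounds that each generator of $K_\Z$ lies in $\mathcal{E}_{\ell,\Z}$, and then combines Proposition~\ref{KE} with Theorem~\ref{main-Thm} to transfer injectivity and density. Your extra checks, that the weight gradings and topologies match and that $[\uP]$ is free of finite rank over $[P]$, only make explicit what the paper leaves implicit.
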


As established above, this homomorphism does not depend on the choice of a compatible initial seed.

We say that a simple class $[L(\Psib)]$ in $K_0(\mathcal{O}^\mathrm{sh})$ is real if $[L(\Psib)]^2 = [L(\Psib^2)]$ is a simple class. The main conjecture of \cite{GHL} can now be formulated for general types.

\begin{Conj}\label{main-conj} 
The image of a cluster monomial of $\mathcal{A}_{w_0}$ is a real simple class in $ [\uP]\otimes_{[P]}K_0(\mathcal{O}_{\mathbb{Z}})$.
\end{Conj}

\begin{Rem} (i) The results in \cite{HN} allow to reformulate these statements in terms of representations of the Borel subalgebra of an ordinary quantum affine algebra.

(ii) It is proved in \cite{HZ} that the subcategory $\mathcal{O}_{\mathbb{Z}}^f$ of finite length representations in $\mathcal{O}_{\mathbb{Z}}$ is stable by fusion product. Hence this Conjecture implies that the image of $\mathcal{A}_{w_0}$ lies in $[\uP]\otimes_{[P]}K_0(\mathcal{O}_{\mathbb{Z}}^f)$. One expects that this last ring is isomorphic to $[\uP]\otimes \mathcal{A}_{w_0}$. This is known if one restricts to finite-dimensional modules by \cite{H}, and it has been recently proved in $ADE$-types in \cite{KLLLPW}.
\end{Rem}

\subsection{Examples}

The following evidences were given in \cite{GHL} and also hold in the non simply-laced types.

\medskip

\subsubsection{Finite-dimensional representations}

Let $\mathcal{C}_{\mathbb{Z}}$ denote the full subcategory of
$\mathcal{O}_{\mathbb{Z}}$ whose objects are the
finite-dimensional modules. Its simple objects have been classified in \cite{H}.
For example, the positive prefundamental representations $L_{i,q^r}^+ \ ((i,r)\in V)$ are contained in 
$\mathcal{C}_{\mathbb{Z}}$. The category $\mathcal{C}_{\mathbb{Z}}$ also contains the fundamental representations $V_{i,q^{r+d_i}} = L(Y_{i,q^{r+d_i}})$ of the ordinary quantum affine algebra $U_q(\hg)$.

Consider the corresponding subring $K_0(\mathcal{C}_{\mathbb{Z}})$ of $K_0(\mathcal{O}_{\mathbb{Z}})$. Recall the cluster algebra $\AA_\G = \mathcal{A}_e$ associated with the quiver $\G=\Gamma_e$. 
Building on results of \cite{HJ} and \cite{KKOP}, the following theorem was established in \cite{H}.

\begin{Thm}\label{fdrep} There is an isomorphism of algebras 
\[
[\uP]\otimes_{[P]} \mathcal{A}_e \to  [\uP]\otimes_{[P]} K_0(\mathcal{C}_{\mathbb{Z}}) 
\]
mapping the initial cluster variable of $\mathcal{A}_e$ attached to vertex $(i,r)$ of $\G_e$ to the renormalized positive prefundamental representation $ [-\Omega(\Psi_{i,q^r})]L(\Psi_{i,q^r})$.
Moreover, all cluster monomials in $\mathcal{A}_e$ are mapped to simple real classes in 
$[\uP]\otimes_{[P]}K_0(\mathcal{C}_{\mathbb{Z}})$.
\end{Thm}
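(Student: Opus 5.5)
The proof will transport the known cluster structure of $K_0(\mathcal{C}_\Z)$ from \cite{H}, rather than build it from scratch. The plan is to identify $\mathcal{A}_e$ with the cluster algebra $\AA_\G$ of \cite{HL2}, whose initial seed is the basic quiver $\G$. We then match its initial cluster variables with the renormalized classes of positive prefundamental representations, and import from \cite{KKOP} the monoidal categorification statement for the underlying finite-dimensional category.

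\textbf{Step 1: the ring homomorphism.} Define $\phi$ on the initial cluster by $z_{(i,r)}\mapsto [-\Omega(\Psi_{i,q^r})][L^+_{i,q^r}]$. By the $w=e$ case of Proposition~\ref{renqq}, the exchange relation at a vertex $(i,r)$ of $\G$ reads
\[
z_{(i,r)}z'_{(i,r)}=z_{(i,r+2d_i)}\prod_{j\to (i,r)} z_j+\prod_{(i,r)\to j} z_j .
\]
This should be the Baxter-type $TQ$ (or $QQ^*$) relation of \cite{HJ,FH}. That relation is $[L^+_{i,a}][X]=[\cdots]+[\cdots]$, where $X$ is a Kirillov--Reshetikhin-type module built from fundamentals $V_{i,q^{r+d_i}}$. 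The twist by $[-\Omega]$ is designed so that both sides carry the same weight factor, exactly as in Proposition~\ref{renqq}.

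\textbf{Step 2: injectivity and surjectivity.} The images of the initial variables are algebraically independent, since their $q$-characters have distinct highest $\ell$-weights $\Psi_{i,q^r}$. By the Laurent phenomenon this makes $\phi$ injective.

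For surjectivity we need generators of $K_0(\mathcal{C}_\Z)$. By \cite{H}, its simple classes are products of positive prefundamentals with finite-dimensional modules of the ordinary quantum affine algebra, up to $[\lambda]$. By \cite{FH2}, $K_0(\mathcal{C}_\Z)$ is generated by the $[L^+_{i,q^r}]$ and the $[V_{i,q^{r+d_i}}]$. Each $[V_{i,q^{r+d_i}}]$ is a one-step mutation of the initial seed, namely the $QQ^*$-relation of Example~\ref{exam-5.3} type. Hence all generators lie in the image.

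\textbf{Step 3: cluster monomials are real simple classes.} This is the main obstacle. We will reduce it to the monoidal categorification theorem of \cite{KKOP} for Hernandez--Leclerc subcategories of finite-dimensional $U_q(\hg)$-modules, where cluster monomials are real simples.

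First, any cluster monomial involves finitely many mutations. These mutations lie in a finite window of $\G$ which, after shifting spectral parameters, is the quiver of a category $\mathcal{C}^-$-type seed in which the $L^+$ are frozen limits.

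Second, we use the Hernandez--Jimbo limit $L^+_{i,a}=\lim_k W^{(i)}_{k,a}$, which presents positive prefundamentals as normalized limits of KR modules. Our cluster monomial is then the limit of cluster monomials in finite-dimensional categories. By \cite{KKOP} each of these is a real simple $W$-module class.

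It remains to check that the limit of real simple classes is real simple in $\mathcal{C}_\Z$. We would use the factorization $L(\Psi)\cong L(\Psi^+)\otimes L(\text{finite part})$ from \cite{H}, and control the highest $\ell$-weight through stabilization, analogous to Lemma~\ref{lem:lgvect1}. Realness then passes to the limit because squaring commutes with highest $\ell$-weight. Making this limit argument rigorous is where the real work sits.
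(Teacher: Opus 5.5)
Step~3 is not a technicality to be filled in later. It is the whole content of the theorem, which the paper does not reprove but quotes from \cite{H}, where it is deduced from \cite{HJ} and \cite{KKOP}.

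You also need it before Step~3. Prescribing $\phi$ on the initial cluster only defines a homomorphism from the Laurent polynomial ring into $\mathcal{E}_\ell\cong K_0(\mathcal{O}^{\mathrm{sh}})$, where $\chi_q(L^+_{i,a})=[\Psi_{i,a}]$ is invertible. That every cluster variable lands in $K_0(\mathcal{C}_{\mathbb{Z}})$, i.e.\ that $\phi(\AA_e)\subset[\uP]\otimes_{[P]}K_0(\mathcal{C}_{\mathbb{Z}})$ at all, is already the categorification statement.

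Your sketch of Step~3 does not supply it. The Hernandez--Jimbo limit of Kirillov--Reshetikhin modules produces the infinite-dimensional prefundamental modules of the Borel subalgebra, not the one-dimensional shifted $L^+_{i,a}$. You would therefore need a comparison of $q$-characters between the two settings for all simple modules, not just prefundamental ones. To carry simplicity and realness through the limit, you must also know that $\chi_q(L(\Psi))$ and $\chi_q(L(\Psi^2))$ are the limits of the normalized $q$-characters of the approximating simple modules. The remark that squaring commutes with highest $\ell$-weights gives nothing: $L(\Psi)\otimes L(\Psi)$ and $L(\Psi^2)$ always have the same highest $\ell$-weight, and realness is the equality $\chi_q(L(\Psi))^2=\chi_q(L(\Psi^2))$ of full $q$-characters.

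Steps~1 and~2 also contain concrete errors.

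\begin{itemize}
\item The exchange relations of $\G_e$ are not the $w=e$ case of Proposition~\ref{renqq}. That relation involves $\bQ_{s_i(\varpi_i^\vee),\,q^r}$, whose degree $s_i(\varpi_i^\vee)$ is not dominant, so it is not a finite-dimensional class at all (Theorem~\ref{thmH}). $QQ$-relations are exchange relations of the seeds $\G_{\bi,\br}$ with $\bi$ nonempty (Lemma~\ref{Lem28}), where $\bQ_{w(\varpi_i^\vee)}$ and $\bQ_{ws_i(\varpi_i^\vee)}$ sit at the ends of a vertical $\beta$-arrow; $\G_e$ has no such arrows.
\item Mutation of $\G_e$ at $(i,r)$ gives the $QQ^*$-relation
$z_{(i,r)}z'_{(i,r)}=z_{(i,r-2d_i)}\prod_j z_{(j,r-b_{ij})}+z_{(i,r+2d_i)}\prod_j z_{(j,r+b_{ij})}$, with $j\neq i$ and $c_{ij}\neq 0$, as in Example~\ref{exam-5.3}. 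Your displayed formula has a spurious factor $z_{(i,r+2d_i)}$.
\item More seriously, the new variable is not a fundamental class. Up to a factor $[\lambda]$ its image is $m\,(1+A_{i,q^r}^{-1})$ with $m=Y_{i,q^{r-d_i}}\prod_j\Psi_{j,q^{r-b_{ij}}}$, a two-term truncation. By contrast, $\chi_q(V_{i,q^{r-d_i}})=Y_{i,q^{r-d_i}}(1+A^{-1}_{i,q^r}+\cdots)$ has at least $3$ terms once the rank is at least $2$.
\end{itemize}

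For instance, in type $A_2$ one mutation at $(1,r)$ gives $Y_{1,q^{r-1}}\Psi_{2,q^{r+1}}(1+A^{-1}_{1,q^r})$. The class $[V_{1,q^{r-1}}]$ appears only after a second mutation at $(2,r+1)$.

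So your surjectivity argument fails as written. Reaching generators of $K_0(\mathcal{C}_{\mathbb{Z}})$ through longer mutation sequences, and identifying the resulting cluster variables with simple classes, requires the same input that Step~3 was supposed to provide.
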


The same discussion as in \cite{GHL} establishes that Theorem \ref{fdrep} implies Conjecture \ref{main-conj} for all cluster monomials of $\AA_{e}$.

\subsubsection{$Q$-variables}

\medskip

For $i\in I$, $w\in W$, and $a\in \C^*$, we have the $Q$-seires 
\[
Q_{w(\varpi_i^\vee),\,a} \in K,
\]
whose leading term $\Psi_{w(\varpi_i),\,a}$ is a Laurent monomial in the variables $\Psi_{j,b}$, explicitly computed in \cite{FH3} in terms of Chari's braid group action. It is the image of a $Q$-variable in the cluster algebra.

The following conjecture was formulated in \cite{FH3}. 

\begin{Conj}\label{cjq} We have the $q$-character formula :
$$\chi_q(L(\Psi_{w(\varpi_i^\vee),a})) = Q_{w(\varpi_i^\vee),a}.$$
\end{Conj}

\begin{Rem}
(i) The argument as in \cite{GHL} establishes that this Conjecture is compatible with Conjecture \ref{main-conj} for $Q$-variables. 

(ii) By \cite{FH3} and by the arguments in \cite{GHL}, this Conjecture holds for $Q$-variables associated with $w=e$, $w=s_i\ (i\in I)$ and $w = w_0$.

(iii) In $ADE$-types, Conjecture~\ref{cjq} was recently established in \cite{KLLLPW}.
\end{Rem}

\subsection{Refined conjecture and limit cluster expansions}

Recall the limit cluster expansion of a cluster monomial introduced in Section \ref{lcle}. By replacing each $Q$-variable $\bQ_{\varpi_i^\vee, q^r}$ by $[\Omega(\Psib_{i,q^r})^{-1}][\Psib_{i,q^r}]$, we can regard this limit cluster expansion as an element of $\mathcal{E}_\ell$.

A simple module in $\mathcal{O}_\mathbb{Z}$ is said be reachable if its class in $K_0(\mathcal{O}_\mathbb{Z})$ is the image of a cluster monomial of $\mathcal{A}_{w_0}$.
We have the following remarkable consequence of our constructions since the positive prefundamental representations are 1-dimensional.

\begin{Thm}\label{qc} The $q$-character of a reachable simple object is given by the limit cluster expansion of the corresponding cluster monomial.
\end{Thm}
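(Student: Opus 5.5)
The proof will combine three facts: the cluster monomial $\chi$ is sent by $I$ to the class of a simple module $M$; the map $\chi_q$ is a ring isomorphism onto $\mathcal{E}_{\ell,\Z}$; and the positive prefundamental representations $L^+_{i,q^r}$ are one-dimensional.

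Let $M$ be a reachable simple object, so that $[M] = I(\chi)$ for some cluster monomial $\chi$ of $\mathcal{A}_{w_0}$.

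First, fix the images of the unital cluster variables. By Theorem~\ref{uniF} and Lemma~\ref{unital}, the unital cluster variable $u_{i,r}$ satisfies $F(u_{i,r}) = \bQ_{\varpi_i^\vee,q^r}$. Under the identification $K_\Z \cong [\uP]\otimes_{[P]}\mathcal{E}_{\ell,\Z}$ this equals $[-\Omega(\Psi_{i,q^r})][\Psib_{i,q^r}]$. Since $L^+_{i,q^r} = L(\Psib_{i,q^r})$ is one-dimensional with single $\ell$-weight $\Psib_{i,q^r}$, its $q$-character is exactly $[\Psib_{i,q^r}]$. So the image of $u_{i,r}$ under $\chi_q\circ I$ is the renormalized class of $L^+_{i,q^r}$. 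This is precisely the substitution used to regard the limit cluster expansion as an element of $\mathcal{E}_\ell$.

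Second, apply Theorem~\ref{devtp}. The image of $\chi$ in $K_\Z$ equals a convergent Laurent formal power series $\sum_m c_m\, m$, where the $m$ are Laurent monomials in the $\bQ_{\varpi_j^\vee,q^s}$. This series is the limit cluster expansion. Because $F$ is a ring homomorphism, the equality in $K_\Z$ holds term by term after the substitution of the first step. Now apply $\chi_q$, which is an isomorphism $K_0(\mathcal{O}_\Z)\to\mathcal{E}_{\ell,\Z}$ of topological rings by Proposition~\ref{KE}. This gives
\[
\chi_q(M) = \text{(renormalization factor)}\cdot\sum_m c_m\, m\big|_{\bQ_{\varpi_j^\vee,q^s}\mapsto [\Omega(\Psib_{j,q^s})^{-1}][\Psib_{j,q^s}]}.
\]
Each such $m$ becomes a single $\ell$-weight, namely a Laurent monomial in the $\Psib_{j,q^s}$ times an element of $[\uP]$. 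Hence the right-hand side is literally a sum of $\ell$-weights with multiplicities $c_m$, which is the asserted form of a $q$-character.

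The main obstacle is the compatibility of topologies. The limit cluster expansion converges in $K_\Z$ (with respect to weights given by $\Omega$ and highest weights), while $\chi_q(M)$ lives in $\mathcal{E}_\ell$ with its cone topology, and we must check that the identification $K_\Z\cong[\uP]\otimes\mathcal{E}_{\ell,\Z}$ is continuous. I would handle this by noting that both topologies are defined by finiteness in each weight space of $P$ modulo $Q_+$-cones. Once $\Omega$-renormalization factors are tracked, the highest-weight estimates in the proof of Theorem~\ref{devtp} (remainders of weight $h_t+s\alpha_t$ with $\alpha_t$ a negative root) show that the partial sums stabilize in each weight. So equality of the limits in $K_\Z$ transfers to coefficientwise equality of $\ell$-weight multiplicities in $\mathcal{E}_\ell$. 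A final bookkeeping step checks that the $[\uP]$-factors cancel, using that $\Omega$ is additive on monomials. The positivity of the $c_m$, now dimensions of $\ell$-weight spaces, is then consistent with Theorem~\ref{devtp}.
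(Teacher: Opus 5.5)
Your proposal is correct and follows the paper's route. The paper's justification is exactly that the positive prefundamental modules are one-dimensional, so under the identification $K_\Z\cong[\uP]\otimes_{[P]}\mathcal{E}_{\ell,\Z}$ given by $\chi_q$ (Proposition~\ref{KE}) each $\bQ_{\varpi_j^\vee,q^s}$ becomes a single renormalized $\ell$-weight, and the identity of Theorem~\ref{devtp} in $K_\Z$ becomes the $q$-character of the reachable module. Your extra discussion of the compatibility of topologies, and of the $[\uP]$-renormalization factors, addresses points the paper takes for granted when making this identification, so it adds care without changing the argument.
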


\begin{example} In Example \ref{excl}, we recognize in the limit cluster expansion (\ref{excle}) the $q$-character of a $2$-dimensional fundamental representation of the ordinary quantum affine algebra $U_q(\hat{\mathfrak{sl}_2})$.
\end{example}

This leads to the following general conjecture, whose statement is already new in simply-laced types.

\begin{Conj}\label{main-conj2} 
The limit cluster expansion of a cluster monomial of $\mathcal{A}_{w_0}$ is the $q$-character of a real simple object in  $\mathcal{O}_{\mathbb{Z}}$. 
\end{Conj}

\begin{Rem} (i) Conjecture~\ref{main-conj2} is more precise than Conjecture \ref{main-conj}. Indeed, it yields a formula for the $q$-character, and it allows to specify the simple module in question by looking at the highest $\ell$-weight of this
$q$-character.

(ii) Conjecture~\ref{main-conj2} generalizes  Conjecture \ref{cjq} to all reachable objects.

(iii) The positivity of limit cluster expansion (Theorem \ref{devtp}) gives a strong evidence. 

(iv) By Theorem \ref{qc}, all the previous examples give evidences supporting Conjecture~\ref{main-conj2}.
\end{Rem}
We are planning to come back to these questions in a forthcoming paper.

\appendix
\section{Appendix: The $\sfG_2$-braid relation for the initial quivers} 
\label{App:G2-braid}
In order to describe the promised sequence of 163 mutations in type $\sfG_2$ (see the end of the proof of Theorem~\ref{thm:mut_equivQ}), we need to introduce a larger class of (infinite) quivers and describe them in a compact way. In fact, we need 20 such intermediate quivers, and for each of those quivers the relevant part has about 50 vertices and would occupy a full page. 
Similarly to Section~\ref{ssec:B2-F4},
those quivers are parametrized by reduced expression for the longest element
in the unfolded Weyl group $W(\tsfG_2)\cong W(\sfD_4)$. 

We consider the Dynkin diagram of type $\sfG_2$ and its “unfolding"
\[\begin{tikzcd}[cramped, sep=small]
	{\mathsf{G}_2} && 1 && 2 \\
	\\
	&& {1^{(1)}} \\
	\\
	{\widetilde{\mathsf{G}}_2\cong\mathsf{D}_4} && {1^{(2)}} && {2^{(o)}} \\
	\\
	&& {1^{(3)}}
    	\arrow["{\mathbf{\rangle}}"{marking, allow upside down}, Rightarrow, scaling nfold=3, no head, from=1-3, to=1-5]
	\arrow[no head, from=3-3, to=5-5]
	\arrow[no head, from=5-3, to=5-5]
	\arrow[no head, from=7-3, to=5-5]
\end{tikzcd}\]
together with the projection $\overline{?}\df\{1^{(1)}, 1^{(2)}, 1^{(3)}, 2^{(0)}\}\mapsto\{1,2\}, i^{(*)}\mapsto i$.  
Thus, we are dealing with the Cartan matrix 
\[
C=\begin{pmatrix}
  2 & -1\\ -3 & 2  
\end{pmatrix}
\quad\text{with symmetrizer}\quad
D=\begin{pmatrix}
 3 & 0  \\ 0 & 1  
\end{pmatrix}
\quad\text{and}\quad
B:= D\cdot C=\begin{pmatrix}
    6 & -3\\ -3 & 2
\end{pmatrix}.
\]
For typographic reasons we agree that the basic quiver $\Gam$ has here the vertex set $V'=\Z$ and the arrow set
\[
\Gam_1=\{\alp_k^+\mid k\in\Z\}\cup\{\alp_k^o\mid k\in\Z\}
\]
with 
\begin{align*}
  s(\alp^+_k)&=k, &  t(\alp^+_k) &= k+2d_k,\\
  s(\alp^o_k)&=k, &  t(\alp^o_k) &= k-3,
\end{align*}
for all $k\in\Z$. Here we have used the notation 
\[
d_k:=
\begin{cases}
  1 &\text{if } k\equiv 1 \pmod{2},\\
  3 &\text{if } k\equiv 0 \pmod{2},
\end{cases}
\qquad 
(k\in\Z).
\]
Thus, compared with our general convention,
we identify here  $V=(\{1\}\times(2\Z+1))\,\cup\,(\{2\}\times 2\Z)$
with $\Z$ by projecting onto the second component. As for the
arrows, we replaced accordingly $\alp_{i,i}^{(k)}$ with
$(i,k)\in V$ by $\alp^+_k$, and $\alp_{ij}^{(k)}$ with
$i\neq j$ and $(j,k)\in V$ by $\alp^o_k$.
Note, that this identification is indeed bijective on vertices and on  arrows.

Consider the map 
\[
w\df\Z\ra\{1^{(1)}, 1^{(2)},1^{(3)}, 2^{(0)}\},\;
k\mapsto\begin{cases}
1^{(j)} &\text{if } k\equiv 2j-1 \pmod{6}\quad\text{for some } j\in\{1,2,3\},\\     
2^{(0)} &\text{if } k\equiv 0 \pmod{2}.
\end{cases}
\]
\begin{Def}
We say, that a strictly decreasing sequence of integers $\br=(r_1, r_2,\ldots, r_l)$ is
 \emph{$\tsfG_2$-adapted} if the following conditions are fulfilled:
\begin{itemize}
\item  If $r_p-r_{p+1}\equiv 1\pmod{2}$ for some
$p\in L:=\{1,2,\ldots,l\}$, then $r_p\geq r_{p+1}+3$.
We agree from here on that 
$r_0=\infty$ and $r_{l+1}=-\infty$.
\item $w(\br):=(w(r_1),w(r_2),\ldots, w(r_l))$ is
a reduced expression for some element $\tilde{w}\in W(\tsfG_2)$.
\end{itemize}
\end{Def}
Note, that in this case $l\leq 12$, since $12=l(\tilde{w}_0)$ is the length of the longest element of $W(\tsfG_2)$. Moreover, if $w(r_p)=w(r_q)$ for $p\neq q$, then $\abs{r_p-r_q}\geq 6$. 

Associated to a $\tsfG_2$-adapted sequence $\br$ we introduce the auxiliary functions
\begin{align*}
\htau\df\Z\ra\Z, r &\mapsto\begin{cases}
r- 2\cdot\abs{\{p\in L\mid w(r_p)=2^{(o)} \land r_p>r\}} &\text{if } r\equiv 0 \pmod{2},\\
r- 6\cdot\abs{\{p\in L\mid w(r_p)=1^{(1)} \land r_p>r\}} &\text{if } r\equiv 1 \pmod{6},\\
r- 6\cdot\abs{\{p\in L\mid w(r_p)=1^{(3)} \land r_p>r\}} &\text{if } r\equiv 3 \pmod{6},\\
r- 6\cdot\abs{\{p\in L\mid w(r_p)=1^{(5)} \land r_p>r\}} &\text{if } r\equiv 5 \pmod{6},\\
\end{cases}\\
\hsig\df \Z \ra\Z, r &\mapsto \htau(r-2d_r)+2d_r,\\
\rho_{p}&:=\htau(r_p)-2d_{r_p}=\hsig(r_p)\quad\text{for}\quad p\in L:=\{1,2,\ldots,l\}.
\end{align*}
Note that for $k\in\Z\setminus R$ we have $\hsig(k)=\htau(k)$.
Now, we can introduce the quiver $\tGam_{\br}$ with the same vertex set $\Z=\Gam_0$
and a new set of arrows
\[
\tGam_1:=\Gam_1\cup \{\bet^-_{r-d_r}\mid r\in R\}
\cup\{\bet^o_r, \bet^o_{r+1},\bet^o_{r+2}, \bet^o_{r+3}\mid
r\in R_0\},
\]
where we agree that $R=\{r_1,r_2,\ldots, r_l\}$,
$R_0:=\{r\in R\mid r\equiv 0\pmod{2}\}$ and $R_1:=R\setminus R_0$.

Next, we define the source and target of each arrow of  $\tGam_{\br}$. Namely,
for all $k\in\Z$ and $r\in R$ we set:
\begin{align*}
s(\alp_k^{+}) &:= \htau(k), &  
t(\alp_k^+) &:= \htau(k)+ 2d_k, \\
s(\alp_{k}^{o}) &:= \hsig(k), &
t(\alp_{k}^{o}) &:= \htau(k-3),\\
s(\bet_r^{-}) &:=  \htau(r), &
t(\bet_r^-)   &:=  \hsig(r).\\
\end{align*}
In particular, $t(\bet^-_{r_p})=\rho_p$ for all $1\leq p\leq l$.
Moreover, for all $r\in R_0$ we set:
\begin{align*}
s(\bet_r^o)&:= \hsig(r), 
&t(\bet_r^o)&:= \htau(r-1),\\
s(\bet_{r+1}^o)&:= \htau(r-1), &
t(\bet_{r+1}^o)&:= \htau(r),\\
s(\bet_{r+2}^o)&:= \hsig(r), &
t(\bet_{r+2}^o)&:= \htau(r+1), \\
s(\bet_{r+3}^o)&:=  \htau(r+1),&
t(\bet_{r+3}^o)&:= \htau(r)).
\end{align*}

\begin{example}
Let $\tilde{\br}=(39,32,29,24,21,18,13,10,5,3,0,-5)$. It is easy to see that
$w(\tilde{\br})=(2,0,3,0,2,0,1,0,2,3,0,1)$ is a reduced expression for
$w_0\in W(\tsfG_2)$, and accordingly, $\tilde{\br}$ is $\sfG_2$-adapted. 
Thus, our function $\htau$ has a priori 13 regions of linearity:
\begin{align*}
      &                     &     &                  & Z_1 &=\{33,27,21\},\\
      &                     & Z_2 &=\{30,28,26,24\}, &     &               & Z_3 &=\{23,17,11,5\},\\
      &                     & Z_4 &=\{22,20,18\},    & Z_5 &=\{15,9,3\}   \\
      &                     & Z_6 &=\{16,14,12,10\},                    \\
  Z_7 &=\{7,1,-5\},         & Z_8 &=\{8,6,4,2,0\},   &     &               & Z_9 &=\{-1,-7,\ldots\}, \\ 
      &                    &Z_{11}&=\{-2,-4,\ldots\},&Z_{10}&=\{-3,-9,\ldots\},\\
Z_{12}&=\{-11,-17,\ldots\},      
\end{align*}
and $Z_0=\Z\setminus (\cup_{i=1}^{12} Z_i)$
In fact, we find now
\[
\htau(z)= \begin{cases}
    z  & \text{if } z\in Z_0,\\
    z-2& \text{if } z\in Z_2,\\ 
    z-4& \text{if } z\in Z_4,\\
    z-6& \text{if } z\in Z_1 \cup Z_3 \cup Z_6 \cup Z_7,\\
   z-8 & \text{if } z\in Z_8,\\
   z-10& \text{if } z\in Z_{11}\\ 
   z-12& \text{if } z\in Z_5\cup Z_9\cup Z_{12},\\
   z-18& \text{if } z\in Z_{10},\\
\end{cases}
\]
We display the relevant part of the resulting quiver $\tGam_{\tilde{\br}}$ in Figure~\ref{fig:G2-Expl1}
 \begin{figure}
     \centering
     \scalebox{0.9}{\input{Fig_G2-expl1}}
     \caption{$\tGam_{(39,32,29,24,21,18,13,10,5,3,0,-5)}$}
     \label{fig:G2-Expl1}
 \end{figure}   
\end{example}

\begin{Rem} 
Let $\bi=(2,1,2,1,2,1)$ be one of the two reduced expression
for $w_0\in W(\sfG_2)$, and let $\br=(r_1,r_2,\ldots, r_6)$ be adapted to $\bi$. Then
\[
\tGam_{(r_1, r_2, r_2-2, r_2-4,r_3, r_4, r_4-2,r_4-4,r_5, r_6,r_6-2,r_6-4)}=\Gam_{\bi,\br}.
\]
Similarly, for $\bi'=(1,2,1,2,1,2)$
and $\br'=(r'_1,\ldots,r'_6)$ adapted to
 $\bi'$, we see that
 \[
 \tilde{\br}'=(r'_1,r'_1-2,r'_1-4,r'_2,r'_3,
 r'_3-2,\ldots, r'_5-4,r'_6)
 \]
 is $\tsfG_2$-adapted and $\tGam_{\tilde{\br}'}=\Gam_{\bi',\br'}$.

\end{Rem}

\begin{Def} 
Set $\del(r):=\begin{cases} 5 &\text{if } r\equiv 0\pmod{2},\\8 &\text{if } r\equiv 1\pmod{2}.
\end{cases}$\\
Let $\br=(r_1,r_2,\ldots,r_l)$ be a decreasing sequence which is $\tsfG_2$-adapted, and suppose that for some $1\leq p\leq l-2$ we have 
\begin{equation} \label{eq:G2-braid}
\qquad r_{p+j}=r_p-3j \quad\text{for}\quad j=1,2.
\end{equation}
If additionally to~\eqref{eq:G2-braid} there holds $r_{p-2}-\del(r_p)\geq r_{p+3}$, we define 
$b_{r_p}^-(\br)$ as the sequence which is obtained from $\br$ by replacing $(r_p,r_{p+1},r_{p+2})$ with $(r_{p+1},r_{p+2}, r_{p+2}-3)$.
If additionally to~\eqref{eq:G2-braid} there holds $r_{p-1}\geq r_p+\del(r_p)$, we define
$b_{r_p}^+(\br)$ as the sequence which is obtained
from $\br$ by 
replacing $(r_p,r_{p+1},r_{p+2})$ with $(r_p+3,r_{p},r_{p+1})$.
\end{Def}

We leave it as an exercise to show that,
whenever $b_s^\pm(\br)$ is defined, it is again
$\tsfG_2$-adapted, and $w(b_s^\pm(\br))$ is obtained
from $\br$ by a $\sfD_4$-braid move. 
The proof of the following Lemma is a straightforward verification.

\begin{Lem}[Braid moves] \label{lem:G2-braid}
Let $\br=(r_1,r_2,\ldots,r_l)$ be a $\tsfG_2$-adapted decreasing sequence of integers, and 
$1\leq p\leq l-2$ such that $r_{p+j}=r_p-3j$ for $j=1,2$. In what follows
we will sometimes write $r(p):=r_p$ for typographical reasons. 
\begin{itemize}
\item[(a1)]
Suppose  that $r_p\equiv 1\pmod2$ and that the operation
$b_{r(p)}^-(\br)$ is defined. Then
\[
\mu_{\rho_{p+1}-4}\circ\mu_{\rho_{p+1}-2}\circ\mu_{\rho_{p+1}}\circ\mu_{\rho_p-6}\circ\mu_{\rho_p}(\tGam_{\br})
\cong \tGam_{b_{r(p)}^-(\br)}.
\]
\item[(a2)]
Suppose  that $r_p\equiv 0\pmod2$ and that the operation
$b_{r(p)}^-(\br)$ is defined. Then
\[
\mu_{\rho_{p+1}}\circ\mu_{\rho_p-6}\circ\mu_{\rho_p-4}
\circ\mu_{\rho_p-2}\circ\mu_{\rho_p}(\tGam_{\br})\cong\tGam_{b_{r(p)}^-(\br)}.
\]
\item[(b1)]
Suppose  that $r_p\equiv 0\pmod2$ and that the operation
$b_{r(p)}^+(\br)$ is defined. Then
\[
\mu_{\rho_{p+1}+6}\circ \mu_{\rho_p}\circ\mu_{\rho_p-2}
\circ \mu_{\rho_p-4}\circ\mu_{\rho_p-6}(\tGam_{\br})\cong
\tGam_{b_{r(p)}^+(\br)}.
\]
\item[(b2)]
Suppose  that $r_p\equiv 1\pmod2$ and that the operation
$b_{r(p)}^+(\br)$ is defined. Then
\[
\mu_{\rho_{p+1}+6}\circ\mu_{\rho_{p+1}+4}\circ\mu_{\rho_{p+1}+2}\circ\mu_{\rho_p}\circ\mu_{\rho_p-6}(\tGam_{\br})
\cong \tGam_{b_{r(p)}^+(\br)}.
\]
\end{itemize}
\end{Lem}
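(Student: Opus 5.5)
The plan is to verify each of the four identities directly from the combinatorial definition of $\tGam_{\br}$, working locally. The point is that both sides are quivers of the form $\tGam_{\br'}$ (or mutations thereof), whose arrows are all given by explicit source/target formulas in terms of $\htau$ and $\hsig$. Only finitely many vertices change their incident arrows. First we observe that $b^\pm_{r(p)}(\br)$ differs from $\br$ only in the three positions $p,p+1,p+2$. Hence the functions $\htau,\hsig$ attached to $\br$ and to $b^\pm_{r(p)}(\br)$ agree outside a finite window of $\Z$, roughly $[r_{p+2}-\del(r_p)-6,\ r_p+\del(r_p)+6]$. They also agree after a uniform shift below that window, because the number of occurrences of each letter of $\{1^{(1)},1^{(2)},1^{(3)},2^{(o)}\}$ in $w(\br)$ is unchanged by a $\sfD_4$-braid move. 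So outside the window the two quivers coincide up to the relabelling of vertices that realizes the isomorphism ``$\cong$''. Since each mutation in (a1)--(b2) is at a vertex inside the window, and mutation only changes arrows incident to the mutated vertex or between its neighbours, it suffices to compare the full subquivers on the window together with the arrows leaving it.

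Second, we reduce the four cases. Case (b1) (resp.\ (b2)) is the inverse of (a2) (resp.\ (a1)), applied to $b^+_{r(p)}(\br)$. Indeed $b^-$ and $b^+$ are mutually inverse up to the shift $r\mapsto r\pm3$ of the affected entries. The mutation sequences listed in (b1), (b2) are the reversals of those in (a2), (a1) after re-expressing the $\rho$'s of the new sequence. This uses that $\mu_v$ is involutive, and the conditions $r_{p-2}-\del(r_p)\ge r_{p+3}$ and $r_{p-1}\ge r_p+\del(r_p)$ guarantee the required $\rho$-bookkeeping. So we only need (a1) and (a2). For these, the adaptedness conditions (gaps $\ge 3$ for odd differences, $\ge 6$ between equal letters, and the $\del$-conditions) ensure that the neighbouring letters $r_{p-1},r_{p-2},r_{p+3}$ are far enough away. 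The local configuration is then independent of the rest of $\br$. We can thus translate so that $r_p\in\{0,1\}$ (parity fixed) and treat a single model configuration in each case, with the other columns far away.

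Third, in each model case we compute the five mutations explicitly. We list the arrows around the window using the formulas for $\alp^\pm_k$, $\alp^o_k$, $\bet^-_r$, $\bet^o_{r+j}$. We apply the mutation rule step by step, as in the proofs of Proposition~\ref{prp:A2-braid} and Lemma~\ref{Lem:B2-F4-braid}. We then check that the result equals $\tGam_{b^-_{r(p)}(\br)}$ after a cyclic permutation of the mutated vertices. The boundary columns (the far-away letters) should be untouched, exactly as in the $\sfA_2$ case. The main obstacle is this hands-on verification. The quivers carry the extra oblique $\bet^o$ $3$-cycles coming from the triple edge, and cancellations of $2$-cycles must be tracked carefully. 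I would organize it by recognizing each five-step sequence as an unfolded $\sfD_4$ braid move, i.e.\ three commuting $\sfA_2$-type moves on the legs $1^{(j)}$–$2^{(o)}$ performed in sequence. The $\sfA_2$ local computation (Figure~\ref{fig:A2-A4}) can then be reused for each leg, with only the interaction between legs through the central vertex $2^{(o)}$ needing a separate check.
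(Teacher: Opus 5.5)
Your overall strategy, a direct local check of the five mutations, is the same as the paper's: the lemma is called a straightforward verification and illustrated by Figure~\ref{fig:7-braid1}. However, the reductions you build around that check contain errors that would derail it.

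First, the move does not preserve letter multiplicities. For $r_p$ odd, the letters in positions $p,p+1,p+2$ change from $(1^{(j)},2^{(o)},1^{(j)})$ to $(2^{(o)},1^{(j)},2^{(o)})$; both outer letters are $1^{(j)}$ because $r_p-6\equiv r_p \pmod 6$. This is a single $\sfA_2$-type braid relation in $W(\sfD_4)$ on the one leg $1^{(j)}$--$2^{(o)}$, and it trades one $1^{(j)}$ for one $2^{(o)}$. Far below the window, passing from $\br$ to $b^-_{r(p)}(\br)$ therefore changes $\htau$ by $-2$ on the even column and by $+6$ on the residue class of $r_p$ modulo $6$. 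This is not a uniform shift. As a result, the identification realizing ``$\cong$'' cannot preserve columns: one vertex passes between the $1^{(j)}$ sub-column and the even column. The third quiver of Figure~\ref{fig:7-braid1} shows this, with the odd vertex $13$ (in bold) sitting in the even column; compare the remark following Lemma~\ref{Lem:B2-F4-braid}. The case $r_p$ even is symmetric. The same misreading underlies your plan to treat the five mutations as three commuting $\sfA_2$ moves on the three legs and to recycle Figure~\ref{fig:A2-A4}. Only one leg is involved, the even column is interleaved with all three sub-columns, and the $\bet^o$-arrows create $3$-cycles, so the computation has to be done directly.

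Second, your pairing in the reduction of (b) to (a) is reversed. $b^-_{r(p)}$ at an odd $r_p$ puts the even entry $r_p-3$ in position $p$, so (a1) is undone by (b1) and (a2) by (b2). Figure~\ref{fig:7-braid1} confirms this. The chain joining $\tGam_{(28,25,22)}$ to $\tGam_{(25,22,19)}$ is case (a2), namely $b^-$ at the even entry $28$. Read from right to left it is exactly $\mu_{26}\circ\mu_{24}\circ\mu_{22}\circ\mu_{19}\circ\mu_{13}$, which is the sequence of (b2) for $(25,22,19)$ with $\rho_p=19$, $\rho_{p+1}=20$. The printed sequences of (a2) and (b1) have the same shape (four even vertices, one odd) only because of the column-changing vertex described above.

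Third, your reduction to a single model configuration rests on the claim that $r_{p-1},r_{p-2},r_{p+3}$ are far away, and this is false. Nothing in the hypotheses of (a1), (a2) keeps $r_{p-1},r_{p-2}$ away from $r_p$, and nothing in (b1), (b2) keeps $r_{p+3}$ away from $r_{p+2}$. In the proof of Lemma~\ref{lem:G2-key} the paper applies (a1) with $r_p=9$ to $\br(1)=(32,29,27,19,16,13,11,9,6,3,-5,-7)$. There $r_{p-1}=11$ and $r_{p-2}=13$ lie on the other two legs at distance $2$ and $4$. You must either include such neighbours in the verification or prove that their red and green vertices never become adjacent to the mutated vertices along the sequence.
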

See Figure~\ref{fig:7-braid1} for an illustration. 

\begin{sidewaysfigure}[ht]
 \begin{minipage}[b][8cm][b]{23cm}  
  \vspace*{3cm} \hspace*{2.5cm}   
    \scalebox{0.7}{\begin{tikzcd}[cramped,column sep=scriptsize,row sep=tiny]
	&& 34 &&&& 35 &&&&&&&& 33 \\
	&&&&&&&&&&&& 32 &&&&&&&&&&&&&& 29 \\
	&& 32 && 33 &&&&&& 31 &&&&&&&&&&&& 28 \\
	&&&&&&&&&&&& 30 &&&&&&&&&&&& 27 \\
	&& 30 &&&&&&&&&&&&&& 29 &&&&&& 26 \\
	31 &&&&&&&&&&&& 28 \\
	&& \textcolor{rgb,255:red,255;green,4;blue,0}{28} &&&&&&&&&&&& 27 &&&&&&&& 24 \\
	&&&& 27 && 29 &&&& \textcolor{rgb,255:red,255;green,4;blue,0}{25} && 26 &&&&&&&& 25 \\
	&& \textcolor{rgb,255:red,0;green,255;blue,4}{26} &&&&&&&&&&&&&&&&&&&& \textcolor{rgb,255:red,255;green,4;blue,0}{22} \\
	\textcolor{rgb,255:red,255;green,4;blue,0}{25} &&&&&&&&&&&& 24 &&&&&&&&&&&& 21 && 23 \\
	&& 24 &&&&&&&& \textcolor{rgb,255:red,0;green,255;blue,4}{19} &&&&&&&&&&&& \textcolor{rgb,255:red,0;green,255;blue,4}{{\mathbf{13}}} \\
	&&&&&&&& {\phantom{X}} &&&& \textcolor{rgb,255:red,255;green,4;blue,0}{22} &&&&&& {\phantom{X}} && \textcolor{rgb,255:red,255;green,4;blue,0}{19} \\
	&& 22 &&&&&&&&&&&& 21 && 23 &&&&&& 20 \\
	\textcolor{rgb,255:red,0;green,255;blue,4}{13} &&&&&&&&&&&& \textcolor{rgb,255:red,0;green,255;blue,4}{20} \\
	&& 19 &&&&&&&& \textcolor{rgb,255:red,255;green,4;blue,0}{13} &&&&&&&&&&&& 18 \\
	&&&& 21 && 23 &&&&&& 18 &&&&&&&& \textcolor{rgb,255:red,0;green,255;blue,4}{7} \\
	&& 20 &&&&&&&&&&&&&&&&&&&& \textcolor{rgb,255:red,255;green,4;blue,0}{16} \\
	7 &&&&&&&&&& \textcolor{rgb,255:red,0;green,255;blue,4}{7} && 16 &&&&&&&&&&&& 15 && 17 \\
	&& 18 &&&&&&&&&&&&&& 17 &&&&&& \textcolor{rgb,255:red,0;green,255;blue,4}{14} \\
	&&&&&&&&&&&& 14 &&&&&&&& 1 \\
	&& 16 &&&& 17 &&&&&&&& 15 &&&&&&&& 12 \\
	&&&& 15 &&&&&&&& 12 \\
	&& 14 &&&&&&&& 1 &&&&&&&&&&&& 10 &&&& 11 \\
	&&&&&&&&&&&& 10 &&&& 11 &&&&&&&& 9 \\
	&&&&&&&&&&&&&&&&&&&&&& 8
	\arrow[from=1-3, to=6-1]
	\arrow[from=1-7, to=3-3]
	\arrow[from=1-15, to=4-13]
	\arrow[from=2-13, to=5-17]
	\arrow[from=2-27, to=5-23]
	\arrow[from=3-3, to=1-3]
	\arrow[from=3-3, to=8-7]
	\arrow[from=3-5, to=5-3]
	\arrow[from=3-11, to=6-13]
	\arrow[from=3-23, to=8-21]
	\arrow[from=4-13, to=2-13]
	\arrow[from=4-13, to=7-15]
	\arrow[from=4-25, to=7-23]
	\arrow[from=5-3, to=3-3]
	\arrow[from=5-3, to=8-5]
	\arrow[from=5-17, to=8-13]
	\arrow[from=5-23, to=3-23]
	\arrow[from=5-23, to=10-27]
	\arrow[from=6-1, to=7-3]
	\arrow[from=6-13, to=4-13]
	\arrow[from=6-13, to=8-11]
	\arrow[from=7-3, to=5-3]
	\arrow[draw={rgb,255:red,0;green,4;blue,255}, from=7-3, to=9-3]
	\arrow[from=7-15, to=1-15]
	\arrow[from=7-15, to=10-13]
	\arrow[from=7-23, to=5-23]
	\arrow[from=7-23, to=10-25]
	\arrow[from=8-5, to=3-5]
	\arrow[draw={rgb,255:red,0;green,4;blue,255}, from=8-5, to=7-3]
	\arrow[from=8-5, to=13-3]
	\arrow[from=8-7, to=1-7]
	\arrow[draw={rgb,255:red,0;green,4;blue,255}, from=8-7, to=7-3]
	\arrow[from=8-7, to=11-3]
	\arrow[from=8-11, to=3-11]
	\arrow[draw={rgb,255:red,0;green,4;blue,255}, from=8-11, to=11-11]
	\arrow[from=8-13, to=6-13]
	\arrow[from=8-13, to=13-17]
	\arrow[from=8-21, to=9-23]
	\arrow[draw={rgb,255:red,0;green,4;blue,255}, from=9-3, to=8-5]
	\arrow[draw={rgb,255:red,0;green,4;blue,255}, from=9-3, to=8-7]
	\arrow[from=9-3, to=10-1]
	\arrow[from=9-23, to=7-23]
	\arrow[draw={rgb,255:red,0;green,4;blue,255}, from=9-23, to=11-23]
	\arrow[from=10-1, to=6-1]
	\arrow[draw={rgb,255:red,0;green,4;blue,255}, from=10-1, to=14-1]
	\arrow[from=10-13, to=8-13]
	\arrow[from=10-13, to=13-15]
	\arrow[from=10-25, to=4-25]
	\arrow[draw={rgb,255:red,0;green,4;blue,255}, from=10-25, to=9-23]
	\arrow[from=10-25, to=15-23]
	\arrow[from=10-27, to=2-27]
	\arrow[draw={rgb,255:red,0;green,4;blue,255}, from=10-27, to=9-23]
	\arrow[from=10-27, to=13-23]
	\arrow[from=11-3, to=9-3]
	\arrow[from=11-3, to=16-7]
	\arrow[from=11-11, to=12-13]
	\arrow[draw={rgb,255:red,0;green,4;blue,255}, from=11-23, to=10-25]
	\arrow[draw={rgb,255:red,0;green,4;blue,255}, from=11-23, to=10-27]
	\arrow[from=11-23, to=12-21]
	\arrow[from=12-13, to=10-13]
	\arrow[draw={rgb,255:red,0;green,4;blue,255}, from=12-13, to=14-13]
	\arrow[from=12-21, to=8-21]
	\arrow[draw={rgb,255:red,0;green,4;blue,255}, from=12-21, to=16-21]
	\arrow[from=13-3, to=11-3]
	\arrow[from=13-3, to=16-5]
	\arrow[from=13-15, to=7-15]
	\arrow[draw={rgb,255:red,0;green,4;blue,255}, from=13-15, to=12-13]
	\arrow[from=13-15, to=18-13]
	\arrow[from=13-17, to=5-17]
	\arrow[draw={rgb,255:red,0;green,4;blue,255}, from=13-17, to=12-13]
	\arrow[from=13-23, to=11-23]
	\arrow[from=13-23, to=18-27]
	\arrow[from=14-1, to=15-3]
	\arrow[draw={rgb,255:red,0;green,4;blue,255}, from=14-13, to=13-15]
	\arrow[draw={rgb,255:red,0;green,4;blue,255}, from=14-13, to=13-17]
	\arrow[from=14-13, to=15-11]
	\arrow[from=15-3, to=13-3]
	\arrow[draw={rgb,255:red,0;green,4;blue,255}, from=15-3, to=17-3]
	\arrow[from=15-11, to=11-11]
	\arrow[draw={rgb,255:red,0;green,4;blue,255}, from=15-11, to=18-11]
	\arrow[from=15-23, to=13-23]
	\arrow[from=15-23, to=18-25]
	\arrow[from=16-5, to=8-5]
	\arrow[draw={rgb,255:red,0;green,4;blue,255}, from=16-5, to=15-3]
	\arrow[from=16-5, to=21-3]
	\arrow[from=16-7, to=8-7]
	\arrow[draw={rgb,255:red,0;green,4;blue,255}, from=16-7, to=15-3]
	\arrow[from=16-7, to=19-3]
	\arrow[from=16-13, to=13-17]
	\arrow[from=16-13, to=14-13]
	\arrow[from=16-13, to=19-17]
	\arrow[from=16-21, to=17-23]
	\arrow[draw={rgb,255:red,0;green,4;blue,255}, from=17-3, to=16-5]
	\arrow[draw={rgb,255:red,0;green,4;blue,255}, from=17-3, to=16-7]
	\arrow[from=17-3, to=18-1]
	\arrow[from=17-23, to=15-23]
	\arrow[draw={rgb,255:red,0;green,4;blue,255}, from=17-23, to=19-23]
	\arrow[from=18-1, to=14-1]
	\arrow[from=18-1, to=23-3]
	\arrow[from=18-11, to=20-13]
	\arrow[from=18-13, to=16-13]
	\arrow[from=18-13, to=21-15]
	\arrow[from=18-25, to=10-25]
	\arrow[draw={rgb,255:red,0;green,4;blue,255}, from=18-25, to=17-23]
	\arrow[from=18-25, to=23-23]
	\arrow[from=18-27, to=10-27]
	\arrow[draw={rgb,255:red,0;green,4;blue,255}, from=18-27, to=17-23]
	\arrow[from=18-27, to=21-23]
	\arrow[from=19-3, to=17-3]
	\arrow[from=19-3, to=21-7]
	\arrow[from=19-17, to=13-17]
	\arrow[from=19-17, to=22-13]
	\arrow[draw={rgb,255:red,0;green,4;blue,255}, from=19-23, to=18-25]
	\arrow[draw={rgb,255:red,0;green,4;blue,255}, from=19-23, to=18-27]
	\arrow[from=19-23, to=20-21]
	\arrow[from=20-13, to=18-13]
	\arrow[from=20-13, to=23-11]
	\arrow[from=20-21, to=16-21]
	\arrow[from=20-21, to=25-23]
	\arrow[from=21-3, to=19-3]
	\arrow[from=21-3, to=22-5]
	\arrow[from=21-7, to=16-7]
	\arrow[from=21-15, to=13-15]
	\arrow[from=21-15, to=24-13]
	\arrow[from=21-23, to=19-23]
	\arrow[from=21-23, to=23-27]
	\arrow[from=22-5, to=16-5]
	\arrow[from=22-13, to=20-13]
	\arrow[from=22-13, to=24-17]
	\arrow[from=23-3, to=21-3]
	\arrow[from=23-11, to=18-11]
	\arrow[from=23-23, to=21-23]
	\arrow[from=23-23, to=24-25]
	\arrow[from=23-27, to=18-27]
	\arrow[from=24-13, to=22-13]
	\arrow[from=24-17, to=19-17]
	\arrow[from=24-25, to=18-25]
	\arrow[from=25-23, to=23-23]
\end{tikzcd}
}
\end{minipage}    
   \caption{Examples of  $\sfD_4$-braid moves in type $\sfG_2$:\\
\hspace*{3cm}$\tGam_{(28,25,22)}
\xleftrightarrow{\mu_{26}}\bullet
\xleftrightarrow{\mu_{24}}\bullet
\xleftrightarrow{\mu_{22}}\bullet
\xleftrightarrow{\mu_{19}}\bullet
\xleftrightarrow{\mu_{13}}
\tGam_{(25,22,19)}
\xleftrightarrow{\mu_{19}}\bullet
\xleftrightarrow{\mu_{13}}\bullet
\xleftrightarrow{\mu_{20}}\bullet
\xleftrightarrow{\mu_{18}}\bullet
\xleftrightarrow{\mu_{16}}
\tGam_{(22,19,16)}$}
\label{fig:7-braid1}
\end{sidewaysfigure}

\begin{Def} {\rm
Let $\br=(r_1,r_2,\ldots, r_l)$ be a decreasing, $\tsfG_2$-adapted sequence of integers.  
For $r\in R$ we define 
\begin{align*}
 r^+&:=\min\{q\in R\mid q>r \text{ and } q-r\equiv 1\pmod 2\},\\
r^-&:=\max\{s\in R\mid r>s \text{ and } r-s\equiv 1\pmod 2\}.
\end{align*}
We agree that $r^+=\infty$ (\resp $r^-=\infty$)
if the corresponding sets are empty ($r^+$ should
not be confused with $r_{p^+}$ from Section~\ref{sub-sec-311}). 

Suppose that $s\in R$ and $s^+-3\geq s + 2d_s$. Then
we define  $c_s^+(\br)$ as the sequence which is obtained from $\br$ by replacing $s$ by $s+2d_{s}$ and rearranging the resulting sequence in decreasing order. 

Suppose that $s - 2d_{s}\geq s^-+3$. Then
we define  $c_s^-(\br)$ as the sequence which is obtained from $\br$ by replacing $s$ by $s-2d_{s}$ and rearranging the resulting sequence in decreasing order. 
}
\end{Def}

Note that, under the given hypothesis,  
$c_s^\pm(\br)$ is again $\tsfG_2$-adapted. Moreover,
$w(c^\pm_s(\br))$ is obtained from $w(\br)$ by a (possibly empty) sequence of $\sfD_4$ commutation operations  involving the letters $i^{(k)}$ with
$k\in\{1,2,3\}$. 
In other words, $w(\br)$ and $w(c_s^\pm(\br))$ belong to the same commutation class
of reduced expressions for elements of $W(\tsfG_2)$.
The proof of the following Lemma 
follows directly from the definitions.

\begin{Lem}[Shift moves] \label{lem:G2-shift}
Let $\br=(r_1,r_2,\ldots, r_l)$ be a $\tsfG_2$-adapted, decreasing integer sequence. 
\begin{itemize}
\item[(a)] Suppose that for $s\in R$ the sequence
$c_s^-(\br)$ is defined. Then 
\[
\mu_{\hsig(s)}(\tGam_\br)=\tGam_{c_s^-(\br)}.
\]
\item[(b)] Suppose that for $s\in R$ the sequence
$c_s^+(\br)$ is defined. Then 
\[
\mu_{\htau(s)}(\tGam_\br)=\tGam_{c_s^+(\br)}.
\]
\end{itemize}
\end{Lem}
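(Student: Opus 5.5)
The plan is a direct local verification of the quiver $\tGam_{c_s^\pm(\br)}$ against $\mu(\tGam_\br)$, using the explicit description of sources and targets through $\htau$ and $\hsig$. Part (b) follows from part (a): if $c_s^+(\br)=\br'$ is defined, then $c^-_{s+2d_s}(\br')=\br$ is defined. Moreover $\hsig'(s+2d_s)=\htau(s)$ with respect to $\br'$, because the functions change only on the residue class of $s$, by a shift of $2d_s$ concentrated at one place. Since mutation is involutive, we get (b). So I concentrate on (a).

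For (a), put $s'=s-2d_s$ and $\br'=c_s^-(\br)$. Compare $\htau,\hsig$ for $\br$ and $\br'$. They differ only on the linearity region of $\htau$ in the residue class of $s$ (mod $2$ if $s$ is even, mod $6$ if $s$ is odd), and only at the single position $k=s$. Concretely, replacing $s$ by $s'$ changes $\htau(k)$ only for $k$ with $s'<k<s$ in that class. In that class this is just the one value $k=s-2d_s+\ldots$, and in fact only $k$ with $s'<k\le s$ is affected, which amounts to a relabelling. The effect is that the vertex $\hsig(s)=\rho$ (the target of $\bet^-_s$) and the vertex $\htau(s)$ exchange roles.

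First I list the arrows of $\tGam_\br$ incident to $v:=\hsig(s)$. By definition, $v$ is the target of the vertical arrow $\bet^-_s$ and of $\alp^+_{s-2d_s}$ (up arrow into $v$ from below). Its outgoing arrows are the oblique ones: $\alp^o_s$, and, when $s$ is even, $\bet^o_s$ and $\bet^o_{s+2}$. This is exactly the statement, already used in Remark~\ref{rem:lowering}, that the green vertex is a sink for vertical and a source for oblique arrows.

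Next I apply the mutation rule at $v$. Each composite path $u\to v\to x$ adds an arrow $u\to x$, and these cancel against existing arrows $x\to u$. The 2-paths through $\bet^-_s$ produce arrows from $\htau(s)$ to the oblique targets $\htau(s-3)$, and so on. These cancel with the existing $\alp^o$ arrows out of the vertex above, or turn into the $\alp^o$ arrows starting at $\hsig'$ for $\br'$. The 2-paths through $\alp^+_{s-2d_s}$ produce the new vertical composite, which becomes the new $\bet^-_{s'}$. The condition $s-2d_s\ge s^-+3$ is exactly what guarantees that no oblique $\bet^o$ arrows attached to the neighbouring element $s^-$ of $R$ interfere. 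Under that condition the only arrows involved are the ones listed, so after reversing the arrows at $v$ the result coincides arrow by arrow with the definition of $\tGam_{\br'}$.

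I expect the main obstacle to be the case $s\in R_0$ with its four extra arrows $\bet^o_{s},\dots,\bet^o_{s+3}$, which form 3-cycles. Here cancellations must be checked carefully, and the cases $s$ even and $s$ odd need separate treatment. I would organize this as a table of the finitely many 2-paths through $v$, as in the $\sfB_2$ analogue of Lemma~\ref{Lem28}. The same inspection applies there, and it is the route by which Lemma~\ref{Lem28} itself was proved.
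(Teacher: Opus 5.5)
Your strategy is the right one and is what the paper means by ``follows directly from the definitions.'' The reduction of (b) to (a) by involutivity is correct: $c^-_{s+2d_s}(c^+_s(\br))=\br$ is defined because two elements of $R$ of opposite parity always differ by at least $3$, and $\hsig'(s+2d_s)=\htau(s)$. Your list of the arrows at $\hsig(s)$ is also correct. However, the bookkeeping you sketch for the mutation is wrong in several places. That bookkeeping is the whole content of the lemma, so the check as you describe it would not close.

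Write $T=\htau(s)$, $G=\hsig(s)=T-2d_s$, $s'=s-2d_s$ and $G^-=\htau(s')=G-2d_s$. The corrections are as follows.

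\textbf{Which values change.} Passing to $\br'=c^-_s(\br)$ changes exactly two values: $\hsig(s)\colon G\mapsto T$ and $\htau(s')\colon G^-\mapsto G$. It is not ``$\htau(k)$ for $s'<k\le s$.'' So the red/green pair slides down from $(T,G)$ to $(G,G^-)$; it is not that $\hsig(s)$ and $\htau(s)$ exchange roles.

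\textbf{Vertical arrows.} There is no vertical $2$-path through $G$, since both vertical arrows at $G$ point into it. The new down arrow $G\to G^-$ and the new $\alp^+_{s'}\colon G\to T$ are simply the reversals of $\alp^+_{s'}$ and $\bet^-_s$. The $2$-paths through $\alp^+_{s'}$ produce \emph{oblique} arrows $G^-\to x$.

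\textbf{No cancellation at $T$ by outgoing arrows.} $T$ has no outgoing oblique arrows, so nothing cancels against ``$\alp^o$ arrows out of the vertex above.''

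\textbf{Case $s$ odd.} The only out-neighbour of $G$ is $y=\htau(s-3)$.
\begin{itemize}
\item $T\to y$ survives and is the new $\alp^o_s$.
\item $G^-\to y$ cancels against $\alp^o_{s-3}\colon\hsig(s-3)\to\htau(s-6)=G^-$.
\item The reversed arrow $y\to G$ is $\alp^o_{s-3}$ with its new target $\htau'(s-6)=G$.
\end{itemize}
This uses $\hsig(s-3)=\htau(s-3)$, i.e.\ $s-3\notin R$. One also needs $s-5,s-7\notin R$. Otherwise $\bet^o$-arrows are incident to $\htau(s-6)=G^-$, and in $\tGam_{\br'}$ they would have to sit at $G$, which $\mu_G$ does not achieve. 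These three exclusions are exactly the hypothesis $s-6\ge s^-+3$, so the hypothesis is not only about $\bet^o$-arrows.

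\textbf{Case $s$ even.} Put $x_1=\htau(s-3)$, $x_2=\htau(s-1)$, $x_3=\htau(s+1)$.
\begin{itemize}
\item $T\to x_2$ and $T\to x_3$ cancel against $\bet^o_{s+1}$ and $\bet^o_{s+3}$, which point \emph{into} $T$.
\item $G^-\to x_3$ cancels against $\alp^o_{s+1}\colon x_3\to G^-$.
\item The survivors $T\to x_1$, $G^-\to x_1$, $G^-\to x_2$ are the new $\alp^o_s$ and the two new $\bet^o$-arrows out of $\hsig'(s')=G^-$.
\item The reversed arrows $x_1\to G$, $x_2\to G$ are the two new $\bet^o$-arrows into $\htau'(s')=G$, and $x_3\to G$ is $\alp^o_{s+1}$ with its new target.
\end{itemize}
With this corrected table your argument goes through.
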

Recall that for $s\in R$ we have a vertical down arrow 
$\bet_{s-d_s}^-\df \htau(s)\ra\hsig(s)$.

\begin{example} 
Let $\br$ and $\br'$ be two $\tsfG_2$-adapted decreasing sequences with $w(\br)=w(\br')$.
Then $\br'$ can be obtained from $\br$ by a sequence of operations of the form
$c_p^\pm$, where at each step \emph{no} rearranging is required. 

On the other hand, if $\bi$ is a reduced expression for $w_o\in W(\sfG_2)$ and $\br= (r_1,r_2,\ldots,r_6)$ is adapted to $\bi$, we
may consider the $\tsfG_2$-adapted sequence  $\tilde{\br}$ such that
$\tGam_{\tilde{\br}}=\Gam_{\bi,\br}$. Then 
\[
\tilde{\br}':=b^+_{r_6}\circ b^+_{r_5}\circ\cdots\circ b^+_{r_1}(\tilde{\br})
\]
is defined, and we find
\[
\tGam_{\tilde{\br}'}=\Gam_{\bi, (r_1+2,r_2+2,\ldots,r_6)}.
\]
Note however, that $w(\tilde{\br}')\neq w(\tilde{\br})$. 
\end{example}

\begin{Lem} \label{lem:G2-key}
Consider $\bi=(2,1,2,1,2,1)$ and the $\bi$-adapted sequence
$\br=(26,23,16,13,6,3)$. 
Consider also $\bi'=(1,2,1,2,1,2)$ and the
$\bi'$-adapted sequence $\br'=(35, 28, 25, 18, 15, 8)$. Then there
is an explicit sequence of \textbf{163} mutations, which transforms
$\Gam_{\bi,\br}$ into a quiver, which is isomorphic to 
$\Gam_{\bi',\br'}$. 
\end{Lem}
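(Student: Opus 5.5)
The plan is to assemble the 163-step sequence entirely from the two elementary moves just established, namely the $\sfD_4$-braid moves of Lemma~\ref{lem:G2-braid} (each costing five mutations) and the shift moves of Lemma~\ref{lem:G2-shift} (each costing one mutation). By the preceding Remark, $\Gam_{\bi,\br}=\tGam_{\tilde\br}$ with
\[
\tilde\br=(26,23,21,19,16,13,11,9,6,3,1,-1),
\]
and $\Gam_{\bi',\br'}=\tGam_{\tilde\br'}$ with
\[
\tilde\br'=(35,33,31,28,25,23,21,18,15,13,11,8).
\]
Here $w(\tilde\br)$ and $w(\tilde\br')$ are two reduced expressions of the longest element $\tilde w_0\in W(\sfD_4)$. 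They lie in different commutation classes, because they project to the two distinct reduced words of $w_0\in W(\sfG_2)$. So the task reduces to a purely combinatorial one: find a path from $\tilde\br$ to $\tilde\br'$, up to a global relabelling of the $1^{(k)}$ columns, through $\tsfG_2$-adapted sequences, where each step is some $b^\pm_s$ or $c^\pm_s$.

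First, I would choose a chain of reduced words for $\tilde w_0$ in $W(\sfD_4)$ joining $w(\tilde\br)$ to a word whose projection is $(1,2,1,2,1,2)$, using only braid moves of type $(1^{(k)},2^{(o)},1^{(k)})\leftrightarrow(2^{(o)},1^{(k)},2^{(o)})$ and commutations among the $1^{(k)}$. This mirrors the strategy described in the Remark after Lemma~\ref{Lem:B2-F4-braid}. A convenient choice is to pass through a word adapted to an orientation of $\sfD_4$, so that the chain can be read off from the Auslander--Reiten quiver and has a uniform shape.

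Second, I would realize each braid move in this chain by some $b^\pm_{r_p}$. Before each braid move, I would apply shift moves $c^\pm_s$ to bring the three relevant entries into the configuration $r_{p+j}=r_p-3j$ required by~\eqref{eq:G2-braid}, and to make the neighbours satisfy the gap conditions involving $\del(r_p)$. The commutation moves are absorbed automatically, since $c^\pm_s$ reorders the sequence without changing the commutation class.

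Finally, I would check that the terminal sequence coincides with $\tilde\br'$ after a permutation of the labels $1^{(1)},1^{(2)},1^{(3)}$. Such a permutation induces only a quiver isomorphism, which is why the statement asserts isomorphism rather than equality. Counting the steps, $5\cdot(\#\text{braid moves})+\#\text{shifts}=163$ gives a consistency check.

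The main obstacle is the second step: guaranteeing at every stage that the required shifts are defined, i.e.\ that the hypotheses $s^+-3\ge s+2d_s$ and $s-2d_s\ge s^-+3$, and the $\del$-conditions for $b^\pm$, all hold. Since these windows interact, I would first fix the $\sfD_4$ braid chain, then greedily spread the entries far apart by shifts before each braid move, and verify the bookkeeping by an explicit, computer-assisted table of the intermediate sequences. The individual mutation claims then follow from the two lemmas without redrawing any quivers.
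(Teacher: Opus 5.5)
Your strategy is the one the paper follows. Its closing Remark says the authors first found a chain of $\sfD_4$ braid and commutation moves from $w(\tilde\br)$ to $w(\tilde\br')$ and then lifted it to moves $b^\pm$ (five mutations each, Lemma~\ref{lem:G2-braid}) and $c^\pm_s$ (one mutation each, Lemma~\ref{lem:G2-shift}). Your reduction to $\tilde\br=(26,23,21,19,16,13,11,9,6,3,1,-1)$ and $\tilde\br'=(35,33,31,28,25,23,21,18,15,13,11,8)$ is exactly theirs.

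The gap is that for this lemma the lifted sequence \emph{is} the proof, and you stop before producing it. The paper writes out $21$ explicit stages $\tilde\br=\br(0)\to\br(1)\to\cdots\to\br(21)=\tilde\br'$, using $16$ braid moves and $83$ shift moves, so that $16\cdot 5+83=163$. At each stage one must check that every $c^\pm_s$ and $b^\pm_{r_p}$ is admissible. These conditions depend on the current positions of the neighbouring entries, hence on the order of the moves. Your proposal defers exactly this point (your ``main obstacle'') to a future computer search. So it shows neither that the lift exists nor the count. Moreover, $163$ cannot serve as a consistency check: it is the output of one particular choice of $\sfD_4$ chain and of shifts, not an invariant. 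Carried out, your plan would give \emph{some} finite mutation sequence, which is all Theorem~\ref{thm:mut_equivQ} needs, but not necessarily the stated one.

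Three smaller points.

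\emph{No relabelling is needed.} A reduced word for $\tilde w_0$ made of three blocks (three letters $1^{(\ast)}$ followed by $2^{(o)}$) has pairwise distinct letters in each block, since two equal commuting letters would contradict reducedness. Hence it is commutation equivalent to $w(\tilde\br')$, and shift moves lead to $\tilde\br'$ itself, as in the last stage of the paper's table.

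\emph{The word ``isomorphic'' does not come from relabelling.} It comes from Lemma~\ref{lem:G2-braid}, where each braid move is realized only up to quiver isomorphism because some vertices change columns (compare the Remark after Lemma~\ref{Lem:B2-F4-braid}).

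\emph{The Auslander--Reiten quiver does not supply the chain.} Up to commutation, $w(\tilde\br)$ and $w(\tilde\br')$ are already $c^3$ and $(c^{-1})^3$ for the bipartite Coxeter element $c=s_{2^{(o)}}s_{1^{(1)}}s_{1^{(2)}}s_{1^{(3)}}$. They are thus adapted to the two opposite bipartite orientations of $\sfD_4$. An AR quiver describes a single commutation class, not the braid moves between classes.
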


\begin{proof}
To begin with, consider the $\tsfG_2$-adapted sequences
\begin{align*}
\tilde{\br}  &:=(26, 23,21,19,16,13,11,9,6,3,1,-1)\quad\text{with}\\
w(\tilde{\br}) &= (2^{(0)},1^{(3)},1^{(2)},1^{(1)},2^{(0)},1^{(1)},1^{(3)},1^{(2)},2^{(0)},1^{(2)},1^{(1)},1^{(3)}), \quad\text{and}\\
\tilde{\br}' &=(35,33,31,28,25,23,21,18,15,13,11,8)\quad\text{with}\\
w(\tilde{\br}') &=(1^{(3)},1^{(2)},1^{(1)},2^{(0)},1^{(1)},1^{(3)},1^{(2)},2^{(0)},1^{(2)},1^{(1)},1^{(3)},2^{(0)}).
\end{align*}
Thus, 
\[
\tGam_{\tilde{\br}}=\Gam_{\bi,\br}\quad\text{and}\quad\tGam_{\tilde{\br}'}=\Gam_{\bi',\br'}.
\]
Thus, in view of Lemma~\ref{lem:G2-braid} and Lemma~\ref{lem:G2-shift} it is sufficient
to produce a sequence of operations $b_*^\pm$ and $c_*^\pm$, which transforms
$\tilde{\br}=\br(0)$ into $\tilde{\br}'=\br(21)$. Accordingly, we will carry this  out in 21 steps.
We display at each step the present sequence $\br(i)$ together with (in most cases) $w(i):=w(\br(i))$, where we
simplify each vertex $a^{(j)}$ of $\tsfG_2$ to $j\in\{0,1,2,3\}$.
Under the two sequences $\br(i)$ and $w(i)$, we display a sequence  
of shift (\resp braid moves) which transform, according to the above-mentioned lemmas, $\br(i)$ into $\br(i+1)$. Those moves are composed from right to left. 

\medskip
\begin{tabular}{c|rrrrrrrrrrrr}
$r\setminus p$  &   1,&  2,&  3,&  4,&  5,&  6,&  7,&  8,&  9,& 10,& 11,& 12\phantom{)}\\ \hline
$\br(0)$    & (26,& 23,& 21,& 19,& 16,& 13,& 11,&  9,&  6,&  3,&  1,& -1)\\ 
          $w(0)$    & ( 0,&  3,&  2,&  1,&  0,&  1,&  3,&  2,&  0,&  2,&  1,&  3)\\
    & & & \multicolumn{8}{c}{$c^+_{21}\circ c^+_{23} \circ (c^+_{30}\circ c^+_{28}\circ c^+_{26})\circ c^-_{1} \circ c^-_{-1} $}\\[.5ex] \hline
 $\br(1)$   & (32,& 29,& 27,& 19,& 16,& 13,& 11,&  9,&  6,&  3,& -5,& -7)\\
    &&& \multicolumn{8}{c}{$b^+_{19}\circ b^-_9$}\\[.5ex]\hline
 $\br(2)$   & (32,& 29,& 27,& 22,& 19,& 16,& 11,&  6,&  3,&  0,& -5,& -7)\\
           $w(2)$   & ( 0,&  3,&  2,&  0,&  1,&  0,&  3,&  0,&  2,&  0,&  1,&  3)\\
    &&& \multicolumn{8}{c}{$c^-_{16}\circ c^+_6$}\\[.5ex]\hline
 $\br(3)$   & (32,& 29,& 27,& 22,& 19,& 14,& 11,&  8,&  3,&  0,& -5,& -7)\\    
    &&& \multicolumn{8}{c}{$b^+_{14}$}\\\hline
 $\br(4)$   & (32,& 29,& 27,& 22,& 19,& 17,& 14,& 11,&  3,&  0,& -5,& -7)\\ 
          $w(4)$    & ( 0,&  3,&  2,&  0,&  1,&  3,&  0,&  3,&  2,&  0,&  1,&  3)\\ 
     &\multicolumn{11}{l}{$(c^+_{33}\circ c^+_{27})\circ  (c^+_{40}\circ c^+_{38}\circ c^+_{36}\circ c^+_{34}\circ c^+_{32})\circ  (c^+_{24}\circ c^+_{22})$}\\
     &&\multicolumn{11}{r}{$\circ c^+_{17}\circ(c^-_{11}\circ c^+_3)\circ c^+_0\circ (c^-_{-5}\circ c^+_{-7})$}\\[.5ex]\hline
$\br(5)$   & (42,& 39,& 29,& 26,& 23,& 19,& 14,&  9,&  5,&  2,& -1,& -11)\\
          $w(5)$   & ( 0,&  2,&  3,&  0,&  3,&  1,&  0,&  2,&  3,&  0,&  3,&   1)\\ 
\end{tabular}\\[1cm]
\begin{tabular}{c|rrrrrrrrrrrr}
$r\setminus p$     &   1,&  2,&  3,&  4,&  5,&  6,&  7,&  8,&  9,& 10,&  11,&  12\phantom{)}\\ \hline
 $\br(5)$  & (42,& 39,& 29,& 26,& 23,& 19,& 14,&  9,&  5,&  2,&  -1,& -11)\\  
   &&&\multicolumn{8}{c}{$b^+_{29}\circ b^-_5$}\\\hline  
$\br(6)$   & (42,& 39,& 32,& 29,& 26,& 19,& 14,&  9,&  2,&  -1,& -4,& -11)\\  
          $w(6)$   & ( 0,&  2,&  0,&  3,&  0,&  1,&  0,&  2,&  0,&   3,&  0,&    1)\\ 
 &&&\multicolumn{8}{c}{$(c^+_{34}\circ c^+_{32})\circ c^-_{14}\circ(c^+_4\circ c^+_2)$}\\[.5ex]\hline
$\br(7)$    & (42,& 39,& 36,& 29,& 26,& 19,& 12,& 9,&  6,&  -1,& -4,& -11)\\
    &&&\multicolumn{8}{c}{$b^-_{42}\circ b^+_{12}$}\\[.5ex]\hline  
$\br(8)$    & (39,& 36,& 33,& 29,& 26,& 19,& 15,& 12,&  9,&  -1,& -4,& -11)\\ 
          $w(8)$    & ( 2,&  0,&  2,&  3,&  0,&  1,&  2,&  0,&  2,&   3,&  0,&   1)\\ 
  &&\multicolumn{10}{c}{$(c^-_{34}\circ c^-_{36})\circ c^-_{33}\circ c^-_{26}\circ (c^-_{19}\circ c^+_{15})\circ c^-_{12}\circ c^-_9$} \\[.5ex] \hline
$\br(9)$    & (39,& 32,& 29,& 27,& 24,& 21,& 13,& 10,&  3,&  -1,& -4,& -11)\\
          $w(9)$    & ( 2,&  0,&  3,&  2,&  0,&  2,&  1,&  0,&  2,&   3,&  0,&   1)\\ 
  &&&\multicolumn{8}{c}{$b^-_{27}$}\\[.5ex] \hline
$\br(10)$    & (39,& 32,& 29,& 24,& 21,& 18,& 13,& 10,&  3,&  -1,& -4,& -11)\\
          $w(10)$    & ( 2,&  0,&  3,&  0,&  2,&  0,&  1,&  0,&  2,&   3,&  0,&   1)\\ 
  &&&\multicolumn{8}{c}{$c^+_{24}\circ c^-_{18}$}\\[.5ex] \hline
$\br(11)$     & (39,& 32,& 29,& 26,& 21,& 16,& 13,& 10,&  3,&  -1,& -4,& -11)\\
\end{tabular}\\[1cm]
\begin{tabular}{c|rrrrrrrrrrrr}
$r\setminus p$  &   1,&  2,&  3,&  4,&  5,&  6,&  7,&  8,&  9,& 10,& 11,& 12\phantom{)}\\ \hline
 $\br(11)$  & (39,& 32,& 29,& 26,& 21,& 16,& 13,& 10,&  3,&  -1,& -4,& -11)\\
           $w(11)$  & ( 2,&  0,&  3,&  0,&  2,&  0,&  1,&  0,&  2,&   3,&  0,&   1)\\
  &&&\multicolumn{8}{c}{$b^-_{32}\circ b^+_{16}$} \\[.5ex] \hline
$\br(12)$   & (39,& 29,& 26,& 23,& 21,& 19,& 16,& 13,&  3,&  -1,& -4,& -11)\\
           $w(12)$  & ( 2,&  3,&  0,&  3,&  2,&  1,&  0,&  1,&  2,&   3,&  0,&   1)\\
&&\multicolumn{9}{l}{$c^+_{29}\circ c^+_{26}\circ (c^-_{23}\circ c^+_{19})\circ c^-_{16}\circ$}\\
&&&\multicolumn{9}{r}{$((c^-_{7}\circ c^-_{13})\circ (c^+_5\circ c^+_{-1}))\circ c^+_{-4}\circ c^+_{-11}$}\\[.5ex] \hline
$\br(13)$   & (39,& 35,& 28,& 25,& 21,& 17,& 14,& 11,&  3,&  1,& -2,& -5)\\
           $w(13)$  & ( 2,&  3,&  0,&  1,&  2,&  3,&  0,&  3,&  2,&  1,&  0,&  1)\\
&&&\multicolumn{8}{c}{$b^-_{17}\circ b^-_1$} \\[.5ex] \hline   
$\br(14)$   & (39,& 35,& 28,& 25,& 21,& 14,& 11,&  8,&  3,& -2,& -5,& -8)\\
           $w(14)$  & ( 2,&  3,&  0,&  1,&  2,&  0,&  3,&  0,&  2,&  0,&  1,&  0)\\
&&&\multicolumn{8}{c}{$c^-_8\circ c^+_{-2}$}\\[.5ex] \hline   
 $\br(15)$  & (39,& 35,& 28,& 25,& 21,& 14,& 11,&  6,&  3,&  0,& -5,& -8)\\
&&&\multicolumn{8}{c}{$b^+_6$}\\[.5ex] \hline
$\br(16)$   & (39,& 35,& 28,& 25,& 21,& 14,& 11,&   9,& 6,&  3,& -5,& -8)\\
          $w(16)$   & ( 2,&  3,&  0,&  1,&  2,&  0,&  3,&  2,&  0,&  2,&  1,&  0)\\[.5ex]\hline
\end{tabular}\\[1cm]
\begin{tabular}{c|rrrrrrrrrrrr}
$r\setminus p$  
   &   1,&  2,&  3,&  4,&  5,&  6,&  7,&  8,&  9,& 10,& 11,& 12\phantom{)}\\ \hline
$\br(16)$   & (39,& 35,& 28,& 25,& 21,& 14,& 11,&  9,&  6,&  3,& -5,& -8)\\
&&\multicolumn{10}{c}{$c^+_{25}\circ (c^+_{32}\circ c^+_{30}\circ c^+_{28})\circ c^+_{35}\circ c^+_9\circ (c^+_{16}\circ c^+_{14})$} \\[.5ex] \hline     
 $\br(17)$  & (41,& 39,& 34,& 31,& 21,& 18,& 15,& 11,&  6,&  3,& -5,& -8)\\
           $w(17)$  & ( 3,&  2,&  0,&  1,&  2,&  0,&  2,&  3,&  0,&  2,&  1,&  0)\\
&&&\multicolumn{8}{c}{$b^+_{21}$}\\[.5ex] \hline
 $\br(18)$  & (41,& 39,& 34,& 31,& 24,& 21,& 18,& 11,&  6,&  3,& -5,& -8)\\
           $w(18)$  & ( 3,&  2,&  0,&  1,&  0,&  2,&  0,&  3,&  0,&  2,&  1,&  0)\\
&&&\multicolumn{8}{c}{$(c^+_{26}\circ c^+_{24})\circ (c^-_{16}\circ c^-_{18})\circ c^+_{6}$}\\[.5ex] \hline
 $\br(19)$  & (41,& 39,& 34,& 31,& 28,& 21,& 14,& 11,&  8,&  3,& -5,& -8)\\
&&&\multicolumn{8}{c}{$b^-_{34}\circ b^+_{14}$}\\[.5ex] \hline  
$\br(20)$   & (41,& 39,& 31,& 28,& 25,& 21,&  17,& 14,& 11,&  3,& -5,& -8)\\
          $w(20)$   & ( 3,&  2,&  1,&  0,&  1,&  2,&   3,&  0,&  3,&  2,&  1,&  0)\\
&\multicolumn{10}{l}{$(c^+_6\circ c^+_4\circ c^+_2\circ c^+_0\circ c^+_{-2}\circ c^+_{-4}\circ c^+_{-6}\circ c^+_{-8})\circ $}\\
&&\multicolumn{11}{r}{$c^-_{41}\circ c^-_{39}\circ (c^+_9\circ c^+_3)\circ (c^+_7\circ c^+_1\circ c^+_{-5})\circ(c^+_{16}\circ c^+_{14})\circ c^+_{17}$}\\[.5ex] \hline
 $\br(21)$  & (35,& 33,& 31,& 28,& 25,& 23,& 21,& 18,& 15,& 13,& 11,&  8)\\ 
  $w(21)$           & ( 3,&  2,&  1,&  0,&  1,&  3,&  2,&  0,&  2,&  1,&  3,&  0)\\ \hline
\end{tabular}\\[2ex]
For typographical reasons we displayed this sequence of transformations in four blocks.

\end{proof}

\begin{Rem}  
Our strategy for the proof of Lemma~\ref{lem:G2-key} was to find first an \emph{explicit}
sequence of braid and commutation relations which connect the two reduced expressions
$w(0)$ and $w(21)$ for the longest element in $W(\sfD_4)=W(\tsfG_2)$. Then it became rather straightforward
to "lift" this sequence to our sequence of transformations from $\tilde{\br}$ to $\tilde{\br}'$.
We refrain from displaying the relevant parts of the quivers $\tGam_{\tilde{\br}(i)}$ explicitly, since each such quiver would occupy a full page, see for example Figure~\ref{fig:G2-Expl1}.  
\end{Rem}

\subsection*{Acknowledgments}
Ch.\@ G. and B.\@ L. both acknowledge gratefully that several visits to Université Paris Cité were (partially) funded by IMJ-PRG. In June 2026, Ch.\@ G. could stay at Université Paris Cité as a visiting Professor. 
Ch.\@G. also acknowledges partial support by PAPIIT-DGAPA grant IN113527. 
We heavily used Bernhard Keller's Quiver Mutation App, which can be found at
\url{https://webusers.imj-prg.fr/~bernhard.keller/quivermutation/}\\

Finally, the authors declare that Artificial Intelligence was not used 
at \emph{any stage} of this work . 

\end{document}